\theoremstyle{plain}
\newtheorem{thm}{Theorem}[section]
\newtheorem{theorem}[thm]{Theorem}
\newtheorem{lemma}[thm]{Lemma}
\newtheorem{corollary}[thm]{Corollary}
\newtheorem{proposition}[thm]{Proposition}
\theoremstyle{definition}
\newtheorem{remark}[thm]{Remark}
\newtheorem{notation}[thm]{Notation}
\newtheorem{definition}[thm]{Definition}
\newtheorem{condition}[thm]{Condition}
\newtheorem{example}[thm]{Example}
\newtheorem{examples}[thm]{Examples}
\numberwithin{equation}{section}
\newcommand{\0}{{\mathcal O}}
\newcommand{\sC}{{\mathcal C}}
\newcommand{\sI}{{\mathcal I}}
\newcommand{\sO}{{\mathcal O}}
\newcommand{\sZ}{{\mathcal Z}}
\newcommand{\C}{{\mathbb C}}
\newcommand{\BP}{{\mathbb P}}
\newcommand{\pit}{{\mathbb P}}
\newcommand{\Q}{{\mathbb Q}}
\newcommand{\End}{{\rm End}}
\def\cit{{\mathbb C}}
\def\qit{{\mathbb Q}}
\def\pit{{\mathbb P}}
\def\BP{{\mathbb P}}
\newcommand{\fg}{{\mathfrak g}}
\newcommand{\aut}{{\mathfrak a}{\mathfrak u}{\mathfrak t}}
\def\Gr{\mathop{\rm Gr}\nolimits}
\def\Lag{\mathop{\rm Lag}\nolimits}
\def\Sym{\mathop{\rm Sym}\nolimits}
\def\Hom{\mathop{\rm Hom}\nolimits}
\def\Bl{\mathop{\rm Bl}\nolimits}
\def\SYM{\mathop{\rm Sym}\nolimits}
\title[Special birational transformations of type $(2,1)$
]{Special birational transformations of type $(2,1)$}
\author{Baohua Fu and Jun-Muk Hwang}
\thanks{Baohua Fu is supported by National Scientific Foundation of
China (11225106 and 11321101).}
\begin{document}
\maketitle

\begin{abstract}
A birational transformation
 $\Phi: \BP^n \dasharrow Z \subset \BP^N,$ where $Z \subset \BP^N$ is a nonsingular variety of Picard number 1, is called  a {\em special birational transformation of type $(a,b)$} if $\Phi$ is given by a linear system of degree $a$, its inverse  $\Phi^{-1}$ is given by a linear system of degree $b$ and the base locus $S \subset \BP^n$ of $\Phi$  is irreducible and nonsingular.
  In this paper,
we classify special birational transformations of type $(2,1)$. In addition to previous works \cite{AS2} and \cite{R2} on this topic, our proof employs natural $\cit^*$-actions on $Z$ in a crucial way.
These $\cit^*$-actions also  relate our result to the problem studied in \cite{FH}. \end{abstract}

\section{Introduction}\label{s.intro}

Recall (e.g. Section 2 in \cite{AS2} or Definition 4.1 in \cite{R2}) that a birational transformation
 $$\Phi: \BP^n \dasharrow Z \subset \BP^N$$ where $Z \subset \BP^N$ is a nonsingular projective variety of Picard number 1 is called  a {\em special birational transformation of type $(a,b)$} if \begin{itemize}
 \item[(1)] the base locus $S \subset \BP^n$ of $\Phi$
 is irreducible and nonsingular; \item[(2)] the rational map $\Phi$ is given by a linear system belonging to $\sO_{\BP^n}(a)$; and
 \item[(3)] the inverse rational map $\Phi^{-1}$ is given by a linear system belonging to $\sO_Z(b)$. \end{itemize}
When $Z = \BP^n$, this is a special Cremona transformation,  a classical topic in projective algebraic geometry.
It is a challenging problem to classify special birational transformations.
Even for special Cremona transformations, a complete classification is still missing.
Special Cremona transformations of type $(2,2)$ have been classified  by
Ein and Shepherd-Barron in \cite{ES} by
relating them to Severi varieties classified by Zak (\cite{Zak}).
 In \cite{R2}, special Cremona transformations of types  $(2, 3)$ and $(2, 5)$ have been classified.
 Recently Alzati and Sierra (\cite{AS2}) have extended \cite{ES} to  a classification of special birational transformations of type $(2, 2)$ for a wider class of $Z$.

In this paper,
we will give a complete classification of special birational transformations of type $(2,1)$. This classification can be described in terms of the classification of
the base locus $S \subset \BP^n$, which  is contained in a hyperplane $\BP^{n-1} \subset \BP^n$ for the type $(2,1)$.  Our main result is the following.

\begin{theorem} \label{t.intro}
The base locus
 $S^d \subset \BP^{n-1}$ of a special birational transformation of type $(2,1)$ is projectively equivalent to one of the following:
\begin{itemize}
\item[(a)] $\Q^d \subset \BP^{d+1}$ for $d \geq 1$;
\item[(b)]  $\pit^1 \times \pit^{d-1}  \subset \BP^{2d-1}$ for $d \geq 3$;
\item[(c)] the 6-dimensional Grassmannian ${\rm Gr}(2,5) \subset \pit^9$;
\item[(d)] the 10-dimensional Spinor variety $\mathbb{S}_5 \subset \BP^{15}$;
\item[(e)] a nonsingular codimension $\leq 2$ linear section of  $\pit^1 \times \pit^2 \subset \BP^5$;
\item[(f)]  a nonsingular codimension $\leq 3$ linear section of ${\rm Gr}(2,5) \subset \pit^9.$
\end{itemize}
\end{theorem}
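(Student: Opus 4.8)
The plan is to trade the classical secant--variety analysis for the study of a natural $\cit^*$-action on $Z$, recovering $S$ from the fixed-point geometry of that action. First I would resolve $\Phi$. Since $\Phi^{-1}$ is given by $\sO_Z(1)$ it is a linear projection, so $\dim Z = n$ and $\BP^n$ is obtained from $Z \subset \BP^N$ by projecting from a linear centre. Writing $\pi\colon X = \Bl_S(\BP^n) \to \BP^n$ for the blow-up of $S$, with exceptional divisor $E$ and $H = \pi^*\sO_{\BP^n}(1)$, the transformation becomes a morphism $\phi\colon X \to Z$ with $\phi^*\sO_Z(1) = 2H - E$. Matching this against the linearity of $\Phi^{-1}$ forces $H = \phi^*\sO_Z(1) - \widetilde F$ with $\widetilde F = H - E$; hence $\widetilde F$ is the proper transform of a hyperplane through $S$, so $S \subset \BP^{n-1} \subset \BP^n$, and $\phi$ contracts $\widetilde F$ onto a subvariety $W \subset Z$. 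I would record here the resulting numerical constraints relating $n$, $\dim S$ and $\deg S$, together with the fact that $S$ is scheme-theoretically cut out by quadrics.

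The crucial step is the $\cit^*$-action. Choosing coordinates so that $\BP^{n-1} = \{x_0 = 0\}$ and $\Phi = [\,x_0^2 : x_0 x_1 : \cdots : x_0 x_n : q_1 : \cdots : q_r\,]$, where $q_1,\dots,q_r$ are the quadrics defining $S$ in $\BP^{n-1}$, the one-parameter subgroup $t\cdot[x_0 : x_1 : \cdots : x_n] = [t x_0 : x_1 : \cdots : x_n]$ preserves the ideal of quadrics through $S$, hence the linear system defining $\Phi$, and so descends to a $\cit^*$-action on $Z \subset \BP^N$ linearized with weights $0,1,2$ only. Decomposing $\BP^N$ into the corresponding weight subspaces and intersecting with $Z$, a direct computation identifies the source with the single point $\phi([1:0:\cdots:0])$, the sink with $W$, and the intermediate fixed component with the linear section $Z \cap \BP^{n-1}$, which is exactly $S$. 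Thus $S$ appears simultaneously as a hyperplane-eigenspace section of $Z$ and as a connected component of $Z^{\cit^*}$, while the orbit closures joining source to sink are the minimal rational (conic) curves of $Z$.

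The core of the argument is then the classification of the pair $(Z,\cit^*)$: here $Z$ is a smooth Fano of Picard number $1$ equipped with this action, whose fixed locus has at most the three components above. I would analyse its Bialynicki--Birula decomposition, use $\rho(Z) = 1$ to pin down the source, sink and the single intermediate stratum $S$, and study the variety of minimal rational tangents of $Z$ along $S$; combined with the positivity $\phi^*\sO_Z(1) = 2H - E$ and the restriction to weights $\le 2$, this should force $Z$ to be a quadric, a Segre variety $\BP^1 \times \BP^m$, the Grassmannian $\Gr(2,5)$, the spinor variety $\mathbb{S}_5$, or a suitable linear section of one of these --- precisely the point where the rigidity results of \cite{FH} enter. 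Reading off $S = Z \cap \BP^{n-1}$ then yields cases (a)--(f), the families (e) and (f) arising from the linear sections, while \cite{AS2} and \cite{R2} supply the quadratic-entry-locus backbone. I expect the \textbf{main obstacle} to be exactly this classification of $(Z,\cit^*)$: bounding the dimensions of the fixed components, excluding the remaining quasi-homogeneous possibilities, and controlling the two linear-section families; the secant geometry of $S$ gives the skeleton, and the $\cit^*$-action supplies the extra rigidity needed to close the list.

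Finally, I would establish the converse by exhibiting, for each $S$ in the list, the explicit system of quadrics through $S$ and checking that it realizes a type $(2,1)$ transformation onto a smooth $Z$ of Picard number $1$, so that the classification is sharp.
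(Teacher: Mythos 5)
Your set-up is sound and genuinely overlaps with the paper's first new idea: the reduction to a morphism from $\Bl_S(\BP^n)$, the containment $S \subset \BP^{n-1}$, and the weight-$(0,1,2)$ action of $\cit^*$ on $Z$ with fixed locus consisting of the source point, $S$, and the sink (the paper's $Y$, your $W$) are all exactly as in Sections \ref{s.statement} and \ref{s.Y}; smoothness of the sink is precisely what the paper extracts from this action (Proposition \ref{p.Ysmooth}). But there is a genuine gap at your core step, and the one tool you name to fill it makes the argument circular. You propose to classify the pair $(Z,\cit^*)$ via its Bia{\l}ynicki--Birula decomposition ``combined with the rigidity results of \cite{FH}.'' The paper shows (Theorem \ref{t.qsv}) that prime Fano targets of type-$(2,1)$ transformations are exactly the quadratically symmetric manifolds, i.e.\ those with nonzero prolongation, so the classification in \cite{FH} is logically equivalent to the theorem you are proving: the paper deduces Theorem \ref{t.Main} \emph{from} Theorem \ref{t.bir}, not conversely. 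Worse, the published list in \cite{FH} is erroneous precisely in the cases at stake: by Remark \ref{r.FH}, Proposition 2.9 of \cite{FH} is false and the list there omits the codimension-$2$ sections in case (e) and the codimension-$2$ and $3$ sections in case (f), so invoking it cannot produce the full statement and would import the very error this paper corrects.

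Where you write ``this should force $Z$ to be\dots'' the paper in fact classifies not $Z$ but the sink $Y$: since $\phi$ identifies $\Bl_S(\BP^n)$ with $\Bl_Y(Z)$, the map $\Bl_S(\BP^{n-1}) \to Y$ is a $\BP^{\delta+1}$-bundle (Proposition \ref{p.blow}), $Y$ is a prime Fano manifold of dimension $2(c-1)$ swept by $\BP^{c-1}$'s, and Sato's theorem \cite{Sato} pins $Y$ down to $\BP^{2(c-1)}$, $\Q^{2(c-1)}$, or a projection of $\Gr(2,c+1)$ (Theorem \ref{t.Sato}). The bundle structure then yields the Euler-number relation of Proposition \ref{p.Euler} and vanishing of odd Betti numbers of $S$, after which the classification of $S$ is closed by Russo's QEL/VMRT induction (Propositions \ref{p.pic}, \ref{p.3.1}, \ref{p.3.2}), Mukai's classification (Proposition \ref{p.Mukai}), Ionescu--Russo for the non-prime-Fano case, Alzati--Sierra for the Grassmannian sink (Proposition \ref{p.Bir2Gr}), and --- the paper's second new ingredient, entirely absent from your proposal --- the entry-locus intersection analysis of Section \ref{s.entry} (Theorem \ref{t.fiberdim}, Corollary \ref{c.intersect2}), which is what eliminates the residual numerical cases $(\delta,d,n)=(7,15,25)$ and $(6,14,24)$. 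Without a substitute for these steps, your plan reduces to the very classification you yourself flag as ``the main obstacle,'' so as it stands the proof is not complete.
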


The description of the varieties in Theorem \ref{t.intro} as well as a more precise formulation of the result will be given in Section \ref{s.statement}.
 Our proof of Theorem \ref{t.intro}, to be given in Section \ref{s.proof},  uses previous works on this topic
in \cite{AS2} and \cite{R2}. The main strategy is an inductive argument on VMRT (see Proposition \ref{p.pic}) developed by Russo in \cite{R2}.
 There are two  new ingredients in our approach:   the use of natural $\cit^*$-actions on $Z$, which reveals topological relations between the base loci of $\Phi$ and  $\Phi^{-1}$, and  a study of the intersection of entry loci on the base locus of $\Phi$, which exhibits a delicate  structure in the projective geometry of the base locus.    The part on $\cit^*$-action is
presented in Section \ref{s.Y} and the part on the intersection of entry loci is presented in Section \ref{s.entry}.
The use of $\cit^*$-actions on $Z$ is motivated by our previous work \cite{FH} on projective manifolds with nonzero
prolongations. As a matter of fact, we will see in Section \ref{s.prolongation} that a prime Fano manifold $Z$ is the target of a special birational transformation of type $(2,1)$ if and only if it has nonzero prolongation. By this correspondence, we can use
  Theorem \ref{t.intro} to give a new proof (see Theorem \ref{t.Main}) of the main classification result
 of \cite{FH}. This new proof  corrects an error in the classification in
 \cite{FH}, as explained in Remark \ref{r.FH}.

\bigskip
We will work over complex numbers. For simplicity, a nonsingular irreducible variety will be called a manifold.

\bigskip
{\em Acknowledgments:}  We are grateful to Jos\'e Carlos Sierra for explaining to us the examples in Proposition
\ref{p.ex1} and Proposition \ref{p.ex2} and for a careful reading of the draft.  We would like to thank Francesco Russo
for pointing out a mistake in a previous version and for numerous suggestions.
We are grateful to Qifeng Li for discussions on Section \ref{s.entry}.

\section{Statement of the classification of special quadratic manifolds}\label{s.statement}

\begin{definition}\label{d.quadrics}
Let $U$ be a vector space and let $\sigma: \SYM^2 U \to W$ be a surjective linear map to a vector space $W$.
 Let us denote by $W^* \subset \SYM^2 U^*$ the annihilator of ${\rm Ker}(\sigma)$, which is naturally dual to $W$.
  \begin{itemize} \item[(1)] Let $\psi^o: \BP U \dasharrow \BP W$ be the rational map defined by the linear
system $W^* \subset {\rm Sym}^2 U^* \simeq H^0(\BP U, \sO(2))$.
 The scheme-theoretic base locus of $\psi^o$ will be denoted by $B(\sigma)
\subset \BP U$ and the proper image of $\BP U$ under $\psi^o$ will be denoted by $Y(\sigma) \subset \BP W$.
\item[(2)]
Fix a 1-dimensional vector space $T$ with a fixed identification $T = \cit$.
Define a rational map $\phi^o: \BP (T \oplus U) \dasharrow \BP (T \oplus U \oplus W)$
by $$ [t: u] \mapsto  [t^2: tu: \sigma(u,u)] \mbox{ for } t \in T, u \in U.$$ The proper image of $\BP (T \oplus U)$ under $\phi^o$ will be denoted by
$Z(\sigma) \subset \BP (T \oplus U \oplus W)$.
The scheme-theoretic base locus of $\phi^o$ coincides with $B(\sigma) \subset \BP U = \BP (0 \oplus U)$.
\end{itemize}
We will say that $\sigma$ is a {\em special system of quadrics} if (a)  the base locus subscheme $B(\sigma) \subset \BP U$ is  irreducible and nonsingular and (b) the image $Z(\sigma)$ of $\phi^o$ is nonsingular.
A projective manifold $S \subset \BP U$ is called a {\em special quadratic manifold} if $S = B(\sigma)$ for a special system of quadrics $\sigma: \SYM^2 U \to W$.

In this case, the rational map $\psi^o$ comes from a morphism $$\psi: \Bl_S(\BP U) \to Y(\sigma) \subset \BP W$$ where $\Bl_S(\BP U)$ is the
blow-up of $\BP U$ along $S$. We will denote by $F\subset \Bl_S(\BP U)$ its exceptional divisor.
The rational map $\phi^o$ comes from a morphism $$\phi: \Bl_S(\BP (T \oplus U)) \to Z(\sigma) \subset \BP (T \oplus U \oplus W)$$ where $\Bl_S(\BP(T \oplus U))$ is the
blow-up of $\BP (T\oplus U)$ along $S \subset \BP U= \BP (0 \oplus U)$. We will denote by $E\subset \Bl_S(\BP (T \oplus U))$ its exceptional divisor.
We have the
 commuting diagram $$ \begin{array}{ccccccc} F & \subset & \Bl_S(\BP U ) & \stackrel{\psi}{\longrightarrow} & Y(\sigma) & \subset & \BP W \\
 \cap & & \cap & & \cap & & \cap \\
 E & \subset & \Bl_S(\BP (T \oplus U)) & \stackrel{\phi}{\longrightarrow} & Z(\sigma) & \subset & \BP (T \oplus U \oplus W). \end{array} $$
 Note that $\phi$ sends $U \cong \BP (T \oplus U) \setminus \BP U$ isomorphically to $Z(\sigma) \setminus \BP (U \oplus W).$ Thus $Z(\sigma)$ is a rational variety and $\phi^o: \BP (T \oplus U) \dasharrow Z(\sigma)$ is a birational map.
 \end{definition}

It follows  immediately  from the definition that for a
special quadratic manifold $S \subset \pit U$,  the birational map $\phi^o: \pit(T \oplus U) \dasharrow Z(\sigma)$ is a special birational transformation of type $(2,1)$ in the sense of Section \ref{s.intro}. Conversely we have
\begin{proposition}
Let $\Phi: \pit^n \dasharrow Z \subset \pit^N$ be a  special birational transformation of type $(2,1)$ as defined in Section \ref{s.intro}.
Then the base locus $S$ of $\Phi$ is contained in a hyperplane $\pit^{n-1}$ of $\pit^n$, the subvariety
$S \subset \pit^{n-1}$ is a special quadratic manifold and $\Phi$ coincides with the birational map
$\phi^o$ for a special system of quadrics $\sigma: \SYM^2 U \to W$.
\end{proposition}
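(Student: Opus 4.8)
The plan is to extract from the inverse map a distinguished hyperplane containing $S$, to choose homogeneous coordinates on $\BP^N$ adapted to the projection underlying $\Phi^{-1}$, and then to identify $\Phi$ coordinate-by-coordinate with the normal form $\phi^o$. First I would set $V=H^0(\BP^n,\sO(1))^*$, so that $\BP^n=\BP V$ with $\dim V=n+1$, and write $\Phi=[\phi_0:\cdots:\phi_N]$ with $\phi_0,\dots,\phi_N\in\SYM^2V^*$ spanning the $(N+1)$-dimensional linear system $L$ whose base scheme is $S$. Since $\Phi^{-1}$ is given by a linear system belonging to $\sO_Z(1)$, I would first argue that it is the restriction to $Z$ of a linear projection $\BP^N\dashrightarrow\BP^n$; writing its center as $\BP W$ with $\dim W=(N+1)-(n+1)=N-n$ and identifying a complement of $W$ with $V$, so that $\BP^N=\BP(V\oplus W)$ and the projection is $[\xi_V:\xi_W]\mapsto[\xi_V]$, I would split $\Phi=[\Phi_V:\Phi_W]$ accordingly. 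The identity $\Phi^{-1}\circ\Phi=\mathrm{id}_{\BP V}$ then reads $[\Phi_V(x)]=[x]$, and a degree count forces $\Phi_V(x)=h(x)\,x$ for a single nonzero linear form $h\in V^*$; equivalently the $V$-components of $\Phi$ are $hx_0,\dots,hx_n$ for a basis $x_0,\dots,x_n$ of $V^*$, so that $hV^*=\langle hx_0,\dots,hx_n\rangle\subseteq L$.

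Because each $hx_j$ lies in $L$ it vanishes on the base scheme $S$, and since the $x_j$ have no common zero, $h$ must vanish on $S$; hence $S\subset H:=\{h=0\}=\BP^{n-1}$, which is assertion (1). I would then put $U=\Ker(h)\subset V$, choose a one-dimensional complement $T$ so that $V=T\oplus U$ and $h$ becomes the projection onto $T\cong\cit$, giving $H=\BP U\supset S$ and $hV^*=\langle t^2,tu_1,\dots,tu_n\rangle$ for a basis $u_1,\dots,u_n$ of $U^*$. Next, letting $W^*\subset\SYM^2U^*$ be the image of $L$ under the quotient $\SYM^2V^*\to\SYM^2V^*/hV^*\cong\SYM^2U^*$ and $\sigma:\SYM^2U\to W$ the surjection dual to the inclusion $W^*\subset\SYM^2U^*$, I would observe that the kernel $hV^*$ of this quotient already lies in $L$, so every element of $W^*$ lifts into $L$ and therefore $L=hV^*\oplus W^*$. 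As $\dim W^*=(N+1)-(n+1)=N-n$, this is exactly the span of the components $t^2$, $tu_i$, $\sigma(u,u)$ of $\phi^o$; thus after a linear change of coordinates on $\BP^N=\BP(T\oplus U\oplus W)$ the map $\Phi$ coincides with $\phi^o$. Consequently $Z=Z(\sigma)$ and $S=B(\sigma)$, and the hypotheses that $S$ be irreducible and nonsingular and that $Z$ be nonsingular become precisely the two conditions (a) and (b) defining a special system of quadrics, so that $\sigma$ is special and $S$ is a special quadratic manifold, giving (2) and (3).

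The hard part is justifying that $\Phi^{-1}$ is genuinely the restriction of a linear projection of $\BP^N$, that is, that the $n+1$ forms defining it are restrictions of linear forms on $\BP^N$ rather than merely sections of $\sO_Z(1)$, since this is exactly what delivers the inclusion $hV^*\subseteq L$ on which the whole identification rests; this is the content of condition (3) read together with the linear normality of $Z$ in $\BP^N$. The remaining points, namely that $\SYM^2V^*/hV^*$ is canonically $\SYM^2U^*$, the dimension bookkeeping matching $\BP(T\oplus U\oplus W)$ with $\BP^N$, and the reducedness of the base scheme needed to identify $S$ with $B(\sigma)$ scheme-theoretically, are routine.
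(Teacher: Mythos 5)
Your proposal takes a genuinely different route from the paper: the paper simply invokes Proposition 2.3(a) of \cite{ES} to place $S$ in a hyperplane and then asserts the normal form, whereas you reconstruct everything by hand from the composition identity $\Phi^{-1}\circ\Phi=\mathrm{id}$. Granting your step that the inverse is the restriction of a linear projection, the rest is correct: the factorization $\Phi_V=h\cdot(x_0,\dots,x_n)$ by coprimality of the $x_i$, the inclusion $hV^*\subseteq L$, the resulting splitting $L=hV^*\oplus W^*$ with $W^*\subset\SYM^2 U^*$ (since $h\mu+q_U\in L$ and $h\mu\in hV^*\subseteq L$ force $q_U\in L$), and the coordinate match with $\phi^o$ all go through, and conditions (a) and (b) of Definition \ref{d.quadrics} are exactly the hypotheses on $S$ and $Z$.

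The genuine problem is your justification of the crux. You defer the extension of the $n+1$ sections defining $\Phi^{-1}$ to ambient linear forms to ``the linear normality of $Z$ in $\BP^N$.'' Linear normality of $Z$ is not a hypothesis, and it actually \emph{fails} for targets the proposition must cover: in Example \ref{e.projection} the target $Z_L$ is an isomorphic projection of $\Gr(2,d+2)$, hence not linearly normal, yet $\phi^o_L$ is a special birational transformation of type $(2,1)$. Concretely, since $\phi_*\sO_{\Bl_S(\BP^n)}=\sO_Z$, one has $H^0(Z,\sO_Z(1))\cong H^0(\BP^n,\sI_S(2))$, which in those examples is strictly larger than the $(N+1)$-dimensional space of restricted linear forms; so under the weak reading (``any sections of $\sO_Z(1)$'') your inclusion $hV^*\subseteq L$ -- on which both the hyperplane containment and the identification with $\phi^o$ rest -- requires a real argument, and the scheme-theoretic base-locus condition alone will not supply it: for $S=\Q^d\subset\BP^{d+1}$, choosing coordinates with $[0:1:0:\cdots:0]\notin S$, the subsystem $\langle t^2,\,tu_2,\dots,tu_{d+2},\,Q\rangle$ still has base scheme exactly $S$ but defines a $2:1$ map, so birationality or the smoothness of $Z$ must enter. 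The correct resolution is that in the convention of \cite{ES}, \cite{R2} and \cite{AS2}, which the paper follows, ``given by a linear system belonging to $\sO_Z(b)$'' means cut out on $Z$ by degree-$b$ hypersurfaces of $\BP^N$, so the extension is definitional; with that reading your proof is complete (indeed more self-contained than the paper's, which never uses $\Phi^{-1}$ directly), but the appeal to linear normality must be deleted as false. A small improvement: part (1) needs no extension at all, since $\Phi^*s_i\in H^0(\BP^n,\sI_S(2))$ automatically through the resolution, whence $h$ already vanishes on $S$.
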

\begin{proof}
By Proposition 2.3 (a) \cite{ES}, the base locus $S \subset \pit^n$ of $\Phi$ is contained in a hyperplane. Since $S$ is defined by quadratic equations,  $S \subset \pit^{n-1}$ is also defined by quadratic equations.
As $S$ is the base locus of $\Phi$,  the rational map $\Phi$ is given by a linear subspace $W^* \subset H^0(\pit^{n-1}, \mathcal{I}_S(2))$ and the full linear system $H^0(\pit^n, \mathcal{I}_{\pit^{n-1}}(2))$. Then the map
$\phi^o$ associated to $W$ coincides with $\Phi$.
\end{proof}

\begin{example}\label{e.table}
We list some homogeneous examples of  special quadratic manifolds in the next table.  The data in this table can be
 found from Theorem 3.8 in Chapter III of \cite{Zak}. Note that in all these examples, the dimension $a$ of $W$ is  equal to $ h^0(\pit^{n-1}, \mathcal{I}_S(2)),$ namely, we have $W^* = H^0(\pit^{n-1}, \mathcal{I}_S(2))$.
\end{example}

\medskip
\begin{center}

\begin{tabular}{|c||c|c|c|c|}
\hline $S$ & $\qit^{d}$ & $\pit^1 \times \pit^{d-1}$ & $\Gr(2, 5)$ & $\mathbb{S}_5$ \\
\hline $S \subset \pit^{n-1}$ & $\sO(1)$ & Segre & Pl\"ucker & Spinor \\
\hline $Y$ & point & $\Gr(2, d)$ & $\pit^4$ & $\qit^8$ \\
\hline $Y \subset \BP^{a-1}$ & identity & Pl\"ucker & identity & $\sO(1)$ \\
\hline $Z$ & $\qit^{d+2}$ & $\Gr(2, d+2)$ & $\mathbb{S}_5$ &  $\mathbb{OP}^2$ \\
\hline $Z\subset \BP^{n+a}$ & $\sO(1)$ & Pl\"ucker & Spinor & Severi \\
\hline $d= \dim S$ & $d = n-2$ & $d$ & 6 & 10 \\
\hline $n = \dim Z$ & $n= d+2$ & $2d$ & 10 & 16 \\
\hline $m= \dim Y$ & $0$ & $2(d-2)$ & 4 & 8 \\
\hline $a$ & 1 & $d(d-1)$ & 5 & 10 \\ \hline
\end{tabular} \end{center}

\begin{example} \label{e.projection}
Here we give some examples of special birational transformations of type $(2, 1)$ with $W^* \subsetneq H^0(\pit^{n-1}, \mathcal{I}_S(2))$. Recall from Example \ref{e.table} that the special quadratic manifold $S = \pit^1 \times \pit^{d-1} \subset \pit^{2d-1}$ with $d \geq 6$ is associated to the special system of quadrics
$$
\sigma: {\rm Sym}^2 U \to W = H^0(\pit U, \mathcal{I}_S(2))^*.
$$
The corresponding birational transformation is $$\phi^\circ: \pit (T \oplus U) \dasharrow Z:={\rm Gr}(2, d+2) \subset \pit(T \oplus U \oplus W)$$ with $Y = {\rm Gr}(2, d) \subset \pit W = \pit (\wedge^2 \cit^d)$.
Note that ${\rm Sec}(Z) \cap \pit W = {\rm Sec}(Y)$. Take any linear subspace
$L \subset W$ such that $\pit L \cap {\rm Sec}(Y) = \emptyset$, then $\pit L \cap {\rm Sec}(Z) =\emptyset$.
Let $$p_L: \pit(T \oplus U \oplus W) \dasharrow \pit(T \oplus U \oplus W/L)$$ be the projection from $\pit L$. Then $p_L$ sends  $Z$ (resp. $Y$) isomorphically to a subvariety $$Z_L \subset  \pit(T \oplus U \oplus W/L)  \mbox{ (resp.   } Y_L \subset \pit(W/L)). $$ The map $$\phi_L^\circ:=p_L \circ \phi^\circ: \pit(T \oplus U) \dasharrow Z_L$$ is a special birational transformation of type $(2, 1)$ associated to the special system of quadrics
$$
\sigma_L: {\rm Sym}^2 U \to W/L.
$$
with $Y(\sigma_L) = Y_L \subset \pit(W/L)$ and  $B(\sigma_L) \subset \pit U$ being $\pit^1 \times \pit^{d-1} \subset \pit^{2d-1}$.
\end{example}

To discuss non-homogeneous examples, it is convenient to introduce  the following notion.

\begin{definition}\label{d.dual}
Let $Z \subset \BP V$ be a nondegenerate submanifold and let $W \subset V$ be a subspace such that $\BP W \subset Z$.
Denote by $(V/W)^* \subset V^*$  the set of linear functionals on $V$ annihilating $W$ such that $\BP (V/W)^*$
parameterizes the set of hyperplanes in $\BP V$ containing $\BP W$. Then a general member of $\BP (V/W)^*$ is
called a $\BP W$-{\em general} hyperplane in $\BP V$. More generally, a linear subspace of codimension-$s$ in $\BP V$ is
$\BP W$-{\em general} if it is defined by a general member of $\Gr(s, (V/W)^*)$, i.e., it is general among
subspaces of codimension-$s$ containing $\BP W$.  \end{definition}

 \begin{proposition}\label{p.hyperplane}
 Let $S = B(\sigma) \subset \pit U$ be a special quadratic manifold defined by $\sigma: \SYM^2 U \to W$.
 Assume that  $\dim S \geq 2$,  $\dim \BP U > \dim Y(\sigma)$ and  the intersection of $Z(\sigma)$ with a $\BP W$-general hyperplane
of $\BP (T \oplus U \oplus W)$ is nonsingular. Then for  a general subspace $U' \subset U$ of  codimension 1,
 the restriction $\sigma': \SYM^2 U' \to W$ of $\sigma$ is  a special system of quadrics such that
 \begin{itemize} \item[(i)] the base locus scheme $B(\sigma')
 \subset \BP U'$ coincides with the hyperplane section $S' = S \cap \BP U' \subset \BP U$ of $S = B(\sigma)$;
 \item[(ii)] $Y(\sigma') = Y(\sigma) \subset \BP W$; and
   \item[(iii)]  $Z(\sigma') = Z(\sigma) \cap \BP (T \oplus U' \oplus W) .$  \end{itemize} \end{proposition}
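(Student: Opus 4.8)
\emph{Plan.} The plan is to show that a general codimension-one subspace $U'=\Ker\ell$, with $\ell\in U^*$ general, satisfies all three conclusions, and to treat them together. Write $W^*=\langle q\rangle\subset\SYM^2 U^*$ for the quadrics cutting out $S=B(\sigma)$, and let $H_\ell\subset\BP(T\oplus U\oplus W)$ be the hyperplane $\{\ell=0\}$, where $\ell$ is extended by zero on $T$ and $W$; thus $H_\ell=\BP(T\oplus U'\oplus W)$. The identities $\sigma'(u',u')=\sigma(u',u')$ and $\phi^o|_{\BP(T\oplus U')}=[t^2:tu':\sigma(u',u')]$ immediately give the inclusion $Z(\sigma')\subseteq Z(\sigma)\cap H_\ell$ and the scheme-theoretic equality $B(\sigma')=V(\{q|_{U'}\})=S\cap\BP U'$. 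Since $\dim S\ge 2$ and $S$ is smooth and irreducible, Bertini shows that $S\cap\BP U'$ is smooth and irreducible for general $\ell$; this is conclusion (i) and condition (a) for $\sigma'$.

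For (ii) I would first exploit the hypothesis $\dim\BP U>\dim Y(\sigma)$: the morphism $\psi\colon\Bl_S(\BP U)\to Y(\sigma)$ then has fibres of dimension $\ge 1$, so a general hyperplane $\BP U'$ still dominates $Y(\sigma)$ under $\psi^o$, whence the image of $\psi^o|_{\BP U'}$ is all of $Y(\sigma)$, which is nondegenerate in $\BP W$. Consequently no nonzero member of $W^*$ can restrict to zero on $\BP U'$, since such a restriction would force the image into a hyperplane of $\BP W$; thus $W^*\to\SYM^2(U')^*$ is injective, i.e. $\sigma'\colon\SYM^2 U'\to W$ is surjective and $Y(\sigma')=Y(\sigma)$, which is (ii).

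It remains to prove that $Z(\sigma)\cap H_\ell$ is nonsingular. Granting this, $Z(\sigma)\cap H_\ell$ is a connected (Lefschetz) smooth hyperplane section of the irreducible variety $Z(\sigma)$ of dimension $\dim U$, hence irreducible of dimension $\dim U-1=\dim Z(\sigma')$, so the inclusion $Z(\sigma')\subseteq Z(\sigma)\cap H_\ell$ is an equality, giving (iii) and condition (b) at one stroke. The linear system $\{H_\ell\}_{\ell\in U^*}$ on $Z(\sigma)$ has base locus $Z(\sigma)\cap\BP(T\oplus W)=Y(\sigma)\cup\{[T]\}$, using that $\phi^o$ is an isomorphism off $\BP U$. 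Away from this base locus Bertini yields smoothness for general $\ell$; near the point $[T]=[1:0:0]$ the variety $Z(\sigma)$ is the graph $\{(u,\sigma(u,u))\}$, on which $H_\ell$ restricts to the smooth hyperplane $\{\ell(u)=0\}$. So the only remaining issue, and the hard part, is smoothness along $Y(\sigma)$.

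Here the key observation is that, because $\phi^o$ uses $t^2$ rather than $t$, the $T$-coordinate vanishes to second order along $Y(\sigma)$ on $Z(\sigma)$: any curve in $Z(\sigma)$ through a point $p\in Y(\sigma)\subset\BP W$ has $t=0$ at $p$ in the source, so in a chart of $\BP W$ the $T$-coordinate equals $t^2/[\sigma(u,u)]$ and contributes nothing to first order, giving $T_pZ(\sigma)\subseteq\{t=0\}$ for every $p\in Y(\sigma)$. Now a $\BP W$-general hyperplane has the form $\{a t+\ell=0\}$, and since $t$ annihilates $T_pZ(\sigma)$ the form $a t+\ell$ and the form $\ell$ have the same restriction to $T_pZ(\sigma)$; hence $\{a t+\ell=0\}$ is transverse to $Z(\sigma)$ at $p$ if and only if $H_\ell$ is. By assumption the intersection of $Z(\sigma)$ with a $\BP W$-general hyperplane is nonsingular, hence transverse along $Y(\sigma)$; since the transversality condition depends on $\ell$ alone, the bad locus $\{\ell:T_pZ(\sigma)\subseteq H_\ell\text{ for some }p\in Y(\sigma)\}$ is a proper closed subset of $\BP U^*$, so a general $\ell$ makes $H_\ell$ transverse to $Z(\sigma)$ along $Y(\sigma)$. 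Combined with the previous paragraph this proves $Z(\sigma)\cap H_\ell$ smooth and completes the argument. The main obstacle is precisely this transfer along $Y(\sigma)$: one must extract from a hypothesis stated for $\BP W$-general hyperplanes (which carry a nonzero $T$-component) the transversality of the special hyperplanes $H_\ell$, and the device making this possible is the second-order vanishing of the $T$-coordinate forced by the square $t^2$ in the definition of $\phi^o$.
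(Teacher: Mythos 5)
Your proposal is correct, but at the one nontrivial step --- nonsingularity of $Z(\sigma)\cap\BP(T\oplus U'\oplus W)$ --- it takes a genuinely different route from the paper. The paper transfers the hypothesis by a global symmetry: for $v\in U$ the linear automorphism $g_v\colon [t:u:w]\mapsto [t:u+tv:w+2\sigma(u,v)+t\sigma(v,v)]$ preserves $Z(\sigma)$, and for general $v$ it carries $\BP(T\oplus U'\oplus W)=\{\ell=0\}$ to the $\BP W$-general hyperplane $\{\ell-\ell(v)\,t=0\}$, so the section is nonsingular at one stroke, with no stratification needed; the remaining conclusions are then read off from generic conditions on $U'$ exactly as in your proof (surjectivity of $\sigma'$ via nondegeneracy of $Y$, dominance of $\psi^o|_{\BP U'}$, and the dimension count giving $Z(\sigma')=Z(\sigma)\cap\BP(T\oplus U'\oplus W)$). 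You instead transfer the hypothesis infinitesimally: Bertini off the base locus, a direct check at $[T]$, and along $Y$ the tangency $\mathbf{T}_pZ(\sigma)\subset\BP(U\oplus W)$, which makes transversality of $\{at+\ell=0\}$ and of $\{\ell=0\}$ equivalent at every $p\in Y$. Your argument is in effect the infinitesimal shadow of the paper's: the maps $g_v$ are exactly the translations that interchange the two families of hyperplanes. What your route buys is the extra geometric information that $\{t=0\}$ is tangent to $Z(\sigma)$ along $Y(\sigma)$ and that the base locus of $\{H_\ell\}$ on $Z(\sigma)$ is precisely $Y(\sigma)\cup\{[T]\}$; what it costs is two steps you state too quickly. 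First, $Z(\sigma)\cap\BP W=Y(\sigma)$ does not follow only from $\phi^o$ being an isomorphism off $\BP U$: you must also check that the image of the exceptional divisor $E$ meets $\BP W$ only inside $\psi(F)=Y(\sigma)$ (a point of $E$ over $s\in S$ whose normal direction has nonzero $T$-component maps to a point whose $u$-component is proportional to $u_s\neq 0$, hence lies off $\BP W$). Second, ``contributes nothing to first order'' hides a real comparison of vanishing orders: along a lifted arc normalized so that $u(0)=u_s\neq 0$, with $a=\mathrm{ord}\,t$ and $b=\mathrm{ord}\,w_0(\sigma(u,u))$ in a chart $w_0=1$ at $p$, the vanishing of the $u$-chart coordinates $tu/w_0(\sigma(u,u))$ forces $a\geq b+1$, whence the $t$-coordinate $t^2/w_0(\sigma(u,u))$ has order $2a-b\geq a+1\geq 2$; without the constraint coming from the $u$-coordinates, $t^2$ divided by a deeply vanishing denominator could a priori vanish only to first order. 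Both points are routine to fill in, so the proof goes through, at the price of being longer and more computational than the paper's $g_v$-trick.
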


 \begin{proof}
Firstly, we claim that for a general $U' \subset U$, the hyperplane section $Z(\sigma) \cap \BP (T \oplus U' \oplus W)$ is nonsingular. This is a consequence of the assumption  that a $\BP W$-general hyperplane section of $Z(\sigma) \subset \BP (T \oplus U \oplus W)$ is nonsingular. To see this,  associate to each vector $v \in U$ the linear automorphism $g_v$ of $\BP (T \oplus U \oplus W)$ defined by
 $$g_v: [t: u: w] \mapsto [t: u + tv: w + 2 \sigma(u,v) +  t\sigma(v, v)].$$  For a general choice of $v \in U$ and $U' \subset U$,
 the automorphism $g_v$ sends $\BP (T \oplus U' \oplus W)$ to a $\BP W$-general hyperplane of $\BP (T \oplus U \oplus W)$.
The $g_v$-image of  a general point $[1: u : \sigma(u,u)] \in Z(\sigma)$ is
\begin{eqnarray*} g_v([1:u: \sigma(u,u)]) & = & [1: u+v: \sigma(u,u) + 2\sigma(u,v) + \sigma(v,v)] \\ &=& [1: u+v: \sigma(u+v, u+v)] \in Z(\sigma).\end{eqnarray*}
Thus $g_v$ preserves $Z(\sigma)$. Consequently, for a general choice of $v$, the automorphism $g_v$ sends the hyperplane section $Z(\sigma) \cap \BP (T \oplus U' \oplus W)$ to a $\BP W$-general hyperplane section of $Z(\sigma)$. This proves the claim.

 Now we can choose a general subspace  $U' \subset U$ of codimension 1 such that
 \begin{itemize} \item[(1)]   $\dim \BP U' \geq \dim Y(\sigma)$ and $\dim U - \dim U' < \dim S$;
 \item[(2)] the (scheme-theoretic) linear section $S':=S \cap \BP U'$  is  nonsingular; and
 \item[(3)] the restriction $\psi^o|_{\BP U'}$ is dominant over $Y(\sigma)$. \end{itemize}
 Then $\sigma' : \SYM^2 U' \to W$ is surjective, and $S' = B(\sigma') \subset \pit U'$ is a special quadratic manifold with $Y(\sigma') = Y(\sigma)$, and
 $Z(\sigma') = Z(\sigma) \cap \BP (T \oplus U' \oplus W)$.  \end{proof}

To check the condition in Proposition \ref{p.hyperplane}, we need the following.

\begin{proposition}\label{p.dual}
In the setting of Definition \ref{d.dual}, let $Z \subset \BP V$ be a nondegenerate submanifold containing
$\BP W$.
Define $Z^*_W \subset \BP V^*$ by
$$Z^*_W = \{ [H] \in \BP (V/W)^* \ \mid \ H \cap Z \mbox{ is singular at a point of }
\BP W \}.$$ Assume that $\dim Z^*_W < \dim \BP (V/W)^*.$ Then for a $\BP W$-general hyperplane $[H] \in \BP (V/W)^*$,
the intersection $\sZ := Z \cap H$ is a nonsingular subvariety containing $\BP W$ and the submanifold $\sZ \subset H$
satisfies $\dim \sZ^*_W \leq \dim Z^*_W.$ \end{proposition}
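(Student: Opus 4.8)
The plan is to separate the statement into its two assertions: first that a general section $\sZ=Z\cap H$ is a nonsingular variety containing $\BP W$, and then the dimension bound $\dim\sZ^*_W\le\dim Z^*_W$. For the nonsingularity I would control $Z\cap H$ along $\BP W$ and away from $\BP W$ by two different mechanisms. The linear system on $Z$ cut out by $\BP(V/W)^*$ (hyperplanes through $\BP W$) has base locus exactly $Z\cap\BP W=\BP W$, so, $Z$ being a manifold, Bertini's theorem makes a general member $Z\cap H$ smooth away from $\BP W$. Along $\BP W$, smoothness is precisely the assertion that $[H]\notin Z^*_W$, which holds for general $[H]$ by the hypothesis $\dim Z^*_W<\dim\BP(V/W)^*$. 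Combining the two, $\sZ=Z\cap H$ is nonsingular, and it contains $\BP W$ since $\BP W\subset Z\cap H$. For $\dim Z\ge 2$ it is moreover connected and nondegenerate in $H$ by the Lefschetz-type theorems, so that the construction of $\sZ^*_W$ makes sense in the setting of Definition \ref{d.dual}.

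For the dimension bound I would first rewrite both loci in tangential terms. Writing $\widehat{\mathbb{T}}_p\subset V$ for the affine tangent space of $Z$ at $p\in\BP W$, the hyperplane section $Z\cap\BP\tilde V$ is singular at $p$ exactly when $\BP\tilde V$ is tangent to $Z$ at $p$, i.e. $\widehat{\mathbb{T}}_p\subset\tilde V$; hence $Z^*_W=\bigcup_{p\in\BP W}B_p$ with $B_p=\{[\tilde V]:\widehat{\mathbb{T}}_p\subset\tilde V\}$. The elementary but crucial observation is that, since $\BP W\subset Z$ is linear, $W\subset\widehat{\mathbb{T}}_p$ for every $p\in\BP W$; thus the tangency condition already forces $\tilde V\supset W$, and $B_p$ is simply the linear subspace of $\BP(V/W)^*$ defined by $\tilde V\supset\widehat{\mathbb{T}}_p$. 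The same description holds on $H=\BP V'$: using $\widehat{\mathbb{T}}_p\sZ=\widehat{\mathbb{T}}_p\cap V'$ one gets $\sZ^*_W=\bigcup_p\tilde B_p$ with $\tilde B_p=\{[V'']:\widehat{\mathbb{T}}_p\cap V'\subset V''\}$.

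Next I would exploit that the inclusion $V'\hookrightarrow V$ dualizes to a surjection $(V/W)^*\to(V'/W)^*$ with one-dimensional kernel, hence a linear projection $\pi:\BP(V/W)^*\dashrightarrow\BP(V'/W)^*$ whose unique center is the single point $[H]$ and which sends $\tilde V\mapsto\tilde V\cap V'$. A short linear-algebra check then shows $\pi(B_p)=\tilde B_p$ for each $p$; indeed $[H]\notin B_p$, equivalently $\widehat{\mathbb{T}}_p\not\subset V'$, which holds precisely because $\sZ$ is smooth at $p$, so that $\widehat{\mathbb{T}}_p\cap V'$ has codimension one in $\widehat{\mathbb{T}}_p$. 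Taking the union over $p$ yields $\sZ^*_W=\pi(Z^*_W)$. Finally, the nonsingularity established in the first part gives $[H]\notin Z^*_W$ (a $\BP W$-general section is not singular along $\BP W$), so the center of $\pi$ misses the closed set $Z^*_W$ and $\pi$ restricts to a morphism there; a morphism cannot raise dimension, whence $\dim\sZ^*_W=\dim\pi(Z^*_W)\le\dim Z^*_W$.

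The step I expect to carry the real content is recognizing that everything is governed by the single linear projection from the point $[H]$, and that its center avoids $Z^*_W$ exactly by virtue of the nonsingularity proved in the first part; once this is seen, the inequality is automatic and the remaining verifications are routine dimension counts. The only point requiring extra care is the degenerate range of small $\dim Z$ (connectedness and nondegeneracy of $\sZ$), which I would either dispatch separately or note lies outside the regime in which the proposition is actually applied.
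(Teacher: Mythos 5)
Your proposal is correct and takes essentially the same route as the paper: nonsingularity is obtained by Bertini away from $\BP W$ combined with choosing $[H]$ outside $Z^*_W$ via the dimension hypothesis, and the bound $\dim \sZ^*_W \leq \dim Z^*_W$ comes from the linear projection from $[H]$, where your hyperplane $V'' + \widehat{\mathbb{T}}_p$ is exactly the paper's span $\widetilde{L} = \langle L, T_xZ \rangle$. The only (harmless) difference is that you establish the equality $\sZ^*_W = \pi(Z^*_W)$, whereas the paper only records the inclusion $\sZ^*_W \subseteq \pi(Z^*_W)$, which already suffices.
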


\begin{proof}
The intersection of $Z$ with a $\BP W$-general $H$  is nonsingular
outside $\BP W$ by Bertini. But we can choose $[H] \in \BP (V/W)^*$ outside  $Z^*_W$ by
the dimension condition $\dim Z^*_W < \dim \BP (V/W)^*.$ Thus $\sZ = Z \cap H$ is nonsingular and it contains $\BP W$.
It remains to check  $\dim \sZ^*_W \leq \dim Z^*_W.$

By definition, $$\sZ^*_W = \{ [L] \in \BP (H/W)^* \ \mid \ L \cap \sZ \mbox{ is singular at a point of }
\BP W\}.$$ Given an element $[L]$ of $\sZ^*_W$,    then $L \cap \sZ$ is singular at some point say $x \in \BP W$.
Take the hyperplane $\widetilde{L} \subset V $ to be the linear span of $L$ and $T_xZ$, then
$L = \widetilde{L} \cap H$ and $\widetilde{L} \cap Z$ is singular at $x \in \BP W$. This shows that the image of $Z_W^*$ under the projection
$\BP(V/W)^* \setminus \{H\} \to \BP (\hat{H}/W)^*$  contains $\sZ_W^*$, where
$\hat{H}$ is the hyperplane in $V$ corresponding to $H$.
This implies that
 $\dim \sZ^*_W \leq \dim Z^*_W.$ \end{proof}

By applying Proposition \ref{p.dual} repeatedly,
we have the following.

\begin{corollary}\label{c.dual}
In Proposition \ref{p.dual}, let $s$ be a positive integer satisfying $$s < \dim Z \mbox{ and } \dim Z^*_W \leq \dim \BP (V/W)^* -s.$$
Then a $\BP W$-general linear section of $Z$ with codimension $s$ is nonsingular. \end{corollary}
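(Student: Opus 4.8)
The plan is to prove the statement by induction on $s$, iterating Proposition \ref{p.dual} one hyperplane at a time while tracking how the two numerical hypotheses behave. For the base case $s=1$, the assumption $\dim Z^*_W \le \dim \BP (V/W)^* - 1 < \dim \BP (V/W)^*$ is precisely what is needed to invoke Proposition \ref{p.dual}, which then produces a nonsingular $\BP W$-general hyperplane section; note that $s<\dim Z$ forces $\dim Z \ge 2$, so this section is a genuine positive-dimensional submanifold.

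For the inductive step, assume the corollary for the value $s-1$, applied to arbitrary data. Given $(Z,V,W)$ and $s$ with $s<\dim Z$ and $\dim Z^*_W \le \dim\BP (V/W)^*-s$, observe first that $\dim Z^*_W < \dim \BP (V/W)^*$ since $s\ge 1$, so Proposition \ref{p.dual} applies: a $\BP W$-general hyperplane $[H]$ yields a nonsingular $\sZ := Z\cap H$ containing $\BP W$ with $\dim\sZ^*_W \le \dim Z^*_W$. Writing $\hat H\subset V$ for the corresponding hyperplane, one has $\dim\BP (\hat H/W)^* = \dim\BP (V/W)^*-1$ and $\dim\sZ = \dim Z - 1$, so both hypotheses transfer verbatim to the data $(\sZ,\hat H,W)$ with parameter $s-1$: indeed $s-1<\dim\sZ$ is equivalent to $s<\dim Z$, and $\dim\sZ^*_W \le \dim Z^*_W \le \dim\BP (V/W)^*-s = \dim\BP (\hat H/W)^*-(s-1)$, so the inequality is preserved exactly. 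By the induction hypothesis, a $\BP W$-general codimension-$(s-1)$ linear section of $\sZ$ inside $H$ is nonsingular. (Here one uses that the general such $H$ keeps $\sZ$ nondegenerate in $H$, a standard consequence of $\dim Z\ge 2$ together with Bertini irreducibility; equivalently one may pass to $\langle\sZ\rangle$, which does not affect the hyperplanes through $\BP W$ entering the argument.)

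It remains to convert ``general hyperplane, then general codimension-$(s-1)$ subsection'' into ``general codimension-$s$ section'', and this genericity bookkeeping is the one point that genuinely requires care. Codimension-$s$ subspaces of $\BP V$ containing $\BP W$ are parameterized by $\Gr(s,(V/W)^*)$, and for a hyperplane $H=\BP (\ker f)$ through $\BP W$ there is a canonical identification $(\hat H/W)^* \cong (V/W)^*/\langle f\rangle$; thus choosing $[H]$, i.e.\ a line $\langle f\rangle$, and then a codimension-$(s-1)$ section of $\sZ$, i.e.\ an $(s-1)$-dimensional subspace of $(V/W)^*/\langle f\rangle$, amounts to choosing an $s$-dimensional subspace $A\subset(V/W)^*$ together with a line $\langle f\rangle\subset A$. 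The incidence correspondence of such pairs dominates $\Gr(s,(V/W)^*)$, so the section produced by the inductive step is, for general $[H]$ and general subsection, an entirely general codimension-$s$ $\BP W$-general section $Z\cap L_A$. Since nonsingularity of $Z\cap L_A$ is an open condition on $A\in\Gr(s,(V/W)^*)$, exhibiting a single nonsingular member forces the nonsingular locus to be dense; hence a $\BP W$-general codimension-$s$ linear section of $Z$ is nonsingular, completing the induction.
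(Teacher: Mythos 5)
Your proof is correct and follows essentially the same route as the paper, whose entire proof is the one line ``by applying Proposition~\ref{p.dual} repeatedly'': your induction on $s$, with the observation that $\dim\sZ^*_W\le\dim Z^*_W$ while $\dim\BP(V/W)^*$ drops by exactly one per step, is just a careful writing-out of that iteration. The incidence-correspondence bookkeeping identifying ``general hyperplane, then general codimension-$(s-1)$ subsection'' with a general point of $\Gr(s,(V/W)^*)$ is detail the paper leaves implicit, not a different method.
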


Let us recall the following results from Proposition 2.19 and Remark 2.20 in Chapter III of \cite{Zak}.

\begin{proposition}\label{p.Zak}
(i) Let $Y=\BP W \subset Z \subset \BP V$ be  $\BP^2 \subset \Gr(2,5) \subset \BP^9$ from $S = \BP^1 \times \BP^2$ of Example \ref{e.table}.
Then $Z^*_W$ is isomorphic to a cone over $S$. In particular, $\dim \BP (V/W)^*-\dim Z^*_W = 6-4 =2.$

(ii) Let $Y=\BP W \subset Z \subset \BP V$ be  $\BP^4 \subset \mathbb{S}_5 \subset \BP^{15}$ from $S = \Gr(2,5)$ of Example \ref{e.table}.
Then $Z^*_W$ is isomorphic to a cone over $S$. In particular, $\dim \BP (V/W)^*-\dim Z^*_W = 10-7 =3.$
\end{proposition}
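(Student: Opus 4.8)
The plan is to realize both configurations in explicit exterior-algebra (respectively spinor) coordinates and reduce the tangency condition defining $Z^*_W$ to a rank condition on an alternating form on $\cit^5$, which exhibits $Z^*_W$ as a cone over $S$. The common starting point is that, since $\BP W \subset Z$ is a \emph{linear} subspace, for every $x \in \BP W$ one has $W \subseteq \hat{T}_x Z$; hence $[H] \in \BP(V/W)^*$ lies in $Z^*_W$ exactly when the affine hyperplane $\hat H = \ker \xi$ (with $\xi$ the defining functional) contains $\hat{T}_x Z$ for some $x \in \BP W$. So I must (1) identify $\BP W$ inside $Z$, (2) compute $\hat T_x Z$ for $x \in \BP W$, (3) translate $\hat H \supseteq \hat T_x Z$ into a condition on $\xi$, and (4) recognize the resulting locus as a cone over $S$. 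In both cases step (4) will rest on the textbook fact that a bounded-rank locus of alternating forms is a cone over a Segre or Plücker variety.

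For (i) take $V = \wedge^2 \cit^5$, $Z = \Gr(2,5)$ in its Plücker embedding, and fix a $3$-dimensional $\Pi \subset \cit^5$ so that $\BP W = \Gr(2,\Pi) = \BP(\wedge^2\Pi) \cong \BP^2$ is the linear $\BP^2$ arising from the construction (for $d=3$ one has $Y(\sigma) = \Gr(2,3) = \BP W$). At $x = [v\wedge w]$ with $\Lambda := \langle v,w\rangle \subset \Pi$ one has $\hat T_x Z = \Lambda\wedge\cit^5$. Viewing $\xi \in V^*$ as an alternating form $\gamma$ on $\cit^5$, membership $\xi \in (V/W)^*$ reads $\gamma|_\Pi = 0$, and $\hat H \supseteq \hat T_x Z$ reads $\Lambda \subseteq \mathrm{rad}(\gamma)$; thus $[\gamma] \in Z^*_W$ iff $\dim(\Pi \cap \mathrm{rad}\,\gamma) \geq 2$. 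Writing the matrix of $\gamma$ in a basis adapted to $\Pi = \langle e_1,e_2,e_3\rangle$ as $\left(\begin{smallmatrix} 0 & B \\ -B^{T} & C\end{smallmatrix}\right)$ with $B \in \Mat_{3\times 2}$ and $C$ the single antisymmetric entry, the intersection $\Pi\cap\mathrm{rad}\,\gamma$ equals $\ker B^{T}$, so the condition becomes $\mathrm{rank}(B) \leq 1$. Hence $Z^*_W$ is the cone, with one-point vertex $\{B=0\}$, over the Segre variety $\BP^2 \times \BP^1 = S$; its dimension is $3+1 = 4$, giving $\dim\BP(V/W)^* - \dim Z^*_W = 6 - 4 = 2$.

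For (ii) realize the half-spin module as $V = \Delta^+ = \cit \oplus \wedge^2\cit^5 \oplus \wedge^4\cit^5$, take $Z = \mathbb{S}_5$, and take $\BP W = \BP(\wedge^4\cit^5) \cong \BP^4$ (every $4$-vector is decomposable, hence a pure even spinor, and this is the $W$-summand of the construction). The crux is the tangent computation at $x = [\eta]$, where $\eta$ is decomposable with associated $4$-plane $\Pi_\eta \subset \cit^5$: using the graded action $\fso_{10} = \wedge^2(\cit^5)^* \oplus \fgl_5 \oplus \wedge^2\cit^5$ on $\Delta^+$ (contraction, derivation, wedge), the wedge part kills $\eta$, the $\fgl_5$ part sweeps out $\wedge^4\cit^5$, and contraction by $\wedge^2(\cit^5)^*$ produces $\wedge^2\Pi_\eta$, so that $\hat T_x \mathbb{S}_5 = \wedge^2\Pi_\eta \oplus \wedge^4\cit^5$ (dimension $6+5 = 11$). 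Writing $\xi = (c,\gamma) \in (V/W)^* = \cit \oplus (\wedge^2\cit^5)^*$ (membership in $(V/W)^*$ forces the $\wedge^4$-component of $\xi$ to vanish), the tangency condition collapses to $\gamma|_{\Pi_\eta} = 0$, i.e.\ $\Pi_\eta$ is a $4$-dimensional $\gamma$-isotropic subspace; such a subspace exists iff $\mathrm{rank}(\gamma) \leq 2$. Thus $Z^*_W$ is the cone, with one-point vertex $\{\gamma = 0\}$, over the Plücker $\Gr(2,5) = S$ inside $\BP(\wedge^2\cit^5)^*$; its dimension is $6+1 = 7$, giving $10 - 7 = 3$.

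The \textbf{main obstacle} is step (2) in case (ii): pinning down $\hat T_x\mathbb{S}_5$ at a point of $\BP(\wedge^4\cit^5)$, which lies far from the highest-weight point $[1:0:0]$ and so is invisible in the naive $[\,1:B:\tfrac12 B\wedge B\,]$ chart. This forces one to set up the Clifford/$\fso_{10}$-action on $\Delta^+$ correctly and to verify that $\BP W$ is exactly the $\wedge^4$-summand. Once the tangent space is in hand, both parts reduce to the cone-over-Segre (rank $\leq 1$ for $3\times 2$ matrices) and cone-over-Plücker (rank $\leq 2$ for $2$-forms on $\cit^5$) identifications, recovering the values stated in Proposition~\ref{p.Zak} and in Zak's Chapter III.
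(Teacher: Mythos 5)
Your proof is correct, but it takes a genuinely different route from the paper, which offers no proof of this proposition at all: it simply recalls the statement from Proposition 2.19 and Remark 2.20 in Chapter III of Zak's book, where the tangency loci of these configurations are treated uniformly within the theory of Severi varieties (the maximal linear subspaces on $Z$ being parametrized by the dual variety, etc.). Your computation is a self-contained substitute, and I checked its pivotal steps: the identification of $\BP W$ is right in both cases --- in (i) the $W$-summand is $\wedge^2\cit^3$, so $\BP W$ is the plane $\Gr(2,\Pi)$ of lines in a fixed plane and not a plane of the other family (this matters, and your parenthetical $Y(\sigma)=\Gr(2,3)=\BP W$ correctly pins it down), while in (ii) the chart $[t^2:tB:\tfrac12 B\wedge B]$ shows $W=\wedge^4\cit^5$ and decomposable $4$-vectors are indeed pure even spinors; the tangent computation $\hat T_{[\eta]}\mathbb{S}_5=\wedge^2\Pi_\eta\oplus\wedge^4\cit^5$ via the graded $\fso_{10}$-action is correct (the half-trace twist in the $\fgl_5$-action only moves vectors within $\wedge^4\cit^5$, so it does not affect the span, and the dimension count $6+5=11=\dim\mathbb{S}_5+1$ confirms it); and the two rank translations are exact equivalences, not just generic ones ($\dim(\Pi\cap\mathrm{rad}\,\gamma)\geq 2$ iff $\mathrm{rank}\,B\leq 1$, and a $4$-dimensional $\gamma$-isotropic subspace of $\cit^5$ exists iff $\mathrm{rank}\,\gamma\leq 2$), so both loci are closed and no closure subtlety arises in matching the definition of $Z^*_W$. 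The resulting cones over $\BP^1\times\BP^2$ and over $\Gr(2,(\cit^5)^*)\cong\Gr(2,5)$ with one-point vertices give dimensions $4$ and $7$, exactly as stated. What the two approaches buy: Zak's argument is uniform across the Severi series and conceptually identifies $Z^*_W$ inside the dual variety, whereas your linearization is elementary, makes the cone structure and its vertex completely explicit, and directly furnishes the dimension gaps $2$ and $3$ that the paper feeds into Proposition \ref{p.dual} and Corollary \ref{c.dual}.
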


\begin{lemma} \label{l.equations}
Recall that a subvariety $S \subset \pit^{n-1}$ is  {\em arithmetically Cohen-Macaulay} if $H^i(\pit^{n-1}, \mathcal{I}_S(l))=0$ for all $l$ and for all $0 < i < \dim(S)+1$.
Let $S \subset \pit^{n-1}$ be either the Segre embedding $\pit^1 \times \pit^2 \subset \pit^5$ or
the Pl\"ucker embedding ${\rm Gr}(2,5) \subset \pit^9$. Let $S' \subset \pit^{n-s-1}$ be any nonsingular linear section of $S \subset \pit^{n-1}$ with  codimension $s < \dim S$.  Then $S' \subset \pit^{n-s-1}$  is arithmetically Cohen-Macaulay and  $H^0(\pit^{n-1},\mathcal{I}_S(2)) = H^0( \pit^{n-s-1}, \mathcal{I}_{S'}(2))$.
\end{lemma}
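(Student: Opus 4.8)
The plan is to argue by induction on the codimension $s$, peeling off one hyperplane at a time, so that the whole statement follows from a single hyperplane-section computation together with the classical fact that the two base varieties are arithmetically Cohen-Macaulay.

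First I would record the base case. Both $S=\pit^1\times\pit^2\subset\pit^5$ and $S=\Gr(2,5)\subset\pit^9$ are nondegenerate and arithmetically Cohen-Macaulay: the Segre threefold is the determinantal variety of $2\times 3$ matrices of rank $\le 1$, cut out by its $2\times 2$ minors, and such generic determinantal varieties are classically Cohen-Macaulay; the Grassmannian has a Cohen-Macaulay homogeneous coordinate ring by the standard-monomial (straightening-law) description of its Pl\"ucker algebra. In particular $H^0(\pit^{n-1},\mathcal{I}_S(1))=0$ and $H^i(\pit^{n-1},\mathcal{I}_S(l))=0$ for all $l$ and all $0<i<\dim S+1$.

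Next comes the inductive step. Write $\pit:=\pit^{n-1}$ and let $H=\{\ell=0\}$ be one of the hyperplanes, with $S_1:=S\cap H\subset H\cong\pit^{n-2}$. Provided $\ell$ is a non-zero-divisor on $\mathcal{O}_S$ (equivalently $H$ contains no component of $S$), the snake lemma applied to the two Koszul sequences of $\ell$ on $\mathcal{O}_\pit$ and on $\mathcal{O}_S$ yields the hyperplane-section sequence
\[
0\to\mathcal{I}_{S/\pit}(l-1)\xrightarrow{\ \cdot\ell\ }\mathcal{I}_{S/\pit}(l)\to\mathcal{I}_{S_1/H}(l)\to 0 .
\]
Taking cohomology, for $0<i<\dim S$ the groups $H^i(\mathcal{I}_{S/\pit}(l))$ and $H^{i+1}(\mathcal{I}_{S/\pit}(l-1))$ both vanish by the aCM hypothesis on $S$, so $H^i(H,\mathcal{I}_{S_1/H}(l))=0$ for $0<i<\dim S=\dim S_1+1$; thus $S_1$ is again aCM. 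In degree $2$ the long exact sequence reads $0\to H^0(\mathcal{I}_S(1))\to H^0(\mathcal{I}_S(2))\to H^0(\mathcal{I}_{S_1/H}(2))\to H^1(\mathcal{I}_S(1))$, where $H^0(\mathcal{I}_S(1))=0$ by nondegeneracy and $H^1(\mathcal{I}_S(1))=0$ by aCM, so restriction is an isomorphism $H^0(\pit,\mathcal{I}_S(2))\xrightarrow{\sim}H^0(H,\mathcal{I}_{S_1/H}(2))$. The same mechanism in degree $1$, using $H^1(\mathcal{I}_S(0))=0$, gives $H^0(H,\mathcal{I}_{S_1/H}(1))=0$, so $S_1$ stays nondegenerate. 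Hence nondegeneracy, the aCM property and the space of quadrics are all carried from $S$ to $S_1$, and iterating $s$ times gives both conclusions.

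The step needing care, and which I view as the main obstacle, is the validity of the hyperplane-section sequence at each stage: it requires the successive hyperplanes to be non-zero-divisors on the (possibly singular) intermediate sections $S_i:=S\cap H_1\cap\cdots\cap H_i$. This is where the hypotheses on $S'$ enter. Since $S'=S\cap H_1\cap\cdots\cap H_s$ is a nonsingular codimension-$s$ section, $\dim S'=\dim S-s$; a standard dimension count—a positive-dimensional projective variety meets every hyperplane, so a component of $S_i$ of dimension exceeding $\dim S-i$ would propagate to force $\dim S'>\dim S-s$—shows each $S_i$ has pure dimension $\dim S-i$. Thus no $H_{i+1}$ contains a component of $S_i$; combined with the inductively propagated aCM property, which makes each $S_i$ Cohen-Macaulay and hence free of embedded points, this guarantees that $\ell_{i+1}$ is a non-zero-divisor on $\mathcal{O}_{S_i}$ and that the sequence above is exact at every stage. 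Finally, since $S'$ is nonsingular it is reduced, so the scheme-theoretic section obtained after $s$ steps coincides with the variety $S'$, and the two stated conclusions for $S'$ follow.
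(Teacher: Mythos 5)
Your proof is correct and follows essentially the same route as the paper: the paper likewise cites the aCM property of $S$ (via Zak, Chapter III, Theorem 1.2, where you instead invoke the classical determinantal/straightening-law arguments), uses the same hyperplane-section sequence $0 \to \mathcal{I}_S(-1) \to \mathcal{I}_S \to \mathcal{I}_{S\cap H \subset H} \to 0$ with its long exact cohomology sequence, and iterates. Your additional care — verifying that each successive hyperplane is a non-zero-divisor on the possibly singular intermediate sections, and spelling out the degree-$1$ and degree-$2$ computations behind the paper's ``the last claim then follows easily'' — fills in details the paper's terse proof leaves implicit, but does not change the argument.
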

\begin{proof}
By \cite{Zak} (Chapter III, Theorem 1.2),  $S\subset \pit^{n-1}$  is arithmetically Cohen-Macaulay. On the other hand, for a nonsingular hyperplane section, we have the exact sequence
$$
0 \to \mathcal{I}_S(-1) \rightarrow \mathcal{I}_S \to \mathcal{I}_{S\cap H \subset H} \to 0.
$$
Using the associated long exact sequence, we deduce that $S \cap H$ is arithmetically Cohen-Macaulay. Repeating the same argument,  we see that any nonsingular linear section  of $S$ is arithmetically Cohen-Macaulay. The last claim then  follows easily.
\end{proof}

Now we can give some nonhomogeneous examples of special quadratic manifolds.

\begin{proposition}\label{p.ex1}
From Example \ref{e.table}, the Segre variety $S=\pit^1 \times \pit^2 \subset \pit^5 = \BP U$ is a special quadratic manifold associated with
$$\sigma: \SYM^2 U \to W = H^0(\BP^5, \sI_S(2))^*$$ and $Z(\sigma)$ isomorphic to the Pl\"ucker embedding $\Gr(2,5) \subset \BP^{9}$.  A nonsingular linear section $S'$ of  $S$ by a general subspace $U' \subset U$ of codimension $s \leq 2$
is a special quadratic manifold associated to a special system of quadrics $$\sigma': \SYM^2 U' \to W = H^0(\BP^5, \sI_S(2))^*= H^0(\BP^{5-s}, \sI_{S'}(2))^*$$  and $Z(\sigma')$  is equal to a nonsingular linear section of $\Gr(2,5)$
with codimension $s$.
\end{proposition}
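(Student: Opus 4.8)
The plan is to realize $S'$ as an iterated $\BP W$-general linear section and to apply Proposition \ref{p.hyperplane} once for each codimension drop, using Zak's computation in Proposition \ref{p.Zak}(i) to guarantee that the nonsingularity-of-hyperplane-section hypothesis survives through both steps. Throughout I write $V = T \oplus U \oplus W$, so that by the first assertion $Z(\sigma) = \Gr(2,5) \subset \BP V = \BP^9$ and, by Example \ref{e.table}, $Y(\sigma) = \Gr(2,3)$ is the whole of $\BP W = \BP^2$. In particular $\BP W = Y(\sigma) \subset Z(\sigma)$, so the locus $Z^*_W$ of Proposition \ref{p.dual} is defined; this containment is what makes the inductive machinery available, and it is preserved under sectioning because $Y(\sigma') = \BP W$ stays inside $Z(\sigma')$.

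First I would record the numerical input for the initial application of Proposition \ref{p.hyperplane}. Here $\dim S = 3 \geq 2$ and $\dim \BP U = 5 > 2 = \dim Y(\sigma)$, while Proposition \ref{p.Zak}(i) gives $\dim \BP (V/W)^* - \dim Z^*_W = 2$. Since this gap is positive, $\dim Z^*_W < \dim \BP (V/W)^*$, so Proposition \ref{p.dual} shows that for a $\BP W$-general hyperplane $H$ the section $\sZ = Z(\sigma) \cap H$ is nonsingular and additionally satisfies $\dim \sZ^*_W \leq \dim Z^*_W$. The nonsingularity is exactly the third hypothesis of Proposition \ref{p.hyperplane}, so that proposition applies and yields, for general $U' \subset U$ of codimension $1$, a special system $\sigma': \SYM^2 U' \to W$ with $B(\sigma') = S \cap \BP U'$, with $Y(\sigma') = \BP W$, and with $Z(\sigma') = Z(\sigma) \cap \BP(T \oplus U' \oplus W)$ a hyperplane section of $\Gr(2,5)$. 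This settles the case $s = 1$ (and $s = 0$ is trivial).

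To reach $s = 2$ I would iterate, checking that $\sigma'$ again satisfies the hypotheses of Proposition \ref{p.hyperplane}. The conditions $\dim B(\sigma') = 2 \geq 2$ and $\dim \BP U' = 4 > 2 = \dim Y(\sigma')$ are immediate. For the nonsingularity of a $\BP W$-general hyperplane section of $Z(\sigma')$, I apply Proposition \ref{p.dual} a second time, now to $\sZ = Z(\sigma') = Z(\sigma) \cap H$ inside the hyperplane $\hat H \subset V$: passing to $\hat H$ lowers the dimension of the ambient space of annihilators by one, so $\dim \BP(\hat H / W)^* = \dim \BP(V/W)^* - 1$, whence $\dim \BP(\hat H/W)^* - \dim \sZ^*_W \geq 2 - 1 = 1 > 0$ using $\dim \sZ^*_W \leq \dim Z^*_W$. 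Thus Proposition \ref{p.dual} applies to $\sZ$ and gives the required nonsingular $\BP W$-general hyperplane section. A second application of Proposition \ref{p.hyperplane} then produces, for general $U' \subset U$ of codimension $2$, a special system $\sigma'$ with $B(\sigma') = S \cap \BP U' = S'$, $Y(\sigma') = \BP W$, and $Z(\sigma') = Z(\sigma) \cap \BP(T \oplus U' \oplus W)$, which is a codimension-$2$ linear section of $\Gr(2,5)$. (Equivalently, Corollary \ref{c.dual} packages the two nonsingularity checks at once, since $s \leq 2 \leq \dim \BP(V/W)^* - \dim Z^*_W$ and $s < \dim Z = 6$.)

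Finally I would identify the target space. The quadrics dual to $W$ live on $\BP U = \BP^5$ and vanish on $S$; restricting them to $\BP U' = \BP^{5-s}$ gives quadrics vanishing on $S'$, hence an inclusion $W^* \hookrightarrow H^0(\BP^{5-s}, \sI_{S'}(2))$, which is injective because $\sigma'$ is surjective. Since $s \leq 2 < 3 = \dim S$, Lemma \ref{l.equations} gives $\dim H^0(\BP^{5-s}, \sI_{S'}(2)) = \dim H^0(\BP^5, \sI_S(2)) = \dim W$, so this inclusion is an equality and $W = H^0(\BP^5, \sI_S(2))^* = H^0(\BP^{5-s}, \sI_{S'}(2))^*$, as asserted. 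I expect the main obstacle to be the bookkeeping in the iteration: the whole argument hinges on the exact value $\dim \BP(V/W)^* - \dim Z^*_W = 2$ from Proposition \ref{p.Zak}(i), and on verifying through Proposition \ref{p.dual} that this gap drops by precisely one per hyperplane section and remains positive across the two steps needed to handle $s = 2$.
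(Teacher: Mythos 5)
Your proposal is correct and follows essentially the same route as the paper's proof, which likewise applies Proposition \ref{p.hyperplane} repeatedly in combination with Corollary \ref{c.dual} (your two-step use of Proposition \ref{p.dual} is exactly what that corollary packages) and Proposition \ref{p.Zak}(i), and then identifies $W$ via Lemma \ref{l.equations}. The only ingredient of the paper's proof you omit is the appeal to the projective equivalence of all nonsingular codimension $\leq 2$ sections of $S$ and of $\Gr(2,5)$ (Remark 3.3.2 in \cite{IP}), which upgrades the $\BP W$-general section to \emph{any} nonsingular section; this is needed for later applications but not for the statement as literally phrased.
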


\begin{proof}
Applying Proposition \ref{p.hyperplane} repeatedly in combination with Corollary \ref{c.dual} and Proposition \ref{p.Zak}, we see  that a general linear section $S'$
of $\pit^1 \times \pit^2$ with codimension $s \leq 2$ is a special quadratic manifold associated to a special system of
 quadrics $\sigma': \SYM^2 U' \to W = H^0(\BP^5, \sI_S(2))^*$ and $Z(\sigma')$  equal to a
 $\BP^2$-general linear section of $\Gr(2,5)$
with codimension $s$.  It is well-known (e.g. Remark 3.3.2 in \cite{IP})
 that  all nonsingular sections of $S$ with codimension $s\leq 2$ are projectively equivalent and
all nonsingular sections of $\Gr(2,5)$ with codimension $s \leq 2$ are projectively equivalent.
Thus we may say that $Z(\sigma')$ is any nonsingular linear section of $\Gr(2,5)$ of codimension $s \leq 2$.
Using Lemma \ref{l.equations}, we see $W^*= H^0(\BP^{5-s}, \sI_{S'}(2))$.
\end{proof}

\begin{proposition}\label{p.ex2}
From Example \ref{e.table}, the Grassmannian $S=\Gr(2,5)$ $\subset$ $\pit^{9} = \BP U$ is a special quadratic manifold associated with
$$\sigma: \SYM^2 U \to W = H^0(\BP^{9}, \sI_S(2))^*$$ and $Z(\sigma)$ is isomorphic to the Spinor variety $\mathbb{S}_5 \subset \BP^{15}$.  A nonsingular linear section $S'$ of  $S$ by a general subspace $U' \subset U$ of codimension $s \leq 3$
is a special quadratic manifold associated to a special system of quadrics $$\sigma': \SYM^2 U' \to W = H^0(\BP^{9}, \sI_S(2))^*= H^0(\BP^{9-s}, \sI_{S'}(2))^*$$  and $Z(\sigma')$  equals to a $\BP^4$-general linear section of $Z(\sigma) = \mathbb{S}_5$
with codimension $s \leq 3$.
\end{proposition}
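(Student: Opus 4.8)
The plan is to mirror the proof of Proposition \ref{p.ex1}, replacing the pair $(\pit^1\times\pit^2,\Gr(2,5))$ by $(\Gr(2,5),\mathbb{S}_5)$ and the numerology of Proposition \ref{p.Zak}(i) by that of Proposition \ref{p.Zak}(ii). The starting point is supplied by Example \ref{e.table}: the Pl\"ucker-embedded $S=\Gr(2,5)\subset\pit^9=\BP U$ is a special quadratic manifold with $\sigma:\SYM^2U\to W$ of the stated form, $Y(\sigma)=\pit^4=\BP W$, $Z(\sigma)=\mathbb{S}_5\subset\BP^{15}$, $\dim S=6$, $\dim Z=10$ and $\dim W=10$. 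I would then produce the section $S'$ by peeling off one general hyperplane at a time and invoking Proposition \ref{p.hyperplane} at each step, so that a general codimension-$s$ subspace $U'\subset U$ arises as a general flag of codimension-$1$ subspaces.

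To feed Proposition \ref{p.hyperplane}, I must verify its three hypotheses at every stage of the induction. The inequality $\dim S\geq 2$ holds because after $k\leq 3$ cuts $\dim S=6-k\geq 3$, and $\dim\BP U>\dim Y(\sigma)$ holds because $Y(\sigma)$ is unchanged at $\pit^4$ while $\dim\BP U=9-k\geq 6$. The only substantive hypothesis is that the intersection of the current $Z$ with a $\BP W$-general hyperplane be nonsingular. This is exactly what Proposition \ref{p.Zak}(ii) and Corollary \ref{c.dual} provide: Proposition \ref{p.Zak}(ii) gives $\dim\BP(V/W)^*-\dim Z^*_W=10-7=3$ for $\BP^4\subset\mathbb{S}_5\subset\BP^{15}$, and since $\dim Z=10>3\geq s$ and $\dim Z^*_W=7\leq 10-s$ for $s\leq 3$, Corollary \ref{c.dual} shows that a $\BP^4$-general codimension-$s$ linear section of $\mathbb{S}_5$ is nonsingular for every $s\leq 3$. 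The persistence statement $\dim\sZ^*_W\leq\dim Z^*_W$ in Proposition \ref{p.dual} is what guarantees that this nonsingularity budget is not exhausted before the third cut, so the inductive step of Proposition \ref{p.hyperplane} can be repeated up to $s=3$.

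Running this induction, Proposition \ref{p.hyperplane} yields at the $s$-th stage that $S'=S\cap\BP U'$ is a special quadratic manifold for the restricted system $\sigma':\SYM^2U'\to W$, with $Y(\sigma')=Y(\sigma)=\pit^4$ and $Z(\sigma')=Z(\sigma)\cap\BP(T\oplus U'\oplus W)$ equal to a $\BP^4$-general codimension-$s$ linear section of $\mathbb{S}_5$. Finally, to identify $W^*$ with $H^0(\BP^{9-s},\sI_{S'}(2))$ I would apply Lemma \ref{l.equations}, whose hypotheses cover precisely $\Gr(2,5)\subset\pit^9$ and its nonsingular linear sections of codimension $s<\dim S=6$; this gives $H^0(\BP^9,\sI_S(2))=H^0(\BP^{9-s},\sI_{S'}(2))$ and hence the displayed chain of equalities for $W$.

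I expect the main obstacle to be purely the bookkeeping that the three hypotheses of Proposition \ref{p.hyperplane} survive all the way to $s=3$, in particular that the codimension bound $3$ coming from Proposition \ref{p.Zak}(ii) is exactly tight against the requirement of Corollary \ref{c.dual}. Unlike the $\Gr(2,5)$ case of Proposition \ref{p.ex1}, here I would \emph{not} assert projective uniqueness of the sections, so no separate classification of nonsingular linear sections of $\mathbb{S}_5$ is required and the statement is phrased only for $\BP^4$-general sections.
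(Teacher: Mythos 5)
Your proof is correct and is essentially the paper's own argument: the paper proves Proposition \ref{p.ex2} exactly by iterating Proposition \ref{p.hyperplane}, feeding its nonsingularity hypothesis with Corollary \ref{c.dual} and Proposition \ref{p.Zak}(ii) (the budget $10-7=3$, tight against $s\leq 3$, just as you observe), and identifying $W^*=H^0(\BP^{9-s},\sI_{S'}(2))$ via Lemma \ref{l.equations}. Two remarks. First, a harmless slip: $\dim W=5$, not $10$ (these are the five Pfaffian quadrics cutting out $\Gr(2,5)\subset\pit^9$; consistently $\BP(T\oplus U\oplus W)=\BP^{15}$ since $1+10+5=16$); nothing downstream in your argument uses the wrong value, as the relevant dimension counts come directly from Proposition \ref{p.Zak}(ii). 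Second, the one genuine divergence is in scope: the paper's proof additionally invokes the fact (Remark 3.3.2 in \cite{IP}) that all nonsingular linear sections of $\Gr(2,5)\subset\pit^9$ of a fixed codimension $s\leq 3$ are projectively equivalent. Note that this fact concerns sections of the base locus $S=\Gr(2,5)$, not of $Z=\mathbb{S}_5$; you are right that no uniqueness statement for sections of $\mathbb{S}_5$ is needed or even available (in codimension $3$ a $\BP^4$-general section of $\mathbb{S}_5$ contains a $\pit^4$ while a general one does not, by the remark following Proposition \ref{p.ex2}), but your dismissal of ``projective uniqueness of the sections'' points at the wrong variety. Without the $\Gr(2,5)$-equivalence, your induction establishes the conclusion only for a general $U'$, which does match the literal wording of the statement; the paper wants the stronger reading---every nonsingular codimension-$\leq 3$ section $S'$ is such a special quadratic manifold---because that is how the proposition is consumed in Theorem \ref{t.bir}(f) and Proposition \ref{p.AS2}. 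So your proof is complete as written for the general-section statement, and becomes the paper's proof verbatim once you add the one citation upgrading ``general'' to ``any nonsingular''.
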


\begin{proof}
As in the proof of Proposition \ref{p.ex1}, this follows from Proposition \ref{p.hyperplane}, Corollary \ref{c.dual}, Proposition \ref{p.Zak} and Lemma \ref{l.equations}, modulo the fact
(see Remark 3.3.2 in \cite{IP}) that
all nonsingular linear sections of $\Gr(2,5)$ of a fixed codimension $s \leq 3$ are projectively equivalent.
\end{proof}

\begin{remark} (1) Let $\mathbb{S}_5 = S \subset \pit U, \dim U = 16,$ be the 10-dimensional Spinor variety. It is well-known that
 the dual variety $S^* \subset \pit U^*$ is isomorphic to $S \subset \pit U$ and the automorphism group ${\rm Aut}(S)$ acts transitively on $S$ and $\pit U \setminus S.$ This implies  that  all  nonsingular hyperplane sections of $\mathbb{S}_5$ are projectively equivalent.

 (2) A general  linear section of $\mathbb{S}_5$ with codimension 2 does {\em not} contain a linear
 $\pit^4$.
 One way to see this is using the fact that a general hyperplane section $S_1$ of $\mathbb{S}_5 \subset \pit^{15}$ is isomorphic to a horospherical Fano manifold of Picard number 1, the case 4 in Theorem 1.7 of \cite{Pasquier} (this fact follows from Mukai's classification \cite{M}). From \cite{Pasquier}, the automorphism group
${\rm Aut}(S_1)$ has two orbits, an open orbit and a closed orbit,
say $Q \subset S_1$,  which is isomorphic to the 6-dimensional
hyperquadric $\qit^6$.  Let $\pi: {\rm Bl}_Q(S_1) \to S_1$ be the
blow-up of $S_1$ along $Q$ and let $E \subset {\rm Bl}_Q(S_1)$ be
the exceptional divisor. Then by the proof of Lemma 1.17
\cite{Pasquier}, there exists a morphism $q: {\rm Bl}_Q(S_1) \to
\qit^5$ which is a $\pit^4$-bundle and $E$, which is a
$\BP^3$-bundle over $\qit^5$ under the map  $q$, is homogeneous.
The fibers of $q$ are mapped to linear $\pit^4$'s contained in
$S_1$ and any linear $\pit^4$ in $S_1$ arises this way. This
implies that the intersection of $\pit^4$ in $S_1$ with $Q$ is
$\pit^3 \subset \pit^4$. Now take a general hyperplane section $H
\subset S_1$, then $H \cap Q$ is a smooth $\qit^5$. If $H$
contains a $\pit^4$, then  $H \cap Q$ will contain a  $\pit^3$, a
contradiction.

\end{remark}

The following result from  Corollary 3.21 in \cite{AS2} is a  converse to Proposition \ref{p.ex1} and Proposition \ref{p.ex2}.

\begin{proposition}\label{p.AS2}
A nonsingular linear section of the homogeneous special quadratic manifolds in
Example \ref{e.table} is a special quadratic manifold if and only if  it is a hyperquadric or  one  provided by Proposition \ref{p.ex1} and Proposition \ref{p.ex2}.  \end{proposition}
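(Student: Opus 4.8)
The plan is to separate the two implications, with the converse carrying all the content. For the ``if'' direction I would note first that a proper linear section of $\qit^d \subset \pit^{d+1}$ is again a hyperquadric, and hyperquadrics are special quadratic manifolds by the first column of Example \ref{e.table}; the remaining positive examples are precisely the codimension $\leq 2$ sections of $\pit^1 \times \pit^2$ and the codimension $\leq 3$ sections of $\Gr(2,5)$, already shown to be special quadratic manifolds in Proposition \ref{p.ex1} and Proposition \ref{p.ex2}. So nothing new is needed here. For the converse, let $S = B(\sigma) \subset \pit U$ be one of the four homogeneous examples, write $Z = Z(\sigma) \subset \pit V$ with $V = T \oplus U \oplus W$ and $Y = Z \cap \pit W$, and let $S' = S \cap \pit U'$ be a nonsingular section of codimension $s \geq 1$ that is again special quadratic. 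The reduction I would use is that, as long as the quadric system does not grow, i.e. $W^* = H^0(\pit U', \sI_{S'}(2))$---which holds for $s < \dim S$ by Lemma \ref{l.equations} in the two cases that lemma treats, and would need an analogue in general---the variety $Z(\sigma')$ must coincide with the linear section $Z \cap \pit(T \oplus U' \oplus W)$ of $Z$ through $Y$, exactly as in Proposition \ref{p.hyperplane}. Since $S'$ special quadratic forces $Z(\sigma')$ to be nonsingular, the whole question becomes: for which $s$ can a codimension $s$ linear section of $Z$ containing $Y$ be nonsingular?

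When $Y = \pit W$ is a linear subspace I would settle this completely with the dual-variety bound of Proposition \ref{p.dual} and Corollary \ref{c.dual}. This case occurs exactly for $S = \pit^1 \times \pit^2$, where $Y = \Gr(2,3) = \pit^2$ and $Z = \Gr(2,5)$, and for $S = \Gr(2,5)$, where $Y = \pit^4$ and $Z = \mathbb{S}_5$; Proposition \ref{p.Zak} records $\dim \pit(V/W)^* - \dim Z^*_W = 2$ and $= 3$ respectively. Corollary \ref{c.dual} gives the forward bound that a general such section is nonsingular for $s$ up to this value. For the reverse I would argue by the projective dimension theorem: the defining hyperplanes of a codimension $s$ section through $\pit W$ sweep out a $\pit^{s-1} \subset \pit(V/W)^*$, and once $s$ exceeds the bound this $\pit^{s-1}$ necessarily meets the cone $Z^*_W$, so \emph{some} defining hyperplane is tangent to $Z$ along $\pit W$ and $Z(\sigma')$ is singular there, for \emph{every} choice of $U'$. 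This pins down $s \leq 2$ for $\pit^1 \times \pit^2$ and $s \leq 3$ for $\Gr(2,5)$; combined with the projective-equivalence statements in Proposition \ref{p.ex1} and Proposition \ref{p.ex2}, it identifies $S'$ with the listed sections.

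The main obstacle is the complementary range $S = \pit^1 \times \pit^{d-1}$ with $d > 3$ and $S = \mathbb{S}_5$, where $Y = \Gr(2,d)$, respectively $Y = \qit^8$, sits in $\pit W$ as a \emph{proper, non-linear} subvariety, so that the clean $Z^*_W$ mechanism does not apply: the obstructing hyperplanes must now be singular \emph{along the curved $Y$} rather than at a point of a fixed linear space. Here I would aim to show that for $Z = \Gr(2,d+2)$, respectively $Z = \mathbb{OP}^2$, \emph{every} codimension $s \geq 1$ section through $Y$ is singular along $Y$---equivalently that the appropriate bad dual locus fills the whole parameter space---so that no proper nonsingular section can be special quadratic at all, which is why none appears in the list. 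This is precisely what is secured in \cite{AS2}, and I expect the bookkeeping along the non-linear $Y$, rather than any single clean dimension count, to be the delicate point; an alternative route would track the secant defect $\delta_{S'} = \delta_S - s$ under general hyperplane sections and appeal to the classification of smooth QEL-manifolds to exclude the remaining sections.
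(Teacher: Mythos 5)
Your proposal should first be measured against the fact that the paper contains \emph{no internal proof} of this proposition: it is imported verbatim as Corollary 3.21 of \cite{AS2}, and is then used as a black box throughout Section \ref{s.proof}. So what you have written is really an attempted reconstruction of Alzati--Sierra's argument. Within it, the ``if'' direction is fine, and your treatment of the two linear-$Y$ cases is essentially sound: a codimension-$s$ section through $\BP W$ is singular at a point $x \in \BP W$ precisely when some nonzero form in the span of the $s$ defining forms vanishes on $T_xZ$, i.e.\ precisely when the corresponding $\pit^{s-1} \subset \BP(V/W)^*$ meets $Z^*_W$; combined with Proposition \ref{p.Zak} and the projective dimension theorem this does pin down $s \leq 2$ for $\pit^1\times\pit^2$ and $s \leq 3$ for $\Gr(2,5)$. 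This is a genuine complement to the paper, which deploys Propositions \ref{p.hyperplane}, \ref{p.dual} and Corollary \ref{c.dual} only in the existence direction (Propositions \ref{p.ex1} and \ref{p.ex2}).

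However, as a standalone proof your proposal has two genuine gaps. First, your reduction quietly fixes the system of quadrics to be the restriction of $\sigma$ with the \emph{same} target $W$. The definition of a special quadratic manifold quantifies over all special systems $\sigma'$ with $B(\sigma') = S'$; by Lemma \ref{l.equations} (or its ACM analogue, which you only gesture at) any such system is a quotient $W/L$, and Example \ref{e.projection} shows that proper quotients genuinely occur. So besides the linear section $Z \cap \BP(T\oplus U'\oplus W)$ you must also exclude its projections $Z(\sigma'_L)$, whose singularity is not automatic once $\pit L$ meets the relevant secant variety. Moreover, Proposition \ref{p.hyperplane} identifies $Z(\sigma')$ with the linear section only for \emph{general} $U'$, whereas your exclusion must run over every $U'$ with $S'$ nonsingular; a priori $Z(\sigma')$ is only an irreducible component of the (possibly reducible) linear section, so ``the section is singular at a point of $\BP W$'' does not yet yield ``$Z(\sigma')$ is singular.'' Second, and decisively, the cases carrying the real content --- proper sections of $\pit^1\times\pit^{d-1}$ with $d \geq 4$ and of $\mathbb{S}_5$, where $Y$ is $\Gr(2,d)$ or $\qit^8$ --- are not argued at all but deferred to \cite{AS2}; that is exactly what the paper itself does, so your text is honestly a reduction-plus-citation rather than a proof. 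Your suggested fallback via QEL classification cannot close this gap either: general codimension $1$ and $2$ sections of $\mathbb{S}_5$ \emph{are} QEL-manifolds (they appear as (A5) in Proposition \ref{p.3.1} and (B6) in Proposition \ref{p.3.2}) yet fail to be special quadratic --- the obstruction lives in the smoothness of $Z(\sigma)$, which the QEL classification does not detect.
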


Our aim is to show that Example \ref{e.table}, Example \ref{e.projection},  Proposition  \ref{p.ex1} and Proposition  \ref{p.ex2}
exhaust all special quadratic manifolds.
More precisely, we have the following classification,  which gives a complete classification of special birational transformations of type (2,1).

\begin{theorem} \label{t.bir}
A special quadratic manifold
 $S^d \subset \BP^{n-1}$ is projectively equivalent to one of the following:
\begin{itemize}
\item[(a)] $\Q^d \subset \BP^{d+1}$ for $d \geq 1$;
\item[(b)]  $\pit^1 \times \pit^{d-1}  \subset \BP^{2d-1}$ for $d \geq 3$;
\item[(c)] the 6-dimensional Grassmannian ${\rm Gr}(2,5) \subset \pit^9$;
\item[(d)] the 10-dimensional Spinor variety $\mathbb{S}_5 \subset \BP^{15}$;
\item[(e)] a nonsingular codimension $\leq 2$ linear section of  $\pit^1 \times \pit^2 \subset \BP^5$;
\item[(f)]  a nonsingular codimension $\leq 3$ linear section of ${\rm Gr}(2,5) \subset \pit^9.$
\end{itemize}

 The birational map $\psi^\circ$ is given by $H^0(\pit^{n-1}, \mathcal{I}_S(2))$ except for $(b)$
with $d \geq 6$, where $\psi^\circ$ can be given by some proper subspaces of $H^0(\pit^{n-1}, \mathcal{I}_S(2))$ as in Example \ref{e.projection}.
\end{theorem}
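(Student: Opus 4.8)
The plan is to classify special quadratic manifolds $S = B(\sigma) \subset \pit U$ by induction on $d = \dim S$, combining the three ingredients announced in the introduction: the inductive argument on the variety of minimal rational tangents (Proposition \ref{p.pic}), the topological constraints coming from a natural $\cit^*$-action on $Z(\sigma)$, and the intersection theory of entry loci on $S$. The descent mechanism is already available: given $S = B(\sigma)$ with $d \geq 2$ and $\dim \pit U > \dim Y(\sigma)$, Proposition \ref{p.hyperplane} (whose nonsingularity hypothesis is verified through Corollary \ref{c.dual} and Proposition \ref{p.Zak}) shows that a general hyperplane section $S' = S \cap \pit U'$ is again a special quadratic manifold, with $Y(\sigma') = Y(\sigma)$ and $Z(\sigma') = Z(\sigma) \cap \pit(T \oplus U' \oplus W)$. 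By the inductive hypothesis $S'$ is one of the varieties (a)--(f), and the VMRT argument of Proposition \ref{p.pic} identifies $S'$ with the variety of minimal rational tangents of $Z(\sigma')$, so the inductive structure is genuinely carried by the geometry of $Z$. The low-dimensional cases $d \leq 2$, which anchor the induction, are handled directly from the projective geometry of a base locus cut out by quadrics, together with Proposition \ref{p.AS2}.

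Next I would extract the topological relations. On $Z = Z(\sigma) \subset \pit(T \oplus U \oplus W)$ there is a natural $\cit^*$-action with weights $0, 1, 2$ on the summands $T$, $U$, $W$; one checks directly from the parametrization $[t:u] \mapsto [t^2 : tu : \sigma(u,u)]$ that this action preserves $Z$. Its fixed locus is supported on the isolated vertex $[1:0:0]$, on $Z \cap \pit U$, and on $Z \cap \pit W$, which recover, up to the natural identifications, the base point, the base locus $S$ of $\Phi$, and the base locus $Y$ of $\Phi^{-1}$. Comparing the Bialynicki--Birula decomposition of $Z$ with these fixed components yields relations of the form $\chi(Z) = 1 + \chi(S) + \chi(Y)$, together with finer Betti-number identities. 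This is precisely the promised link between the base loci of $\Phi$ and $\Phi^{-1}$, and it furnishes the numerical constraints that no hyperplane-section argument alone can see.

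The decisive and most delicate step is the reconstruction of $S$ from its classified section $S'$: a priori many non-isomorphic special quadratic manifolds restrict to the same $S'$, so one must supply rigidity. Here the two new ingredients combine. The $\cit^*$-relations above, applied to $S$ and to its section $S'$, pin down the admissible dimensions of $S$, $Y$ and $Z$ and eliminate the numerically impossible extensions; this is where the thresholds $d \geq 3$ in (b), codimension $\leq 2$ in (e), and codimension $\leq 3$ in (f) are forced, compatibly with Remark 2.13, which shows that a general codimension-$3$ section of $\mathbb{S}_5$ no longer contains a linear $\pit^4$ and so cannot extend further. The residual projective ambiguity---which configuration of entry-locus quadrics is actually realized on $S$---is then resolved by the intersection analysis of Section \ref{s.entry}: the entry loci through a general point of $S$ meet along subvarieties whose incidence is rigid enough to force $S$ to be projectively equivalent to one of the homogeneous models of Example \ref{e.table} or to one of their linear sections. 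At this stage Proposition \ref{p.AS2} confirms that exactly the sections listed in Propositions \ref{p.ex1} and \ref{p.ex2}, and no others, are special quadratic.

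Finally, the statement about the defining linear system of $\psi^\circ$ separates into two regimes. In the homogeneous cases and their small-codimension sections, Lemma \ref{l.equations} together with the last column of Example \ref{e.table} gives $W^* = H^0(\pit^{n-1}, \mathcal{I}_S(2))$, so $\psi^\circ$ is given by the full system of quadrics through $S$. The sole exception is case (b) with $d \geq 6$, where the dimension count leaves room for a proper subspace $L \subset W$ with $\pit L \cap \Sec(Y) = \emptyset$; Example \ref{e.projection} then exhibits honest special birational transformations defined by $W^* \subsetneq H^0(\pit^{n-1}, \mathcal{I}_S(2))$. I expect the main obstacle to be the reconstruction step: controlling every extension of a given $S'$ and proving that the $\cit^*$-fixed-point data and the entry-locus incidences leave no possibilities beyond the six listed families.
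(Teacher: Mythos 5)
Your plan has two genuine gaps, and the first is structural: the inductive descent via hyperplane sections is not available for an \emph{unknown} special quadratic manifold. Proposition \ref{p.hyperplane} only produces a special quadratic section $S'$ under the hypothesis that a $\BP W$-general hyperplane section of $Z(\sigma)$ is nonsingular, and the only tool for verifying this, Corollary \ref{c.dual} together with Proposition \ref{p.Zak}, requires an explicit dimension bound on $Z^*_W$ that is computed in the paper \emph{only} for the two known homogeneous targets $\Gr(2,5)$ and $\mathbb{S}_5$. In the paper this machinery is used constructively, to manufacture the examples of Propositions \ref{p.ex1} and \ref{p.ex2}, never as a classification tool; for an arbitrary $S$ you know nothing about $Z$, so you cannot bound $Z^*_W$ and the descent is circular. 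Even granting the descent, your ``reconstruction'' step --- determining all $S$ whose general hyperplane section is a classified $S'$ --- is a smooth-extension problem that is notoriously hard and for which you supply no mechanism beyond the assertion that the entry-locus incidences are ``rigid enough.'' The paper never solves such an extension problem: it works directly with $S$ itself.

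The idea the proposal is missing is the classification of $Y$. Since $\Bl_S(\BP U) \to Y$ is a $\pit^{\delta+1}$-bundle (Proposition \ref{p.blow}) and $Y$ is covered by $(c-1)$-dimensional linear spaces with $\dim Y = 2(c-1)$, Sato's theorem (Theorem \ref{t.Sato}) forces $Y$ to be $\BP^{2(c-1)}$, $\Q^{2(c-1)}$, or a projection of $\Gr(2,c+1)$; this is the backbone of the actual proof, and it is also what makes the topological relation usable. Your Bialynicki--Birula identity $\chi(Z)=1+\chi(S)+\chi(Y)$ (which itself needs an unproven identification of the fixed components $Z\cap\BP U$ and $Z\cap\BP W$ with $S$ and $Y$) involves the unknown $\chi(Z)$, so it constrains nothing; the paper instead derives $\chi(S)=((\delta+2)\chi(Y)-n)/(n-d-2)$ (Proposition \ref{p.Euler}) from the two descriptions of $\Bl_S(\pit^{n-1})$, where $\chi(Y)$ is known case by case from Sato's trichotomy --- the $\cit^*$-action is used only to prove $Y$ is smooth (Proposition \ref{p.Ysmooth}). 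The classification is then completed by a $\delta$-stratified case analysis on $S$ itself: Russo's Propositions \ref{p.3.1} and \ref{p.3.2} for $\delta\geq d/2$, Proposition \ref{p.CC} when $S$ is not prime Fano, Mukai's classification (Proposition \ref{p.Mukai}) and the entry-locus intersection bound (Theorem \ref{t.fiberdim} against Corollary \ref{c.intersect2}) to kill the residual cases such as $(\delta,d,n)=(7,15,25)$ and $(6,14,24)$, Mok's recognition theorem, and Alzati--Sierra's Proposition \ref{p.Bir2Gr} for the Grassmannian case --- none of which appear in your outline. (Your reading of Proposition \ref{p.pic} is also off: it concerns the VMRT of the QEL-manifold $S$, not of $Z(\sigma')$; the identification of $S$ with the VMRT of $Z$ belongs to Section \ref{s.prolongation} and plays no role in the proof of Theorem \ref{t.bir}.)
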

The proof of Theorem \ref{t.bir} will be given in Section \ref{s.proof}.

\section{Classification of $Y(\sigma)$}\label{s.Y}

In this section, we classify varieties that
can occur as the $\psi$-image $Y(\sigma)$ of a special quadratic manifold $S \subset \BP U$.
 To simplify the notation, we will write $Y$ for $Y(\sigma)$ and $Z$ for $Z(\sigma)$.  We will use the notation of Definition \ref{d.quadrics}.

\begin{proposition}\label{p.Ysmooth}
Let $S \subset \BP U$ be a special quadratic manifold. Then $Y$ is nonsingular.\end{proposition}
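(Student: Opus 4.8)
The plan is to exploit a natural $\cit^*$-action on $Z$ and to realise $Y$ as a union of connected components of its fixed-point locus. Since the fixed-point locus of a torus action on a smooth variety is itself smooth, and $Z = Z(\sigma)$ is smooth by the definition of a special system of quadrics, this will give the smoothness of $Y$.

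First I would introduce the $\cit^*$-action on $\BP(T \oplus U \oplus W)$ given by $\lambda \cdot [t:u:w] = [t : \lambda u : \lambda^2 w]$, that is, with weights $0,1,2$ on the three summands $T, U, W$. From $\phi^o([t : \lambda u]) = [t^2 : \lambda tu : \lambda^2 \sigma(u,u)] = \lambda \cdot \phi^o([t:u])$ one sees at once that this action preserves the image $Z$. Because the three weights are distinct, the fixed locus of the action on the ambient projective space is $\BP T \sqcup \BP U \sqcup \BP W$, whence $Z^{\cit^*} = (Z \cap \BP T) \sqcup (Z \cap \BP U) \sqcup (Z \cap \BP W)$, with each piece open and closed in $Z^{\cit^*}$. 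In particular $Z \cap \BP W$ is a union of connected components of $Z^{\cit^*}$, hence smooth, and it therefore suffices to prove the equality $Y = Z \cap \BP W$.

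The inclusion $Y \subseteq Z \cap \BP W$ is immediate: since $\phi^o([0:u]) = [0:0:\sigma(u,u)]$, the restriction $\phi|_{\Bl_S(\BP U)} = \psi$ maps $\Bl_S(\BP U)$ onto $Y \subset \BP W$. For the reverse inclusion I would lift the action to $\Bl_S(\BP(T\oplus U))$. The action $[t:u] \mapsto [t:\lambda u]$ on $\BP(T \oplus U)$ is trivial on the hyperplane $\BP U = \{t=0\}$, so it fixes $S$, lifts to the blow-up, and makes $\phi$ equivariant. Its fixed locus upstairs consists of the point $[1:0]$, the proper transform $\Bl_S(\BP U)$ (pointwise fixed), and a section $S_0 \cong S$ of the exceptional divisor $E$ lying in the normal direction to $\BP U$ (of weight $-1$, hence disjoint from $F \subset \Bl_S(\BP U)$). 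Evaluating $\phi$ on these: one has $\phi([1:0]) = [1:0:0] \in \BP T$, while for $u_0 \in S$ the curve $\epsilon \mapsto [\epsilon : u_0]$, whose limit lies in $S_0$, satisfies $\phi^o([\epsilon:u_0]) = [\epsilon^2 : \epsilon u_0 : 0] = [\epsilon:u_0:0] \to [0:u_0:0] \in \BP U$ (using $\sigma(u_0,u_0)=0$), so $\phi(S_0) \subset \BP U$. Now let $z \in Z \cap \BP W$. As $\phi$ is proper and equivariant, the complete invariant fibre $\phi^{-1}(z)$ contains a fixed point; since neither $[1:0]$ nor $S_0$ maps into $\BP W$, that point lies on $\Bl_S(\BP U)$, giving $z \in \phi(\Bl_S(\BP U)) = Y$. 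This proves $Z \cap \BP W = Y$ and hence the proposition.

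The step I expect to require the most care is the analysis of the fixed locus of the lifted action on the blow-up, in particular the appearance of the weight $(-1)$ section $S_0$ of $E$ and the verification that its image lands in $\BP U$ rather than $\BP W$; this is precisely what prevents $Z \cap \BP W$ from being strictly larger than $Y$. The remaining inputs — that the fixed locus of a $\cit^*$-action on a smooth variety is smooth, and that a complete invariant fibre carries a fixed point — are standard.
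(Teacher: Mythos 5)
Your proof is correct, and it rests on the same central idea as the paper's: realize $Y$ inside the fixed locus of a natural $\cit^*$-action on the smooth variety $Z$, then invoke smoothness of fixed loci of torus actions on smooth varieties. The implementation, however, is genuinely different at two points. The paper lifts the action $[t:u]\mapsto[t:\lambda u]$ to $\Bl_S(\BP(T\oplus U))$ and then \emph{descends} it to $Z$ along $\phi$, using connectedness of the fibers of $\phi$ (Zariski's main theorem); it then shows only that $Y$ is an irreducible component of $Z^{\cit^*}$, by observing that the orbit of a general point of $Z$ has its limit in $Y$. You instead put the action directly on the ambient $\BP(T\oplus U\oplus W)$ with weights $0,1,2$ --- incidentally the very action the paper introduces later in Proposition \ref{p.Zqsv} --- so that invariance of $Z$ is immediate from equivariance of $\phi^o$ and the descent step is sidestepped entirely; and you prove the sharper equality $Y=Z\cap\BP W$ by classifying the fixed locus upstairs as $\{[1:0]\}\sqcup\Bl_S(\BP U)\sqcup S_0$ and placing a fixed point in each complete invariant fiber. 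Your route is a bit longer but more self-contained (no descent of the action, no limit-of-general-orbit argument) and yields the extra information that $Y$ exhausts $Z\cap\BP W$, rather than merely being a component of the fixed locus. The delicate steps you flagged do check out: the equivariant splitting of the normal bundle of $S$ produces the weight-$(-1)$ section $S_0$ of $E$, each point of $S_0$ is the limit of a curve $\epsilon\mapsto[\epsilon:u_0]$ with $u_0\in S$, and continuity gives $\phi(S_0)\subset\BP U$, which is disjoint from $\BP W$ and so rules out $S_0$ in the fiber argument.
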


\begin{proof}
Consider the $\cit^*$-action on $\pit (T \oplus U)$ given by $\lambda \cdot [t:u] = [t: \lambda u]$ for all $\lambda \in \cit^*$.
The fixed locus of this $\cit^*$-action has two components: the isolated point $ \BP T$ and  the hyperplane $\pit U.$
  As $S$ is contained in the fixed locus, this $\cit^*$-action lifts to $\Bl_S(\pit (T \oplus U))$. Since the morphism $\phi: \Bl_S(\pit (T \oplus U)) \to Z$ has connected fibers by Zariski main theorem,  the $\cit^*$-action on $\Bl_S(\pit(T \oplus U))$ descends to a $\cit^*$-action
on $Z$ such that $\phi$ is $\cit^*$-equivariant. This implies that the proper image of $\pit U$ under $\phi^o$, which is $Y$, is contained in
the fixed locus of the $\cit^*$-action on $Z$.  On the other hand,  the $\cit^*$-orbit of a general point of
$Z$ has a limit point in $Y$, hence $Y$ is an irreducible component of the fixed locus of the $\cit^*$-action
on $Z$. This shows that $Y$ is nonsingular because $Z$ is nonsingular. \end{proof}

\begin{definition}\label{d.secant}
For a projective submanifold $M \subset \BP^N$ and two distinct points $x \neq y \in M$, denote by $\ell_{x,y} \subset \BP^N$
the line joining $x$ and $y$. Such a line is called a {\em secant line} of $M$.   The {\em secant variety} of $M$ is $${\rm Sec}(M) = \mbox{ the closure of } \cup_{x\neq y \in M} \ell_{x,y}.$$
  The {\em secant defect} of $M$ is defined by  $\delta_M= 2 \dim M + 1 -\dim {\rm Sec}(M).$
\end{definition}

\begin{proposition}\label{p.secant}
Let $S \subset \BP U$ be a special quadratic manifold.  Let  $\pi: \Bl_S(\BP U) \to \BP U$ be  the blow-up morphism. Then ${\rm Sec}(S) = \BP U$ and
the morphism $\psi: \Bl_S(\BP U) \to Y$ has the following properties:
\begin{itemize}
\item[(1)] $\psi$ contracts the proper transform $\pi^{-1}[\ell]$ of a secant line $\ell$  of $S$ not contained in $S$  to a point in $Y$;
\item[(2)]  the fibers of the exceptional divisor  $\pi|_F: F\to S$ are sent isomorphically to linear subspaces in $Y$; and
\item[(3)] $\psi (F) = Y$.
\end{itemize}\end{proposition}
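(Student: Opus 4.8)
The plan is to treat the three assertions in turn, deducing $\Sec(S) = \BP U$ along the way. Throughout I write $\psi^o = [Q_0 : \cdots : Q_{a-1}]$, where $Q_0, \ldots, Q_{a-1}$ is a basis of $W^* \subset \SYM^2 U^*$, so that $S = B(\sigma)$ is the common zero locus of the $Q_i$. For the contraction of secant lines (1), the key is an elementary restriction computation. Let $\ell = \ell_{x,y}$ be a secant line with $x \ne y \in S$ and $\ell \not\subset S$. Since each $Q_i$ vanishes on $S$, the restriction $Q_i|_\ell \in H^0(\ell, \sO_\ell(2))$ is a binary quadratic form vanishing at $x$ and $y$, hence $Q_i|_\ell = c_i \cdot s_{xy}$, where $s_{xy}$ is the section of $\sO_\ell(2)$ (unique up to scale) with divisor $x+y$ and $c_i \in \cit$. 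As $\ell \not\subset S = \bigcap_i \{Q_i = 0\}$, not all $c_i$ vanish, so $\psi^o|_{\ell \setminus \{x,y\}} \equiv [c_0 : \cdots : c_{a-1}]$ is constant. Since the strict transform $\pi^{-1}[\ell] \cong \BP^1$ is irreducible and $\psi$ is a morphism agreeing with this constant on the dense open subset $\ell \setminus \{x,y\}$, it contracts $\pi^{-1}[\ell]$ to the point $[c_0 : \cdots : c_{a-1}] \in Y$. (The same computation with $x=y$ and $s_{xy}$ a double zero shows tangent lines are contracted as well.)

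Next I would establish $\Sec(S) = \BP U$, which I expect to be the main obstacle, since the computation in (1) only shows that \emph{existing} secants are contracted. The idea is to exploit the birational morphism $\phi \colon \Bl_S \BP(T\oplus U) \to Z$ together with $\rho(Z)=1$. As $\Bl_S \BP(T\oplus U)$ has Picard number two and $\phi$ is birational onto the smooth variety $Z$ of Picard number one, $\phi$ is the contraction of the unique extremal ray. The proper transform of $\BP U$ is $\Bl_S \BP U$, and it is contracted by $\phi$ onto $Y$, of strictly smaller dimension, since $\psi = \phi|_{\Bl_S \BP U}$; hence every fibre of $\psi$ is covered by rational curves of the extremal ray, i.e. by curves of $\Bl_S \BP U$ contracted by $\psi$. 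The crucial point is that the minimal such curves are precisely the strict transforms of secant and tangent lines of $S$: a line $\ell \subset \BP U$ is contracted by $\psi^o$ exactly when all $Q_i|_\ell$ are proportional to a common binary quadratic $f$, whose zeros then lie in $S=\bigcap_i\{Q_i=0\}$, forcing $\ell$ to meet $S$ in a length-two subscheme. Granting this, each fibre of $\psi^o$ is swept out by secant lines; in particular a general point $p \in \BP U$ lies on a secant line, so $\Sec(S) = \BP U$. The delicate part is the identification of the minimal contracted curves with secant lines; I would pin this down using the extremal-ray structure and the degree bound implicit in (1).

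With $\Sec(S) = \BP U$ in hand, assertion (3) follows quickly. For general $q \in Y$ choose a general $p \in (\psi^o)^{-1}(q)$; by $\Sec(S)=\BP U$ there is a secant line $\ell \ni p$ with $\ell \not\subset S$, and by (1) $\psi^o$ contracts $\ell$ to $\psi^o(p)=q$. Thus the closure of $(\psi^o)^{-1}(q)$ contains $\ell$, which meets $S$ at its two endpoints, so the fibre $\psi^{-1}(q)$ meets $F$ along the points of $\pi^{-1}[\ell]$ lying over $\ell \cap S$; hence $q \in \psi(F)$. As this holds for general $q$ and $\psi(F)$ is closed, $\psi(F)=Y$.

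Finally, (2) is a local computation on the exceptional divisor, which I would phrase via line bundles. Since $\psi^o$ is cut out by quadrics vanishing to order exactly one along the smooth centre $S$, one has $\psi^* \sO_Y(1) = \pi^* \sO_{\BP U}(2) \otimes \sO(-F)$. Restricting to a fibre $F_s \cong \BP^{c-1}$ of $\pi|_F \colon F \to S$, with $c = \codim(S \subset \BP U)$, the factor $\pi^*\sO_{\BP U}(2)$ is trivial while $\sO(-F)|_{F_s} \cong \sO_{\BP^{c-1}}(1)$ is the relative hyperplane bundle, so $\psi^*\sO_Y(1)|_{F_s} \cong \sO_{\BP^{c-1}}(1)$. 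Therefore $\psi|_{F_s}$ is given by a linear subsystem of $|\sO_{\BP^{c-1}}(1)|$. Because $\psi$ is a \emph{morphism}, this subsystem is base-point-free, and a base-point-free subsystem of the hyperplane system on $\BP^{c-1}$ is necessarily complete; hence $\psi|_{F_s}$ is an isomorphism of $F_s$ onto a linearly embedded $\BP^{c-1} \subset \BP W$, which lies in $Y$ since $\psi(F)\subseteq Y$. This proves (2).
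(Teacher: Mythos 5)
Your parts (1), (2) and (3) are fine: (1) is the paper's argument in coordinates (the paper phrases it via the Veronese embedding and $\Ker(\sigma)$, but it is the same computation), (3) matches the paper's one-line deduction of (3) from $\Sec(S)=\BP U$ and (1), and your line-bundle proof of (2) is a correct and clean alternative to the paper's, which instead realizes $\psi$ as the restriction of the projection $\Bl_{\BP {\rm Ker}(\sigma)}(\BP \SYM^2 U) \to \BP W$ and uses the product structure of that exceptional divisor. The genuine gap is in the claim $\Sec(S)=\BP U$, which is the pivot of the whole proposition; note that the paper does not reprove it but imports it from Proposition 2.3(a) of Ein--Shepherd-Barron, so this is precisely the point where a blind proof had to do real work, and yours does not close it.

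Concretely, two steps fail as written. First, you assert that $\phi$ contracts $\Bl_S(\BP U)$ onto $Y$ ``of strictly smaller dimension, since $\psi=\phi|_{\Bl_S \BP U}$'' --- the ``since'' clause is vacuous: a priori the unique $\phi$-exceptional divisor could be $E$ rather than the proper transform of $\BP U$, with $\psi^o$ generically finite, which is exactly the scenario in which $\Sec(S)\subsetneq \BP U$. This step is repairable: since the scheme-theoretic base locus of the quadrics is $S$, one has $\phi^*\sO_Z(1)=\sO(2H-E)$ on $\Bl_S\BP(T\oplus U)$, so every $\phi$-contracted curve class is proportional to $l-2e$ (with $l$ a general line class and $e$ a line in a fiber of $E$); then $E\cdot(l-2e)=2>0$ rules out $E$ as the exceptional divisor (the exceptional divisor of a birational morphism onto a smooth variety is covered by curves on which it is negative), while $(H-E)\cdot(l-2e)=-1<0$ identifies the contracted divisor as $\Bl_S(\BP U)$. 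Second, and more seriously, the step you yourself flag as ``the delicate part'' is left open and is not closed by your characterization of contracted \emph{lines}: an irreducible $\phi$-contracted curve has class $k(l-2e)$, i.e.\ is a degree-$k$ curve meeting $S$ with multiplicity $2k$, and nothing you say produces curves with $k=1$ through a general point of a fiber. One way to finish is to invoke Theorem 1.1 of \cite{ES} for the divisorial contraction $\phi$ (using the smoothness of $Y$, which the paper proves \emph{before} this proposition via the $\cit^*$-action, so there is no circularity) to conclude that $\psi$ is a projective bundle whose fiber-lines have class $l-2e$ and hence are strict transforms of secant lines; but some such input, or the citation the paper uses, is indispensable. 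As it stands, your proposal proves (1) and (2) rigorously and (3) conditionally, but not the equality $\Sec(S)=\BP U$.
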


\begin{proof}
The equality ${\rm Sec}(S) = \BP U$ is from Proposition 2.3(a) of \cite{ES}.

Recall that $\psi^o: \BP U \dasharrow Y $ is induced by the projection $\sigma: \SYM^2 U \to W$ composed with
 the second Veronese embedding $v_2: \BP U \to \BP \SYM^2 U$.
Let $\ell \subset \BP U$ be a secant line of $S$ not contained in $S$.  Let $L \subset \SYM^2 U$ be the 3-dimensional subspace
spanned by $v_2(\ell).$ Since $v_2(\ell)$ intersects $\BP {\rm Ker}(\sigma)$ at two distinct points,
we have $\dim L \cap {\rm Ker}(\sigma) = 2$. Thus $\sigma(L)$ is 1-dimensional, i.e., $\psi$ contracts $\ell$, proving (1).

Note that $\psi$ can be viewed as the restriction of the natural projection $p: \Bl_{\BP {\rm Ker}(\sigma)}(\BP \SYM^2 U) \to \BP W$
to $\Bl_S(v_2(\BP U))$. As the normal bundle of $\BP {\rm Ker}(\sigma)$ in $\BP \SYM^2U$ is $\0(1)^{s}$ for some $s$,
the exceptional divisor of the blow-up is a product of two projective spaces, which can only be contracted by the projection to the other factor.
Hence the  projection $p$ sends a fiber of the blow-up  isomorphically to a linear subspace in $\BP W$. This implies (2).

Finally, (3) is a consequence of ${\rm Sec}(S) = \BP U$ and (1). \end{proof}

Let us recall the following basic terminology.

 \begin{definition}\label{d.cc}
Let $M \subset \BP^N$ be a projective submanifold. \begin{itemize}
\item[(1)] $M$ is {\em conic-connected} if there exists an irreducible conic curve through two general points of $M$.
\item[(2)] $M$ is a {\em prime Fano manifold} if  ${\rm Pic}(M) $
is generated by $\0_M(1)$ and $M$ is covered by lines. \end{itemize} \end{definition}

 \begin{proposition}\label{p.Ypf}
 Let $S \subset \BP U$ be a special quadratic manifold. Then $Y$ is a conic-connected prime Fano manifold.
 \end{proposition}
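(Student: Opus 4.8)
The plan is to verify the three required properties in turn: that $Y$ is conic-connected, that $Y$ is covered by lines, and that $\Pic(Y)=\Z\cdot\0_Y(1)$. First I would dispose of the degenerate case: if $\dim Y=0$ then $Y$ is a point and everything is trivial, so I assume $\dim Y\geq 1$. In that case $S$ cannot be a quadric hypersurface (that would make $\psi^o$ the map attached to a single quadric and collapse $Y$ to a point), so $\codim_{\BP U}S\geq 2$. This codimension bound is exactly what powers both the conic-connectedness and the line-covering arguments.

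For conic-connectedness I would exploit that $\psi^o$ is induced by $\sigma\circ v_2$, with $v_2:\BP U\to\BP\SYM^2U$ the second Veronese embedding, as recalled in the proof of Proposition \ref{p.secant}. Choose general points $x_1,x_2\in\BP U$ and put $y_i=\psi^o(x_i)$; since $\psi^o$ is dominant these are general points of $Y$, and they are distinct because $\dim Y\geq 1$. As $\codim_{\BP U}S\geq 2$, the general line $\ell=\ell_{x_1,x_2}$ is disjoint from $S$, so $v_2(\ell)$ is a smooth conic whose linear span $L$ satisfies $L\cap\Ker(\sigma)=0$. Hence $\sigma$ is injective on $L$, and $\psi^o(\ell)=\sigma(v_2(\ell))$ is a smooth conic in $Y$ through $y_1$ and $y_2$. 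This is the opposite extreme of the computation in Proposition \ref{p.secant}(1), where a secant line forces $\dim(L\cap\Ker(\sigma))=2$ and the conic degenerates to a point. Therefore $Y$ is conic-connected.

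That $Y$ is covered by lines is immediate from Proposition \ref{p.secant}: by (2) each fiber of $\pi|_F:F\to S$ maps isomorphically onto a linear subspace $\Lambda_s\subset Y$ of dimension $\codim_{\BP U}S-1\geq 1$, and by (3) we have $\bigcup_s\Lambda_s=\psi(F)=Y$. Each $\Lambda_s$ is swept out by lines of $\BP W$ contained in $Y$, so $Y$ is covered by lines $\ell$ with $\0_Y(1)\cdot\ell=1$. To compute $\Pic(Y)$ I would run a Mori contraction argument on $\psi:\Bl_S(\BP U)\to Y$. Writing $H$ for the hyperplane class of $\BP U$ and $F$ for the exceptional divisor, one has $\rho(\Bl_S(\BP U))=2$ and $\psi^*\0_Y(1)=2\pi^*H-F=:D$, which is nef. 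The proper transform of a secant line has class $\gamma$ with $\pi^*H\cdot\gamma=1$ and $F\cdot\gamma=2$, so $D\cdot\gamma=0$; since $N_1(\Bl_S(\BP U))$ is two-dimensional, the hyperplane $D^{\perp}$ is exactly the line $\R\gamma$. Because $\mathrm{Sec}(S)=\BP U$, a secant line through a general point is contracted by $\psi$, so $\psi$ is not finite and the classes of $\psi$-contracted curves span precisely $\R\gamma$. Passing to the Stein factorization $\psi=g\circ\psi'$ with $\psi':\Bl_S(\BP U)\to Y'$ a fibration with connected fibers, the rank drop equals the dimension of that span, so $\rho(Y')=\rho(\Bl_S(\BP U))-1=1$; and since $g:Y'\to Y$ is finite surjective it induces an injection on $N^1$, giving $\rho(Y)\leq\rho(Y')=1$, hence $\rho(Y)=1$. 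Finally, $Y$ is smooth and unirational, so $\Pic(Y)$ is free of rank $\rho(Y)=1$, and since $Y$ is covered by lines $\ell$ with $\0_Y(1)\cdot\ell=1$ the class $\0_Y(1)$ is primitive; therefore $\Pic(Y)=\Z\cdot\0_Y(1)$ and $Y$ is a conic-connected prime Fano manifold.

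The main obstacle is the last step, namely making sure that passing from $\Bl_S(\BP U)$ to $Y$ lowers the Picard number by exactly one and not more. I would sidestep the need to prove directly that the fibers of $\psi$ are connected by inserting the Stein factorization $Y'$: there the contraction of the single ray $\R\gamma$ makes $\rho(Y')=1$ transparent, and only the standard fact that a finite surjective morphism cannot decrease the Picard number is then used. The crucial numerical input — that all $\psi$-contracted curves are numerically proportional — is automatic here because $D^{\perp}$ is one-dimensional, so no analysis of the geometry of the entry loci (the fibers of $\psi$) is required.
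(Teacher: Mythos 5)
Your proposal follows the same two pillars as the paper's own proof: conic-connectedness of $Y$ via images under $\sigma$ of the conics $v_2(\ell)$ on the Veronese variety, and Picard number $1$ via the fact that $\Bl_S(\BP U)$ has Picard number $2$ while $\psi$ contracts the strict transforms of secant lines (Proposition \ref{p.secant}(1)), with the covering family of lines supplied by Proposition \ref{p.secant}(2),(3). Your Mori-theoretic elaboration of the second pillar is sound, but the Stein factorization detour is superfluous: since $\psi$ is surjective, $\psi^*$ embeds $N^1(Y)$ into $\gamma^{\perp}$, which is one-dimensional by your computation $D\cdot\gamma=(2\pi^*H-F)\cdot\gamma=2-2=0$, so $\rho(Y)\le 1$ directly --- this is exactly the paper's one-line argument made explicit. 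Your care about freeness of $\Pic(Y)$ (smoothness from Proposition \ref{p.Ysmooth} plus unirationality) and about primitivity of $\sO_Y(1)$ via lines of degree $1$ fills in details the paper leaves implicit.

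One step, however, is not justified as written: the claim that $\ell\cap S=\emptyset$ forces $L\cap\Ker(\sigma)=0$. Any nonzero $q\in L=\SYM^2\langle u_1,u_2\rangle$ factors as a symmetric product $q=vw$ with $[v],[w]\in\ell$, and $q\in\Ker(\sigma)$ iff $\sigma(v,w)=0$; disjointness of $\ell$ from $S$ only excludes the case $[v]=[w]$, hence only yields $\dim(L\cap\Ker(\sigma))\le 1$ (a two-dimensional intersection would give a line in $\BP L\cong\BP^2$, which necessarily meets the conic $v_2(\ell)$ and so would produce a point of $\ell\cap S$). The remaining possibility --- a pair $[v]\neq[w]$ on $\ell$ with $\sigma(v,w)=0$ --- is not ruled out by disjointness, and it is not vacuous for general systems of quadrics: for a general pencil ($\dim W=2$) one has $L\cap\Ker(\sigma)\neq 0$ for \emph{every} line, and then $\psi^o(\ell)$ is a line traversed $2{:}1$ rather than a conic. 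Two repairs are available. Either note that if this degeneration happens for a general $\ell$, then two general points of $Y$ are joined by a line lying on $Y$, which forces $Y$ to be a linear subspace of $\BP W$, hence $Y=\BP W$ by nondegeneracy, which is still conic-connected; or exclude it via linearity of the entry loci (Proposition 2.3(b) of \cite{ES}, cf.\ Proposition \ref{p.linear}): the $2{:}1$ involution would place two general points of $\ell$ in the same fiber of $\psi$, and since such a fiber maps to a linear subspace of $\BP U$, the line $\ell$ would lie in that fiber and be contracted, contradicting that its image is a line. With this patch your argument is complete; note that the paper's own terse statement, that the image of the conic-connected Veronese variety is conic-connected, silently passes over the same degeneration (and the paper's statement, like your trivial disposal of $\dim Y=0$, tacitly sets aside the case where $S$ is a quadric hypersurface and $Y$ is a point).
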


 \begin{proof}
Recall that $\psi^o: \BP U \dasharrow Y$ is induced by the projection $\sigma: \SYM^2 U \to W$ composed with
 the second Veronese embedding $v_2: \BP U \to \BP \SYM^2 U$.
 Since the Veronese variety $v_2(\BP U) \subset \BP (\SYM^2 U)$
 is conic-connected, the image $Y$ must be conic-connected.

Since $\Bl_S(\BP U)$ has Picard number 2 and $\psi: \Bl_S(\BP U) \to Y$ contracts some curves by Proposition \ref{p.secant}
(1), we see that
$Y$ has Picard number 1. By Proposition \ref{p.secant} (2), lines cover $Y$. Thus $Y$ is a prime Fano manifold. \end{proof}

 \begin{proposition}\label{p.number}
  The secant defect $\delta = \delta_S$ of a special quadratic manifold $S \subset \pit U$ satisfies
 $\delta = 2 \dim S +2 - \dim U$
and  $$\dim Y = 2(\dim S - \delta) = 2 (\dim U - \dim S-2).$$
 \end{proposition}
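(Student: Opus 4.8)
The first equality is immediate from Proposition \ref{p.secant}. Since $\Sec(S) = \BP U$, we have $\dim \Sec(S) = \dim U - 1$, so by Definition \ref{d.secant}
\[
\delta_S = 2\dim S + 1 - \dim \Sec(S) = 2\dim S + 2 - \dim U .
\]
Using this identity one checks $2(\dim S - \delta_S) = 2(\dim U - \dim S - 2)$, so the two displayed expressions for $\dim Y$ agree and it suffices to prove $\dim Y = \dim U - 2 - \delta_S$. As $\psi\colon \Bl_S(\BP U) \to Y$ is surjective with $\dim \Bl_S(\BP U) = \dim U - 1$, this is equivalent to showing that the general fibre of $\psi$ has dimension $\delta_S + 1$.

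Fix a general point $z \in \BP U$ and set $y = \psi^o(z)$. For $x \in \BP U$ let $\sigma_x\colon U \to W$ be the linear map $v \mapsto \sigma(x,v)$. Since $\psi^o$ is $u \mapsto [\sigma(u,u)]$, its differential at $z$ is (up to scalar) $v \mapsto \sigma_z(v)$; a direct computation then shows that the general fibre of $\psi$ has dimension $\dim \ker \sigma_z$, so the claim becomes $\dim \ker \sigma_z = \delta_S + 1$. Because $\Sec(S) = \BP U$, the point $z$ lies on a secant chord $\langle p,q \rangle$ whose endpoints $p,q \in S$ are general; normalize $z = p+q$. As $S$ is cut out by the quadrics of $W^*$ and the pairing $W^* \times W$ is perfect, the affine tangent space of $S$ at a point $x$ is exactly $\ker \sigma_x$, so $\dim \ker\sigma_p = \dim\ker\sigma_q = \dim S + 1$. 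Terracini's lemma combined with $\Sec(S) = \BP U$ yields $\ker\sigma_p + \ker\sigma_q = U$, hence $\dim(\ker\sigma_p \cap \ker\sigma_q) = 2(\dim S + 1) - \dim U = \delta_S$.

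For the lower bound, observe that $\sigma_z = \sigma_p + \sigma_q$, so $\ker\sigma_p \cap \ker\sigma_q \subset \ker\sigma_z$; moreover $p - q \in \ker\sigma_z$, since $\sigma(z, p-q) = \sigma(p,p) - \sigma(q,q) = 0$, whereas $p - q \notin \ker\sigma_p$ because $\sigma(p,p-q) = -\sigma(p,q) \neq 0$ for a genuine secant chord. Therefore $\dim\ker\sigma_z \geq \delta_S + 1$, i.e. $\dim Y \leq \dim U - 2 - \delta_S$. For the reverse inequality I consider the linear map $\Theta\colon \ker\sigma_z \to W$, $v \mapsto \sigma(p,v)$. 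Its kernel is precisely $\ker\sigma_p \cap \ker\sigma_q$ (dimension $\delta_S$), and for $v \in \ker\sigma_z$ one has $\sigma(p,v) = -\sigma(q,v)$, so $\Im \Theta \subset \Im\sigma_p \cap \Im\sigma_q$. Hence
\[
\dim\ker\sigma_z \;\leq\; \delta_S + \dim\bigl(\Im\sigma_p \cap \Im\sigma_q\bigr).
\]

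Thus the whole statement reduces to the one remaining point, which I expect to be the main obstacle: that $\Im\sigma_p \cap \Im\sigma_q = \cit\,\sigma(p,q)$ is one–dimensional for general $p,q \in S$. Since $\Im\sigma_x$ is the affine cone over the linear subspace $\psi(F_x) \subset Y$ of Proposition \ref{p.secant}(2), where $F_x := (\pi|_F)^{-1}(x)$, this says that the two linear spaces $\psi(F_p)$ and $\psi(F_q)$ meet exactly in the single point $[\sigma(p,q)] = \psi^o(\langle p,q\rangle)$. Equivalently, it says that the general fibre of $\psi$ is precisely the cone with vertex $z$ over the entry locus $\Sigma_z \subset S$ (the locus of points of $S$ lying on secant lines through $z$): granting that $\dim \Sigma_z = \delta_S$, the expected dimension of the entry locus of a general point of $\Sec(S) = \BP U$, this cone has dimension $\delta_S + 1$ and the upper bound follows. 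I intend to establish the one–point intersection either from the structure theory of entry loci of secant–defective varieties cut out by quadrics, or by exploiting the nonsingularity of $Z(\sigma)$ — a hypothesis not yet used in this argument — which controls how the images of the exceptional fibres $F_x$ are allowed to overlap inside $Y$.
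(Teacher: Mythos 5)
Your first equality and the reduction to a fiber-dimension count are fine, and your route is genuinely different from the paper's: the paper's proof of Proposition \ref{p.number} is essentially a citation --- the identity $\delta = 2\dim S + 2 - \dim U$ follows from $\Sec(S)=\BP U$ (Proposition \ref{p.secant}) exactly as you argue, but the substantive equality $\dim Y = 2(\dim S - \delta)$ is simply quoted from Corollary 2.3 of \cite{AS2}, with the last equality being arithmetic. You instead attempt a self-contained proof via the tangent map of $\psi^o$ and Terracini, and the part you complete is correct: the identification $T_x\widehat{S} = \ker\sigma_x$ (valid because $S=B(\sigma)$ is the \emph{nonsingular scheme-theoretic} base locus, so the Zariski tangent space of the quadratic scheme equals that of $S$), the computation $\dim(\ker\sigma_p\cap\ker\sigma_q)=\delta$, and the lower bound $\dim\ker\sigma_z\geq\delta+1$ via $p-q$, giving $\dim Y \leq 2(\dim S-\delta)$ honestly.

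However, the reverse inequality is exactly where the content of the proposition lies, and your proposal does not establish it. The claim that $\Im\sigma_p\cap\Im\sigma_q = \cit\,\sigma(p,q)$ for general $p,q\in S$ --- equivalently, that the linear spaces $\psi(F_p)$ and $\psi(F_q)$ from Proposition \ref{p.secant}(2) meet in a single point, equivalently that general $\psi$-fibers are precisely the $(\delta+1)$-dimensional cones $C_z(S)$ --- is true in all the examples, but it is essentially the content of Corollary 2.3 of \cite{AS2}, so deferring it means the proof is not done; ``I intend to establish'' is a plan, not an argument. Note also that the QEL property (Proposition \ref{p.SQEL}, i.e.\ Proposition 2.3(b) of \cite{ES}) only gives you that $C_z(S)$ is a linear $\BP^{\delta+1}$ \emph{contained in} the fiber --- which is your lower bound again --- not that the fiber is no larger. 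Finally, beware of circularity in your second suggested strategy: in this paper the nonsingularity of $Z(\sigma)$ controls the fiber structure through Propositions \ref{p.blow} and \ref{p.linear}, but both of those invoke Proposition \ref{p.number} itself (via $k=\delta+1$), so they cannot be used to close your gap. As it stands you have a correct proof of one inequality and a well-motivated but unproven reduction for the other; to complete it independently of \cite{AS2} you would need an actual argument that a general fiber of $\psi$ contains no point outside $C_z(S)$ (for instance, analyzing how $\psi^o$ restricts to the line joining two general points of a fiber, where degree-two behavior must be excluded).
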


 \begin{proof} The equality
 $\delta = 2 \dim S +2 - \dim U$ is a consequence of ${\rm Sec}(S) = \BP U$ in Proposition \ref{p.secant} and the equality
 $\dim Y = 2(\dim S - \delta) $ is from Corollary 2.3 of \cite{AS2}.  Finally, $\dim S - \delta = \dim U - \dim S -2$
 from $\delta = 2 \dim S + 2 - \dim U$. \end{proof}

 \begin{proposition}\label{p.blow}
 Let $S\subset \BP U$ be a special quadratic manifold with secant defect $\delta$. Then there is a natural identification
 $\Bl_S(\BP (T \oplus U) )= \Bl_Y(Z)$
 such that $\phi: \Bl_S(\BP (T \oplus U)) \to Z$ coincides with the blow-up of $Z$ along $Y$ with the exceptional divisor $\Bl_S(\BP U)$. In particular, the morphism $\psi: \Bl_S(\BP U) \to Y$ is a $\pit^{\delta +1}$-bundle. \end{proposition}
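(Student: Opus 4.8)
The plan is to identify $\phi$ with the blow-up of $Z$ along $Y$ by verifying the universal property of blow-ups and then matching Picard numbers; the $\pit^{\delta+1}$-bundle structure of $\psi$ will then come for free, as the restriction of a blow-up to its exceptional divisor.

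First I would analyze $\phi$ as a contraction. Since $S$ is smooth and irreducible and $\BP(T\oplus U)$ has Picard number $1$, the variety $X:=\Bl_S(\BP(T\oplus U))$ is smooth with $\rho(X)=2$; write $\pi\colon X\to\BP(T\oplus U)$ for the blow-down, with exceptional divisor $E$ over $S$, and recall from the diagram of Definition \ref{d.quadrics} that $\Bl_S(\BP U)\subset X$ is the proper transform of $\BP U$ and $\psi=\phi|_{\Bl_S(\BP U)}$. The morphism $\phi\colon X\to Z$ is birational onto the smooth variety $Z$, and it contracts the prime divisor $\Bl_S(\BP U)$ onto $Y$, since by Proposition \ref{p.number}
\[
\dim\Bl_S(\BP U)-\dim Y=(\dim U-1)-2(\dim U-\dim S-2)=\delta+1>0 .
\]
For a birational morphism of smooth projective varieties one has $\rho(X)=\rho(Z)+r$, where $r\ge 1$ is the number of $\phi$-exceptional prime divisors; as $\rho(X)=2$ and $\rho(Z)\ge 1$, this forces $r=1$ and $\rho(Z)=1$. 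Thus $\Bl_S(\BP U)$ is the only $\phi$-exceptional divisor, $E$ is not contracted, and $\phi$ is an isomorphism over $Z\setminus Y$.

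Next I would reduce the blow-up assertion to the invertibility of the inverse image ideal $\sI_Y\cdot\sO_X$. Granting that, the universal property of the blow-up yields a factorization $X\xrightarrow{g}\Bl_Y(Z)\to Z$ of $\phi$; here $g$ is a birational morphism between the smooth projective varieties $X$ and $\Bl_Y(Z)$, and $\rho(\Bl_Y(Z))=\rho(Z)+1=2=\rho(X)$. A birational morphism of smooth projective varieties with equal Picard numbers contracts no divisor, hence is small, hence is an isomorphism by van der Waerden purity. This identifies $\phi$ with the blow-up of $Z$ along the smooth center $Y$ (Proposition \ref{p.Ysmooth}) and $\Bl_S(\BP U)$ with its exceptional divisor, so that $\psi$ becomes the projection $\BP(N_{Y/Z})\to Y$, a $\pit^{c-1}$-bundle with $c=\codim_Z Y=\dim U-\dim Y=\delta+2$; that is, $\psi$ is a $\pit^{\delta+1}$-bundle.

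The crux, which I expect to be the main obstacle, is the invertibility of $\sI_Y\cdot\sO_X$, and here I would argue directly from the explicit form of $\phi^o$. As $Y=Z\cap\BP W$ is cut out in $Z$ by the vanishing of the $T$- and $U$-coordinates, $\sI_Y\cdot\sO_X$ is generated by the pullbacks of those coordinates, namely by the sections $t^2$ and $tu_1,\dots,tu_{\dim U}$ of $\sL:=\phi^*\sO(1)=\sO_X(2H-E)$, with $H=\pi^*\sO(1)$. Every point of $\Bl_S(\BP U)$ lies over a point of $\BP U$, where the $u_i$ never vanish simultaneously; choosing $u_{i_0}\neq 0$ there, the generator $tu_{i_0}$ is, up to a unit, a local equation of $\Bl_S(\BP U)$, while $t^2$ and the remaining $tu_i$ are its multiples, so that locally $\sI_Y\cdot\sO_X=\sO_X(-\Bl_S(\BP U))$. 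The subtle case is over $S$, in a chart of the blow-up near $F=E\cap\Bl_S(\BP U)$: there the quadrics $t^2,tu_i$ acquire an extra zero along $E$, and one must check that, after passing to their proper transforms as sections of $\sL$, the generator $tu_{i_0}$ still cuts out $\Bl_S(\BP U)$ with multiplicity one while the residual ideal $(t,u_1,\dots,u_{\dim U})$ remains the unit ideal; the point is precisely to rule out any spurious factor along $E$. Once this local computation is in place, $\sI_Y\cdot\sO_X$ is invertible and the argument above applies.
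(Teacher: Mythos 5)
Your proposal is correct, but it follows a genuinely different route from the paper's, which is essentially a two-line citation: since $Y$ is nonsingular (Proposition \ref{p.Ysmooth}) and $Z$ is nonsingular by hypothesis, the paper invokes Proposition \ref{p.secant} together with Theorem 1.1 of \cite{ES} --- a ready-made recognition criterion identifying such birational morphisms onto smooth varieties as smooth blow-ups --- and then computes $k=\dim Z-\dim Y-1=\delta+1$ via Proposition \ref{p.number}. You instead reprove the needed recognition statement from scratch: Picard-rank bookkeeping ($\rho(X)=\rho(Z)+r$) to show $\Bl_S(\BP U)$ is the unique exceptional divisor and $\rho(Z)=1$, principality of $\sI_Y\cdot\sO_X$ verified from the explicit quadrics, the universal property to produce $g\colon X\to \Bl_Y(Z)$, and purity to conclude $g$ is an isomorphism. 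Your route buys self-containedness (no appeal to \cite{ES}) at the price of the local computation you flag; the paper's buys brevity. Both rely on the same inputs proved earlier by the $\cit^*$-action and secant-variety arguments, namely smoothness of $Y$ and the dimension formula of Proposition \ref{p.number}, so there is no circularity in either.

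Two remarks on your flagged crux, which does close as you anticipate. First, the check over $S$ is cleanest at the level of divisors of the sections of $\sL=\sO_X(2H-E)$: writing $\widetilde{\BP U}=\Bl_S(\BP U)$ and $H_i=\{u_i=0\}$, one has $\mathrm{div}(t^2)=2\widetilde{\BP U}+E$ and $\mathrm{div}(tu_i)=\widetilde{\BP U}+\widetilde{H_i}$ with \emph{no} $E$-component, because $S\not\subset H_i$ (indeed $S$ is nondegenerate in $\BP U$, as $\Sec(S)=\BP U$ by Proposition \ref{p.secant}); the common divisorial part is exactly $\widetilde{\BP U}$, and the residual divisors $\widetilde{\BP U}+E,\widetilde{H_1},\dots,\widetilde{H_{\dim U}}$ have empty common intersection since $\bigcap_i \widetilde{H_i}\subset \pi^{-1}(\BP T)$, which misses $\widetilde{\BP U}\cup E$ because $\BP T\notin \BP U$. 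Hence the generated ideal is $\sO_X(-\widetilde{\BP U})$ everywhere, including along $F$, with no spurious factor along $E$. Second, you assert scheme-theoretically that $Y=Z\cap \BP W$ is cut out by the $T$- and $U$-coordinates; this is neither obvious nor needed. It suffices that these linear forms vanish on $Y\subset\BP W$, which gives $\sO_X(-\widetilde{\BP U})\subseteq \sI_Y\cdot\sO_X$ by the computation just made, while $\phi(\widetilde{\BP U})=Y$ gives $\sI_Y\cdot\sO_X\subseteq \sI_{\widetilde{\BP U}}=\sO_X(-\widetilde{\BP U})$; the two inclusions squeeze to equality, and the rest of your argument proceeds unchanged.
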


\begin{proof}
Since both $Y$ and $Z$ are nonsingular,
   Proposition \ref{p.secant} and Theorem 1.1 of \cite{ES}  imply that
$\Bl_S(\pit(T \oplus U))$ is also the blow-up of $Z$ along $Y$.  Thus $\psi$ is a $\pit^k$-bundle, where
$k = \dim Z - \dim Y-1$. By Proposition \ref{p.number}, we have $k = \delta +1$.   \end{proof}

 The following two propositions are direct consequences of Proposition \ref{p.blow}.

\begin{proposition}\label{p.linear}
 Let $S\subset \BP U$ be a special quadratic manifold with secant defect $\delta$. A general fiber of the $\pit^{\delta +1}$-bundle
 $\psi: \Bl_S(\BP U) \to Y$ in Proposition \ref{p.blow} is sent to a linear subspace in $\BP U$ by the blow-up morphism $\Bl_S(\BP U) \to \BP U$.
 \end{proposition}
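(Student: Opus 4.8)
The plan is to determine the restriction of the blow-down morphism to a general fiber by a single degree computation, and then to deduce linearity formally. Fix a general point $y \in Y$ and set $P := \psi^{-1}(y) \cong \BP^{\delta+1}$, the fiber of the bundle of Proposition \ref{p.blow}. Write $\pi \colon \Bl_S(\BP U) \to \BP U$ for the blow-down and $H := \pi^{*}\sO_{\BP U}(1)$. Since $H$ is globally generated (being the pullback of a globally generated sheaf) it is nef, and as $\Pic(P) = \Z \cdot \sO_{P}(1)$ we may write $H|_{P} = \sO_{P}(a)$ for some integer $a \geq 0$. The entire proof reduces to showing $a = 1$: once this is known, $\pi|_{P} \colon P \to \BP U$ is a morphism with $(\pi|_{P})^{*}\sO_{\BP U}(1) = \sO_{P}(1)$, hence is given by a base-point-free linear subsystem of the complete system $|\sO_{\BP^{\delta+1}}(1)|$; since any proper linear subsystem of $|\sO_{\BP^{\delta+1}}(1)|$ has a nonempty (linear) base locus, this subsystem must be the full one, so $\pi|_{P}$ is a linear embedding and $\pi(P) \subset \BP U$ is a linear $\BP^{\delta+1}$, as required.

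To prove $a = 1$ I would exhibit inside $P$ a curve of $H$-degree $1$. Because $\Sec(S) = \BP U$ by Proposition \ref{p.secant}, a general point $u \in \BP U$ lies on a secant line $\ell$ of $S$ with $\ell \not\subset S$. Its proper transform $\widetilde{\ell} \subset \Bl_S(\BP U)$ is contracted by $\psi$ to $\psi^{o}(u)$ by Proposition \ref{p.secant}(1); as $u \notin S$ we have $\psi(\widetilde{\ell}) = \psi^{o}(u)$, so $\widetilde{\ell}$ lies in the fiber over $\psi^{o}(u)$. Since $\ell \not\subset S$, the morphism $\pi$ restricts to an isomorphism $\widetilde{\ell} \xrightarrow{\sim} \ell$, whence $H \cdot \widetilde{\ell} = \deg \ell = 1$. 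Writing $e := \sO_{P}(1) \cdot \widetilde{\ell} \geq 1$, the identity $1 = H \cdot \widetilde{\ell} = a\,e$ forces $a = 1$ (and incidentally $e = 1$, so that $\widetilde{\ell}$ is a line of $P$; as a consistency check, the relation $\psi^{*}\sO_{Y}(1) = 2H - F$ gives $F \cdot \widetilde{\ell} = 2$, reflecting that $\ell$ meets $S$ in two points).

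The step that needs the most care is matching the abstract bundle fibers $\psi^{-1}(y)$ with the contracted secant lines: one must argue that a general fiber really does contain the proper transform of a secant line. This I would handle by letting $u$ range over the dense open subset of $\BP U$ on which $\psi^{o}$ is a morphism and $u \notin S$; there $\psi^{o}(u)$ sweeps out a dense subset of $Y$, so for general $y$ we may take $y = \psi^{o}(u)$ and the secant line through $u$ provides the required curve $\widetilde{\ell} \subset P$. Everything else — the nefness of $H$, the elementary fact about base-point-free subsystems of $|\sO(1)|$, and the identification $(\pi|_{P})^{*}\sO_{\BP U}(1) = \sO_{P}(1)$ — is formal, so no serious obstacle remains beyond this fiber-matching point.
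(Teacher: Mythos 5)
Your proof is correct, but it takes a genuinely different route from the paper's. The paper argues geometrically: for a general point $z \in \Sec(S) = \BP U$, the cone $C_z(S)$ of secant lines through $z$ is a \emph{linear} subspace of dimension $\delta+1$ by Proposition 2.3\,(b) of Ein--Shepherd-Barron (this is precisely the quadratic-entry-locus property of $S$); since all secant lines through $z$ are contracted to the single point $\psi^o(z)$, the cone $C_z(S)$ lies in the image of the fiber over $\psi^o(z)$, and a dimension count forces the image to coincide with $C_z(S)$. You instead run intersection theory on the fiber $P \cong \BP^{\delta+1}$: one contracted secant line of $H$-degree $1$ inside $P$ forces $\pi^{*}\sO_{\BP U}(1)|_{P} = \sO_{P}(1)$, and the base-point-free-subsystem argument then makes $\pi|_{P}$ a linear embedding. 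Your fiber-matching step (general $y = \psi^o(u)$ with $u$ general, so that $\widetilde{\ell} \subset \psi^{-1}(y)$ because $\psi$ contracts $\widetilde{\ell}$ to $\psi^o(u)$) is sound, as is the projection-formula computation $H \cdot \widetilde{\ell} = 1$. The trade-off: the paper's proof is shorter and, more importantly, identifies the images of the fibers concretely as the entry-locus cones $C_z(S)$, tying the $\BP^{\delta+1}$-bundle structure to the entry loci; your proof avoids invoking the linearity of $C_z(S)$ (i.e.\ the QEL input from \cite{ES}), using only $\Sec(S) = \BP U$, the contraction of secant lines from Proposition \ref{p.secant}\,(1), and the bundle structure from Proposition \ref{p.blow}, and it yields the slightly stronger conclusion that $\pi$ restricts to an isomorphism from a general fiber onto its linear image, rather than merely that the image is linear.
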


 \begin{proof} Let $z \in {\rm Sec}(S) = \BP U$ be a general point.
Let $C_z(S) \subset \BP U$ be the union of secant lines of $S$ passing through $z$.
By Proposition 2.3 (b) of \cite{ES}, the cone $C_z(S) \subset \BP U$ is
 a linear subspace of dimension $\delta +1$. But by Proposition \ref{p.secant} (1), the cone $C_z(S)$ is contained in the image of a fiber of
 $\psi$. Thus $C_z(S)$ must coincide with the image of a fiber of $\psi$. \end{proof}

\begin{proposition}\label{p.Euler}
For a special quadratic manifold $S$, set $n= \dim U$ and $d = \dim S$. Then
the Euler numbers of $S$ and $Y$ are related by
$$ \chi(S) = \frac{(\delta+2) \chi(Y) -n}{n-d-2}.$$
\end{proposition}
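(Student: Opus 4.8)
The plan is to compute the Euler characteristic of the common blow-up $\widetilde{X} := \Bl_S(\BP(T \oplus U)) = \Bl_Y(Z)$ provided by Proposition \ref{p.blow} in two different ways, and then to eliminate the resulting unknown $\chi(Z)$ by a $\cit^*$-fixed-point argument. I use throughout that for complex algebraic varieties the topological Euler characteristic is additive over a closed subvariety and its open complement and multiplicative for Zariski-locally trivial fibrations; consequently, if $B$ is a smooth codimension-$c$ center in a smooth $M$, then the exceptional divisor of $\Bl_B M \to M$ is a $\BP^{c-1}$-bundle over $B$ and $\chi(\Bl_B M) = \chi(M) + (c-1)\chi(B)$. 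Applying this to $\widetilde{X} = \Bl_S(\BP(T \oplus U))$, where $\BP(T \oplus U) = \BP^n$ has $\chi = n+1$ and $S$ has codimension $n-d$, gives
$$\chi(\widetilde{X}) = (n+1) + (n-d-1)\,\chi(S).$$
Applying it instead to $\widetilde{X} = \Bl_Y(Z)$, where $\dim Z = n$ and, by Proposition \ref{p.number}, $\dim Y = 2(n-d-2)$, so that $Y$ has codimension $\delta+2$ (matching the $\BP^{\delta+1}$-bundle structure of $\psi$ in Proposition \ref{p.blow}), gives
$$\chi(\widetilde{X}) = \chi(Z) + (\delta+1)\,\chi(Y).$$
Equating the two expressions reduces the statement to computing $\chi(Z)$.

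For $\chi(Z)$ I would exploit the $\cit^*$-action on $Z$ introduced in Proposition \ref{p.Ysmooth}, which acts on $\BP(T \oplus U \oplus W)$ with weights $0,1,2$ on $T,U,W$. Since $Z$ is smooth and projective, the Bia\l ynicki--Birula decomposition yields $\chi(Z) = \chi(Z^{\cit^*})$, so it suffices to identify the fixed locus. The fixed locus of the ambient action is $\BP T \sqcup \BP U \sqcup \BP W$, and I claim that
$$Z \cap \BP T = \{o\}, \qquad Z \cap \BP U = S, \qquad Z \cap \BP W = Y,$$
where $o = [1:0:0]$. The first equality follows from $\phi^o([1:0]) = o$ and the third from the definition of $Y$. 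The central point is the middle equality: because $\sigma(u,u) = 0$ precisely when $u$ lies in the affine cone $\hat{S}$ over $S = B(\sigma)$, the variety $Z$ contains the cone over $S$ with vertex $o$, namely the image of $[1:v] \mapsto [1:v:0]$ for $v \in \hat S$, whose closure meets $\BP U$ exactly in $S$; conversely every $\cit^*$-fixed point of $Z$ in $\BP U$ arises as such a limit. Granting this, $Z^{\cit^*} = \{o\} \sqcup S \sqcup Y$ and hence $\chi(Z) = 1 + \chi(S) + \chi(Y)$.

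Substituting this value into the equality of the two blow-up formulas gives
$$(n+1) + (n-d-1)\chi(S) = 1 + \chi(S) + (\delta+2)\chi(Y),$$
that is, $n + (n-d-2)\chi(S) = (\delta+2)\chi(Y)$, which is the asserted identity after dividing by $n-d-2 = \tfrac{1}{2}\dim Y$ (nonzero as soon as $\dim Y > 0$; the quadric case $n=d+2$ is the degenerate one and is covered by the undivided form). I expect the main obstacle to be the rigorous identification $Z \cap \BP U = S$, since this is the only step where the precise projective geometry of the base locus $B(\sigma)$ enters and where one must exclude spurious fixed components. A clean alternative that sidesteps the ambient closure analysis is to compute the fixed locus on $\widetilde{X} = \Bl_S(\BP(T\oplus U))$, where the lifted action fixes the point over $\BP T$, the proper transform $\Bl_S(\BP U)$, and a section of the exceptional divisor $E$ over $S$ in the weight-$(-1)$ normal direction; pushing these forward by the equivariant morphism $\phi$ recovers $o$, $Y$, and a copy of $S$ respectively, confirming $Z^{\cit^*} = \{o\}\sqcup S \sqcup Y$.
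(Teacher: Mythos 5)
Your argument is correct in substance, but it takes a genuinely different and longer route than the paper, whose proof is a two-line double count on the $(n-1)$-dimensional blow-up $\Bl_S(\BP U)$ alone: the exceptional divisor $F$ is a $\BP^{n-d-2}$-bundle over $S$, giving $\chi(\Bl_S(\BP U)) = n + (n-d-2)\chi(S)$, while Proposition \ref{p.blow} states explicitly (not merely implicitly through the identification with $\Bl_Y(Z)$) that $\psi: \Bl_S(\BP U) \to Y$ is a $\BP^{\delta+1}$-bundle, giving $\chi(\Bl_S(\BP U)) = (\delta+2)\chi(Y)$; equating the two yields the formula with no reference to $Z$ at all. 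You instead double-count the $n$-dimensional blow-up $\Bl_S(\BP(T\oplus U)) = \Bl_Y(Z)$, which introduces the unknown $\chi(Z)$ and forces the fixed-locus identification $Z^{\cit^*} = \{o\} \sqcup S \sqcup Y$ --- exactly the step you flag as delicate, and indeed the only real gap. It is fixable with the paper's own structure: since $\phi$ maps the affine cell isomorphically onto $Z \setminus \BP(U\oplus W)$, one has $Z \cap \BP(U \oplus W) = Y \cup \phi(E)$, and because $S = B(\sigma)$ is smooth with conormal spaces spanned by the quadrics, $\phi$ sends the point of $E$ over $[u] \in S$ in the normal direction $\nu = (\nu_0, \nu_U)$ to $[0 : \nu_0 u : 2\sigma(u,\nu_U)]$; hence $\phi(E) \cap \BP U = S$ exactly, and $\phi(E) \cap \BP W \subset Y$ because each $[\sigma(u,v)]$ is the limit of $[\sigma(u+tv,u+tv)]$ as $t \to 0$. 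Your alternative computation upstairs also works, provided you add the standard observation that the fiber of the equivariant surjection $\phi$ over any fixed point of $Z$ is a nonempty invariant complete variety and so contains a fixed point, whence $Z^{\cit^*}$ equals the image of the fixed locus of the blow-up; your enumeration of that fixed locus as the isolated point, the pointwise-fixed proper transform $\Bl_S(\BP U)$, and the section of $E$ in the transverse direction is correct. What your route buys, at the cost of this extra work, is the clean by-product $\chi(Z) = 1 + \chi(S) + \chi(Y)$ (checkable against Example \ref{e.table}, e.g.\ $27 = 1 + 16 + 10$ for $Z = \mathbb{OP}^2$), very much in the spirit of the $\cit^*$-techniques of Section \ref{s.Y}; what the paper's route buys is brevity and complete avoidance of fixed-point analysis. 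Your explicit remark that in the hyperquadric case $n = d+2$ both arguments really prove the undivided identity $n + (n-d-2)\chi(S) = (\delta+2)\chi(Y)$ is a point the paper passes over silently.
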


\begin{proof}
 Since the exceptional divisor  $F$ of the  blow-up ${\rm Bl}_S(\pit^{n-1}) \to \pit^{n-1}$ is a $\BP^{n-d-2}$-bundle over $S$,  we have
$$ \chi(\pit^{n-1}) - \chi(S) = \chi({\rm Bl}_S(\pit^{n-1})) - \chi(F) =\chi({\rm Bl}_S(\pit^{n-1})) -  (n-d-1) \chi(S),$$
which gives $\chi({\rm Bl}_S(\pit^{n-1})) = n + (n-d-2) \chi(S)$.
On the other hand, the map ${\rm Bl}_S(\pit^{n-1}) \to Y$ is a $\pit^{\delta+1}$-bundle from Proposition \ref{p.blow}, which implies  $\chi({\rm Bl}_S(\pit^{n-1})) = (\delta+2) \chi(Y)$. Combining the two gives the desired equality. \end{proof}

We are ready to have a classification of $Y$.

\begin{theorem}\label{t.Sato}
Let $S \subset \BP U$ be a special quadratic manifold and let $c =
\dim U - \dim S -1$ be its codimension. Recall that $\dim Y =
2(c-1)$ from
  Proposition \ref{p.number}. Then   $Y \subset \BP W$ is isomorphic to one of the following: \begin{itemize}
\item[(Y1)] $ \BP^{2(c-1)} \cong Y = \BP W$;
\item[(Y2)]  a nonsingular quadric hypersurface $\Q^{2(c-1)} \cong Y \subset \BP W \cong \BP^{2c-1}$;  or
\item[(Y3)]  a biregular projection of the Pl\"ucker embedding ${\rm Gr}(2, \C^{c+1}) \subset \BP (\wedge^2 \C^{c+1}).$
\end{itemize}
 \end{theorem}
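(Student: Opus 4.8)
The plan is to read off from the geometry of the double fibration $S \leftarrow F \to Y$ (given by $\pi|_F$ and $\psi|_F$) that $Y$ is a manifold of even dimension swept out by half-dimensional linear spaces, to identify $Y$ abstractly by invoking Sato's classification of such manifolds, and finally to pin down the embedding $Y \subset \BP W$ using that $\0_Y(1)$ generates $\Pic(Y)$.

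First I would collect the structural facts already proved. By Proposition \ref{p.Ysmooth}, $Y$ is nonsingular; by Proposition \ref{p.Ypf}, $Y$ is a conic-connected prime Fano manifold, so that $\Pic(Y) = \Z \cdot \0_Y(1)$, the Picard number is $1$, and $Y$ is covered by lines; and by Proposition \ref{p.number}, $\dim Y = 2(c-1)$. The decisive input comes from Proposition \ref{p.secant}(2)--(3): the exceptional divisor $\pi|_F : F \to S$ is a $\BP^{c-1}$-bundle, each of its fibers is carried by $\psi$ isomorphically onto a linear subspace $\BP^{c-1} \subset \BP W$ contained in $Y$, and $\psi(F) = Y$. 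Writing $m = c-1$, this exhibits $Y$ as a smooth projective $2m$-fold of Picard number $1$ covered by a family of $m$-dimensional \emph{linear} subspaces of the ambient $\BP W$, that is, swept out by linear spaces of dimension exactly half of $\dim Y$.

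Next I would apply the classification of smooth projective varieties of dimension $2m$ covered by a family of $m$-planes (Sato). The Picard number one hypothesis excludes the product $\BP^m \times \BP^m$ and scroll-type projective bundles, while half-dimensionality excludes the remaining rational homogeneous candidates (for instance the $10$-dimensional spinor variety, whose maximal linear spaces are only $\BP^4$'s); the surviving possibilities are precisely $\BP^{2m}$, the smooth quadric $\Q^{2m}$, and the Grassmannian $\Gr(2, \C^{m+2}) = \Gr(2, \C^{c+1})$. To fix the embedding I would use that, $Y$ being prime Fano, $\0_Y(1)$ is the ample generator of $\Pic(Y)$ and restricts to $\0(1)$ on the lines contained in the covering $\BP^{c-1}$'s; hence $Y \subset \BP W$ is given by a subsystem $W^* \subseteq H^0(Y, \0_Y(1))$ of the minimal embedding. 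For $Y \cong \BP^{2m}$ the dimension forces $Y = \BP W$, yielding (Y1); for $Y \cong \Q^{2m}$ (with $2m \geq 3$, the ambient edge case $2m=2$ collapsing to $\BP^2$) the unique biregular $\0(1)$-embedding is the standard quadric in $\BP W \cong \BP^{2c-1}$, yielding (Y2); and for $Y \cong \Gr(2,\C^{c+1})$ one obtains the Pl\"ucker embedding followed by a biregular linear projection, yielding (Y3).

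The main obstacle is the classification step itself. Either one cites Sato's theorem in precisely the form needed --- smooth Picard-number-one $2m$-folds covered by half-dimensional linear spaces --- or one reproves it, the natural tool being the variety of minimal rational tangents of $Y$ together with the family of $m$-planes that the double fibration $S \leftarrow F \to Y$ realizes inside $Y$. The points demanding genuine care are ruling out any further (possibly non-homogeneous) example, and checking that in the Grassmannian case the projection from $\wedge^2 \C^{c+1}$ onto $W$ is indeed biregular on $\Gr(2,\C^{c+1})$, which by Example \ref{e.projection} corresponds to the center of projection avoiding the secant variety.
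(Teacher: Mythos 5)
Your proposal follows essentially the same route as the paper: the authors likewise combine Proposition \ref{p.secant} (the fibers of $\pi|_F$ map to linear $\BP^{c-1}$'s covering $Y$), the dimension count $\dim Y = 2(c-1)$ from Proposition \ref{p.number}, and the Picard number one statement of Proposition \ref{p.Ypf}, and then conclude directly by citing Sato's Main Theorem on smooth varieties of dimension $\leq 2(c-1)$ covered by $(c-1)$-dimensional linear spaces. Your additional remarks on pinning down the embedding via the ample generator $\0_Y(1)$ are correct but are subsumed in the paper by the (embedded) form of Sato's classification.
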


 \begin{proof} By Proposition \ref{p.secant}, the (nondegenerate) projective submanifold $Y \subset \BP W$ of dimension $2 (c-1)$ is covered by
 linear subspaces of dimension $c-1$. Thus
 we can use Eichi Sato's classification, Main Theorem  in \cite{Sato} (see also \cite{NO} Corollary 5.3, which completes the classification), of  nonsingular projective varieties of dimension $\leq 2(c-1)$ covered by linear subspaces of
 dimension $(c-1)$.
  Since $Y$ has Picard number 1 from Proposition \ref{p.Ypf}, Sato's classification shows that  $Y \subset \BP W$ must be one of the three  listed varieties. \end{proof}

We have the following topological consequence.

\begin{corollary} \label{c.Betti}
Let $S \subset \BP U$ be a special quadratic manifold with $d= \dim S$ and $n= \dim U$.  Then all odd Betti numbers of $S$ vanish. In particular,   the Euler number of $S$ satisfies $\chi(S) \geq d+1$.
\end{corollary}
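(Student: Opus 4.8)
The plan is to exploit the classification of $Y$ in Theorem \ref{t.Sato} together with the fibration structure established in Proposition \ref{p.blow}. The key observation is that in all three cases (Y1), (Y2), (Y3) of Theorem \ref{t.Sato}, the variety $Y$ has only even cohomology and in fact admits an algebraic cell decomposition: a projective space $\BP^{2(c-1)}$, a smooth quadric $\Q^{2(c-1)}$, and a (biregular projection of a) Grassmannian $\Gr(2,\C^{c+1})$ are all rational homogeneous (or projections thereof) whose odd Betti numbers vanish and whose integral cohomology is torsion-free and concentrated in even degrees. First I would record that all odd Betti numbers of $Y$ vanish in each case.

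Next, I would transport this vanishing to $\Bl_S(\BP U)$ using the fact from Proposition \ref{p.blow} that $\psi: \Bl_S(\BP U) \to Y$ is a $\pit^{\delta+1}$-bundle. Since the fibers are projective spaces, the Leray--Hirsch theorem (or simply the cohomology of a projective bundle) gives that $\Bl_S(\BP U)$ again has vanishing odd Betti numbers, being built from even cohomology of $Y$ tensored with even cohomology of the fiber. Then I would descend to $S$ itself by using the blow-up formula: since the exceptional divisor $F$ of $\Bl_S(\BP U) \to \BP U$ is a $\BP^{n-d-2}$-bundle over $S$ (as used already in Proposition \ref{p.Euler}), the cohomology of the blow-up decomposes as
$$
H^*(\Bl_S(\BP U)) \cong H^*(\BP U) \oplus \bigoplus_{i=1}^{n-d-2} H^{*-2i}(S).
$$
The vanishing of odd Betti numbers of $\Bl_S(\BP U)$ and of $\BP U$ therefore forces the odd Betti numbers of $S$ to vanish as well.

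The main subtlety, which I expect to be the principal point requiring care, is the case (Y3) where $Y$ is a \emph{biregular projection} of the Grassmannian rather than the Grassmannian itself. A biregular projection is an isomorphism onto its image, so $Y$ is abstractly isomorphic to (an isomorphic image of) $\Gr(2,\C^{c+1})$, or to a suitable biregular-projection variety whose topology coincides with that of the Grassmannian; since the cohomology is a topological invariant and biregular projection is an isomorphism of varieties, the Betti numbers are unchanged and the vanishing of odd cohomology persists. I would make this point explicitly so that the argument is uniform across all three cases of Theorem \ref{t.Sato}.

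Finally, for the last sentence, once all odd Betti numbers of $S$ vanish, the Euler number is the sum of the (nonnegative) even Betti numbers, $\chi(S) = \sum_i b_{2i}(S)$. Since $S$ is a connected projective manifold of dimension $d$, we have $b_0(S) = b_2(S) = \cdots = b_{2\lfloor d/2\rfloor}(S) \geq 1$ by the hard Lefschetz theorem (each even Betti number up to the middle dimension is at least $1$ because $b_0 = 1$ and powers of the hyperplane class are nonzero), and by Poincaré duality the upper half is controlled symmetrically. Counting the even degrees $0, 2, \ldots, 2d$ gives at least $d+1$ contributions each of size at least $1$, hence $\chi(S) \geq d+1$, as claimed.
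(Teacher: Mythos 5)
Your proposal is correct and follows essentially the same route as the paper: odd Betti numbers of $Y$ vanish by Theorem \ref{t.Sato}, they vanish for $\Bl_S(\BP U)$ via the $\BP^{\delta+1}$-bundle $\psi$ of Proposition \ref{p.blow}, and the blow-up formula for cohomology then forces them to vanish for $S$. Your extra care about case (Y3) (a biregular projection is an isomorphism, so Betti numbers are unchanged) and your explicit derivation of $\chi(S)\geq d+1$ from the nonvanishing of powers of the hyperplane class are both sound and merely spell out what the paper leaves implicit.
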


\begin{proof}
 As the morphism ${\rm Bl}_S(\pit^{n-1}) \to Y$ is a $\pit^{\delta+1}$-bundle by Proposition \ref{p.blow} and odd Betti numbers of $Y$ vanish
from Theorem \ref{t.Sato},  the odd Betti numbers of  ${\rm Bl}_S(\pit^{n-1})$ vanish.
This implies that odd Betti numbers of $S$ vanish by the formulae for blow-ups in  Chapter 4, Section 6, p605 of \cite{GH}.
\end{proof}

In the case of (Y2) of Theorem \ref{t.Sato}, we have the following topological consequence.

\begin{proposition} \label{p.Q_delta}
For a special quadratic manifold $S^d \subset \pit^{n-1}$ with codimension $c$ and $n \geq 3$,  assume that $Y =\qit^{2(c-1)}$.
Then either $\delta \geq \frac{d}{2}$ or $\delta = \frac{d}{2}-1$.
\end{proposition}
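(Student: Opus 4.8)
The plan is to extract an explicit formula for $\chi(S)$ in the case $Y = \qit^{2(c-1)}$ and then play integrality of the Euler number against the lower bound $\chi(S) \geq d+1$ from Corollary~\ref{c.Betti}. First I would record that an even-dimensional smooth quadric has Euler number $\chi(\qit^{2m}) = 2m+2$, so here $\chi(Y) = 2(c-1)+2 = 2c$. Writing everything in terms of $d$ and $\delta$ via Proposition~\ref{p.number}, namely $n = 2d+2-\delta$ and $c = d+1-\delta$ (so that $n-d-2 = d-\delta$), and substituting into the formula of Proposition~\ref{p.Euler}, I expect the numerator to collapse after a short computation to
\[
\chi(S) \;=\; 2\delta + 1 + \frac{d+2}{d-\delta}.
\]

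Since $\chi(S)$ is an integer and $d - \delta = c - 1 \geq 1$ (as $Y$ is a positive-dimensional quadric, so $c\geq 2$), this immediately forces the divisibility $(d-\delta) \mid (d+2)$. Set $k := d-\delta$. The conclusion to be proved is equivalent to: either $k \leq d/2$ (i.e. $\delta \geq d/2$) or $k = d/2+1$ (i.e. $\delta = d/2-1$). So I would argue by contradiction, assuming the \emph{bad range} $k > d/2$ with $k \neq d/2+1$. Feeding $\chi(S) \geq d+1$ into the displayed formula gives $(d+2)/k \geq 2k-d$, i.e. $2k^2 - dk - (d+2) \leq 0$; the discriminant here is the perfect square $(d+4)^2$, so the roots are $k=-1$ and $k=(d+2)/2$, and the inequality yields $k \leq d/2+1$. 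Combined with $k > d/2$ this pins $k$ down to the single integer lying in the half-open interval $(d/2,\,d/2+1]$.

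It then remains to rule out that single value. For $d$ even the only integer in the interval is $k = d/2+1$, which gives exactly the exceptional value $\delta = d/2-1$ and hence is \emph{not} in the bad range. For $d$ odd the only integer is $k = (d+1)/2$, and here I would invoke the divisibility $k \mid (d+2)$: it forces $(d+1) \mid 2(d+2)$, hence $(d+1)\mid 2$, so $d=1$. The residual case $d=1$, $\delta=0$ is where pure numerics stop working, and I expect this tiny exceptional case to be the only real obstacle: the formula predicts $\chi(S)=4$, which is impossible for a smooth irreducible curve since then $\chi(S) = 2-2g \leq 2$. This last contradiction eliminates the case. In short, the generic range is dispatched cleanly by the quadratic inequality plus divisibility, while the boundary values require the extra geometric input that $\chi(S)$ must be a genuine Euler number, not merely a quantity satisfying $\chi(S)\geq d+1$.
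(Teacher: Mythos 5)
Your proof is correct. The formula you derive is exactly the paper's identity $\chi(S) = 2\delta + \frac{n}{n-d-2}$ (obtained by the same substitution of $\chi(\qit^{2(c-1)}) = 2c$ into Proposition \ref{p.Euler}) rewritten via $n = (n-d-2)+(d+2)$, and the rest checks out: the discriminant of $2k^2 - dk - (d+2)$ is indeed $(d+4)^2$, the odd-$d$ branch is correctly killed by $(d+1)\mid 2$, and the residual $(d,\delta)=(1,0)$ case by the genus bound. Where you diverge is in organizing the case analysis. The paper sets $k' := \frac{n}{n-d-2} \in \N$, assumes $\delta < \frac{d}{2}$, and splits on $k' = 2$, $k' = 3$, $k' \geq 4$: the case $k' \geq 4$ collapses to $(n,d,\delta)=(4,1,0)$ and is eliminated by the same no-smooth-curve-with-$\chi = 4$ argument you use; $k'=3$ yields precisely $\delta = \frac{d}{2}-1$; and $k'=2$ (which in your parametrization is $k = d-\delta = d+2$, i.e.\ $\delta = -2$) is excluded by ${\rm Sec}(S) = \pit^{n-1}$ from Proposition \ref{p.secant}. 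You instead confine $k = d-\delta$ to the interval $(\frac{d}{2}, \frac{d}{2}+1]$ in one stroke via the quadratic inequality coming from the bound $\chi(S) \geq d+1$ of Corollary \ref{c.Betti} --- an input the paper's proof of this proposition does not invoke --- and then use the divisibility $(d-\delta) \mid (d+2)$. Your version buys a cleaner, uniform numerical argument (no separate secant-variety case) at the cost of leaning on Corollary \ref{c.Betti}, which is legitimately available since it is established before the proposition; the paper's version needs only integrality plus $\delta \geq 0$. Both are sound, and they share the same skeleton: the Euler-characteristic formula, integrality, and the curve contradiction at the boundary.
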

\begin{proof}
Putting $\chi(\qit^{2(c-1)}) = 2c$ in Proposition \ref{p.Euler}, we obtain
\begin{equation}\label{e.euler}
\chi(S) = 2 \delta + \cfrac{n}{n-d-2}.
\end{equation}
Set $k:= \cfrac{n}{n-d-2}.$ The above equation implies that $k$ is a natural number, hence $2 \leq k\leq n$.

 Assume that $\delta < \frac{d}{2}$, i.e. $2d+2-n <\frac{d}{2}$, which is equivalent to
$3d+5 \leq 2n$,  implying $d \leq \cfrac{2n-5}{3}$ and
$$n = k(n-d-2) \geq k(n-\cfrac{2n-5}{3}-2) = \frac{k(n-1)}{3}.$$
Let us check case-by-case the possible values of $k$.
\begin{itemize} \item[(1)] When $4 \leq k \leq n$. Then $3n \geq 4(n-1)$ gives
 $n=4, d=1, \delta=0$ and $\chi(S)= 4$. But there is no smooth curve with $\chi(S)=4$, a contradiction.
\item[(2)] When $k = 3$. Then $2n=3d+6$ and $\delta = \frac{d}{2}-1$.
\item[(3)] When $k=2$. We have $n = 2(n-d-2)$, which yields $n=2d+4$, a contradiction to ${\rm Sec}(S) = \pit^{n-1}$ of Proposition \ref{p.secant}.
 \end{itemize}
 We conclude that $\delta = \frac{d}{2}-1$ is the only possibility.
\end{proof}

\section{Review of results on QEL-manifolds}
 For the proof of Theorem \ref{t.bir}, we will use the notion of QEL-manifolds introduced by Russo in \cite{R2}.
  In this section, we review some results on QEL-manifolds from \cite{IR2} and \cite{R2} that are needed  for our purpose.

\begin{definition} \label{d.qel}
Let $S \subset \pit V$ be a nondegenerate submanifold with secant defect $\delta$.
\begin{itemize}
\item[(1)] For a point $z \in {\rm Sec}(S) \setminus S$, the {\em entry locus} of $S$ with respect to $z$ is the subvariety
 $\Sigma_z(S) \subset S$ defined by $$\Sigma_z(S) = \mbox{ the closure of } \{ x \in S \ \mid \ z \in \ell_{x,y} \mbox{ for some } y \in S, y \neq x\}.$$
The cone over $\Sigma_z(S)$ with the vertex at $z$ is denoted by $C_z(S)$. If $z$ is general, then $\Sigma_z(S) = S \cap C_z(S).$
\item[(2)] $S$ is said to be a {\em quadratic entry locus manifold} (QEL-manifold in abbreviation) if for a general $z \in {\rm Sec}(S)$, the cone
$C_z(S) \subset \pit V$ is a linear subspace of dimension $\delta+1$ and the entry locus $\Sigma_z(S) \subset C_z(S)$ is a nonsingular quadratic hypersurface in the linear
subspace.
\end{itemize}

\end{definition}

This notion is relevant to us by the following.

\begin{proposition}\label{p.SQEL}
A special quadratic manifold $S \subset \BP U$ is a QEL-manifold and is linearly normal. \end{proposition}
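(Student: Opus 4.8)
The plan is to check, for a general point $z\in\Sec(S)$, the two defining conditions of a QEL-manifold—that the secant cone $C_z(S)$ is a linear subspace of dimension $\delta+1$, and that the entry locus $\Sigma_z(S)$ is a nonsingular quadric hypersurface in it—and then to deduce linear normality. Much of the work is already done: Proposition \ref{p.secant} gives $\Sec(S)=\BP U$, and Proposition \ref{p.linear} shows that for general $z$ the cone $C_z(S)\subset\BP U$ is a linear $\BP^{\delta+1}$ equal to the $\pi$-image of a fiber of the $\BP^{\delta+1}$-bundle $\psi\colon\Bl_S(\BP U)\to Y$ of Proposition \ref{p.blow}, where $\pi\colon\Bl_S(\BP U)\to\BP U$ is the blow-up morphism. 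So the only substantive point for the QEL property is the quadric structure of $\Sigma_z(S)=S\cap C_z(S)$.

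First I would analyze the restriction $\pi|_{\psi^{-1}(y)}\colon \psi^{-1}(y)\to C_z(S)$, where $y=\psi^o(z)$. Its source is $\cong\BP^{\delta+1}$ and its image is the linear space $C_z(S)\cong\BP^{\delta+1}$; since $\pi$ is an isomorphism off the exceptional divisor $F$ and a general point of the fiber lies off $F$, this map is birational. A birational morphism $\BP^{\delta+1}\to\BP^{\delta+1}$ (equivalently, a birational morphism between smooth projective varieties of Picard number one, which can contract no divisor) is an isomorphism. Under this identification, a point $q\in\psi^{-1}(y)$ lies in $F$ exactly when $\pi(q)\in S$, so $\pi$ carries $F\cap\psi^{-1}(y)$ isomorphically onto $\Sigma_z(S)=S\cap C_z(S)$, realizing $\Sigma_z(S)\cong(\psi|_F)^{-1}(y)$. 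As $F$ and $Y$ are smooth (Definition \ref{d.quadrics} and Proposition \ref{p.Ysmooth}) and $\psi|_F$ is dominant with $\psi(F)=Y$ by Proposition \ref{p.secant}(3), generic smoothness in characteristic zero shows the general fiber of $\psi|_F$ is smooth; hence $\Sigma_z(S)$ is a smooth subvariety of $C_z(S)=\BP^{\delta+1}$ of dimension $\dim F-\dim Y=(n-2)-2(n-d-2)=\delta$, i.e.\ a smooth hypersurface.

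To see that this hypersurface has degree $2$, I would project from $z$. By the description of $C_z(S)$ as the union of secant lines through $z$, a general line through $z$ in $C_z(S)$ is a secant line meeting $S$ in two points, which then lie in $\Sigma_z(S)$; since $z\notin S\supseteq\Sigma_z(S)$ for general $z$, projection from $z$ exhibits $\Sigma_z(S)\to\BP^{\delta}$ as a finite map whose degree equals $\deg\Sigma_z(S)$, and the preceding sentence shows this degree is $2$. A smooth degree-two hypersurface is a nonsingular quadric, so together with the linearity of $C_z(S)$ this proves that $S$ is a QEL-manifold.

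For linear normality I would invoke the QEL theory being reviewed. When $\delta\geq 1$ the manifold $S$ is conic-connected—two general points lie on the smooth positive-dimensional quadric $\Sigma_z(S)$, hence on a conic—and conic-connected, in particular QEL, manifolds of positive defect are linearly normal by \cite{IR2} and \cite{R2}. I expect the main obstacle to lie not in the quadric computation but precisely here: obtaining linear normality uniformly, above all in the borderline case $\delta=0$ (codimension $\dim S+1$), where Zak's linear normality theorem does not apply and one must instead rely on the explicit structure of the low-dimensional linear sections together with their arithmetic Cohen–Macaulayness (cf.\ Lemma \ref{l.equations}). The other place demanding care, handled by the generic-smoothness argument above, is ensuring the general entry locus is genuinely a smooth scheme rather than merely a set-theoretic quadric.
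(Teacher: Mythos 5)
Your derivation of the QEL property is correct and takes a genuinely different route from the paper's: the paper disposes of both halves by citation, taking the QEL property directly from Proposition 2.3(b) of \cite{ES} and linear normality from Proposition 1.3 of \cite{R2} combined with $\Sec(S)=\BP U$ (Proposition \ref{p.secant}). You instead reconstruct the quadric structure of the entry locus from the bundle geometry of Propositions \ref{p.blow} and \ref{p.linear}: the identification of the general fiber $\psi^{-1}(y)\cong\BP^{\delta+1}$ with $C_z(S)$ via a birational morphism between projective spaces, the scheme-theoretic equality $\pi^{-1}(S)=F$ giving $\Sigma_z(S)\cong(\psi|_F)^{-1}(y)$, the generic-smoothness step, and the dimension count $\dim F-\dim Y=\delta$ are all sound. (Be aware, though, that the paper's proof of Proposition \ref{p.linear} itself quotes Proposition 2.3(b) of \cite{ES} for the linearity of $C_z(S)$, so your argument is only partially independent of that source; within the paper's logical order it is nonetheless legitimate.) One unstated point in the degree computation: to see that a general line through $z$ in $C_z(S)$ meets $S$ in \emph{exactly} two points, you need that $S$ is cut out by quadrics — a line meeting the base scheme of a system of quadrics in a subscheme of length $\geq 3$ lies in $S$, impossible for a line through $z\notin S$; without this remark, the projection from $z$ only yields $\deg\Sigma_z(S)\geq 2$. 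This is a one-line fix.

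The genuine gap is in the linear normality half. First, the implication you invoke is false as stated: conic-connected manifolds of positive defect need not be linearly normal — an isomorphic projection of the Veronese surface $v_2(\BP^2)$ to $\BP^4$ is conic-connected with $\delta=1$ and is not linearly normal (it fails to be QEL precisely because the entry locus of a general point of $\BP^4$ breaks into three conics). The correct tool is exactly what the paper cites: Proposition 1.3 of \cite{R2}, which asserts that a QEL-manifold with $\Sec(S)=\BP U$ is linearly normal, with no hypothesis $\delta\geq 1$. Second, you leave $\delta=0$ unresolved, speculating that one must use the explicit structure of the low-dimensional cases together with Lemma \ref{l.equations}; that would be unavailable (indeed circular) at this point of the paper, since the classification comes only in Section 6, and it is in any case unnecessary — the $\delta=0$ case ($S$ a variety with one apparent double point, going back to Severi and \cite{R1}) is covered by the same Proposition 1.3 of \cite{R2}. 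So as written, your argument establishes the QEL property but does not prove linear normality.
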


\begin{proof}
Proposition 2.3 (b) of \cite{ES} says that a special quadratic manifold $S$ is a QEL-manifold.
Proposition 1.3 of \cite{R2} says that a QEL-manifold $S \subset \BP U$ is linearly normal if ${\rm Sec}(S) = \BP U$.
But the latter condition holds for a special quadratic manifold by Proposition \ref{p.secant}. \end{proof}

In what way is Proposition \ref{p.SQEL} useful to us? The biggest advantage of  considering QEL-manifolds is that one can use an inductive
argument via varieties of minimal rational tangents. Let us recall the definition.

\begin{definition}\label{d.vmrt}
 Let  $M \subset \pit^N$ be a prime Fano manifold. The VMRT at a point $x$ is the subvariety $\sC_x \subset \BP T_xM$ consisting of tangent directions to lines on $M$ through $x$. When $x$ is a general point of $M$, the VMRT at $x$ is nonsingular.
 \end{definition}

\begin{examples} \label{e.IHSS}
An irreducible Hermitian symmetric space of compact type is a
homogeneous space $M= G/P$ with a simple Lie group $G$ and a
maximal parabolic subgroup $P$ such that the isotropy
representation of $P$ on $T_x(M)$ at a base point $x \in M$ is
irreducible.
 The highest weight orbit of the isotropy action on $\BP T_x(M)$
is exactly the VMRT at $x$.  The following table (e.g. Section 3.1 \cite{FH}) collects basic information on these varieties.

\begin{center}
\begin{tabular}{|c| c| c| c| }
\hline Type & I.H.S.S. $M$   &  VMRT  $S$ &  $S \subset \BP
T_x(M)$
\\ \hline I    &  $ \Gr(a, a+b) $ & $\pit^{a-1} \times \pit^{b-1}$
& Segre  \\  \hline II  & $\mathbb{S}_{n}$ & $\Gr(2, n)$  & Pl\"ucker  \\
\hline III & $ \Lag(2n)$ & $\pit^{n-1}$ & Veronese \\  \hline
 IV & $\Q^n$ & $\Q^{n-2}$ & Hyperquadric \\  \hline
 V & $\mathbb{O}\pit^2$ & $\mathbb{S}_{5}$ & Spinor  \\  \hline
 VI & $E_7/(E_6 \times U(1)) $ & $\mathbb{O}\pit^2$ & Severi \\
 \hline
\end{tabular}
\end{center}

\end{examples}

\begin{examples} \label{e.SymGr}
Let $\Sigma$ be an $n$-dimensional vector space endowed with a
skew-symmetric 2-form $\omega$ of maximal rank. The symplectic Grassmannian
$M=\Gr_\omega(k, \Sigma)$ is the variety of all $k$-dimensional
isotropic subspaces of $\Sigma$, which is not homogeneous if $n$ is odd.
  Let $W$ and $Q$ be vector spaces of
dimensions $k \geq 2$ and $m$ respectively.
Let $\mathbf{t}$ be the tautological line
bundle over $\pit W$.
The VMRT $\mathcal{C}_x \subset \pit T_x(M)$ of $\Gr_\omega(k, \Sigma)$
at a general point is isomorphic to the projective
bundle $\pit((Q \otimes \mathbf{t}) \oplus \mathbf{t}^{\otimes
2})$ over $\pit W$ with the projective embedding given by
the complete linear system $$H^0(\pit W, (Q \otimes \mathbf{t}^*)
\oplus (\mathbf{t}^*)^{\otimes 2}) = (W \otimes Q)^* \oplus \SYM^2
W^*.$$
\end{examples}

An inductive argument in studying QEL-manifolds is provided by the following result from  Theorem 2.1, Theorem 2.3 and Theorem 2.8 in \cite{R2}.

\begin{proposition} [\cite{R2}]\label{p.pic}
Let $S^d \subset \BP U$ be a $d$-dimensional QEL-manifold with secant defect $\delta$.
\begin{itemize} \item[(1)] If $\delta \geq 1$ and $S$ is a prime Fano manifold,
then $d + \delta$ is even and $K_S^{-1} = \sO(\frac{d + \delta}{2})$.
\item[(2)] If $\delta \geq 3$, then   $d- \delta$ is divisible by $2^{[\frac{\delta-1}{2}]}$, the QEL-manifold $S$ is  a prime Fano manifold and the VMRT $\sC_x \subset \BP T_x(S)$ at a general point is itself a
QEL-manifold with $\dim \sC_x = \frac{d + \delta -4}{2}$ and $\delta_{\sC_x} = \delta -2$. In fact, points of $\sC_x$
corresponding to the lines through $x$ contained in an entry locus of $S$ through $x$ form an entry locus of $\sC_x$. \end{itemize}
\end{proposition}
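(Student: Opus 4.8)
The plan is to establish (1) and (2) in tandem, with the arithmetic in (2) falling out of an induction on $\delta$ once the geometric recursion for the VMRT is in place. I would first record the facts used repeatedly. By the QEL hypothesis, for general $z \in \Sec(S)$ the entry locus $\Sigma_z(S)$ is a smooth quadric $\Q^\delta$ spanning $C_z(S) = \BP^{\delta+1}$, and these quadrics cover $S$. A dimension count gives that the family of entry-locus quadrics has dimension $2(d-\delta)$, and that through a general point $x \in S$ they form a subfamily of dimension $d-\delta$. Since $\delta \geq 3$ in the setting of (2), each $\Q^\delta$ contains lines, so $S$ is covered by lines; combined with its conic-connectedness (Proposition \ref{p.Ypf} style) and the exclusion of the explicitly classified conic-connected manifolds of Picard number $\geq 2$ (all of which have small secant defect, e.g.\ the Segre $\pit^1\times\pit^{d-1}$ has $\delta=2$), this gives that $S$ is a prime Fano manifold, which is the first assertion of (2).

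For the index statement in (1) I would compute the anticanonical degree of a minimal rational curve. Take a line $\ell$ contained in an entry-locus quadric through a general point (a smooth conic when $\delta = 1$). Using that the entry loci are smooth quadrics to control the splitting of $T_S|_\ell$, together with the identity $\dim \sC_x = -K_S\cdot \ell - 2$ for the minimal family of lines, the target is $-K_S \cdot \ell = \tfrac{d+\delta}{2}$, i.e.\ $K_S^{-1} = \sO(\tfrac{d+\delta}{2})$. Since this degree is a positive integer, $d+\delta$ is even. I would verify the formula on the models $\Q^d$ (where $\delta = d$ and the index is $d$), $\Gr(2,5)$ (where $\delta = 4$, index $5$) and $\mathbb{S}_5$ (where $\delta = 6$, index $8$).

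The heart of (2), and the step I expect to be the main obstacle, is the VMRT recursion: for general $x$, the variety $\sC_x \subset \BP T_x S$ is again a QEL-manifold with
$$ \dim \sC_x = \tfrac{d+\delta-4}{2}, \qquad \delta_{\sC_x} = \delta - 2, $$
and its entry loci are exactly the sub-quadrics swept out by the lines through $x$ lying on a fixed entry-locus quadric of $S$. The mechanism is that a smooth $\Q^\delta \subset S$ through $x$ has VMRT at $x$ equal to a smooth $\Q^{\delta-2}$, naturally embedded in $\sC_x$; I would show that two general points of $\sC_x$ lie on exactly one such $\Q^{\delta-2}$ and that this family realises the QEL-structure of $\sC_x$. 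Because the entry-locus dimension of a QEL-manifold equals its secant defect, identifying the entry loci as these $\Q^{\delta-2}$'s at once yields $\delta_{\sC_x} = \delta-2$. The delicate point is to control the projective second fundamental form of $S$ at $x$ and to check that the quadratic structure of the entry loci of $S$ genuinely descends to a quadratic entry-locus structure on $\sC_x$, rather than merely producing quadrics inside $\sC_x$; establishing that $\Sec(\sC_x)$ has exactly the predicted dimension, with no extra defect, is the crux.

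Finally, the divisibility $2^{[(\delta-1)/2]} \mid (d-\delta)$ follows by induction on $\delta$. From the recursion,
$$ \dim \sC_x - \delta_{\sC_x} = \tfrac{d+\delta-4}{2} - (\delta-2) = \tfrac{d-\delta}{2}, $$
which is an integer since $d+\delta$ is even by (1). Applying (2) to $\sC_x$, whose defect is $\delta-2 \geq 3$ once $\delta \geq 5$, gives $2^{[(\delta-3)/2]} \mid \tfrac{d-\delta}{2}$, and since $[(\delta-3)/2]+1 = [(\delta-1)/2]$ this is precisely $2^{[(\delta-1)/2]} \mid (d-\delta)$. The base cases $\delta = 3, 4$ reduce to integrality of $\dim \sC_x = \tfrac{d+\delta-4}{2}$, which forces $d$ odd (for $\delta = 3$) and $d$ even (for $\delta = 4$); in both cases this gives $2 \mid (d-\delta)$, i.e.\ divisibility by $2^{1} = 2^{[(\delta-1)/2]}$.
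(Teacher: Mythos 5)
There is a genuine gap, and you have in fact named it yourself: the entire proposal hinges on the VMRT recursion --- that $\sC_x$ is again a QEL-manifold with $\dim \sC_x = \frac{d+\delta-4}{2}$ and $\delta_{\sC_x} = \delta-2$ --- and for this step you only describe what would have to be shown (``establishing that $\Sec(\sC_x)$ has exactly the predicted dimension, with no extra defect, is the crux'') without proving it. A plan that defers its acknowledged crux is not a proof. Worse, your argument for part (1) is circular modulo this gap: you compute $-K_S\cdot \ell$ from the identity $\dim \sC_x = -K_S\cdot\ell - 2$, so concluding $-K_S\cdot\ell = \frac{d+\delta}{2}$ is literally equivalent to the unproven formula $\dim\sC_x = \frac{d+\delta-4}{2}$; the remark about ``controlling the splitting of $T_S|_\ell$'' carries no independent content. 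Moreover (1) is asserted for all $\delta \geq 1$, while the recursion in (2) is only claimed for $\delta \geq 3$; for $\delta = 1$ the entry loci are conics containing no lines at all, so your line-inside-an-entry-locus setup breaks down entirely there. The standard way out --- and the one Russo actually uses --- is to work with \emph{conics}: through two general points of $S$ the conics inside the (unique) entry locus form a family of dimension $\delta - 1$, and deformation theory of free rational curves gives $2i - d - 1 = \delta - 1$ for the index $i$, hence $i = \frac{d+\delta}{2}$ and $d+\delta$ even, with no reference to the VMRT. Your secondary steps also have soft spots: deducing prime Fano-ness from the Ionescu--Russo classification of conic-connected manifolds requires linear normality of $S$, which you do not address.

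For context, the paper gives no proof of this proposition at all: it is quoted verbatim from Theorems 2.1, 2.3 and 2.8 of \cite{R2}, so your blind attempt amounts to reproving Russo's theorems. Your divisibility induction at the end is arithmetically correct and does mirror the logic of Russo's Theorem 2.8 (the step $\dim\sC_x - \delta_{\sC_x} = \frac{d-\delta}{2}$ and $[\frac{\delta-3}{2}]+1 = [\frac{\delta-1}{2}]$ are fine), but it is entirely conditional on the recursion. To close the gap you would need Russo's actual mechanism, which your sketch does not touch: the tangential projection $\pi_x$ from ${\bf T}_xS$, whose general fibers are the entry loci through $x$ (compare Lemma \ref{l.tangential} in this paper), is used to relate $\Sec(\sC_x)$ to the geometry of $S$ and to pin down both $\dim\sC_x$ and the absence of extra secant defect. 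Showing merely that the quadrics $\Q^{\delta-2}$ of lines through $x$ in entry loci sit inside $\sC_x$ and that two general points of $\sC_x$ lie on one of them produces, as you say, ``quadrics inside $\sC_x$'' --- it does not yet yield the QEL property, because the entry loci of $\sC_x$ are defined through $\Sec(\sC_x)$, whose dimension you never control.
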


Since in many cases the VMRT $\sC_x$ at a general $x \in S$ gives a good deal of information for $S$,
  if a QEL-manifold $S$ has $\delta \geq 3$, Proposition \ref{p.pic} enables us to reduce a problem on $S$ to a lower-dimensional QEL-manifold $\sC_x$.
  Notice that such an inductive argument is not available for special quadratic manifolds. This is why Proposition \ref{p.SQEL} is
  useful to us.
    The inductive argument for QEL-manifolds works perfectly when $\delta_S \geq \frac{1}{2} \dim S$ and we have
the following two classification results due to Russo.

\begin{proposition}[Corollary 3.1 in \cite{R2}]\label{p.3.1}
Let $S^d \subset \pit^N$ be a $d$-dimensional QEL-manifold with $\frac{d}{2}< \delta < d$. Then $S \subset \BP^N$ is isomorphic to one of the followings.
\begin{itemize}
\item[(A1)] the Segre variety $\BP^1 \times \BP^2 \subset \BP^5$;
\item[(A2)] the  Pl\"ucker embedding $\Gr(2, 5) \subset \BP^9$;
\item[(A3)] the Spinor variety $\mathbb{S}_5 \subset \BP^{15}$;
\item[(A4)] a general hyperplane section of the Pl\"ucker embedding $\Gr(2, 5) \subset \BP^9$;
\item[(A5)] a general hyperplane section of the Spinor variety $\mathbb{S}_5 \subset \BP^{15}$.
\end{itemize}
\end{proposition}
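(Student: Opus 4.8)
The plan is to convert the classification into a finite numerical problem using Proposition \ref{p.pic}, and then identify the surviving cases by inducting along the VMRT. The numerical reduction goes as follows. Suppose first $\delta \geq 3$. Then Proposition \ref{p.pic}(2) shows $S$ is a prime Fano manifold, so Proposition \ref{p.pic}(1) gives that $d+\delta$ is even, and moreover $2^{[\frac{\delta-1}{2}]}$ divides $d-\delta$. The inequalities $\delta < d < 2\delta$ force $1 \leq d-\delta < \delta$, and since a positive multiple of $2^{[\frac{\delta-1}{2}]}$ is at least $2^{[\frac{\delta-1}{2}]}$, we obtain $2^{[\frac{\delta-1}{2}]} \leq d-\delta < \delta$, hence $2^{[\frac{\delta-1}{2}]} < \delta$. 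This last inequality holds precisely for $\delta \in \{3,4,5,6\}$ and fails for every $\delta \geq 7$ (where $2^{[\frac{\delta-1}{2}]}$ grows exponentially while $\delta$ grows linearly). Running through $\delta = 3,4,5,6$ subject to the parity, divisibility, and range constraints leaves exactly $(d,\delta) \in \{(5,3),(6,4),(9,5),(10,6)\}$. The remaining case $\delta = 2$ (where $S = \pit^1\times\pit^2$ has Picard number $2$ and so need not be prime Fano, which is why the parity constraint need not bind) is squeezed by $\frac{d}{2} < 2 < d$ to $d = 3$. Thus $(d,\delta)$ is one of $(3,2),(5,3),(6,4),(9,5),(10,6)$, exactly the five types (A1)--(A5).

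With the numerical types fixed, I would identify each $S$ by induction on $d$, exploiting that the VMRT operation $(d,\delta)\mapsto(\frac{d+\delta-4}{2},\,\delta-2)$ of Proposition \ref{p.pic}(2) permutes the list into the two chains $(10,6)\mapsto(6,4)\mapsto(3,2)$ and $(9,5)\mapsto(5,3)\mapsto(2,1)$. The base case $(3,2)$ is handled directly: a $3$-dimensional QEL-manifold whose general entry locus is a smooth quadric surface spanning a $\BP^3$ is conic-connected and must be the Segre threefold $\pit^1\times\pit^2\subset\BP^5$, which is (A1). For each larger type, Proposition \ref{p.pic}(2) shows that the general VMRT $\sC_x \subset \BP T_x(S)$ is itself a QEL-manifold whose invariants equal the next-smaller type in the chain; by the inductive hypothesis $\sC_x$ is therefore the corresponding variety already classified, so for type $(6,4)$ one gets $\sC_x \cong \pit^1\times\pit^2$ and for type $(10,6)$ one gets $\sC_x \cong \Gr(2,5)$.

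The genuinely hard part, and where I expect the main obstacle, is the reconstruction step: recovering $S$ from the identified VMRT $\sC_x$. For the homogeneous types this is clean, via a recognition principle for minimal rational curves: since $\pit^1\times\pit^2$ and $\Gr(2,5)$ are precisely the VMRTs of the irreducible Hermitian symmetric spaces $\Gr(2,5)$ and $\mathbb{S}_5$ (see Examples \ref{e.IHSS}), a Cartan--Fubini type recognition forces $S \cong \Gr(2,5)$ for type $(6,4)$ and $S \cong \mathbb{S}_5$ for type $(10,6)$, giving (A2) and (A3). The delicate cases are the odd types $(5,3)$ and $(9,5)$: their VMRTs are \emph{boundary} QEL-manifolds, satisfying $\delta' = \frac{d'}{2}$ and thus lying just outside the strict range of the proposition, so the homogeneous recognition does not apply verbatim. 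To conclude that $S$ is a general hyperplane section of $\Gr(2,5)$ respectively $\mathbb{S}_5$, I would instead show that such an $S$ embeds as a hyperplane section of the symmetric space reconstructed from the ``completed'' VMRT, controlling the section through the secant and entry-locus geometry furnished by Proposition \ref{p.SQEL}. Verifying that exactly these hyperplane sections occur, and ruling out spurious reconstructions at the two boundary types, is where the main technical effort is concentrated.
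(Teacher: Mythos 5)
Your numerical reduction is correct and is in fact the first half of how this statement is actually proved (the paper itself gives no proof --- Proposition \ref{p.3.1} is quoted verbatim from Corollary 3.1 of \cite{R2}, whose core is exactly the divisibility-and-parity argument you run: Proposition \ref{p.pic} forces $(d,\delta)\in\{(3,2),(5,3),(6,4),(9,5),(10,6)\}$). The genuine gap is in the second half, and you flag it yourself: for the odd types $(5,3)$ and $(9,5)$ you have no reconstruction argument. ``Embedding $S$ as a hyperplane section of the symmetric space reconstructed from the completed VMRT'' is a restatement of the desired conclusion, not a method: Cartan--Fubini/Mok-type recognition (Main Theorem of Section 2 in \cite{Mo}) requires the VMRT of a homogeneous model, and there is no tool for ``completing'' the VMRT of a hyperplane section to that of the ambient symmetric space. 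The missing idea is more elementary and bypasses VMRT recognition altogether. Since $\delta>d/2$, Zak's theorem forces ${\rm Sec}(S)=\BP^N$, hence $S$ is linearly normal by Proposition 1.3 of \cite{R2} and $N=2d+1-\delta$ is pinned down in each case. Then Proposition \ref{p.pic}(1) gives $K_S^{-1}=\sO\bigl(\frac{d+\delta}{2}\bigr)$: for $(5,3)$ and $(6,4)$ this is $\sO(d-1)$, so $S$ is a del Pezzo manifold, and Fujita's classification (with the degree computed to be $5$) yields a linear section of $\Gr(2,5)\subset\BP^9$ of the right dimension; for $(9,5)$ and $(10,6)$ it is $\sO(d-2)$, so $S$ is a Mukai manifold, and Mukai's classification --- exactly the mechanism of Proposition \ref{p.Mukai} in this paper --- yields $\mathbb{S}_5$ or its hyperplane section, with $N$ selecting the codimension. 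This treats even and odd types uniformly and is how Russo concludes; your Mok-recognition step for $(6,4)$ and $(10,6)$ then becomes unnecessary (it could be made to work, but only after additionally showing the VMRT is nondegenerately and projectively the Segre resp.\ Pl\"ucker embedding in $\BP T_xS$, which your induction does not quite deliver).

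Two secondary defects. First, your base case $(3,2)$ is asserted, not proved: ``conic-connected, hence the Segre threefold'' needs $\delta\geq 1\Rightarrow$ conic-connected (Theorem 2.1 of \cite{R2}), ${\rm Sec}(S)=\BP^5$ (Zak again, since $\delta>d/2$), linear normality, and then the classification of Theorem \ref{t.IR} together with a sifting of (C1)--(C4) by the QEL condition and $\delta=2$; none of this is in your sketch. Second, your appeal to Proposition \ref{p.SQEL} is misplaced: that proposition concerns special quadratic manifolds, whereas Proposition \ref{p.3.1} is about arbitrary QEL-manifolds, so no ``secant and entry-locus geometry'' from it is available here. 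Finally, to land on items (A4)--(A5) as stated one should note that all smooth hyperplane sections of $\Gr(2,5)$ are projectively equivalent (Remark 3.3.2 in \cite{IP}) and likewise for $\mathbb{S}_5$, so the sections produced are indeed the ``general'' ones.
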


\begin{proposition}[Corollary 3.2 in \cite{R2}]\label{p.3.2}
Let $S^d \subset \pit^N$ be a $d$-dimensional QEL-manifold with $\frac{d}{2}= \delta$. Then $d=2,4,8$ or $16$ and
 $S \subset \BP^N$ is isomorphic to one of the followings.
\begin{itemize}
\item[(B1)] a general hyperplane section of the Segre variety $\BP^1 \times \BP^2 \subset \BP^5$;
\item[(B2)] the Veronese surface $v_2(\BP^2)  \subset \BP^5$;
\item[(B3)] the Segre variety $\BP^1 \times \BP^3 \subset \BP^{7}$;
\item[(B4)]  the Segre variety $\BP^2 \times \BP^2 \subset \BP^8$;
\item[(B5)] a general codimension-2 linear section of the Pl\"ucker embedding $\Gr(2, 5) \subset \BP^9$;
\item[(B6)] a general codimension-2 linear section of the Spinor variety $\mathbb{S}_5 \subset \BP^{15}$;
\item[(B7)] the Pl\"ucker embedding $\Gr(2, 6) \subset \BP^{14}$;
\item[(B8)] the $E_6$-Severi variety $\mathbb{OP}^2 \subset \BP^{26}$.
\end{itemize}
\end{proposition}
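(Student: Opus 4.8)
The plan is to split the argument according to whether the secant variety fills the ambient space. Since $\delta = d/2$, Definition~\ref{d.secant} gives $\dim \Sec(S) = 2d+1-\delta = \tfrac{3d}{2}+1$, so either $\Sec(S) = \BP^N$, which forces $N = \tfrac{3d}{2}+1$, or $\Sec(S) \subsetneq \BP^N$. In the latter case $S \subset \BP^N$ is a smooth nondegenerate variety whose secant variety is deficient and whose secant defect $\delta = d/2$ is extremal, so Zak's theorem on Severi varieties (Chapter IV of \cite{Zak}) forces $N = \tfrac{3d}{2}+2$ and identifies $S$ with one of the four Severi varieties, giving exactly $d \in \{2,4,8,16\}$ and the cases (B2), (B4), (B7), (B8). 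The whole problem thus reduces to the \emph{filling} case $\Sec(S) = \BP^N$.

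For the filling case I would run the VMRT induction of Proposition~\ref{p.pic}. When $\delta \geq 3$ (that is, $d \geq 6$) the VMRT $\sC_x$ at a general point is again a QEL-manifold, with
\[
\dim \sC_x = \frac{d+\delta-4}{2} = \frac{3d-8}{4}, \qquad \delta_{\sC_x} = \delta-2 = \frac{d-4}{2}.
\]
Integrality of $\dim \sC_x$ forces $d \equiv 0 \pmod 4$, and the ratio $\delta_{\sC_x}/\dim \sC_x = \tfrac{2(d-4)}{3d-8}$ equals $\tfrac12$ exactly when $d=8$ and exceeds $\tfrac12$ for $d>8$. Hence for $d>8$ Proposition~\ref{p.3.1} applies to $\sC_x$, so $\sC_x$ is one of (A1)--(A5), whose dimensions are $3,5,6,9,10$; matching $\dim\sC_x = \tfrac{3d-8}{4}$ against this list leaves only $d=16$ (where $\sC_x \cong \mathbb{S}_5$) and excludes $d=12$ and all $d \geq 20$ by a plain dimension count. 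For $d=16$ the recognition theorem for the VMRT of an irreducible Hermitian symmetric space identifies $S$ with the $E_6$-Severi variety $\mathbb{OP}^2$, which is secant-deficient, contradicting $\Sec(S)=\BP^N$. Therefore the filling case has $d \in \{2,4,8\}$; for $d=8$ the VMRT $\sC_x^4$ (with $\delta_{\sC_x}=2$) is one of the fourfolds from the base case below, and the recognition theorem then yields (B6).

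It remains to treat the base cases $\delta \leq 2$, namely $d=2$ (a QEL surface in $\BP^4$) and $d=4$ (a QEL fourfold in $\BP^7$), which lie outside the range $\delta \geq 3$ of the VMRT machinery. These I would classify directly: such $S$ are conic-connected of the expected low codimension, and the classification of conic-connected manifolds from \cite{IR2} pins them down, producing (B1) for $d=2$ and the pair (B3), (B5) for $d=4$. I expect the main obstacle to lie precisely here, in two linked places. First is the recognition step passing from a prescribed VMRT (most delicately $\sC_x \cong \mathbb{S}_5$) back to the global variety $S$; this is the technical heart and is exactly what eliminates the spurious high-dimensional filling candidate $d=16$. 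Second is the hands-on classification of the low-dimensional base cases $d=2,4$, where no inductive reduction is available and one must argue directly from the geometry of $\Sec(S)$ and of the entry loci $\Sigma_z(S)$.
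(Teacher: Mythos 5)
Your proposal should first be measured against what the paper actually does: the paper does not reprove this statement but imports it verbatim from Corollary 3.2 of \cite{R2}, and its only original content is the Remark that follows, which makes two corrections to Russo's list, namely (a) eliminating the hypothetical second $d=16$ case via the recognition theorem of \cite{Mo}, and (b) discarding the biregular projections that appear in Russo's LQEL statement, since a QEL-manifold with $\Sec(S)=\BP^N$ is linearly normal (Proposition 1.3 of \cite{R2}, cf.\ Proposition \ref{p.SQEL}). Your architecture---Zak's Severi theorem in the non-filling case, VMRT induction via Proposition \ref{p.pic} and Proposition \ref{p.3.1} in the filling case, the arithmetic $d\equiv 0 \pmod 4$ and the dimension match against the list $\{3,5,6,9,10\}$ isolating $d=16$, then Mok's recognition---is essentially Russo's original strategy, and you independently rediscovered correction (a). (A small caveat in the non-filling step: $\delta=d/2$ and $\Sec(S)\neq\BP^N$ only give $N\geq \tfrac{3d}{2}+2$; for $N>\tfrac{3d}{2}+2$ you must first project isomorphically from a point off $\Sec(S)$ and invoke linear normality of the Severi varieties to force $N=\tfrac{3d}{2}+2$.)

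There are, however, two genuine gaps. First, in the filling case $d=8$ the step ``the recognition theorem then yields (B6)'' does not exist: Mok's theorem recognizes homogeneous models from their VMRTs, and a codimension-2 linear section of $\mathbb{S}_5$ is not homogeneous, so no recognition theorem applies. The correct substitute---used by Russo and built into this paper as Proposition \ref{p.Mukai}---is that $\delta=d-4$ forces $K_S^{-1}=\sO(d-2)$ by Proposition \ref{p.pic}(1), whence Mukai's classification of coindex-3 prime Fano manifolds \cite{M} identifies $S$ directly. You also skip the subcase $\sC_x\cong\pit^1\times\pit^3$, where recognition gives $S\cong\Gr(2,6)$ inside $\pit^{13}$, i.e.\ a projection of the Pl\"ucker embedding; this, like your $d=16$ elimination, must be excluded by linear normality, because your stated contradiction conflates abstract isomorphism with projective equivalence: the projected $\mathbb{OP}^2\subset\BP^{25}$ \emph{does} have filling secant variety, so ``secant-deficient'' is not the contradiction---failure of linear normality is (this is exactly correction (b) above). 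Second, in the base cases $d=2,4$ the tool you name is both mis-cited (the conic-connected classification is \cite{IR}, not \cite{IR2}) and insufficient: Theorem \ref{t.IR} classifies only conic-connected manifolds that are \emph{not} prime Fano, whereas (B5) is a prime Fano fourfold of index $3$ and Picard number one, which must be pinned down by Fujita's classification of del Pezzo manifolds (e.g.\ 12.1 of \cite{IP}); conic-connectedness alone cannot produce it.
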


\begin{remark}  Corollary 3.2 of \cite{R2} lists one hypothetical case of $d=16$ in addition to (B8).
That case does not exist in view of Main Theorem of Section 2 in \cite{Mo}. Corollary 3.2 of \cite{R2}
 is stated for ``LQEL-manifolds", a weaker notion than QEL-manifolds,  with $\frac{d}{2} = \delta$. Because of this, the list there
  includes some biregular projections of QEL-manifolds.
Since biregular projections of QEL-manifolds can not be QEL-manifolds by Proposition 1.3 of \cite{R2}, we have the above list.
  \end{remark}

  Note that the inductive argument using Proposition \ref{p.pic} cannot be continued if $\delta \leq 2$. This is a major difficulty
  in the study of QEL-manifolds. In our case, however,  thanks to the restriction coming from Theorem \ref{t.Sato}, such low defect cases can be handled by a number of explicit
  classification results on QEL-manifolds in the following two extreme cases.

\begin{proposition}\label{p.Mukai}
Let $S^d \subset \pit^N$ be a QEL-manifold with ${\rm Sec}(S) = \pit^N$ and  $\delta \geq 2$. If $S$ is a prime Fano manifold with
$K^{-1}_S = \sO(d-2)$, then
 it is projectively equivalent to one of the following:
\begin{itemize}
\item[(M1)] the 10-dimensional Spinor variety $\mathbb{S}_5 \subset \pit^{15}$;
\item[(M2)]  a nonsingular linear section of $\mathbb{S}_5 \subset \pit^{15}$ of codimension $\leq 4$.
\end{itemize}
\end{proposition}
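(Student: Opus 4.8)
The plan is to reduce the hypotheses to a purely numerical statement about $(S,\sO_S(1))$ and then quote Mukai's classification of prime Fano manifolds of coindex $3$. First I would pin down the secant defect: since $S$ is a prime Fano QEL-manifold with $\delta\ge 2\ge 1$, Proposition \ref{p.pic}(1) gives $K_S^{-1}=\sO(\tfrac{d+\delta}{2})$, and comparing with the hypothesis $K_S^{-1}=\sO(d-2)$ forces
\[
\tfrac{d+\delta}{2}=d-2,\qquad\text{i.e.}\qquad \delta=d-4 .
\]
In particular $\delta\ge 2$ gives $d\ge 6$. Next, from ${\rm Sec}(S)=\pit^N$ and $\delta=2d+1-\dim{\rm Sec}(S)$ I get $N=2d+1-\delta=d+5$; and since a QEL-manifold with ${\rm Sec}(S)=\pit^N$ is linearly normal (Proposition 1.3 of \cite{R2}; cf. Proposition \ref{p.SQEL}), $\pit^{d+5}$ is the fundamental embedding of $S$, so $S\subset\pit^{d+5}$ has codimension $5$.

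Now $H=\sO_S(1)$ generates ${\rm Pic}(S)$ with $K_S^{-1}=(d-2)H$, so $(S,H)$ is a prime Fano manifold of coindex $3$, i.e. a Mukai manifold, of some sectional genus $g$ (with $2g-2=H^{d}$). In the theory of such manifolds a linearly normal coindex-$3$ manifold of genus $g$ has codimension $g-2$ in its fundamental embedding (for instance $\mathbb{S}_5\subset\pit^{15}$ has $g=7$ and codimension $5$); this follows from $h^0(S,H)=\chi(S,H)=g+d-1$, using Kodaira vanishing ($H-K_S=(d-1)H$ is ample) and the coindex-$3$ Hilbert polynomial. Comparing with the codimension $5$ from the previous step gives $g=7$. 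Finally, by Mukai's classification the prime Fano manifolds of coindex $3$ and genus $7$ are exactly the nonsingular linear sections of the $10$-dimensional Spinor variety $\mathbb{S}_5\subset\pit^{15}$; with $6\le d\le 10$ this makes $S$ a nonsingular linear section of $\mathbb{S}_5$ of codimension $10-d\le 4$, which is case (M1) for $d=10$ and case (M2) otherwise.

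As a cross-check in the large-defect range, Russo's explicit QEL-classifications recover the same conclusion without Mukai: for $d\ge 9$ one has $\tfrac{d}{2}<\delta<d$, and Proposition \ref{p.3.1} leaves only $\mathbb{S}_5$ ($d=10$) and its general hyperplane section ($d=9$); for $d=8$ one has $\delta=\tfrac{d}{2}$, and Proposition \ref{p.3.2} leaves the codimension-$2$ section of $\mathbb{S}_5$ together with $\Gr(2,6)\subset\pit^{14}$, the latter excluded because ${\rm Sec}(\Gr(2,6))$ is the Pfaffian cubic hypersurface, so ${\rm Sec}\ne\pit^{14}$ (equivalently its codimension is $6\ne 5$). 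The main obstacle is the remaining range $d\in\{6,7\}$, where $\delta=d-4<\tfrac{d}{2}$ falls outside Propositions \ref{p.3.1}--\ref{p.3.2}, and where the VMRT induction of Proposition \ref{p.pic}(2) either does not apply ($d=6$, $\delta=2$) or stalls at once since the VMRT has defect $\delta-2=1$ ($d=7$). It is precisely here that the identification of $S$ as a coindex-$3$, genus-$7$ manifold, and hence Mukai's structure theorem, is indispensable; the only delicate point I foresee is ensuring linear normality (so that the codimension equals $g-2$), which is guaranteed by ${\rm Sec}(S)=\pit^N$.
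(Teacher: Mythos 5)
Your proposal is correct, and its skeleton is the same as the paper's: both reduce via Proposition \ref{p.pic} to $\delta=d-4$, $d\geq 6$, $N=d+5$, use ${\rm Sec}(S)=\pit^N$ together with Proposition 1.3 of \cite{R2} to get linear normality, and then invoke Mukai's classification of coindex-$3$ manifolds. The one genuinely different step is how the genus is pinned down. The paper does not compute $g$ directly: it first excludes $g\leq 5$ by quoting that such manifolds are complete intersections (Remark 2 in \cite{M}) and that a QEL complete intersection must be a hyperquadric (Proposition 3.4 of \cite{IR2}), and then scans the list for $g\geq 6$ in Example 5.2.2 and Theorem 5.2.3 of \cite{IP} for entries $S^d\subset\BP^{d+5}$ with $d\geq 6$, finding only the linear sections of $\mathbb{S}_5$. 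You instead derive $g=7$ numerically from linear normality and the identity $h^0(S,H)=\chi(S,H)=g+d-1$ (equivalently, codimension $=g-2$ in the fundamental embedding), which you correctly justify via Kodaira vanishing plus the coindex-$3$ Hilbert polynomial; this identity does hold in general (Serre-duality symmetry of $\chi(tH)$ about $t=-(d-2)/2$, the vanishings $\chi(-H)=\cdots=\chi(-(d-3)H)=0$, $\chi(\sO)=1$ and $H^d=2g-2$ determine $\chi(H)=g+d-1$), so your route is sound. What each buys: your computation avoids the complete-intersection/QEL lemma entirely and replaces the case analysis over $g$ by a single equation, at the cost of an (easy but unstated in the paper) Riemann--Roch verification; the paper's route needs no Hilbert-polynomial computation but does need the extra QEL input to kill $g\leq 5$. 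Your closing cross-check via Propositions \ref{p.3.1} and \ref{p.3.2} for $d\geq 8$, and your observation that the VMRT induction is unavailable for $d\in\{6,7\}$ so that Mukai is indispensable there, are both accurate and consistent with why the paper proceeds as it does.
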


\begin{proof}
The assumption ${\rm Sec}(S) = \pit^N$ implies that $S \subset \pit^N$ is linearly normal by Proposition 1.3 of \cite{R2}.
By Proposition \ref{p.pic}, we have $d+\delta = 2(d-2)$, hence $\delta = d-4$ and $N = 2d +1 - \delta = d +5$.
By our assumption $\delta \geq 2$,  we obtain $d \geq 6$.

We will use Mukai's classification in \cite{M}  of linearly normal
prime Fano manifolds $S^d \subset \pit^N$ with $K^{-1}_S =
\sO(d-2)$. Mukai's classification is in terms of the genus $g:=
\frac{1}{2} {\rm deg}(S) + 1.$ If $g \leq 5$, then such $S$ is a
complete intersection by Remark 2 in \cite{M}. Since a
QEL-manifold that is a complete intersection must be a
hyperquadric by Proposition 3.4 of \cite{IR2}, we have only $g
\geq 6$. The classification for $g\geq 6$ is listed in Example
5.2.2 and Theorem 5.2.3 of \cite{IP}. In the list, the only
possibilities of $S^d \subset \BP^{d+5}$ with $d \geq 6$ occur in
Example 5.2.2 (v) and these are exactly (M1) and (M2).
\end{proof}

For the next classification result, we need the notion of rational normal scrolls.

\begin{definition}\label{d.scrolls}
For integers $a_1, \cdots, a_n \geq 1$, the {\em rational normal scroll} $S(a_1, \cdots, a_n)$ is
the projective bundle  $\pit(\0_{\pit^1}(a_1) \oplus \cdots \oplus \0_{\pit^1}(a_n))$ over $\pit^1$,
 embedded into $\pit^N$ with $N=\sum_i (a_i+1) -1$ by the
 tautological line bundle. In particular, the fibers of the
 projective bundle are mapped to linear subspaces in $\pit^N$.
\end{definition}

The following is proved in  Theorem 3 of \cite{R1}.

\begin{proposition}\label{p.OADP}
Let $S \subset \BP^{2d+1}$ be a QEL-manifold with $\delta =0$.  \begin{itemize} \item[(1)]
 When $\dim S=1$, the curve $S \subset \BP^3$ is isomorphic to the twisted cubic.
\item[(2)] When $\dim S =2$, there are three possibilities for the surface $S \subset \BP^5$:
\begin{itemize} \item[(a)] a Del Pezzo surface of degree 5 in $\pit^5$; 
\item[(b)] the rational normal scroll $S(2,2)$;
\item[(c)] the rational normal scroll $S(1,3).$
\end{itemize}  
\end{itemize}
\end{proposition}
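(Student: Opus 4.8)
Since $S^d \subset \pit^{2d+1}$ has secant defect $\delta = 0$, we have $\Sec(S) = \pit^{2d+1}$ and, by Definition \ref{d.qel}, the entry locus $\Sigma_z(S)$ of a general point $z \in \Sec(S)$ is a nonsingular $0$-dimensional quadric inside a line, i.e.\ a pair of points. Thus through a general point of $\pit^{2d+1}$ there passes a \emph{unique} secant line of $S$: the variety $S$ has one apparent double point (OADP). Equivalently, projection $\pi_p \colon \pit^{2d+1} \dasharrow \pit^{2d}$ from a general external point $p$ maps $S$ birationally onto a subvariety of $\pit^{2d}$ whose only singularity is one ordinary node. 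My plan is to exploit this numerically, treating the two dimensions separately; in both cases $S$ is nondegenerate by hypothesis and linearly normal by Proposition 1.3 of \cite{R2}.

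For $d = 1$ I would use the apparent-double-point count for space curves. A generic projection of a smooth nondegenerate curve $S \subset \pit^3$ of degree $e$ and genus $g$ to $\pit^2$ is a plane curve of degree $e$ whose only singularities are $\binom{e-1}{2} - g$ nodes, in bijection with the secants of $S$ through the centre. The OADP condition forces $\binom{e-1}{2} - g = 1$, i.e.\ $g = \tfrac12 e(e-3)$. Comparing this with Castelnuovo's bound for the genus of a nondegenerate smooth space curve of degree $e$, which grows like $e^2/4$ whereas $\tfrac12 e(e-3)$ grows like $e^2/2$, rules out every $e \geq 4$ (one checks $e = 4, 5, 6$ against the explicit bound directly). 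Hence $e = 3,\ g = 0$, and a nondegenerate rational curve of degree $3$ in $\pit^3$ is the twisted cubic.

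For $d = 2$ the core is the double-point formula for the generic projection $f \colon S \to \pit^4$. Writing $H = \0_S(1)$, $d = H^2 = \deg S$, $K = K_S$ and $\chi(S) = c_2(S)$ for the topological Euler number, the number of nodes of $f(S)$ equals $\tfrac12\bigl(d^2 - 10d - 5\,H\!\cdot\!K - K^2 + \chi(S)\bigr)$, so the OADP condition becomes
\[
d^2 - 10 d - 5\,H\!\cdot\!K - K^2 + \chi(S) = 2 .
\]
(One verifies that the del Pezzo quintic and the scrolls $S(2,2),\,S(1,3)$ all satisfy this with value $2$.) Combining this relation with linear normality, I would first bound the degree and the sectional genus $\pi$, where $2\pi - 2 = H\!\cdot\!(H+K)$, showing $d \leq 5$. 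The case $d = 4$ is minimal degree, so $S$ is a smooth surface of minimal degree in $\pit^5$, hence a rational normal scroll $S(1,3)$ or $S(2,2)$ or the Veronese $v_2(\pit^2)$; the Veronese is excluded because its secant variety is a cubic hypersurface, giving $\delta = 2 \neq 0$. The case $d = 5$ is that of a linearly normal surface of degree one above the minimum, and the classification of such surfaces, together with the rationality of OADP surfaces, singles out the del Pezzo surface of degree $5$.

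The principal obstacle is this last step in the surface case: the numerical relation alone does not isolate the three surfaces, and one must control the birational geometry of $S$ --- establishing that an OADP surface is rational and running the adjunction map to reduce to a minimal model --- before the classification of low-degree surfaces in $\pit^5$ can be invoked. This is exactly the content of Theorem 3 of \cite{R1}, which we take as input; the translation to the OADP condition, the numerical reduction, and the curve case are the parts I would carry out directly.
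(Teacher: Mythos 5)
Your proposal is correct in substance, but you should be aware that the paper does not actually prove this proposition: it is recalled as a known result, quoted directly from Theorem 3 of \cite{R1}. Consequently, on the only genuinely hard step --- the classification of smooth OADP surfaces in $\pit^5$ --- your argument and the paper's coincide, since after the double-point-formula reduction you explicitly take Theorem 3 of \cite{R1} as input, and that theorem \emph{is} the surface case of the statement being proved; your numerical work there is a consistency check rather than an independent proof (though a correct one: $\nu = \tfrac12\bigl(d^2 - 10d - 5\,H\!\cdot\!K - K^2 + \chi(S)\bigr)$ is the right double point formula for a generic projection to $\pit^4$, and it does evaluate to $1$ on the del Pezzo quintic and on $S(2,2)$, $S(1,3)$). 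What you add beyond the paper is a correct, self-contained treatment of the curve case: the translation of $\delta = 0$ QEL into OADP is sound, since the general entry locus is a smooth $0$-dimensional quadric spanning a line, so exactly one secant passes through a general point of $\pit^{2d+1} = \Sec(S)$; and the count $\nu = \binom{e-1}{2} - g = 1$, played against Castelnuovo's bound in $\pit^3$, does force $e = 3$, $g = 0$, hence the twisted cubic. One small slip worth fixing: with Definition \ref{d.secant} of the paper, the Veronese surface $v_2(\pit^2)$ has $\delta = 2\cdot 2 + 1 - 4 = 1$, not $\delta = 2$, because its secant variety is a cubic hypersurface of dimension $4$; this is harmless for your argument, since all you need is $\Sec(v_2(\pit^2)) \neq \pit^5$, which is already incompatible with $\delta = 0$.
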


\section{Intersection of entry loci on QEL-manifolds}\label{s.entry}
In this section, we study the intersection of entry loci on QEL-manifolds satisfying certain additional conditions. Although our main purpose  is to use this in the proof of Theorem \ref{t.bir},   our results here have  independent interest in projective geometry
of QEL-manifolds.
 The following notation will be used.

\begin{notation}
For any subset $A \subset \pit^{n-1}$, its linear span will be denoted by $\langle A \rangle \subset \pit^{n-1}$. For a point $s \in \pit^{n-1}$, we will write $\langle A, s \rangle$ in place of $\langle A \cup \{ s \}\rangle$.
For a projective variety $S \subset \pit^{n-1}$ covered by lines and a point $s \in S$, we denote by ${\rm Locus}(s) \subset S$ the locus of lines on $S$ passing through $s$. Thus for a general point $x \in S$, $$ \dim {\rm Locus}(x) = \dim \sC_x + 1.$$  If $s \in S$ is a nonsingular point, then ${\bf T}_sS \subset \pit^{n-1}$ denotes the projective tangent space of $S$ at $s$. When $S\subset \pit^{n-1}$ is a QEL-manifold with $\delta >0$,   for a  point $u \in {\rm Sec}(S)$, we recall from Definition \ref{d.qel} that $\Sigma_u(S)$ is the entry locus of $S$ and $C_u(S) = \langle \Sigma_u(S) \rangle.$
\end{notation}

\begin{condition}\label{c} We consider the following condition on $S$:
$$
 \text{For a general} \ u \in {\rm Sec}(S), \ \text{we have} \  \Sigma_t(S) =\Sigma_u(S) \ \text{for all } t \in   \langle \Sigma_u \rangle \setminus \Sigma_u.
$$
\end{condition}

Condition \ref{c} is relevant to us by the following fact from Remark 2.4 \cite{AS2}.
\begin{lemma}\label{l.AS}
A special quadratic manifold satisfies Condition \ref{c}.
\end{lemma}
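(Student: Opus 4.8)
The plan is to read the entry locus directly off the $\pit^{\delta+1}$-bundle structure $\psi\colon \Bl_S(\BP U) \to Y$ furnished by Proposition \ref{p.blow}, and to show that the entry locus $\Sigma_w(S)$ of a point $w \in \BP U\setminus S$ is controlled entirely by the image $\psi(\widetilde w)\in Y$ of its strict transform $\widetilde w\in \Bl_S(\BP U)$. Throughout, $\pi\colon \Bl_S(\BP U)\to \BP U$ denotes the blow-down and $F$ its exceptional divisor, as in Proposition \ref{p.secant}.

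First I fix a general $u\in {\rm Sec}(S)=\BP U$, set $y_0:=\psi(\widetilde u)\in Y$ — a general point of $Y$ since $\psi^o$ is dominant — and put $\Pi:=\pi(\psi^{-1}(y_0))$. By Proposition \ref{p.blow} the fibre $\psi^{-1}(y_0)$ is a $\pit^{\delta+1}$, and by Proposition \ref{p.linear} its image $\Pi$ is a linear $\pit^{\delta+1}$ equal to $C_u(S)=\langle \Sigma_u(S)\rangle$; because $S$ is a QEL-manifold (Proposition \ref{p.SQEL}), the intersection $S\cap\Pi=\Sigma_u(S)$ is a nonsingular quadric hypersurface in $\Pi$. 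Since $\Sigma_u(S)=S\cap\Pi$, the set appearing in Condition \ref{c} is $\langle \Sigma_u\rangle\setminus\Sigma_u=\Pi\setminus S$.

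The core of the argument is then the claim that
$$\Sigma_w(S)=S\cap\Pi\qquad\text{for every } w\in \Pi\setminus S.$$
Note first that any such $w$ satisfies $\psi(\widetilde w)=y_0$: as $\pi$ is an isomorphism over $\BP U\setminus S$ and $w\in\pi(\psi^{-1}(y_0))$, the unique lift $\widetilde w$ lies on the fibre $\psi^{-1}(y_0)$. For the inclusion ``$\subseteq$'', take $x\in\Sigma_w(S)$, so $w\in\ell_{x,y}$ for some $y\in S$ with $y\neq x$; since $w\notin S$ the secant $\ell_{x,y}$ is not contained in $S$, so by Proposition \ref{p.secant}(1) its strict transform is contracted by $\psi$, and as $\widetilde w$ lies on it the image is $\psi(\widetilde w)=y_0$, whence $\ell_{x,y}=\pi(\text{strict transform})\subseteq\pi(\psi^{-1}(y_0))=\Pi$ and in particular $x\in S\cap\Pi$. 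For ``$\supseteq$'', take $x\in S\cap\Pi$; the line $\ell_{x,w}$ lies in $\Pi$ and meets the quadric $S\cap\Pi$ in a second point $y\in S$ (passing to the tangent limit when the line is tangent), so $w\in\ell_{x,y}$ and $x\in\Sigma_w(S)$. Granting the claim, the lemma is immediate: for any $t\in\langle\Sigma_u\rangle\setminus\Sigma_u=\Pi\setminus S$ we get $\Sigma_t(S)=S\cap\Pi=\Sigma_u(S)$.

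I expect the main obstacle to be the rigorous bookkeeping behind the two inclusions rather than any deep new input. The inclusion ``$\subseteq$'' hinges on the fact, from Proposition \ref{p.secant}(1) and the bundle structure of Proposition \ref{p.blow}, that a secant line through $w\notin S$ is contracted by $\psi$ to $\psi(\widetilde w)$ and hence lies in $\Pi$; one must check that its strict transform really passes through $\widetilde w$ and maps onto the whole line under $\pi$. The inclusion ``$\supseteq$'' uses that $S\cap\Pi$ is a smooth quadric, which is guaranteed only because $y_0$ is general, so that lines through $w$ genuinely cut out secant pairs; the tangent lines are dealt with by taking closures. Once these two points are verified, the equality $\Sigma_t(S)=\Sigma_u(S)$ requires nothing further.
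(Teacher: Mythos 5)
Your proof is correct, but it is worth noting that the paper does not actually prove Lemma \ref{l.AS} internally: it simply quotes the statement from Remark 2.4 of \cite{AS2}. So your argument is not a variant of the paper's proof but a self-contained replacement for that citation, assembled from the paper's own structural results: Proposition \ref{p.secant}(1) (secant lines not contained in $S$ are $\psi$-contracted), Proposition \ref{p.blow} (the $\pit^{\delta+1}$-bundle structure), Proposition \ref{p.linear} (the fiber through $\widetilde u$ maps onto the linear space $C_u(S)$), and the QEL property $\Sigma_u(S) = S\cap C_u(S)$ for general $u$ from Definition \ref{d.qel} combined with Proposition \ref{p.SQEL}. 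Both inclusions check out: for ``$\subseteq$'', any secant through $w\in \Pi\setminus S$ has proper transform passing through $\widetilde w$ (as $w\notin S$), hence is contracted to $y_0$ and lies in $\Pi$, and since $S\cap\Pi$ is closed, the closure defining $\Sigma_w(S)$ stays inside it; for ``$\supseteq$'', since $w\notin S$ the quadric $Q = S\cap\Pi$ is not contained in the polar hyperplane of $w$, so the genuine secants through $w$ account for a dense open subset of $Q$ (when $\delta=0$, $Q$ is a pair of points and the statement is immediate), and closedness of $\Sigma_w(S)$ handles the tangency locus, exactly as you indicate. What your route buys is that it keeps the paper self-contained and makes transparent the geometric content of Condition \ref{c}: the entry locus through a point $t\notin S$ is cut out by the $\psi$-fiber through $\widetilde t$, so $\Sigma_t(S)$ depends only on $\psi(\widetilde t)$, which is constant on $\Pi\setminus S$. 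The one point deserving explicit emphasis is that both identifications $\Pi = C_u(S) = \langle\Sigma_u(S)\rangle$ and $S\cap\Pi = \Sigma_u(S)$ require $u$ (equivalently $y_0$) to be general — Proposition \ref{p.linear} and Definition \ref{d.qel}(1) are stated only for general points — but this is precisely the generality granted in Condition \ref{c}, so no strength is lost.
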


\begin{proposition}\label{p.general}
Let $S \varsubsetneq \pit^{n-1}$ be a QEL-manifold with $\delta >0$ satisfying Condition \ref{c}.
Fix a general entry locus $\Sigma = \Sigma_z(S)$ by choosing a general point $z \in {\rm Sec}(S)$ and fix a general point $x \in \Sigma$. There exists a Zariski open subset $O \subset S$ with $O \cap \Sigma = \emptyset$ such that for any $s\in O$, we have the following. \begin{itemize}
\item[(1)] Either ${\rm Locus}(s) \cap \Sigma = \emptyset$ for all $s \in O$ or
 ${\rm Locus}(s) \cap \Sigma  \nsubseteq {\bf T}_x \Sigma \cap \Sigma$  for all $s \in O$.
\item[(2)] There exists  a unique entry locus  of $S$ passing through
$x$ and $ s,$ to be denoted by $\Sigma_s \in S,$ such that $\Sigma_s \subset \langle \Sigma_s \rangle$ is a smooth hyperquadric.
\item[(3)] $\langle {\bf T}_x S, s\rangle \cap S = ({\bf T}_x S \cap S) \cup \Sigma_s$.
\item[(4)] The line $\ell_{s,x}$ is not contained in $S$.
\item[(5)] For any $t\in \langle \Sigma_s \rangle \setminus \Sigma_s$, we have $C_t (S) =\langle \Sigma_s \rangle $ and $\Sigma_t(S)= \Sigma_s$.
\item[(6)] Define $P_s:=\Sigma_s \cap \Sigma$. Then $P_s$ coincides with $\langle \Sigma_s \rangle \cap \langle \Sigma \rangle$
and  is a linear subspace of $\pit^{n-1}$. In particular,  we have $P_{s}  \subset  {\bf T}_x \Sigma \cap \Sigma.$
\item[(7)] Let $p_{\Sigma}: S \setminus \Sigma \to \pit^{ n- \delta -3}$ be the projection from   $\langle \Sigma \rangle .$ Then
  $p_{\Sigma}$ is a  smooth morphism at $s$. \end{itemize}
\end{proposition}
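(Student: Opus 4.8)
The plan is to extract everything from a single geometric picture: since $z\in\Sec(S)$ is general and $S$ is a QEL-manifold with $\delta>0$, the entry locus $\Sigma=\Sigma_z(S)$ is a smooth hyperquadric spanning a $(\delta+1)$-plane $\langle\Sigma\rangle$, and the projection $p_\Sigma$ from $\langle\Sigma\rangle$ plays the organizing role. I would fix a general $x\in\Sigma$ once and for all, and build the open set $O$ as the intersection of finitely many dense opens obtained below; the running theme is that each assertion fails only on a proper closed subset, so their union is still proper. First I would establish the dichotomy in (1): the condition ``$\Locus(s)\cap\Sigma=\emptyset$'' is closed in $s$, and if it fails generically then I claim $\Locus(s)\cap\Sigma\nsubseteq \mathbf{T}_x\Sigma\cap\Sigma$ generically, because containment in the fixed proper linear subspace $\mathbf{T}_x\Sigma$ is again a closed condition that cannot hold on a dense set unless the lines through a general $s$ all meet $\Sigma$ inside $\mathbf{T}_x\Sigma$, which I would rule out by a count against the mobility of $s$.

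Next, (2)–(4) are the heart and should be proved together. For general $s\in S\setminus\Sigma$ the line $\ell_{s,x}$ is a genuine secant not lying on $S$ (this gives (4), since a general point $s$ cannot be joined to the fixed general $x$ by a line on $S$ when $S$ is nondegenerate of Picard number $1$), so a general point $w\in\ell_{s,x}$ lies in $\Sec(S)\setminus S$ and its entry locus $\Sigma_w$ contains both $x$ and $s$; I would set $\Sigma_s:=\Sigma_w$ and use the QEL hypothesis to get that $\Sigma_s\subset\langle\Sigma_s\rangle$ is a smooth quadric. Uniqueness follows from Condition~\ref{c}, which forces the entry locus to be constant along $\langle\Sigma_s\rangle\setminus\Sigma_s$; this simultaneously yields (5). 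For (3) I would argue by dimension and degree: the linear space $\langle\mathbf{T}_xS,s\rangle$ cuts $S$ in a subscheme containing the tangential slice $\mathbf{T}_xS\cap S$ and the quadric $\Sigma_s$ through $x$ and $s$, and a tangent-space/secant computation (using $\langle\Sigma_s\rangle\subset\langle\mathbf{T}_xS,s\rangle$, which holds because $\mathbf{T}_x\Sigma_s\subset\mathbf{T}_xS$) shows these account for the entire intersection.

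For (6) I would use Condition~\ref{c} on both entry loci: $P_s=\Sigma_s\cap\Sigma$ is contained in $\langle\Sigma_s\rangle\cap\langle\Sigma\rangle$ trivially, and conversely any point $t$ of the latter intersection, if not on $\Sigma$ (resp.\ $\Sigma_s$), would have $\Sigma_t(S)$ equal to both $\Sigma$ and $\Sigma_s$ by (5) and the analogous statement for $\Sigma$, a contradiction unless $t$ already lies in the common quadric; the linearity of $P_s$ then comes from its being the base locus of the pencil of quadrics spanned by $\Sigma$ and $\Sigma_s$ inside the linear span, and $P_s\subset\mathbf{T}_x\Sigma\cap\Sigma$ follows since $x\in P_s$ is a smooth point and $P_s$ is linear. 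Finally (7) is a smoothness statement for $p_\Sigma$ at $s$: the fiber of $p_\Sigma$ through $s$ is $\langle\langle\Sigma\rangle,s\rangle\cap S$ minus $\Sigma$, and smoothness of the morphism at $s$ amounts to the differential $dp_\Sigma$ being surjective on $T_sS$, equivalently $T_sS\cap\langle\Sigma\rangle$ having the expected dimension; I would verify this on a dense open by the generic smoothness theorem after checking $p_\Sigma$ is dominant, shrinking $O$ once more. The main obstacle I anticipate is (3): controlling the \emph{scheme-theoretic} intersection $\langle\mathbf{T}_xS,s\rangle\cap S$ and ruling out extra components requires a careful degree-and-tangency argument rather than a soft dimension count, and it is here that the genericity of both $x$ and $s$, together with Condition~\ref{c}, must be used most delicately.
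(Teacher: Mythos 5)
The central gap is at item (3), which you yourself flag as ``the main obstacle'' and then never actually prove. The equality $\langle {\bf T}_x S, s\rangle \cap S = ({\bf T}_x S \cap S) \cup \Sigma_s$ is equivalent to saying that the closure of the fiber through a general $s$ of the tangential projection $p_x: S \dasharrow \pit^{n-d-2}$ from ${\bf T}_xS$ is exactly the entry locus through $s$ and $x$, a smooth $\delta$-dimensional quadric. This is a genuine theorem of Ionescu and Russo (\cite{IR2}, Lemma 1.2 and Theorem 2.3, quoted in the paper as Lemma \ref{l.tangential}), and the paper's proof of Proposition \ref{p.general} dispatches (2) and (3) in one line by invoking parts (i) and (ii) of that lemma. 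A ``degree-and-tangency'' computation of the kind you gesture at will not produce it: controlling the scheme $\langle {\bf T}_xS, s\rangle \cap S$ and excluding extra components away from ${\bf T}_xS$ is precisely the content of the cited result, whose proof uses the structure theory of tangential projections of QEL-manifolds. So unless you import Lemma \ref{l.tangential} (or reprove it), your argument for the proposition's hardest item is incomplete. By contrast, your construction of $\Sigma_s$ as $\Sigma_w$ for a general $w \in \ell_{s,x}$, with uniqueness extracted from Condition \ref{c} via constancy of the entry locus along $\langle \Sigma_s\rangle \setminus \Sigma_s$, is a reasonable alternative route to (2), and your treatments of (5) and (7) match the paper's.

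There are also three smaller defects. In (4) you invoke ``Picard number $1$,'' which is not among the hypotheses (the proposition assumes only that $S \varsubsetneq \pit^{n-1}$ is a QEL-manifold with $\delta>0$ satisfying Condition \ref{c}); the correct and hypothesis-free argument, implicit in the paper, is that ${\rm Locus}(x) = S$ would make the smooth $S$ a cone with vertex $x$, hence a linear space, contradicting $S \varsubsetneq \pit^{n-1}$ nondegenerate. In (6), your justification of linearity of $P_s$ as ``the base locus of the pencil of quadrics spanned by $\Sigma$ and $\Sigma_s$'' is wrong --- an intersection of two quadrics is generally not linear --- but it is also unnecessary: your own contradiction argument via (5) shows $\langle \Sigma_s \rangle \cap \langle \Sigma \rangle \subset S$, whence $P_s = \langle \Sigma_s \rangle \cap \langle \Sigma \rangle$ (using that $S$ meets each span exactly in the entry locus), and linearity is automatic as an intersection of two linear subspaces; this is exactly the paper's argument. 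Finally, in (1) your parity of open/closed conditions is reversed ($\{s : {\rm Locus}(s)\cap\Sigma \neq \emptyset\}$ is the closed set, being the image of a projective incidence variety), and the ``count against the mobility of $s$'' is too vague to establish the second branch of the dichotomy; the paper's short argument is that if ${\rm Locus}(s) \cap \Sigma \neq \emptyset$ generically, then $\bigcup_{y \in \Sigma \setminus {\bf T}_x\Sigma} {\rm Locus}(y)$ covers an open subset of $S$, and every $s$ in it has a point of ${\rm Locus}(s)\cap\Sigma$ outside ${\bf T}_x\Sigma$.
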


We will use the following lemma.

\begin{lemma} [\cite{IR2}, Lemma 1.2 and Theorem 2.3] \label{l.tangential}
Let $S^d  \subset \pit^{n-1}$ be a QEL-manifold with $\delta > 0$. Then

(i) Through two general points of $S$, there passes a unique entry locus, which is a smooth hyperquadric in its linear span.

(ii) Let $x \in S$ be a general point and let
$p_x:   S \dasharrow \pit^{n-d-2}$ be the tangential projection from ${\bf T}_xS$. Then the closure of the fiber of
$p_x$ through a general point $s \in S$ is  the entry locus passing through $s$ and $x$, hence
it is a smooth hyperquadric of dimension $\delta$.
\end{lemma}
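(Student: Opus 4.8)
The plan is to deduce both parts from the QEL hypothesis together with Terracini's lemma, treating (ii) as the main construction and reading (i) off from it. First I would establish existence in (i): given general points $x,y\in S$, a general point $z$ of the secant line $\ell_{x,y}$ is a general point of ${\rm Sec}(S)$, since the incidence variety of pointed secant lines dominates ${\rm Sec}(S)$ with fibers of dimension $2d+1-\dim{\rm Sec}(S)=\delta$. By the definition of a QEL-manifold (Definition \ref{d.qel}) the entry locus $\Sigma_z(S)$ is then a smooth quadric of dimension $\delta$ spanning the linear space $C_z=\langle\Sigma_z\rangle\cong\pit^{\delta+1}$, and because $z\in\ell_{x,y}$ both $x$ and $y$ lie on $\Sigma_z$. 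This produces an entry locus through $x,y$ that is a smooth hyperquadric in its span, and I would also record that $\Sigma_z=S\cap C_z$ for general $z$, as in Definition \ref{d.qel}.

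For (ii), fix a general $x$ and a general $s$, and let $\Sigma=\Sigma_{x,s}$ be the entry locus from the previous step, with span $C=\langle\Sigma\rangle\cong\pit^{\delta+1}$. The first key point is that the tangential projection $p_x$ collapses $\Sigma$ to a point. Indeed, since $\Sigma\subset S$ we have ${\bf T}_x\Sigma\subset{\bf T}_xS$, and as $\Sigma$ is a smooth $\delta$-dimensional quadric with $\Sigma=S\cap C$, its embedded tangent space ${\bf T}_x\Sigma={\bf T}_xS\cap C$ is a hyperplane $\pit^{\delta}$ in $C$. Since $s\in C$ but $s\notin{\bf T}_xS$ (a general $s$ avoids ${\bf T}_xS$ because $S$ is nondegenerate, hence not contained in the proper linear space ${\bf T}_xS$), we get $C=\langle{\bf T}_x\Sigma,s\rangle\subset\langle{\bf T}_xS,s\rangle=:\Lambda\cong\pit^{d+1}$. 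Therefore $\Sigma\subset S\cap\Lambda$ lies in a single fiber of $p_x$, so $\Sigma\subseteq p_x^{-1}(p_x(s))$.

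The second key point is that this fiber has dimension exactly $\delta$, forcing its component through $s$ to equal $\Sigma$. Here I would invoke Terracini's lemma: for general $z\in\ell_{x,s}$ the tangent space ${\bf T}_z{\rm Sec}(S)=\langle{\bf T}_xS,{\bf T}_sS\rangle$ has dimension $\dim{\rm Sec}(S)=2d+1-\delta$ (Definition \ref{d.secant}), which forces $\dim({\bf T}_xS\cap{\bf T}_sS)=\delta-1$. A short linear computation then identifies the kernel of the differential of $p_x$ at $s$ with ${\bf T}_sS\cap\Lambda=\langle{\bf T}_sS\cap{\bf T}_xS,\,s\rangle\cong\pit^{\delta}$; by the constant rank theorem the general fiber of $p_x$ is smooth of dimension $\delta$ and $\overline{p_x(S)}$ has dimension $d-\delta$. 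Since the irreducible $\delta$-dimensional $\Sigma$ sits inside this $\delta$-dimensional fiber, the fiber component through $s$ is exactly $\Sigma$, proving (ii). Uniqueness in (i) then follows, since any entry locus through the general points $x,s$ is a smooth quadric whose span lies in $\Lambda$ by the same argument, hence lies in the fiber $p_x^{-1}(p_x(s))$ and therefore equals $\Sigma$.

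I expect the main obstacle to be the genericity bookkeeping required to run these arguments simultaneously: ensuring the point $z$ chosen general on $\ell_{x,y}$ is general in ${\rm Sec}(S)$, that $s$ avoids ${\bf T}_xS$, and that both Terracini's lemma and the constant-rank statement apply at the chosen general points. The technical heart is the two identifications ${\bf T}_x\Sigma={\bf T}_xS\cap C$ (transversality of the intersection $S\cap C$) and $\ker(dp_x)_s={\bf T}_sS\cap\Lambda$, which together convert the a priori quadric structure of entry loci on a QEL-manifold into the statement that the entry loci are precisely the fibers of the tangential projection.
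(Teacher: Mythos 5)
Your reconstruction is sound up to its final step, and since the paper itself gives no argument for this lemma (it is quoted from \cite{IR2}), most of what you write is a legitimate from-scratch proof: the genericity transfer through the irreducible incidence variety of pointed secant lines, the identification ${\bf T}_x\Sigma = {\bf T}_xS\cap C$ (valid because $s\in C\setminus {\bf T}_xS$ forces ${\bf T}_xS\cap C$ to be a proper linear subspace of $C\cong\BP^{\delta+1}$ containing the tangent hyperplane of the quadric), the resulting inclusion $C\subset\Lambda=\langle {\bf T}_xS,s\rangle$, and the Terracini computation giving $\ker(dp_x)_s=\langle {\bf T}_xS\cap{\bf T}_sS,\, s\rangle\cong\BP^{\delta}$ all check out. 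Your uniqueness argument in (i) also works, provided it is read, as the paper does in Proposition \ref{p.general}(2), as uniqueness among entry loci that are smooth $\delta$-dimensional hyperquadrics: for special $z\in{\rm Sec}(S)$ the set $\Sigma_z$ need not have this form, so your phrase ``any entry locus through $x,s$ is a smooth quadric'' is only justified for the general ones.

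The genuine gap is at the end of (ii): you prove that the irreducible component of $\overline{p_x^{-1}(p_x(s))}$ through $s$ equals $\Sigma$, whereas the lemma asserts that the closure of the \emph{whole} fiber is the entry locus. Generic smoothness gives local irreducibility at $s$, hence a single component through $s$; it does not exclude further $\delta$-dimensional components of the general fiber away from $s$ (smooth fibers can be disconnected). Indeed, running your own argument at a general point $s''$ of such a hypothetical component shows it would again be a $\delta$-dimensional quadric entry locus through $x$, so a priori the fiber could be a union of several entry loci through $x$, pairwise meeting only at $x$ --- and $x$ lies in the center of projection, outside the actual fiber, so even connectedness through $x$ would not help. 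Your uniqueness statement (i) concerns the single pair $(x,s)$ and does not rule this out, and the dimension count bounds each component but not their number. Note that everything you use --- one $\delta$-dimensional quadric through two general points plus Terracini --- is already available for the weaker LQEL notion mentioned after Proposition \ref{p.3.2}, for which the general fiber of the tangential projection is in general only a \emph{union} of quadrics; so some genuinely QEL-specific input is required to get irreducibility, and supplying it is precisely the nontrivial content of the cited \cite{IR2} results. The distinction matters downstream: the paper uses the full-fiber form in Proposition \ref{p.general}(3), namely $\langle {\bf T}_xS, s\rangle\cap S=({\bf T}_xS\cap S)\cup\Sigma_s$, which fails if the fiber has extra components, and that equality in turn drives Proposition \ref{p.W2} and Theorem \ref{t.fiberdim}.
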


\begin{proof}[Proof of Proposition \ref{p.general}]
For each of the conditions (1)-(7), we will show that there exists a Zariski open subset of $S$ every element of which satisfies the
condition. Then $O$ can be taken as the intersection of these open subsets.

For (1), assume that ${\rm Locus}(s) \cap \Sigma$ is non-empty for a general $s \in S$.
Then $$\bigcup_{y \in \Sigma\setminus {\bf T}_x\Sigma} {\rm Locus}(y)$$ covers an open subset of $S$ and for any $s$ inside this open subset, $${\rm Locus}(s) \cap \Sigma  \nsubseteq {\bf T}_x \Sigma \cap \Sigma.$$

 (2) follows from Lemma \ref{l.tangential} (i) while (3) follows from
Lemma \ref{l.tangential} (ii).

Note that we have ${\rm Locus}(x) \neq S$ as $S \neq \pit^{n-1}$. Thus any point $s \in S \setminus {\rm Locus}(x)$ satisfies the condition (4).

(5)  is clear from Condition \ref{c}.

For (6), we take the open subset in (5).
 If  $\langle \Sigma_s \rangle \cap \langle \Sigma \rangle$
 contains a point not on $S$, say $z$, then $\langle \Sigma_s \rangle= C_z = \langle \Sigma \rangle$ by (5), which gives a contradiction.
Thus $\langle \Sigma_s \rangle \cap \langle \Sigma \rangle \subset S$, hence $\Sigma_s \cap \Sigma = \langle \Sigma_s \rangle \cap \langle \Sigma \rangle$ is a linear subspace. The inclusion $P_{s}  \subset  {\bf T}_x \Sigma \cap \Sigma$ follows from the fact that
 $P_{s} \subset \Sigma$ is linear  and $x \in P_{s}$.

(7) is obviously true for a general point $s$.
\end{proof}

\begin{proposition}\label{p.W}
In Proposition \ref{p.general}, there exists a linear subspace $W_s \subset \pit^{n-1}$ for each $s \in O$ such that
 $$s \in W_s, \ \dim W_s = \dim P_s, \ \langle P_s \cap W_s, s \rangle = W_s \mbox{ and  } \langle P_s, s \rangle \cap \Sigma_s = P_s \cup W_s.$$
 In particular, $\langle P_s, s \rangle \nsubseteq \Sigma_s$ for each $s \in O.$
  \end{proposition}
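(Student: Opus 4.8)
The plan is to convert the statement into elementary linear algebra inside the smooth quadric $\Sigma_s$, reading off $W_s$ as the ``other ruling'' of the reducible quadric that $\Sigma_s$ cuts out on the join $\langle P_s, s\rangle$. I will freely use Proposition \ref{p.general}: $\Sigma_s \subset \langle\Sigma_s\rangle$ is a smooth hyperquadric containing both $x$ and $s$; the intersection $P_s = \Sigma_s\cap\Sigma = \langle\Sigma_s\rangle\cap\langle\Sigma\rangle$ is a linear subspace with $x\in P_s$; and, crucially, the line $\ell_{s,x}$ is not contained in $S$, hence not in $\Sigma_s$. Write $\Sigma_s = \{q = 0\}$ for a nondegenerate quadratic form $q$ defining it, with polarization $B$; for a point $p$ let $\widehat{p}$ be a representing vector and for a linear subspace $A$ let $\widehat{A}$ be its affine cone.

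First I would record the two facts that drive the argument. Since $P_s$ is a linear subspace contained in the quadric, $\widehat{P_s}$ is totally isotropic, so $q$ and $B$ vanish on it. Since $s, x\in\Sigma_s$ while $\ell_{s,x}\not\subset\Sigma_s$, the isotropic vectors $\widehat{s}, \widehat{x}$ are non-conjugate, i.e.\ $B(\widehat{s},\widehat{x})\neq 0$; as $x\in P_s$, this says the linear functional $\ell := B(\,\cdot\,,\widehat{s})$ is \emph{not} identically zero on $\widehat{P_s}$. The same inequality forces $\widehat{s}\notin\widehat{P_s}$, so $s\notin\langle P_s\rangle = P_s$ and $\dim\langle P_s, s\rangle = \dim P_s + 1$.

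Next I would compute the intersection. A general vector in the affine cone over $\langle P_s, s\rangle$ is $v + \lambda\widehat{s}$ with $v\in\widehat{P_s}$; since $q(v) = 0$ and $q(\widehat{s}) = 0$, expansion gives $q(v + \lambda\widehat{s}) = 2\lambda\,\ell(v)$, a rank-$2$ form vanishing exactly when $\lambda = 0$ or $\ell(v) = 0$. The factor $\lambda = 0$ returns $P_s$, and I set $W_s := \{[v + \lambda\widehat{s}] : \ell(v) = 0\}$. Because $\ell\neq 0$, $W_s$ is a proper hyperplane of $\langle P_s, s\rangle$, whence $\dim W_s = \dim P_s$; one then checks directly that $s\in W_s$ (as $\ell$ kills $v = 0$), that $P_s\cap W_s$ is the hyperplane $\{\ell = 0\}$ of $P_s$ so that $\langle P_s\cap W_s, s\rangle = W_s$, and that $\langle P_s, s\rangle\cap\Sigma_s = P_s\cup W_s$. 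The final non-containment is then automatic from $\dim W_s < \dim\langle P_s, s\rangle$.

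I expect no serious obstacle: the only geometric input beyond formal computation is the non-conjugacy $B(\widehat{s},\widehat{x})\neq 0$, which is precisely what guarantees the cut-out quadric is genuinely reducible rather than $\langle P_s, s\rangle$ sinking entirely into $\Sigma_s$, and this is supplied verbatim by Proposition \ref{p.general}(4). Everything after it is the standard splitting of a quadric along the join of one of its points with a linear subspace contained in it.
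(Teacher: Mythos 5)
Your proof is correct and in substance identical to the paper's: both split the intersection $\langle P_s, s\rangle \cap \Sigma_s$ into the hyperplane $P_s$ plus a residual hyperplane $W_s$ through $s$, using Proposition \ref{p.general} (4) (the line $\ell_{s,x}$ not lying in $S$, hence not in $\Sigma_s$) as the sole geometric input that keeps the join from sinking into $\Sigma_s$. The paper carries out the splitting synthetically (a degree-$\leq 2$ hypersurface in $\langle P_s, s\rangle$ containing the hyperplane $P_s$ and the point $s \notin P_s$, but not all of the join), while you make the same splitting explicit via the polarization $B$ and the non-conjugacy $B(\widehat{s},\widehat{x}) \neq 0$; this is a presentational difference only.
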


 \begin{proof} As the line $\ell_{s,x}$ is not contained in $S$ by Proposition \ref{p.general} (4),  we see that $\langle P_s, s \rangle \nsubseteq \Sigma_s$. As $\langle P_s, s\rangle$ is a linear subspace of $\langle \Sigma_s \rangle$ and $\Sigma_s$ is a quadric hypersurface in $\langle \Sigma_s \rangle$, the intersection
 $\langle P_s, s \rangle \cap \Sigma_s$ is a hypersurface of degree $\leq 2$ in  $\langle P_s, s \rangle$, which contains already the linear subspace $P_s$.
 As $s \notin P_s$,   there exists a linear subspace $W_s$ with $\dim W_s = \dim P_s$ satisfying $$ s \in W_s \mbox{ and } \langle P_s, s \rangle \cap \Sigma_s = P_s \cup W_s.$$ Moreover $P_s \cap W_s$ must be a hyperplane in $W_s$ disjoint from $s$.
 Thus $\langle P_s \cap W_s, s \rangle = W_s.$ \end{proof}

\begin{proposition} \label{p.fiber} In Proposition \ref{p.general},
assume furthermore that $S \subset \pit^{n-1}$ is defined by quadratic equations. Then
for any $s\in O \subset S$, there exists a linear subspace $F_s \subset S$ such that
\begin{equation} \label{e.fiberP}
\langle \Sigma, s \rangle  \cap S = \Sigma  \cup F_s.
\end{equation}
Moreover, we have
$F_s = \langle {\rm Locus}(s) \cap \Sigma, s \rangle$ and $\dim F_s = \dim ({\rm Locus}(s) \cap \Sigma)+1$.
In particular, $\dim F_s =0$  if and only if ${\rm Locus}(s) \cap \Sigma = \emptyset$.
\end{proposition}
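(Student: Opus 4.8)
The plan is to analyze the intersection $\langle \Sigma, s\rangle \cap S$ by exploiting that $S$ is cut out by quadrics together with the structure already established for entry loci. First I would observe that $\langle \Sigma, s \rangle = \langle \langle \Sigma \rangle, s \rangle$ is a linear subspace of dimension $\dim \langle \Sigma\rangle + 1 = \delta + 2$, since $s \notin \langle \Sigma \rangle$ (as $s \in O$ and $O \cap \Sigma = \emptyset$, while by Proposition \ref{p.general}(6) any point of $\langle \Sigma\rangle \cap S$ meeting $\Sigma_s$ lies in $\Sigma$). The key structural input is that $\Sigma = \Sigma_z(S)$ is itself a smooth quadric hypersurface spanning $\langle \Sigma \rangle$, so $\Sigma \subset \langle \Sigma, s\rangle \cap S$ already accounts for a degree-two hypersurface inside the hyperplane $\langle \Sigma \rangle$ of $\langle \Sigma, s\rangle$.

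The main step is to show that $\langle \Sigma, s\rangle \cap S$ consists of $\Sigma$ together with exactly a linear complement. Since $S$ is defined by quadratic equations, the scheme-theoretic intersection $\langle \Sigma, s \rangle \cap S$ is cut out inside the projective space $\langle \Sigma, s\rangle \cong \pit^{\delta+2}$ by quadrics. I would argue that any such quadric $Q$ restricts on the hyperplane $\langle \Sigma \rangle$ to a quadric containing $\Sigma$; because $\Sigma$ is a nondegenerate quadric hypersurface in $\langle \Sigma \rangle$, its defining quadric is the unique (up to scalar) quadric in $\langle \Sigma \rangle$ vanishing on it, so each $Q$ agrees with the equation of $\Sigma$ on that hyperplane up to scaling. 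This forces the quadratic part of $Q$ in the extra coordinate direction (dual to $s$) to be linear, i.e. each $Q$ vanishing on $S$ that is not a multiple of the $\Sigma$-quadric must factor through the linear form cutting out $\langle \Sigma \rangle$. Consequently the residual locus $\overline{(\langle \Sigma, s\rangle \cap S) \setminus \Sigma}$ is a linear subspace, which I would name $F_s$; it lies in $S$ because it lies in $\langle \Sigma, s\rangle \cap S$, and it is linear by the preceding analysis. This yields the decomposition $\langle \Sigma, s\rangle \cap S = \Sigma \cup F_s$ of equation \eqref{e.fiberP}.

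For the identification $F_s = \langle {\rm Locus}(s) \cap \Sigma, s\rangle$, I would reason geometrically. A point $y \in \Sigma$ lies on a line of $S$ through $s$ precisely when $y \in {\rm Locus}(s) \cap \Sigma$, and such a line $\ell_{s,y}$ is entirely contained in $S$ and in $\langle \Sigma, s\rangle$, hence in $F_s$ (it meets $\Sigma$ only at $y$ since $s \notin \langle\Sigma\rangle$). This gives the inclusion $\langle {\rm Locus}(s) \cap \Sigma, s\rangle \subseteq F_s$. For the reverse inclusion, any point $w \in F_s \setminus \{s\}$ determines a line $\ell_{s,w} \subset F_s \subset S$, and since $F_s$ is linear meeting $\langle\Sigma\rangle$ in the hyperplane $F_s \cap \langle \Sigma\rangle$, this line hits $\Sigma = S \cap \langle\Sigma\rangle$ in a point of ${\rm Locus}(s) \cap \Sigma$, placing $w$ in the span. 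The dimension count $\dim F_s = \dim({\rm Locus}(s) \cap \Sigma) + 1$ follows since $s \notin \langle\Sigma\rangle$ adds exactly one to the dimension of the linear span of ${\rm Locus}(s) \cap \Sigma$, and the final equivalence for $\dim F_s = 0$ is immediate.

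The step I expect to be the main obstacle is the rigidity argument showing the residual locus is \emph{linear} rather than merely a lower-degree subscheme: one must rule out that some quadric in the ideal of $S$ restricts to a quadric in $\langle \Sigma, s\rangle$ whose residual part (after removing $\Sigma$) is again quadratic. The essential point is the uniqueness of the quadric through the smooth nondegenerate hyperquadric $\Sigma$ in its span $\langle\Sigma\rangle$, combined with the fact that $s$ contributes only one additional coordinate, so that any quadric vanishing on $\Sigma \cup \{s\}$ and meeting the transversal direction nontrivially must split off a linear factor corresponding to $\langle\Sigma\rangle$. Making this splitting precise — and checking that it holds simultaneously for the whole system of quadrics defining $S$ so that the intersection is genuinely $\Sigma \cup F_s$ with $F_s$ linear — is the crux of the proof.
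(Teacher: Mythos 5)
There is a genuine gap at the central step of your argument, and you have correctly identified where it sits but not resolved it. Your claim that ``each $Q$ vanishing on $S$ that is not a multiple of the $\Sigma$-quadric must factor through the linear form cutting out $\langle \Sigma \rangle$'' is false, and the conclusion that the residual locus is linear does not follow. Work in $\langle \Sigma, s\rangle \cong \pit^{\delta+2}$ with $\langle \Sigma \rangle = \{x_0=0\}$ and $\Sigma = \{x_0 = q = 0\}$. Any quadric $Q$ vanishing on $S \cap \langle \Sigma, s\rangle$ restricts on the hyperplane to a multiple of $q$, so $Q = \lambda \tilde q + x_0 L$; after normalizing one generator with $\lambda \neq 0$ and subtracting, the ideal you obtain is generated by one quadric of the form $\tilde q + x_0 L_1$ together with products $x_0 M_j$. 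The residual locus (where $x_0 \neq 0$) is then only shown to be the intersection of the linear space $\Lambda = \{M_j = 0\}$ with the quadric $\{\tilde q + x_0L_1 = 0\}$ --- a quadric hypersurface in $\Lambda$ through $s$, not a priori a linear space. The splitting claim itself has an immediate counterexample: in $\pit^3$ the quadric $x_1x_3 - x_2^2 + x_0x_1$ vanishes on the smooth conic $\{x_0 = 0,\ x_1x_3 = x_2^2\}$ and at $s=[1:0:0:0]$, yet is irreducible. So nothing in your argument rules out the residual component being an honest quadric, and indeed nothing can at this level of generality, because you never invoke Condition \ref{c} --- the hypothesis that does the real work.

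The paper's proof is different in exactly this respect. It defines $F_s$ as the closure of the fiber through $s$ of the projection $p_\Sigma$ from $\langle \Sigma\rangle$ (Proposition \ref{p.general} (7)), and proves that $F_s$ is a cone over ${\rm Locus}(s)\cap \Sigma$ with vertex $s$ by the following trisecant trick: for $s' \in (\langle \Sigma, s\rangle \cap S)\setminus \Sigma$, the line $\ell_{s,s'}$ meets $\langle \Sigma \rangle$ in a single point $t$; if $t \notin \Sigma$, then Condition \ref{c} forces $\Sigma_t(S) = \Sigma$, whence $s, s' \in \Sigma$, a contradiction; so $t \in \Sigma$, the line carries three points of $S$, and since $S$ is cut out by quadrics, $\ell_{s,s'} \subset S$. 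Linearity of $F_s$ then comes not from any ideal-theoretic rigidity but from the fact that a cone which is nonsingular at its vertex is a linear space, using the smoothness of $p_\Sigma$ at $s$ from Proposition \ref{p.general} (7) --- a second hypothesis your proposal never uses, which is a further sign the linearity was not actually derived. To repair your argument you would need to feed Condition \ref{c} into the analysis of the residual quadric $\{\tilde q + x_0L_1 = 0\}\cap \Lambda$; the paper's line-based argument is the efficient way to do so. Your second paragraph (the identification $F_s = \langle {\rm Locus}(s)\cap\Sigma, s\rangle$ and the dimension count) is essentially sound once linearity of $F_s$ is granted, though the assertion $S \cap \langle \Sigma\rangle = \Sigma$ used there again rests on Condition \ref{c} rather than being automatic.
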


\begin{proof} Let $F_s$ be the closure of the fiber of $p_{\Sigma}$ through $s$, which is nonsingular at $s$ by Proposition \ref{p.general} (7).  We claim that $F_s$ is  a cone over ${\rm Locus}(s) \cap \Sigma$ with vertex $s$.  Then $F_s$ must be linear as it is nonsingular  at $s$ and Proposition \ref{p.fiber} follows.

To prove the claim,
pick any  $s' \in (\langle  \Sigma, s \rangle \cap S) \setminus \Sigma$ with $s \neq s'$.  It suffices to show that $\ell_{s,s'} \subset S$,
 which implies $$s' \ \in \ \langle {\rm Locus}(s) \cap \Sigma, s \rangle \ \subset \
 \langle \Sigma, s \rangle \cap S, $$ proving the claim.

 As
 $\langle s, \Sigma \rangle  = \langle s', \Sigma \rangle$,  the line $\ell_{s,s'}$ intersects $\langle \Sigma \rangle$ at a single point, say $t \in \langle \Sigma \rangle$.
Suppose that $t  \notin \Sigma$.  Then $\Sigma_t(S) = \Sigma$  by Condition \ref{c}.
   As  $\ell_{s,s'}$ passes through
$t$, we have   $s, s' \in \Sigma_t(S) = \Sigma$, a contradiction.
 Thus $t \in \Sigma$, which implies that the line $\ell_{s,s'}$ has 3 intersection points with $S$. From the assumption that  $S$ is defined by quadratic equations, we have $\ell_{s,s'} \subset S$, proving the claim.
\end{proof}

\begin{proposition}\label{p.W2}
Assume that $S \subset \pit^{n-1}$ is a QEL-manifold  with
$\delta>0$ that  is defined by quadratic equations and satisfies
Condition \ref{c}. Let $O$ be as in Proposition \ref{p.general}.
Then for any $s\in O$, we have
\begin{equation} \label{e.fiberT}
\langle {\bf T}_x \Sigma, s \rangle \cap S = ({\bf T}_x\Sigma \cap S) \cup W_s
\end{equation} where $W_s$ is as in Proposition \ref{p.W}.
\end{proposition}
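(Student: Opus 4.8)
The plan is to establish the decomposition \eqref{e.fiberT} by comparing it with the decomposition \eqref{e.fiberP} obtained in Proposition \ref{p.fiber}, exploiting the relationship between $\Sigma = \Sigma_x$ and the entry locus $\Sigma_s$ through $x$ and $s$. Recall from Proposition \ref{p.general} (3) that we already know $\langle {\bf T}_x S, s \rangle \cap S = ({\bf T}_x S \cap S) \cup \Sigma_s$, so the structure of the intersection with the larger tangent space ${\bf T}_x S$ is under control. The first step is to set up notation carefully: since $x \in \Sigma$ is a general point of the smooth hyperquadric $\Sigma$ in its linear span $\langle \Sigma \rangle$, the projective tangent space ${\bf T}_x \Sigma$ is a hyperplane in $\langle \Sigma \rangle$, and ${\bf T}_x \Sigma \cap \Sigma = {\bf T}_x \Sigma \cap \langle \Sigma \rangle \cap S$ is the tangent cone of the quadric $\Sigma$ at $x$.

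My approach is to prove the two inclusions of \eqref{e.fiberT} separately. For the inclusion ``$\supseteq$'', the containment ${\bf T}_x \Sigma \cap S \subseteq \langle {\bf T}_x \Sigma, s \rangle \cap S$ is immediate, so I only need $W_s \subseteq \langle {\bf T}_x \Sigma, s \rangle \cap S$. By Proposition \ref{p.W}, $W_s \subset \Sigma_s \subset S$ and $\langle P_s \cap W_s, s \rangle = W_s$ with $P_s \cap W_s$ a hyperplane in $W_s$; since $P_s \subset {\bf T}_x \Sigma \cap \Sigma$ by Proposition \ref{p.general} (6), we get $P_s \cap W_s \subset {\bf T}_x \Sigma$, and together with $s$ this spans $W_s$, giving $W_s \subset \langle {\bf T}_x \Sigma, s \rangle$; combined with $W_s \subset S$ this yields the inclusion. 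For the reverse inclusion ``$\subseteq$'', I would take a general point $s' \in \langle {\bf T}_x \Sigma, s \rangle \cap S$ lying outside ${\bf T}_x \Sigma \cap S$ and aim to show $s' \in W_s$. The idea is that the line $\ell_{s,s'}$ together with the span $\langle \Sigma, s\rangle$ can be analyzed using the quadric condition: since $\langle {\bf T}_x \Sigma, s \rangle \subset \langle \Sigma, s \rangle$, we have $s' \in \langle \Sigma, s \rangle \cap S = \Sigma \cup F_s$ by \eqref{e.fiberP}, so either $s' \in \Sigma$ or $s' \in F_s$. The case $s' \in \Sigma$ forces $s' \in {\bf T}_x \Sigma \cap S$ after checking $s'$ lies in ${\bf T}_x\Sigma$ (a point of $\Sigma$ inside $\langle {\bf T}_x \Sigma, s\rangle$ must lie in the hyperplane ${\bf T}_x\Sigma$ of $\langle \Sigma \rangle$, since $s \notin \langle \Sigma \rangle$), which is excluded. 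In the remaining case $s' \in F_s = \langle {\rm Locus}(s) \cap \Sigma, s \rangle$, I would intersect with $\langle {\bf T}_x \Sigma, s\rangle$ and use that ${\rm Locus}(s) \cap \Sigma \subseteq P_s$ (or its intersection with ${\bf T}_x \Sigma$) to land inside $W_s$.

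The main obstacle I anticipate is the precise bookkeeping in the reverse inclusion, specifically controlling how $F_s$ from \eqref{e.fiberP} meets the smaller linear space $\langle {\bf T}_x \Sigma, s\rangle$ and identifying that intersection with $W_s$. The delicate point is that $F_s = \langle {\rm Locus}(s) \cap \Sigma, s\rangle$ is a cone over ${\rm Locus}(s)\cap \Sigma$ with vertex $s$, and I must show that cutting it down by the hyperplane-type condition imposed by ${\bf T}_x \Sigma$ precisely recovers $W_s = \langle P_s \cap W_s, s\rangle$. This requires relating ${\rm Locus}(s) \cap \Sigma$ to $P_s = \Sigma_s \cap \Sigma$ and its intersection with ${\bf T}_x \Sigma$, and carefully tracking dimensions using $\dim W_s = \dim P_s$ from Proposition \ref{p.W}. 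I expect that a dimension count — verifying both sides of \eqref{e.fiberT} have the same dimension once the inclusion ``$\supseteq$'' is established, so that the decomposition is forced — will be the cleanest way to close the argument and circumvent a tricky direct verification of the reverse inclusion.
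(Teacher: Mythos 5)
Your forward inclusion is correct (and matches the ingredients of the paper's proof: $W_s=\langle P_s\cap W_s,s\rangle$ with $P_s\subset {\bf T}_x\Sigma$), and your exclusion of the case $s'\in\Sigma$ via $\langle {\bf T}_x\Sigma,s\rangle\cap\langle\Sigma\rangle={\bf T}_x\Sigma$ is fine. But the reverse inclusion in the remaining case $s'\in F_s$ — which you yourself flag as the main obstacle — is where the proposal genuinely breaks. The containment you propose to use, ${\rm Locus}(s)\cap\Sigma\subseteq P_s$, is \emph{false} for $s\in O$ whenever ${\rm Locus}(s)\cap\Sigma\neq\emptyset$: by Proposition \ref{p.general} (6) we have $P_s\subset {\bf T}_x\Sigma\cap\Sigma$, while Proposition \ref{p.general} (1) guarantees precisely that ${\rm Locus}(s)\cap\Sigma\nsubseteq{\bf T}_x\Sigma\cap\Sigma$ (indeed, the proof of Theorem \ref{t.fiberdim} (ii) derives a contradiction from exactly this containment). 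Your parenthetical hedge is the salvageable version — one does have ${\rm Locus}(s)\cap\Sigma\cap{\bf T}_x\Sigma\subseteq P_s\cap W_s$ — but proving it requires an argument you do not supply: for $y\in{\rm Locus}(s)\cap\Sigma\cap{\bf T}_x\Sigma$, the line $\ell_{s,y}\subset S$ lies in $\langle{\bf T}_xS,s\rangle\cap S=({\bf T}_xS\cap S)\cup\Sigma_s$ by Proposition \ref{p.general} (3); since $s\notin{\bf T}_xS$ and the line is irreducible, $\ell_{s,y}\subset\Sigma_s$, hence $\ell_{s,y}\subset\langle P_s,s\rangle\cap\Sigma_s=P_s\cup W_s$, forcing $\ell_{s,y}\subset W_s$. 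In other words, the tangential decomposition of Proposition \ref{p.general} (3) — the paper's central input — cannot be bypassed by routing through \eqref{e.fiberP} alone.

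Your fallback, a dimension count to ``force'' the decomposition once $\supseteq$ is known, also does not close the gap: equality of dimensions together with one inclusion does not rule out extra irreducible components of $\langle{\bf T}_x\Sigma,s\rangle\cap S$ outside $({\bf T}_x\Sigma\cap S)\cup W_s$. Worse, identifying $F_s\cap\langle{\bf T}_x\Sigma,s\rangle$ with $W_s$ via dimensions would need to know that $W_s$ has codimension $1$ in $F_s$, which is Theorem \ref{t.fiberdim} (ii) — proved in the paper by comparing \eqref{e.fiberP} with \eqref{e.fiberT}, i.e.\ by using the very proposition you are proving; leaning on it would be circular. For comparison, the paper avoids $F_s$ entirely: it squeezes $\langle{\bf T}_x\Sigma,s\rangle\cap\Sigma_s$ between $\langle P_s,s\rangle\cap\Sigma_s=P_s\cup W_s$ and the linear space $\langle P_s,s\rangle$ (the chain \eqref{e.w1}), concludes equality \eqref{e.w2} by a degree-$2$ argument, and then decomposes $\langle{\bf T}_x\Sigma,s\rangle\cap S$ through Proposition \ref{p.general} (3) and Lemma \ref{l.linear}. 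Your route can be completed, but only by importing that same line-through-$\Sigma_s$ argument, which is the missing idea here.
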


It is convenient to recall the following straightforward lemma.

\begin{lemma}\label{l.linear}
Let $L_1,  L_2 \subset \pit^{n-1}$ be two linear subspaces. Then \begin{itemize}
\item[(i)] $\langle L_1 \cap L_2, s \rangle = \langle L_1, s \rangle \cap L_2$ if $s \in L_2$; and \item[(ii)]
$\langle L_1, s \rangle \cap L_2 = L_1 $ if $L_1 \subset L_2$ and $s \not\in L_2$. \end{itemize} \end{lemma}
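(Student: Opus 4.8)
The plan is to lift everything from $\pit^{n-1}$ to the ambient vector space and reduce both identities to the Dedekind modular law for subspaces. First I would write $\pit^{n-1} = \pit V$ and introduce the affine cones: let $\widehat{L}_1, \widehat{L}_2 \subset V$ be the linear subspaces with $L_i = \pit \widehat{L}_i$, and let $\widehat{s} \subset V$ be the one-dimensional subspace corresponding to $s$. Under projectivization, intersection of linear subspaces corresponds to intersection of the cones and the linear span $\langle A \rangle$ corresponds to the sum of the cones; in particular $\langle L_1 \cap L_2, s\rangle = \pit\big((\widehat{L}_1 \cap \widehat{L}_2) + \widehat{s}\big)$ and $\langle L_1, s\rangle \cap L_2 = \pit\big((\widehat{L}_1 + \widehat{s}) \cap \widehat{L}_2\big)$, with the analogous translations for the remaining expressions. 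Thus each statement becomes an identity of subspaces of $V$, and the projective hypotheses translate as $s \in L_2 \Leftrightarrow \widehat{s}\subset\widehat{L}_2$ and $L_1 \subset L_2 \Leftrightarrow \widehat{L}_1 \subset \widehat{L}_2$.

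The key tool is the modular law, which asserts that $(A + C)\cap B = (A\cap B) + C$ whenever $C \subset B$. For part (i) the hypothesis $s \in L_2$ gives $\widehat{s} \subset \widehat{L}_2$, so I would apply the modular law with $A = \widehat{L}_1$, $B = \widehat{L}_2$, $C = \widehat{s}$ to obtain $(\widehat{L}_1 + \widehat{s}) \cap \widehat{L}_2 = (\widehat{L}_1 \cap \widehat{L}_2) + \widehat{s}$, and projectivizing yields $\langle L_1, s\rangle \cap L_2 = \langle L_1 \cap L_2, s\rangle$. For part (ii) the hypotheses are $\widehat{L}_1 \subset \widehat{L}_2$ and $\widehat{s} \not\subset \widehat{L}_2$; here I would apply the modular law with $A = \widehat{s}$, $B = \widehat{L}_2$, $C = \widehat{L}_1$ (so that $C \subset B$), giving $(\widehat{s} + \widehat{L}_1)\cap \widehat{L}_2 = (\widehat{s}\cap \widehat{L}_2) + \widehat{L}_1$. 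Since $\widehat{s}$ is one-dimensional and not contained in $\widehat{L}_2$, the intersection $\widehat{s} \cap \widehat{L}_2$ is $0$, so the right-hand side collapses to $\widehat{L}_1$, and projectivizing gives $\langle L_1, s\rangle \cap L_2 = L_1$.

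Since the lemma is elementary, I expect no serious obstacle. The only points requiring care are invoking the modular law with the containment hypothesis oriented correctly in each part (the containment is $\widehat{s}\subset\widehat{L}_2$ in (i) but $\widehat{L}_1 \subset \widehat{L}_2$ in (ii), so the roles of the three subspaces differ), and observing in (ii) that the one-dimensionality of $\widehat{s}$ together with $s \notin L_2$ forces $\widehat{s}\cap\widehat{L}_2 = 0$. If one prefers to avoid quoting the modular law, each inclusion can instead be verified directly by the one-line argument underlying it: for the nontrivial inclusion in (i), any $x \in (\widehat{L}_1 + \widehat{s}) \cap \widehat{L}_2$ decomposes as $x = a + c$ with $a \in \widehat{L}_1$, $c \in \widehat{s} \subset \widehat{L}_2$, whence $a = x - c \in \widehat{L}_1 \cap \widehat{L}_2$, and similarly in (ii).
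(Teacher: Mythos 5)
Your proof is correct. Note that the paper itself offers no proof of this lemma --- it is introduced with the phrase ``the following straightforward lemma'' and the verification is left to the reader --- so there is no argument to compare against; your reduction to affine cones and the Dedekind modular law $(A+C)\cap B=(A\cap B)+C$ for $C\subset B$ is exactly the standard justification one would supply, and you have oriented the containment hypothesis correctly in each part (taking $C=\widehat{s}\subset\widehat{L}_2$ in (i) and $C=\widehat{L}_1\subset\widehat{L}_2$ in (ii), where $\widehat{s}\cap\widehat{L}_2=0$ collapses the right-hand side). The only convention worth making explicit is that projectivization sends the zero subspace to the empty set, so the identities remain valid even when $L_1\cap L_2=\emptyset$ in (i); your dictionary between spans/intersections of projective subspaces and sums/intersections of cones handles this automatically.
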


\begin{proof}[Proof of Proposition \ref{p.W2}]
To start with, we claim the following relation
\begin{equation}\label{e.w1} \langle P_{s}, s \rangle \cap \Sigma_{s}   \subset  \langle {\bf T}_x \Sigma, s \rangle \cap \Sigma_{s}   \subsetneq \langle \Sigma, s \rangle  \cap \langle \Sigma_s \rangle  = \langle P_{s}, s\rangle. \end{equation}
The first inclusion is from  $P_{s}  \subset  {\bf T}_x \Sigma$ in Proposition \ref{p.general} (6).  The second inclusion is from
 $ {\bf T}_x \Sigma \subset \langle \Sigma,s \rangle$. The last equality follows from  $$\langle P_{s}, s\rangle = \langle \langle \Sigma \rangle \cap \langle \Sigma_s \rangle, s \rangle = \langle \Sigma, s \rangle \cap \langle \Sigma_s \rangle$$ which is a consequence of Proposition \ref{p.general} (6) and Lemma \ref{l.linear} (i). Finally, the inequality in the middle is by
$\langle P_s, s \rangle \not\subset \Sigma_s$ from Proposition \ref{p.W}.

 In \eqref{e.w1},  the proper subvariety  $\langle {\bf T}_x \Sigma, s \rangle \cap \Sigma_s$ of $\langle P_s, s \rangle$
 is a subvariety of degree $\leq 2.$ Since $\langle P_s, s \rangle$ is a linear subspace of $\langle \Sigma_s \rangle$ from Proposition \ref{p.general} (6)
 and $\Sigma_s \subset \langle  \Sigma_s \rangle$ is a quadric hypersurface from Proposition \ref{p.general} (2), the intersection
 $\langle P_s, s \rangle \cap \Sigma_s$ must be a hypersurface of degree 2 in $\langle P_s, s \rangle$.
It follows that  \begin{equation}\label{e.w2} \langle P_{s}, s \rangle \cap \Sigma_{s}  = \langle {\bf T}_x \Sigma, s \rangle \cap \Sigma_{s}. \end{equation}
From ${\bf T}_x \Sigma \subset {\bf T}_x S$, we have the tautological relation
 \begin{equation}\label{e.w3} \langle {\bf T}_x \Sigma, s\rangle  \cap S = \langle {\bf T}_x \Sigma, s \rangle
  \cap \left( \langle {\bf T}_x S, s \rangle \cap S \right) \end{equation}
  Combining \eqref{e.w3} with
$\langle {\bf T}_x S, s \rangle \cap S =  \Sigma_{s} \cup ({\bf T}_xS \cap S)  $ from Proposition \ref{p.general} (3), we obtain \begin{equation}\label{e.w4}
   \langle {\bf T}_x \Sigma, s\rangle  \cap S
   =  (\langle {\bf T}_x \Sigma, s\rangle \cap \Sigma_{s}) \cup (\langle {\bf T}_x \Sigma, s \rangle \cap {\bf T}_xS \cap S). \end{equation}
For the first term on the righthand side of \eqref{e.w4}, we have
\begin{equation}\label{e.w5}
\langle {\bf T}_x \Sigma, s \rangle \cap \Sigma_{s} = \langle P_{s}, s\rangle \cap \Sigma_{s} = P_{s} \cup W_{s}
\end{equation}
where the first equality is from \eqref{e.w2} and the second equality is from Proposition \ref{p.W}.
For the second term on the right hand side of \eqref{e.w4}, we have
 \begin{equation}\label{e.w6} \langle {\bf T}_x \Sigma, s\rangle  \cap {\bf T}_xS \cap S = {\bf T}_x \Sigma \cap S \end{equation}
because $\langle {\bf T}_x \Sigma, s \rangle \cap {\bf T}_x S =
{\bf T}_x \Sigma$ by Lemma \ref{l.linear} (ii). Putting
\eqref{e.w5} and \eqref{e.w6} into \eqref{e.w4},  we  obtain
$$\langle {\bf T}_x \Sigma, s\rangle \cap S =  P_s \cup W_{s} \cup  ({\bf T}_x \Sigma \cap S). $$ Recalling $P_s \subset {\bf T}_x \Sigma \cap \Sigma$ from Proposition \ref{p.general} (6), we have \eqref{e.fiberT}.
\end{proof}

\begin{theorem} \label{t.fiberdim}
In Proposition \ref{p.W2}, assume that $S$ is prime Fano and that  ${\rm Locus}(s) \cap \Sigma \neq \emptyset$ for $s \in O$.  Then \begin{itemize}
\item[(i)] $\dim F_s = \frac{3 \delta - d}{2}$; \item[(ii)] $W_s$ is a codimension-1 linear subspace of $F_s$; and \item[(iii)]
  $\Sigma \cap \Sigma_{s} \simeq \BP^{\frac{3\delta-d-2}{2}}$. \end{itemize}
\end{theorem}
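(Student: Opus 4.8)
The plan is to extract all three statements from a single dimension count, anchored by the key geometric facts already assembled in Propositions \ref{p.general}, \ref{p.W}, \ref{p.fiber}, and \ref{p.W2}. First I would record the three relevant dimensions. Since $S$ is prime Fano with $\delta \geq 1$, Proposition \ref{p.pic}(1) gives $K_S^{-1} = \sO(\frac{d+\delta}{2})$, and since a general entry locus $\Sigma$ is a smooth $\delta$-dimensional quadric in its span $\langle \Sigma \rangle \cong \pit^{\delta+1}$, we have $\dim \Sigma = \delta$ and $\dim \langle \Sigma \rangle = \delta+1$. The object to compute is $\dim F_s$, where by Proposition \ref{p.fiber} the scroll fiber satisfies $F_s = \langle {\rm Locus}(s)\cap\Sigma, s\rangle$ and $\langle \Sigma, s\rangle \cap S = \Sigma \cup F_s$.

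For part (i), the cleanest route is to compute $\dim F_s$ as the dimension of a general fiber of the projection $p_\Sigma\colon S \setminus \Sigma \to \pit^{n-\delta-3}$ from $\langle \Sigma\rangle$. By Proposition \ref{p.general}(7) this projection is smooth at $s$, so the fiber dimension is $\dim S - \dim \overline{p_\Sigma(S)}$. The target lives in $\pit^{n-\delta-3}$, and I expect $p_\Sigma$ to be dominant onto a subvariety whose dimension I can pin down using the secant/entry-locus structure: a general fiber of $p_\Sigma$ is $F_s$, and the entry loci through $x$ sweeping out $S$ (via the tangential projection $p_x$ of Lemma \ref{l.tangential}(ii), whose fibers are the $\delta$-dimensional quadrics $\Sigma_s$) let me relate $\dim F_s$ to $\delta$ and $d$. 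Concretely, I would count: $S$ is fibered (rationally) by the $\Sigma_s$'s over a base of dimension $d - \delta$, and $\Sigma$ meets a general $\Sigma_s$ in the linear space $P_s = \Sigma \cap \Sigma_s$ of Proposition \ref{p.general}(6); tracking how $\dim P_s$, $\dim F_s$, and $\dim({\rm Locus}(s)\cap\Sigma)$ fit together should force $\dim F_s = \frac{3\delta - d}{2}$. The identity $\dim F_s = \dim({\rm Locus}(s)\cap\Sigma)+1$ from Proposition \ref{p.fiber} converts this into a statement about $\dim {\rm Locus}(s)\cap\Sigma$, which is where the hypothesis ${\rm Locus}(s)\cap\Sigma \neq \emptyset$ enters to guarantee the intersection has the expected dimension rather than being empty.

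For part (ii), I would invoke Proposition \ref{p.W} directly: there $\langle P_s, s\rangle \cap \Sigma_s = P_s \cup W_s$ with $\dim W_s = \dim P_s$ and $s \in W_s$. The content is to identify $W_s$ as a hyperplane in $F_s$. Since $F_s$ is linear (Proposition \ref{p.fiber}) and passes through $s$, and $W_s$ is the ``non-$\Sigma$'' component of $\langle {\bf T}_x\Sigma, s\rangle \cap S$ cut out in equation \eqref{e.fiberT}, comparing the codimension of the linear cut $\langle {\bf T}_x\Sigma, s\rangle$ against $\langle \Sigma, s\rangle$ inside the ambient space shows $W_s \subset F_s$ has codimension exactly $1$: passing from $\langle \Sigma, s\rangle$ to $\langle {\bf T}_x\Sigma, s\rangle$ enlarges the linear space by the tangent directions of $\Sigma$ at $x$ beyond $\langle \Sigma\rangle$, and restricting to the smaller scroll-type section $F_s$ drops exactly one dimension.

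For part (iii), I would compute $\dim(\Sigma \cap \Sigma_s) = \dim P_s$ from the previous parts together with the structure of the quadric $\Sigma_s$. Since $\Sigma_s$ is a smooth quadric of dimension $\delta$ in $\langle \Sigma_s\rangle \cong \pit^{\delta+1}$, and $P_s = \Sigma \cap \Sigma_s = \langle \Sigma\rangle \cap \langle \Sigma_s\rangle$ is a linear subspace lying on this quadric, its dimension equals $\dim W_s$ (Proposition \ref{p.W}), which by (ii) is $\dim F_s - 1 = \frac{3\delta-d}{2}-1 = \frac{3\delta-d-2}{2}$; being a linear subspace this gives $P_s \cong \pit^{\frac{3\delta-d-2}{2}}$, proving (iii). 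The main obstacle I anticipate is part (i): making the fiber-dimension count for $p_\Sigma$ rigorous, i.e. confirming that $p_\Sigma$ is dominant onto a subvariety of the expected dimension so that the generic fiber $F_s$ has dimension exactly $\frac{3\delta-d}{2}$ rather than jumping. This requires carefully combining the global fibration of $S$ by entry-locus quadrics with the local smoothness of $p_\Sigma$ at $s$, and it is the step where the hypothesis ${\rm Locus}(s)\cap\Sigma\neq\emptyset$ and the prime Fano structure (through $K_S^{-1}=\sO(\frac{d+\delta}{2})$) are genuinely needed; once the count is nailed down, parts (ii) and (iii) follow formally from the already-established linearity and quadric statements.
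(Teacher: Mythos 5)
Your proposal has a genuine gap at its center: part (i) is never actually proved, and you say so yourself when you flag the fiber-dimension count as ``the main obstacle I anticipate.'' The decisive input that your sketch never uses is the dimension of the locus of lines: since $S$ is a prime Fano QEL-manifold, Proposition \ref{p.pic} gives $\dim \sC_t = \frac{d+\delta}{2}-2$, hence $\dim {\rm Locus}(t) = \frac{d+\delta}{2}-1$ for general $t$. The paper's proof runs the count through the incidence variety
$$I = \mbox{closure of } \{(t,y) \in \Sigma \times O \ \mid \ \ell_{t,y} \subset S\},$$
whose first projection has general fiber of dimension $\frac{d+\delta}{2}-1$, so $\dim I = \delta + \frac{d+\delta}{2}-1$; the hypothesis ${\rm Locus}(s)\cap\Sigma \neq \emptyset$ combined with Proposition \ref{p.general}(1) makes the second projection surjective, so its general fiber ${\rm Locus}(s)\cap\Sigma$ has dimension $\frac{3\delta-d}{2}-1$, and then $F_s = \langle {\rm Locus}(s)\cap\Sigma, s\rangle$ from Proposition \ref{p.fiber} gives $\dim F_s = \frac{3\delta-d}{2}$. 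Your alternative --- computing $\dim F_s$ as $\dim S - \dim \overline{p_\Sigma(S)}$ and ``tracking'' how $\dim P_s$, $\dim F_s$, $\dim({\rm Locus}(s)\cap\Sigma)$ ``fit together'' via the rational fibration of $S$ by entry loci over a $(d-\delta)$-dimensional base --- never closes: $\dim \overline{p_\Sigma(S)}$ is exactly the unknown, and you cite $K_S^{-1} = \sO(\frac{d+\delta}{2})$ without ever converting it into $\dim {\rm Locus}(t)$, which is the only place the prime Fano hypothesis can quantitatively enter. Without the line count, nothing in your outline produces the number $\frac{3\delta-d}{2}$.

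Part (ii) also has a real hole. Your hyperplane comparison can only give codimension $\leq 1$: note first that ${\bf T}_x\Sigma$ is a \emph{hyperplane inside} $\langle \Sigma\rangle$ (your phrase that passing to $\langle {\bf T}_x\Sigma, s\rangle$ ``enlarges the linear space by tangent directions beyond $\langle\Sigma\rangle$'' has the containment backwards), so $\langle {\bf T}_x\Sigma, s\rangle$ is a hyperplane of $\langle \Sigma, s\rangle$, and a hyperplane section of the linear space $F_s$ is either a hyperplane of $F_s$ or \emph{all} of $F_s$. Excluding $W_s = F_s$ is genuine content that your ``drops exactly one dimension'' assertion skips: the paper shows that $W_s = F_s$ would force ${\rm Locus}(s)\cap\Sigma = P_s \cap W_s \subset P_s \subset {\bf T}_x\Sigma \cap \Sigma$, contradicting Proposition \ref{p.general}(1), which under your nonemptiness hypothesis guarantees ${\rm Locus}(s)\cap\Sigma \nsubseteq {\bf T}_x\Sigma\cap\Sigma$ --- this is the second place that hypothesis is needed, not just in (i). Your part (iii) coincides with the paper's computation ($\dim(\Sigma\cap\Sigma_s) = \dim P_s = \dim W_s = \dim F_s - 1$, a linear subspace by Proposition \ref{p.general}(6)) and is fine once (i) and (ii) are in place; but as written, both (i) and (ii) rest on counts you have asserted rather than performed.
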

\begin{proof}
Consider the incidence variety
$$
I = \mbox{ the closure of } \{(t, y) \in \Sigma \times O |
\ell_{ty} \subset S\}
$$
and the two projections $p_1: I \to \Sigma$ and $p_2: I \to S$.
As $S$ is a prime Fano QEL-manifold,  Proposition \ref{p.pic} gives $\dim \sC_t = \frac{d + \delta}{2}-2$ for a general $t \in  S$, which implies
$$\dim {\rm Locus}(t)= \dim \sC_t +1 = \frac{d+\delta}{2}-1.$$ Since we have chosen our $\Sigma$ generally,
    the  dimension of a general fiber of $p_1$ is equal to $\dim {\rm Locus}(t)$ for general $t \in S$.  Thus  $$\dim I - \dim \Sigma = \frac{d+\delta}{2}-1 \mbox{ and } \dim I = \delta +  \frac{d+\delta}{2}-1.$$
     The assumption ${\rm Locus}(s) \cap \Sigma \neq \emptyset$ for $s \in O$ implies that $p_2$ is surjective by Proposition \ref{p.general} (1). Thus the dimension of a general fiber of $p_2$ is $$\dim I - \dim S = \delta +  \frac{d+\delta}{2} -1 -d =   \frac{3 \delta-d}{2}-1.$$
     Since the fiber of $p_2$ over a general $s \in O$ is ${\rm Locus}(s) \cap \Sigma$  and $F_s = \langle {\rm Locus}(s) \cap \Sigma, s \rangle$ from Proposition \ref{p.fiber}, we have $\dim F_s = \frac{3 \delta - d}{2}$, proving (i).

    Comparing  equations \eqref{e.fiberP} and \eqref{e.fiberT}, we see that $W_s \subset F_s$ is a linear subspace  of codimension $\leq 1$.
Suppose that $W_s = F_s$.
Note that  $ P_{s} \cap W_{s} \subset {\rm Locus}(s) \cap \Sigma$ from $P_s \subset \Sigma$ and $W_s \subset {\rm Locus}(s)$.
As $F_s = \langle s,{\rm Locus}(s) \cap \Sigma\rangle$ from Proposition \ref{p.fiber}  and  $ W_{s} =  \langle s, P_{s} \cap W_{s} \rangle$ from Proposition \ref{p.W}, the hypothesis $F_s = W_s$ implies ${\rm Locus}(s) \cap \Sigma = P_{s} \cap W_{s}.$
Combining this with Proposition \ref{p.general} (6), we have $$ {\rm Locus}(s) \cap \Sigma \subset P_{s} \subset \Sigma \cap {\bf T}_x \Sigma$$which contradicts  Proposition \ref{p.general} (1). Thus $W_{s}$ is of codimension one in $F_s$, proving (ii).

 Finally, the equality $\dim P_s = \dim W_s$ in Proposition \ref{p.W} implies  $$\dim (\Sigma \cap \Sigma_s) = \dim W_s = \dim F_s -1 = \frac{3 \delta -d -2}{2},$$ proving (iii).
\end{proof}

We close this section with one explicit example of the intersection of entry loci.
It is a simple consequence of the following elementary lemma, whose proof will be skipped.

\begin{lemma}\label{l.S}
Let $V$ be a vector space of dimension $2m \geq 6,$ equipped with a non-degenerate quadratic form $Q: V \times V \to \C$. \begin{itemize} \item[(1)]
The two components $\mathbb{S}^+(V;Q)$ and $\mathbb{S}^-(V; Q)$ of the space of $m$-dimensional $Q$-isotropic subspaces of $V$ admit  projective embeddings, realizing them as prime Fano manifolds biregular to each other.  \item[(2)] Let $W \subset V$ be a $Q$-isotropic subspace of dimension $\leq m-3$ and let
$$W^{\perp} := \{ v \in V, \ Q(v,w) = 0 \mbox{ for all } w \in W.\}.$$  The quotient space $W^{\perp}/W$ with $\dim W^{\perp}/W \geq 6$ is equipped with a non-degenerate quadratic from $Q^W$ induced by $Q$  and  we have  natural inclusions $$\mathbb{S}^+(W^{\perp}/W; Q^W) \subset \mathbb{S}^+(V; Q) \mbox{ and } \mathbb{S}^-(W^{\perp}/W; Q^W) \subset \mathbb{S}^-(V; Q)$$
which induce  isomorphisms between their Picard groups. \item[(3)] When $W = \C v$ for some $0 \neq v \in V$, let us write $\mathbb{S}^+(W^{\perp}/W; Q^W)$ as  $\mathbb{S}^v.$ For two independent vectors $v, v' \in V$, if $Q(v, v')\neq 0,$ then  $\mathbb{S}^v \cap \mathbb{S}^{v'} = \emptyset$, while if $Q(v,v') =0,$ then  $$\mathbb{S}^v \cap \mathbb{S}^{v'} = \mathbb{S}^+(W^{\perp}/W; Q^W) \mbox{ for } W = \C v + \C v'.$$ \end{itemize} \end{lemma}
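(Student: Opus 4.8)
The plan is to reduce all three assertions to standard linear algebra of the orthogonal group together with the well-understood geometry of the half-spin (spinor) embedding. Write $k = \dim W$ and let $\pi\colon W^{\perp}\to W^{\perp}/W$ be the quotient map. First I would record the elementary backbone. Since $W$ is $Q$-isotropic we have $W\subset W^{\perp}$, and $Q|_{W^{\perp}}$ descends to a well-defined form $Q^{W}$ on $W^{\perp}/W$ that is nondegenerate of even dimension $2(m-k)\ge 6$. The key observation is that $\bar{U}\mapsto \pi^{-1}(\bar{U})$ gives an inclusion-preserving bijection between the maximal (i.e. $(m-k)$-dimensional) $Q^{W}$-isotropic subspaces of $W^{\perp}/W$ and the maximal ($m$-dimensional) $Q$-isotropic subspaces $U$ of $V$ containing $W$; the inverse sends such a $U$ (which automatically lies in $W^{\perp}$) to $U/W$. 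This single bijection is the set-theoretic content of both (2) and (3), and verifying it is entirely routine.

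\textbf{Part (1).} Here I would invoke the half-spin representations of $\mathrm{Spin}(V)\cong\mathrm{Spin}(2m)$. The two families $\mathbb{S}^{\pm}(V;Q)$ are the two highest-weight orbits of $\mathrm{Spin}(V)$ in the projectivized half-spin representations, so each is a smooth homogeneous space $\mathrm{Spin}(2m)/P$ of Picard number $1$ and is covered by lines; thus each is a prime Fano manifold in its spinor embedding. For the biregularity $\mathbb{S}^{+}(V;Q)\cong\mathbb{S}^{-}(V;Q)$ I would exhibit an isometry $g\in O(V)\setminus SO(V)$ (for instance the reflection in a non-isotropic vector, or the interchange of a hyperbolic pair): such a $g$ swaps the two families because it reverses the parity of $\dim(U\cap gU)$, and it is an algebraic isomorphism between the two components.

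\textbf{Part (2).} The bijection above is the point-level description of a morphism $\iota^{\pm}\colon\mathbb{S}^{\pm}(W^{\perp}/W;Q^{W})\to\mathbb{S}^{\pm}(V;Q)$, where the labels $+/-$ are matched once and for all using the fact that $\dim(U\cap U')\bmod 2$ is preserved under $\iota$. I would then verify that $\iota^{\pm}$ is a linear embedding in spinor coordinates. The crucial input is that the lines on $\mathbb{S}^{+}(V;Q)$ are exactly the pencils $\{\,U:R\subset U\subset R^{\perp}\,\}$ attached to isotropic $(m-2)$-planes $R\subset V$; among these, the lines lying in the image of $\iota^{+}$ are precisely those with $W\subset R$, and under the correspondence $U\leftrightarrow U/W$ these are exactly the lines of $\mathbb{S}^{+}(W^{\perp}/W;Q^{W})$. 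Hence $\iota^{\pm}$ carries lines to lines, so it restricts the spinor $\mathcal{O}(1)$ of the ambient variety to the spinor $\mathcal{O}(1)$ of the sub-spinor variety; since by part (1) both Picard groups are infinite cyclic generated by these respective bundles, the pullback $\iota^{\pm *}$ is a Picard-group isomorphism. \emph{This linearity claim, together with the bookkeeping matching the two families, is the only genuinely non-formal point, and I expect it to be the main obstacle; everything else is formal.}

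\textbf{Part (3).} This is a direct specialization. By construction $\mathbb{S}^{v}=\iota^{+}\bigl(\mathbb{S}^{+}((\mathbb{C}v)^{\perp}/\mathbb{C}v;Q^{\mathbb{C}v})\bigr)=\{\,U\in\mathbb{S}^{+}(V;Q):v\in U\,\}$, so $\mathbb{S}^{v}\cap\mathbb{S}^{v'}=\{\,U\in\mathbb{S}^{+}(V;Q):v,v'\in U\,\}$ is the set of maximal isotropics containing $W=\mathbb{C}v+\mathbb{C}v'$. If $Q(v,v')\ne 0$ then $W$ is a nondegenerate hyperbolic plane, hence not isotropic, so no maximal isotropic contains it and the intersection is empty. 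If $Q(v,v')=0$ then $W$ is a $2$-dimensional isotropic subspace, and applying part (2) with this $W$ identifies the intersection with $\mathbb{S}^{+}(W^{\perp}/W;Q^{W})$, as claimed.
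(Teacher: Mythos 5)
The paper offers no proof to compare against: Lemma~\ref{l.S} is introduced with the words ``whose proof will be skipped,'' so your proposal is filling a gap the authors deliberately left open rather than paralleling an existing argument. On its own merits your argument is correct, and it is the natural way to supply the omitted proof: the quotient bijection $\bar{U}\mapsto\pi^{-1}(\bar{U})$ between maximal $Q^W$-isotropics of $W^{\perp}/W$ and maximal $Q$-isotropics of $V$ containing $W$ is indeed the entire set-theoretic content of (2) and (3), and your description of the lines on the spinor variety as the pencils $\{U: R\subset U\subset R^{\perp}\}$ attached to isotropic $(m-2)$-planes $R$ is the standard one; the observation that a pencil lies in $\{U: W\subset U\}$ exactly when $W\subset R$ (because the base of the pencil is $\bigcap U=R$, as two distinct members of one family of a $4$-dimensional quadratic space meet in $0$) correctly yields lines-to-lines, hence $\iota^{*}\sO(1)=\sO(1)$ and the Picard isomorphism, since both Picard groups are $\Z\cdot\sO(1)$ by (1). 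Note that this makes your flagged ``linearity in spinor coordinates'' worry moot: once $\iota$ is known to be an embedding carrying lines to lines, it is automatically given by a subsystem of $|\sO(1)|$, so no coordinate computation is required.

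Two small inaccuracies worth repairing. First, in (1) the phrase ``reverses the parity of $\dim(U\cap gU)$'' is garbled; the clean statement is that for a reflection $r_w$ in a non-isotropic $w$ one has $r_wU\neq U$ and $\dim(U\cap r_wU)=m-1$, which by the parity criterion ($U,U'$ in the same family iff $\dim(U\cap U')\equiv m \bmod 2$) forces $U$ and $r_wU$ into opposite families. Second, in (3) you invoke part (2) for $W=\C v+\C v'$ of dimension $2$, which satisfies the hypothesis $\dim W\leq m-3$ only when $m\geq 5$; this covers the paper's sole application ($m=5$ in Proposition~\ref{p.intS}), but strictly speaking the identification $\mathbb{S}^{v}\cap\mathbb{S}^{v'}=\mathbb{S}^{+}(W^{\perp}/W;Q^{W})$ should be derived from the underlying bijection, which holds with no dimension restriction, rather than from the statement of (2).
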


We denote by $\mathbb{S}_m$ the Spinor variety $\mathbb{S}^+(V; Q)$ of Lemma \ref{l.S}.
 We have the following consequence.

\begin{proposition}\label{p.intS}
The intersection of any two distinct entry loci of $\mathbb{S}_5 \subset \pit^{15}$ is either empty or  $\pit^3$.
\end{proposition}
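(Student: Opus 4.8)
The plan is to identify the entry loci of $\mathbb{S}_5 \subset \pit^{15}$ explicitly with the sub-Spinor varieties of Lemma \ref{l.S} and then read off the intersection from part (3) of that lemma. Write $\mathbb{S}_5 = \mathbb{S}^+(V;Q)$ with $\dim V = 10$, so that points of $\mathbb{S}_5$ are the maximal (i.e. $5$-dimensional) $Q$-isotropic subspaces in one of the two families. By Proposition \ref{p.number} the secant defect is $\delta = 2\cdot 10 + 2 - 16 = 6$, and by the QEL property (Definition \ref{d.qel}) a general entry locus $\Sigma_z(\mathbb{S}_5)$ is a smooth quadric $\qit^6$ spanning a $\pit^7$. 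For an isotropic vector $0 \neq v \in V$ with $W = \C v$, the sub-Spinor variety $\mathbb{S}^v = \mathbb{S}^+(W^\perp/W; Q^W)$ of Lemma \ref{l.S} sits inside $\mathbb{S}_5$ as the locus of maximal isotropic subspaces containing $v$; it is a copy of $\mathbb{S}_4$, classically isomorphic to $\qit^6$ by triality, so $\dim \mathbb{S}^v = 6 = \delta$. Thus both the entry loci and the varieties $\mathbb{S}^v$ form families of smooth $6$-dimensional quadrics inside $\mathbb{S}_5$.

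The main step is to prove that these two families coincide, i.e. that every general entry locus is an $\mathbb{S}^v$. I would argue as follows. Take two general points $x_1, x_2 \in \mathbb{S}_5$, corresponding to maximal isotropic subspaces $A_1, A_2$; by the parity rule for the two families $\dim(A_1 \cap A_2)$ is odd, and in general position it equals $1$, say $A_1 \cap A_2 = \C v$. Then $v$ is isotropic and $A_1, A_2 \in \mathbb{S}^v$, so the smooth quadric $\mathbb{S}^v \cong \qit^6$ passes through $x_1$ and $x_2$. By Lemma \ref{l.tangential}(i) there is a unique entry locus through the two general points $x_1, x_2$, and it is a smooth $\qit^{\delta} = \qit^6$; hence it suffices to check that $\mathbb{S}^v$ is itself an entry locus. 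This I would verify by a secant sweep: for $z$ general in $\pit^7 = \langle \mathbb{S}^v \rangle$, every point of the smooth quadric $\mathbb{S}^v$ lies on a secant line through $z$, so $\mathbb{S}^v \subseteq \Sigma_z(\mathbb{S}_5)$, and comparing dimensions (both equal to $6$, with $\mathbb{S}^v$ irreducible) forces $\mathbb{S}^v = \Sigma_z$. Since $\mathbb{S}_5$ is homogeneous and both families are single orbits of ${\rm Aut}(\mathbb{S}_5)$ of the same dimension $8$, exhibiting one common member identifies the two families. The delicate point, and the main obstacle, is precisely this verification that $\mathbb{S}^v$ is a genuine entry locus and not merely some quadric through two general points; it is the homogeneity of $\mathbb{S}_5$ together with the dimension count that makes the single explicit matching suffice.

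Granting the identification, the proposition follows immediately from Lemma \ref{l.S}(3). Two distinct entry loci are $\mathbb{S}^v$ and $\mathbb{S}^{v'}$ with $v, v'$ independent isotropic vectors. If $Q(v,v') \neq 0$, then Lemma \ref{l.S}(3) gives $\mathbb{S}^v \cap \mathbb{S}^{v'} = \emptyset$. If $Q(v,v') = 0$, then $W := \C v + \C v'$ is a $2$-dimensional isotropic subspace, $\dim(W^\perp/W) = 6$, and Lemma \ref{l.S}(3) gives $\mathbb{S}^v \cap \mathbb{S}^{v'} = \mathbb{S}^+(W^\perp/W; Q^W) = \mathbb{S}_3$, which is classically $\pit^3$ via the exceptional isomorphism $D_3 = A_3$. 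Hence the intersection of two distinct entry loci is either empty or $\pit^3$, as claimed. As a consistency check, this matches Theorem \ref{t.fiberdim}(iii), which for $d = 10$ and $\delta = 6$ predicts $\Sigma \cap \Sigma_s \cong \pit^{(3\delta - d - 2)/2} = \pit^3$ in the nonempty case.
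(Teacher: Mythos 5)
Your proposal is correct and follows essentially the same route as the paper: both identify the entry loci of $\mathbb{S}_5$ with the sub-Spinor varieties $\mathbb{S}^v \cong \mathbb{S}_4 \cong \qit^6$ of Lemma \ref{l.S} and then read off the intersection from Lemma \ref{l.S}(3) together with $\mathbb{S}_3 \cong \pit^3$. Your secant-sweep and homogeneity arguments simply flesh out the identification step that the paper asserts tersely (``From the isomorphism $\mathbb{S}_4 \simeq \qit^6$, it is an entry locus\dots''), so the two proofs are the same in substance.
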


\begin{proof}
It is well-known that the embedding of $\mathbb{S}_m$ as a prime Fano manifold in Lemma \ref{l.S} becomes an isomorphism $\mathbb{S}_3 \cong \pit^3$ when $m=3$ and an isomorphism $\mathbb{S}_4 \cong \mathbb{Q}^6 \subset \pit^7$ when $m=4$.
Applying Lemma \ref{l.S} with $m=5$, we have a natural inclusion $\mathbb{S}_4 \cong \mathbb{S}^v \subset \mathbb{S}_5$ for any isotropic vector
 $v \in V$. As ${\rm Aut}(\mathbb{S}_5)$ acts transitively on
 $\BP^{15} \setminus \mathbb{S}_5$, every entry locus of
 $\mathbb{S}_5$ is isomorphic to $\qit^6$. Conversely, any
 $\qit^6$ on $\mathbb{S}_5$ is contained in the entry locus of any
 point in $\langle \qit^6 \rangle$, hence it is an entry locus.
 This implies that for any isotropic $v \in V$, $\mathbb{S}^v$ is
 an entry locus of $\mathbb{S}_5$.

By Example \ref{e.table} and Proposition \ref{p.blow}, we have a
$\BP^7$-bundle
$$\psi: {\rm Bl}_{\mathbb{S}_5}(\BP^{15}) \to
\qit^8.$$
 Any fiber of $\psi$ is mapped to the linear span of an
entry locus of $\mathbb{S}_5$, hence the set of all entry loci of
$\mathbb{S}_5$ is parametrised by $\qit^8$, hence
 every entry locus of $\mathbb{S}_5$ is of the form $\mathbb{S}^v$ for some isotropic vector $v$.
 Take any two distinct entry loci $\mathbb{S}^v$ and $\mathbb{S}^{v'}$. If their intersection is non-empty, then $Q(v, v')=0$ and  $\mathbb{S}^v \cap \mathbb{S}^{v'} \simeq \mathbb{S}_3 \simeq \pit^3$.
\end{proof}

Recall  (e.g. Proposition 1.3 (ii) in \cite{IR})  that the entry loci of a linear section with codimension $\leq \delta$ of a QEL-manifold $S$ with $\delta \geq 1$   come from the corresponding linear section of entry loci of $S$. Thus Proposition \ref{p.intS} implies
the following.

\begin{corollary}\label{c.intersect2}
The intersection of any two distinct entry loci of a nonsingular
linear section of $\mathbb{S}_5$ with codimension $t \leq 2$ is
either empty or isomorphic to $\BP^{k}$ with $k\geq 3-t$.
\end{corollary}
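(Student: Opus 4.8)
The plan is to reduce the statement to the already-established Proposition \ref{p.intS}, which describes the intersection of two distinct entry loci on the full Spinor variety $\mathbb{S}_5 \subset \pit^{15}$, combined with the principle recalled just before the corollary (Proposition 1.3 (ii) of \cite{IR}): for a QEL-manifold with $\delta \geq 1$, the entry loci of a linear section of codimension at most $\delta$ are precisely the corresponding linear sections of the entry loci of the ambient manifold. Here $\mathbb{S}_5$ has secant defect $\delta = 6$, its entry loci being the quadrics $\mathbb{S}^v \cong \qit^6$ exhibited in the proof of Proposition \ref{p.intS}, so the hypothesis $t \leq 2 \leq \delta$ makes this principle applicable.

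Concretely, I would fix a codimension-$t$ linear subspace $H \subset \pit^{15}$ cutting out the nonsingular section $\mathbb{S}_5 \cap H$, and take two distinct entry loci $T_1, T_2$ of $\mathbb{S}_5 \cap H$. By the quoted principle, each is of the form $T_i = \Sigma_i \cap H$ for some entry locus $\Sigma_i$ of $\mathbb{S}_5$, and since $T_1 \neq T_2$ the loci $\Sigma_1, \Sigma_2$ must themselves be distinct. The intersection is then computed purely set-theoretically:
\[
T_1 \cap T_2 = (\Sigma_1 \cap H) \cap (\Sigma_2 \cap H) = (\Sigma_1 \cap \Sigma_2) \cap H.
\]

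Now Proposition \ref{p.intS} gives that $\Sigma_1 \cap \Sigma_2$ is either empty or a $\pit^3$. In the first case $T_1 \cap T_2 = \emptyset$. In the second, $T_1 \cap T_2 = \pit^3 \cap H$ is the intersection of a fixed $\pit^3$ with a codimension-$t$ linear subspace of $\pit^{15}$, hence a linear subspace of that $\pit^3$ of codimension at most $t$; that is, a $\BP^k$ with $k \geq 3-t$. For $t \leq 2$ the incidence count $3 + (15-t) \geq 15$ in fact forces $\pit^3 \cap H \neq \emptyset$, so the only source of the empty case is $\Sigma_1 \cap \Sigma_2 = \emptyset$. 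This yields exactly the claimed dichotomy.

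The argument is essentially formal once the two inputs are in place, so I do not expect a serious obstacle. The one point deserving care is the passage asserting that distinct entry loci $T_1, T_2$ of the section lift to distinct entry loci $\Sigma_1, \Sigma_2$ of $\mathbb{S}_5$; this follows by contraposition from the presentation $T_i = \Sigma_i \cap H$, but it is the only place where the ``come from the corresponding linear section'' principle is used essentially, so one should confirm that this principle is being invoked for the specific nonsingular section at hand and not merely for a generic one.
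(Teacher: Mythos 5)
Your proposal is correct and is essentially identical to the paper's argument: the paper derives the corollary in exactly this way, by combining Proposition~1.3~(ii) of \cite{IR} (entry loci of a codimension-$t \leq \delta$ linear section are the corresponding linear sections of entry loci of $\mathbb{S}_5$) with Proposition \ref{p.intS}, so that the intersection of two distinct entry loci of the section is a codimension-$\leq t$ linear section of $\emptyset$ or $\BP^3$. Your additional dimension count showing $\BP^3 \cap H \neq \emptyset$ for $t \leq 2$ is a harmless refinement beyond what the stated corollary requires.
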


\section{Proof of Theorem \ref{t.bir}}\label{s.proof}

In this section, we will prove Theorem \ref{t.bir} by showing that a pair $(S \subset \pit U, Y \subset \pit W)$ determined by a special system of quadrics must be one of the examples in Example \ref{e.table},
Example \ref{e.projection},  Proposition \ref{p.ex1} or Proposition \ref{p.ex2}.
We will divide our argument into a number of subsections. In the first two subsections, we will classify a certain type of special quadratic manifolds. In the next three subsections, we carry out the classification of the pair $(S, Y)$.

\subsection{Classification when $\delta \geq \frac{1}{2} \dim S$}

As an easy application of Proposition \ref{p.3.1} and Proposition \ref{p.3.2}, we have the following.

\begin{proposition} \label{p.d/2}
A $d$-dimensional special quadratic manifold $S^d \subset \pit U$ with secant defect $\delta \geq \frac{d}{2}$ is projectively equivalent to one of the following.  For our later use, we specify one possibility of $Y= Y(\sigma)$ in each case in the list.  \begin{itemize}
\item[(i)] the smooth hyperquadric $\mathbb{Q}^d \subset \pit^{d+1}$ with $Y$ a point;
\item[(ii)] the Segre threefold $\pit^1 \times \pit^2 \subset \pit^5$ with $Y=\pit^2$;
\item[(iii)] the Pl\"ucker embedding of ${\rm Gr}(2, 5) \subset \pit^9$ with $Y=\pit^4$;
\item[(iv)] the 10-dimensional Spinor variety $\mathbb{S}_5 \subset \pit^{15}$ with $Y =\qit^8$;
\item[(v)] a general hyperplane section of (iii) with $Y=\pit^4$;
\item[(vi)] a general hyperplane section of (ii) with $Y=\pit^2$;
\item[(vii)] a general codimension 2 linear section of (iii) with $Y=\pit^4$;
\item[(viii)] the Segre fourfold $\pit^1 \times \pit^3 \subset \pit^7$ with $Y={\rm Gr}(2,4) \simeq \qit^4$.
\end{itemize}
\end{proposition}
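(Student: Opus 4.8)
The plan is to use the fact that, by Proposition \ref{p.SQEL} and Proposition \ref{p.secant}, every special quadratic manifold $S$ is a QEL-manifold with ${\rm Sec}(S)=\pit U$, so that Russo's classification of QEL-manifolds of large defect applies directly. Recall from Proposition \ref{p.number} that $\delta=2d+2-n$ and $\dim Y=2(d-\delta)$. I would first dispose of the extreme case $\delta=d$: here $n=d+2$, so $S$ has codimension $1$ and $\dim Y=0$; hence $\dim W=1$, the map $\sigma$ is a single quadratic form, and $S=B(\sigma)$ is a smooth quadric, giving case (i) with $Y$ a point. For the remaining range I split into $\frac{d}{2}<\delta<d$ and $\delta=\frac{d}{2}$ and feed $S$ into Proposition \ref{p.3.1} and Proposition \ref{p.3.2} respectively, producing the finite candidate list (A1)--(A5) and (B1)--(B8).

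The next step is to prune this list using the two constraints that an abstract QEL-manifold need not satisfy but a special quadratic manifold must. First, ${\rm Sec}(S)=\pit U$ eliminates the Severi varieties of Proposition \ref{p.3.2} — the Veronese surface (B2), $\pit^2\times\pit^2$ (B4), $\Gr(2,6)$ (B7) and $\mathbb{OP}^2$ (B8) — since for each of these the secant variety is a cubic hypersurface, properly contained in the ambient space. The surviving candidates (A1)--(A4), (B1), (B3), (B5) are precisely the varieties (ii)--(viii), each already exhibited as a special quadratic manifold in Example \ref{e.table}, Proposition \ref{p.ex1} and Proposition \ref{p.ex2}; the associated $Y$ is then read off from $\dim Y=2(d-\delta)$ together with those constructions, yielding the values recorded in the statement.

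What remains — and this is the crux — is to rule out the two linear sections of the Spinor variety that survive the secant test: the general hyperplane section (A5) and the general codimension-$2$ section (B6) of $\mathbb{S}_5\subset\pit^{15}$. Both are genuine QEL-manifolds with full secant variety, so the easy obstructions fail; instead I would invoke the Euler identity of Proposition \ref{p.Euler}, valid only for special quadratic manifolds. Concretely, compute $\chi(S)$ topologically: from $\chi(\mathbb{S}_5)=16$, and using that $\mathbb{S}_5$ is self-dual with $\dim\mathbb{S}_5^{*}=10$ (so it has positive dual defect and a general pencil of hyperplane sections has no singular members), a Lefschetz/fibration count gives $\chi(S)=14$ for (A5) (via the blow-up description of the general hyperplane section as a horospherical manifold) and $\chi(S)=12$ for (B6). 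In both cases $c=5$, so by Theorem \ref{t.Sato} the manifold $Y$ is $\pit^{8}$, $\Q^{8}$, or a biregular projection of $\Gr(2,\C^{6})$, whence $\chi(Y)\in\{9,10,15\}$. Substituting the topological value of $\chi(S)$ into Proposition \ref{p.Euler} forces
$$\chi(Y)=\frac{\chi(S)\,(n-d-2)+n}{\delta+2},$$
which equals $\tfrac{71}{7}$ for (A5) and $\tfrac{31}{3}$ for (B6). These are not integers, contradicting the fact that $Y$ is a nonsingular projective variety (Proposition \ref{p.Ysmooth}); in particular they do not lie in $\{9,10,15\}$. Hence neither section of $\mathbb{S}_5$ can be a special quadratic manifold, and the list reduces to (i)--(viii).

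The step I expect to be the main obstacle is exactly this last one: the Euler-number computation for the linear sections of $\mathbb{S}_5$. One must justify carefully that a general pencil of such sections is a Lefschetz pencil with no singular members (equivalently, control the dual defect of $\mathbb{S}_5$) and assemble the Lefschetz relations correctly; it is the resulting non-integrality of $\chi(Y)$ that cleanly separates $\mathbb{S}_5$ itself, which survives, from all its proper linear sections, which do not. Everything earlier — invoking Russo's lists and discarding the Severi varieties by the secant condition — is comparatively routine bookkeeping.
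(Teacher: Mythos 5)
Your proposal is correct, and its skeleton coincides with the paper's: dispose of $\delta=d$ directly via $\mathrm{Sec}(S)=\pit U$, feed $S$ into Russo's lists (Propositions \ref{p.3.1} and \ref{p.3.2}) using Proposition \ref{p.SQEL}, discard the four Severi varieties by the secant condition, and read off $Y$ from the known constructions. The genuine divergence is at the crux you identified, the exclusion of (A5) and (B6): the paper handles both in one line by citing Proposition \ref{p.AS2} (Corollary 3.21 of \cite{AS2}, the Alzati--Sierra converse), which says outright that no proper linear section of $\mathbb{S}_5$ is a special quadratic manifold; you instead give an internal topological obstruction via the Euler identity of Proposition \ref{p.Euler}. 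Your numerics check out: $\chi(\mathbb{S}_5)=16$; the paper's own Remark 2.13(2) gives the blow-up description of the general hyperplane section $S_1$, whence $\chi(S_1)=\chi(\pit^4)\chi(\qit^5)-\chi(\qit^6)\bigl(\chi(\pit^2)-1\bigr)=30-16=14$; and since the dual of $\mathbb{S}_5$ has codimension $5$, a general pencil on $\mathbb{S}_5$ has only smooth members, so $2\chi(S_1)=\chi(\mathbb{S}_5)+\chi(S_2)$ yields $\chi(S_2)=12$ --- this pencil relation lets you bypass the dual-defect control for $S_1$ itself (otherwise you would invoke Ein's theorem that the defect drops by one under hyperplane sections). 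Then $\chi(Y)=\bigl(\chi(S)(n-d-2)+n\bigr)/(\delta+2)$ equals $71/7$ resp.\ $31/3$, and non-integrality already contradicts Proposition \ref{p.Ysmooth} without even consulting Theorem \ref{t.Sato}. What each approach buys: the paper's proof is shorter but outsources the hard content to \cite{AS2}, whereas yours is self-contained within the paper's $\cit^*$-bundle machinery and is exactly in the spirit of the paper's own integrality argument in Proposition \ref{p.Q_delta}; the price is the Lefschetz/dual-defect bookkeeping you flagged, which, as checked above, does go through.
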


\begin{proof}
If $\delta=d$,  the fact ${\rm Sec}(S) = \BP U$ and
 $\delta = 2d + 1 - \dim {\rm Sec}(S)$ implies that $S$ is a hyperquadric. This gives case (i).

As $S$ is a QEL-manifold by Proposition \ref{p.SQEL}, if $d> \delta \geq d/2$, we can apply Proposition \ref{p.3.1} and Proposition \ref{p.3.2}.
In Proposition \ref{p.3.1},  (A5) is excluded by Proposition \ref{p.AS2}.
Thus only (A1)-(A4) are possible, which gives (ii)-(v).

In Proposition \ref{p.3.2}, the four Severi varieties (B2), (B4), (B7) and (B8) do not satisfy ${\rm Sec}(S) = \BP U$ of
Proposition \ref{p.secant}. Also
(B6) is excluded by Proposition \ref{p.AS2}.
Thus only (B1), (B5) and (B3) are possible, which gives (vi)-(viii).

The example of $Y(\sigma)$ in each case follows from the table in Example \ref{e.table}, Proposition \ref{p.ex1} and Proposition \ref{p.ex2}. \end{proof}

\subsection{Classification when $\delta \geq 1$ and $S$ is not a prime Fano manifold}
Special quadratic manifolds with $\delta \geq 1$ that are not prime Fano manifolds can be classified as follows.

\begin{proposition} \label{p.CC}
Let $S \subset \BP U$ be a special quadratic manifold with $\delta \geq 1$. If $S$ is not a prime Fano manifold, then
 $S \subset \pit U$
is projectively equivalent to one of the following:
\begin{itemize}
\item[(c1)] a smooth conic in $\pit^2$;
\item[(c2)] a general hyperplane section of the Segre 3-fold $\pit^1 \times \pit^2 \to \pit^5$;
\item[(c3)] the Segre variety $\pit^1 \times \pit^{d-1} \subset \pit^{2d-1}$.
\end{itemize}
\end{proposition}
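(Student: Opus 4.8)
The plan is to use that $S$ is conic-connected, feed this into the classification of conic-connected manifolds to produce a short list of candidates, and then cut the list down with the two numerical invariants already computed in this section: the secant defect and the Euler number.

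First I would bound the defect. Since $S$ is a special quadratic manifold it is a QEL-manifold with $\mathrm{Sec}(S)=\BP U$ and is linearly normal (Propositions \ref{p.SQEL} and \ref{p.secant}). By Proposition \ref{p.pic} (2) any QEL-manifold with $\delta\geq 3$ is a prime Fano manifold, so the hypothesis that $S$ is not prime Fano forces $\delta\in\{1,2\}$. Moreover, because $\delta\geq 1$ the entry locus through two general points is a smooth quadric of positive dimension (Lemma \ref{l.tangential} (i)), and any two of its points are joined by a conic lying on it; hence $S$ is conic-connected.

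Next I would invoke the Ionescu--Russo classification of conic-connected manifolds \cite{IR2}. A conic-connected manifold that is not a prime Fano manifold is, up to an isomorphic linear projection, a Veronese variety $v_2(\BP^k)$, a Segre variety $\BP^a\times\BP^b$, a variety $\BP(T_{\BP^k})$, or a rational normal scroll. Linear normality of $S$ forbids proper projections, so $S$ occurs in its given embedding. I would then impose $\mathrm{Sec}(S)=\BP U$. The secant variety of $v_2(\BP^k)$ fills the ambient space only for $k=1$, giving the conic $\Q^1\subset\BP^2$ of (c1); the secant variety of $\BP^a\times\BP^b$ (with $a\leq b$) is the locus of matrices of rank $\leq 2$, which fills exactly when $a=1$, giving $\BP^1\times\BP^{d-1}$ with $\delta=2$, i.e. (c3); and $\BP(T_{\BP^k})$, being a hyperplane section of $\BP^k\times\BP^k$, has secant variety contained in the proper determinantal hypersurface $\mathrm{Sec}(\BP^k\times\BP^k)$ and so is excluded. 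For a scroll $S(a_1,\dots,a_k)$ the condition $\mathrm{Sec}(S)=\BP U$ forces $\sum a_i=k+2-\delta$; with $\delta=2$ this returns $\BP^1\times\BP^{d-1}$, and with $\delta=1$ it singles out the family $S(1,\dots,1,2)$ of all dimensions $k$.

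The main work, and the step I expect to be the real obstacle, is to eliminate all members of $S(1,\dots,1,2)$ except the cubic surface scroll $S(1,2)$. Here I would use the Euler-number formula together with the classification of $Y$. For $S=S(1,\dots,1,2)$ of dimension $d=k$ one has $\delta=1$, $n=\dim U=2d+1$, and $\chi(S)=\chi(\BP^{d-1})\,\chi(\BP^1)=2d$, so Proposition \ref{p.Euler} gives $\chi(Y)=\tfrac{1}{3}(2d^2+1)$. On the other hand, by Theorem \ref{t.Sato} the manifold $Y$ has dimension $2(d-1)$ and is one of $\BP^{2(d-1)}$, $\Q^{2(d-1)}$, or a biregular projection of $\mathrm{Gr}(2,\C^{d+1})$, so $\chi(Y)\in\{\,2d-1,\ 2d,\ \binom{d+1}{2}\,\}$. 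Comparing these two expressions for $\chi(Y)$ leaves only $d\leq 2$; thus the surviving scroll is $S(1,2)$, which I would then identify projectively with a general hyperplane section of $\BP^1\times\BP^2\subset\BP^5$, yielding (c2). Finally I would record that each of (c1)--(c3) is indeed not a prime Fano manifold: the conic is not covered by lines, while (c2) and (c3) have Picard number $2$; this confirms that the list is complete and consistent with the hypothesis.
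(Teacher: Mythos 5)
Your overall strategy (linear normality, conic-connectedness, the Ionescu--Russo classification, then the condition ${\rm Sec}(S)=\BP U$) is the paper's, but there is a genuine gap in the middle step: you misquote the classification of conic-connected manifolds. The actual statement (Theorem \ref{t.IR}, i.e.\ Theorem 2.2 of [IR]) for a linearly normal, conic-connected, non-prime-Fano manifold lists four cases, and case (C3) is the VMRT of a symplectic Grassmannian $\Gr_\omega(k, k+d+1)$ for \emph{all} $2 \leq k \leq d$, while (C4) is a hyperplane section of any Segre $\BP^a \times \BP^{d+1-a}$ with $2 \leq a, d+1-a$. Your list replaces (C3) by ``rational normal scroll'' and (C4) by $\BP(T_{\BP^k})$. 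For $k=2$ the variety in (C3) is indeed the scroll $S(1,\dots,1,2)$ (Example \ref{e.SymGr} with $\dim W = 2$), but for $k \geq 3$ it is the projective bundle $\BP((Q \otimes \mathbf{t}) \oplus \mathbf{t}^{\otimes 2})$ over $\BP^{k-1}$ with $k-1 \geq 2$, which is none of your four types; these candidates pass all your filters (linearly normal, not prime Fano, $\delta \leq 2$) and are never eliminated in your write-up. The paper disposes of them via the secant condition, citing Lemma 4.19 of \cite{FH}: among the varieties in (C3), only the general hyperplane section of $\BP^1 \times \BP^d$, i.e.\ the case $k=2$, satisfies ${\rm Sec}(S) = \BP U$ (for instance, when $k=d$ one has $N = d + \frac{d(d+1)}{2} - 1 > 2d+1$ for $d \geq 3$, so the secant variety cannot fill). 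The omission of the unbalanced (C4) cases, by contrast, is harmless: your argument ${\rm Sec}(X \cap H) \subseteq {\rm Sec}(X) \cap H$, with ${\rm Sec}(\BP^a \times \BP^b)$ the rank-$\leq 2$ determinantal locus, proper whenever $a, b \geq 2$, excludes every member of (C4) verbatim, not only $\BP(T_{\BP^k})$.

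Once the list is corrected, the rest of your proof is sound, and your final step is a genuine and valid departure from the paper. To rule out $S(1,\dots,1,2)$ of dimension $d \geq 3$, the paper identifies it as a general hyperplane section of $\BP^1 \times \BP^d$ and invokes the Alzati--Sierra result (Proposition \ref{p.AS2}), which rests on Corollary 3.21 of \cite{AS2}; you instead use only the internal tools of Section \ref{s.Y}: with $\chi(S) = 2d$, $\delta = 1$, $n = 2d+1$, Proposition \ref{p.Euler} gives $\chi(Y) = \frac{1}{3}(2d^2+1)$, while Theorem \ref{t.Sato} allows only $\chi(Y) \in \{2d-1,\, 2d,\, \binom{d+1}{2}\}$. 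I checked the arithmetic: the first and third cases give $d^2 - 3d + 2 = 0$, hence $d \leq 2$, and the second gives $2d^2 - 6d + 1 = 0$, which has no integer solution. This self-contained Euler-number elimination is correct (there is no circularity, since Theorem \ref{t.Sato} and Proposition \ref{p.Euler} are established before this proposition and do not use it) and buys independence from \cite{AS2} at this step; the paper's route via Proposition \ref{p.AS2} is shorter but imports an external classification.
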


The proof is a direct consequence of the following result of Ionescu and Russo.

\begin{theorem}[Theorem 2.2 in \cite{IR}]\label{t.IR}
Let $S \subset \BP U$ be a nondegenerate, linearly normal, conic-connected manifold of dimension
$d$. Then either $S$ is a prime Fano manifold or it is projectively equivalent
to one of the following:
\begin{itemize} \item[(C1)] the second Veronese embedding of $\BP^d$.
\item[(C2)] the Segre embedding of $\BP^a \times \BP^{d-a}$ for $1
\leq a \leq d-1$. \item[(C3)] ${\rm Bl}_{\BP^k}(\BP^d)$ embedded
in $\BP^N$ by the linear system of quadric hypersurfaces of
$\BP^d$ passing through $\BP^k, 0 \leq k \leq d-2$, where $N =
d(d+3)/2-\binom{k+2}{2}$.

\item[(C4)] a hyperplane section of the Segre
embedding $\BP^a \times \BP^{d+1-a}$ with $2 \leq a, d+1-a$.
\end{itemize}
\end{theorem}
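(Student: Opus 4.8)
The plan is to verify that a special quadratic manifold with $\delta \geq 1$ satisfies the hypotheses of Theorem \ref{t.IR}, and then to intersect the Ionescu--Russo list (C1)--(C4) with the numerical constraints forced on a special quadratic manifold. First I would record that $S \subset \BP U$ is nondegenerate (immediate from $\Sec(S) = \BP U$ in Proposition \ref{p.secant}), linearly normal (Proposition \ref{p.SQEL}), and conic-connected: by Proposition \ref{p.SQEL} and Lemma \ref{l.tangential}(i), through two general points of $S$ there passes a smooth quadric of dimension $\delta \geq 1$, which already contains a conic joining them. As $S$ is assumed not to be a prime Fano manifold, Theorem \ref{t.IR} then puts $S$ into one of (C1)--(C4).

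The cases (C1), (C2), (C4) are settled by the two necessary conditions $\Sec(S) = \BP U$ (Proposition \ref{p.secant}) and $\delta \geq 1$. For the Veronese (C1), the secant variety $\Sec(v_2(\BP^d))$ is the locus of symmetric matrices of rank $\leq 2$, of projective dimension $2d$; equating this to $\dim \BP U$ forces $d = 1$, i.e. the conic (c1). For the Segre (C2), $\Sec(\BP^a \times \BP^{d-a})$ is the rank $\leq 2$ determinantal locus, and $\Sec(S) = \BP U$ holds precisely when $\min(a, d-a) = 1$; this yields $\BP^1 \times \BP^{d-1} \subset \BP^{2d-1}$, which is (c3). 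For a hyperplane section $S$ of $\BP^a \times \BP^{d+1-a}$ with $a, d+1-a \geq 2$ (case (C4)), linear normality gives $\dim U = (a+1)(d+2-a) - 1$, so Proposition \ref{p.number} would force $\delta = 1 - (a-1)(d-a) \leq 0$, contradicting $\delta \geq 1$; thus (C4) contributes nothing.

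The substantive case is (C3), the VMRT of $\Gr_\omega(k, k+d+1)$, which by Examples \ref{e.SymGr} is $\BP((Q \otimes \mathbf{t}) \oplus \mathbf{t}^{\otimes 2})$ over $\BP^{k-1}$ with $\dim Q = m = d - k + 1$, embedded by a complete linear system of dimension $km + \binom{k+1}{2}$. Linear normality gives $\dim U = km + \binom{k+1}{2}$, so Proposition \ref{p.number} yields $\delta = 2d + 2 - km - \binom{k+1}{2}$; using $d \geq k$ one estimates $\delta \leq 2 - \tfrac{1}{2}k(k-1)$, which is $< 1$ once $k \geq 3$. Hence only $k = 2$ survives, where $\delta = 1$ identically and $S$ is the rational normal scroll $S(1^{\,d-1}, 2) \subset \BP^{2d}$. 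To eliminate $d \geq 3$ I would pass to topology: as a $\BP^{d-1}$-bundle over $\BP^1$ one has $\chi(S) = 2d$, so Proposition \ref{p.Euler} forces $\chi(Y) = \tfrac{1}{3}(2d^2+1)$, while $\dim Y = 2(c-1) = 2d - 2$. By Theorem \ref{t.Sato} the Euler number of such a $Y$ is one of $2d-1$ (for $\BP^{2d-2}$), $2d$ (for $\Q^{2d-2}$), or $\binom{d+1}{2}$ (for a biregular projection of $\Gr(2, \C^{d+1})$); equating each to $\tfrac{1}{3}(2d^2+1)$ leaves only $d \in \{1, 2\}$. So $d = 2$, and $S(1,2) \subset \BP^4$ is projectively equivalent to a general hyperplane section of $\BP^1 \times \BP^2$, namely (c2).

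The main obstacle is exactly this last elimination within (C3). The scrolls $S(1^{\,d-1}, 2)$ all satisfy $\Sec(S) = \BP U$ and $\delta = 1$, so they are invisible to the secant/defect bookkeeping that disposes of the other cases; what forces $d = 2$ is the incompatibility between the Euler number of $S$ and the very short list of admissible $Y$ provided by Theorem \ref{t.Sato}. I expect the work on (C1), (C2), (C4) to be routine, with essentially all the content concentrated in the $k = 2$ scroll subcase of (C3).
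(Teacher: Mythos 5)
Your proposal does not prove the statement at hand; it is circular with respect to it. The statement is Theorem \ref{t.IR} itself, i.e.\ Ionescu and Russo's classification of \emph{all} nondegenerate, linearly normal, conic-connected manifolds, and your argument explicitly invokes that very theorem (``Theorem \ref{t.IR} then puts $S$ into one of (C1)--(C4)'') before pruning the list under the additional hypotheses that $S$ is a special quadratic manifold with $\delta \geq 1$. What you have written is therefore a proof of the paper's Proposition \ref{p.CC}, not of Theorem \ref{t.IR}. The distinction is not cosmetic: the cases you ``eliminate'' --- $v_2(\BP^d)$ for $d \geq 2$, the Segre $\BP^a \times \BP^{d-a}$ with $\min(a,d-a) \geq 2$, all of (C4), the symplectic VMRTs with $k \geq 3$, and the scrolls $S(1^{d-1},2)$ with $d \geq 3$ --- are all genuinely conic-connected and linearly normal, so they \emph{must} appear in the conclusion of Theorem \ref{t.IR}; your computations only show that they violate $\Sec(S) = \BP U$, $\delta \geq 1$, or the Euler-number identity, none of which is a hypothesis of the theorem. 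Moreover, the entire toolkit you deploy (Propositions \ref{p.secant}, \ref{p.number}, \ref{p.Euler}, Theorem \ref{t.Sato}) is valid only for special quadratic manifolds and says nothing about a general conic-connected manifold. Note also that the paper offers no proof of Theorem \ref{t.IR}: it is quoted from \cite{IR}, where the proof runs through the geometry of the family of conics and Fano/adjunction-theoretic classification arguments --- machinery entirely absent from your proposal. Measured against the target statement, the gap is total.

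That said, read as a blind proof of Proposition \ref{p.CC}, your argument is essentially sound, and in case (C3) it takes a genuinely different route from the paper: there, the paper quotes Lemma 4.19 of \cite{FH} to see that only the $k=2$ members (the general hyperplane sections of $\BP^1 \times \BP^d$, i.e.\ the scrolls $S(1^{d-1},2) \subset \BP^{2d}$) satisfy $\Sec(S) = \BP U$, and then excludes $d \geq 3$ by Proposition \ref{p.AS2} (Corollary 3.21 of \cite{AS2}); you instead derive $k = 2$ from the defect estimate $\delta \leq 2 - \tfrac{1}{2}k(k-1)$ via linear normality, and kill $d \geq 3$ by playing $\chi(S) = 2d$ against Proposition \ref{p.Euler} and the three values of $\chi(Y)$ permitted by Theorem \ref{t.Sato}. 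The arithmetic checks out: $3\chi(Y) = 2d^2+1$ against $\chi(Y) \in \{2d-1,\, 2d,\, \binom{d+1}{2}\}$ indeed forces $d \leq 2$, so only the hyperplane section of $\BP^1 \times \BP^2$ survives. This self-contained topological elimination is a nice alternative to the paper's appeal to \cite{AS2} at that one spot. But it replaces a step \emph{downstream} of Theorem \ref{t.IR}; it does not address the theorem itself, whose proof would have to be supplied from \cite{IR} or rebuilt from scratch.
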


\begin{proof}[Proof of Proposition \ref{p.CC}]
By Proposition \ref{p.SQEL}, we know that $S \subset \pit U$ is linearly normal.
By Theorem 2.1 of \cite{R2}, the condition $\delta \geq 1$ implies that $S$ is conic-connected.
Thus if $S$ is not a prime Fano manifold, it must be one of (C1)-(C4) in Theorem \ref{t.IR}.

Recall that we have ${\rm Sec}(S) = \pit U$ by Proposition \ref{p.secant}.
It is well known (e.g. the table in p.466 of \cite{FH}) that   only (c1) in (C1)
and (c2) in (C2) satisfy ${\rm Sec}(S) = \pit U$ and no cases in (C4)
satisfy ${\rm Sec}(S) = \pit U.$
In (C3), only a general hyperplane section of $\pit^1 \times \pit^{d} \subset \pit^{2d+1}$
satisfies ${\rm Sec}(S) = \pit U$ by Lemma 4.19 \cite{FH}.   By Proposition \ref{p.AS2},
only (c3) can occur.
\end{proof}

\subsection{Classification when $Y = \pit^{2(c-1)}$}

We can classify special quadratic manifolds in the case of (Y1)  of Theorem \ref{t.Sato}:

\begin{proposition} \label{p.Pn}
Assume that $Y = \pit^{2(c-1)}$, then $S \subset \BP^{n-1}$ is projectively equivalent to one of the following:
\begin{itemize}
\item[(i)] $\Q^{d} \subset \BP^{d+1}$;
\item[(ii)] $\BP^1 \times \BP^2 \subset \BP^5$;
\item[(iii)] ${\rm Gr}(2, 5) \subset \BP^9$;
\item[(iv)] a general hyperplane  section of (ii);
\item[(v)] a general  codimension $\leq 2$ linear section of (iii);
\item[(vi)] a general codimension-3 linear section of (iii);
\item[(vii)] a general codimension-2 linear section of (ii).
\end{itemize} All of the above cases do occur with $Y = \pit^{2(c-1)}$ from
Example \ref{e.table}, Proposition \ref{p.ex1} and Proposition \ref{p.ex2}.
\end{proposition}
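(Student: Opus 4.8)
The plan is to use the constraint $Y = \pit^{2(c-1)}$ to pin down the possible values of the secant defect $\delta$ and dimension $d$, and then invoke the classification results already established. The key numerical relation is Proposition \ref{p.number}, which gives $\dim Y = 2(c-1) = 2(\dim U - \dim S - 2)$, so that $c = \dim U - d - 1$ is the codimension and $\delta = 2d + 2 - n$ where $n = \dim U$. Since $Y = \pit^{2(c-1)}$ is linearly full (i.e.\ $Y = \pit W$), the key extra input is that $\chi(Y) = 2c - 1$, which I would feed into the Euler-characteristic formula of Proposition \ref{p.Euler}. This yields a Diophantine constraint relating $\chi(S)$, $\delta$, $n$ and $d$; combined with the bound $\chi(S) \geq d+1$ from Corollary \ref{c.Betti} and the constraint $\delta \leq d$, this should sharply limit the admissible triples $(d, \delta, n)$.

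First I would separate the analysis according to the size of $\delta$ relative to $d$. When $\delta \geq \frac{d}{2}$, the classification is already complete by Proposition \ref{p.d/2}, so I would simply read off from that list those cases whose $Y(\sigma)$ is a projective space: this accounts for the hyperquadric $\Q^d$ (with $Y$ a point $= \pit^0$), the Segre $\pit^1 \times \pit^2$ (with $Y = \pit^2$), $\Gr(2,5)$ (with $Y = \pit^4$), and their general linear sections with $Y$ still a projective space, giving cases (i)-(v) and (vii) of the proposition. The remaining work is the range $\delta < \frac{d}{2}$, where the inductive VMRT machinery of Proposition \ref{p.pic} is not directly decisive. Here I would use the Euler-characteristic identity together with $\chi(Y) = 2c-1$ to show that such cases either cannot occur or are forced into the short list. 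If $S$ is not a prime Fano manifold, Proposition \ref{p.CC} applies and I would check which of (c1)-(c3) have $Y$ a projective space, while the Segre $\pit^1 \times \pit^{d-1}$ case has $Y = \Gr(2,d)$, which is not a projective space for $d \geq 4$, so it is excluded here.

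The main obstacle I anticipate is the subtle bookkeeping in the low-defect prime-Fano regime $\delta \leq 2$ with $\delta < \frac{d}{2}$, where neither the VMRT induction nor the $\delta \geq \frac{d}{2}$ classification applies directly. The plan is to combine the numerical constraint from $\chi(Y) = 2c-1$ in Proposition \ref{p.Euler} with the adjunction-type output of Proposition \ref{p.pic}(1), namely $K_S^{-1} = \sO(\frac{d+\delta}{2})$, and then invoke the explicit classification of Proposition \ref{p.Mukai} (which handles the case $K_S^{-1} = \sO(d-2)$, forcing linear sections of $\mathbb{S}_5$) and Proposition \ref{p.OADP} (the $\delta = 0$ case, yielding scrolls or a Del Pezzo surface or a twisted cubic). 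I would then check directly that among these low-defect candidates, only general codimension-$3$ linear sections of $\Gr(2,5)$ survive the requirement $Y = \pit^{2(c-1)}$, producing case (vi), whereas the scrolls and Del Pezzo surfaces of Proposition \ref{p.OADP} fail either the special-quadratic condition (via Proposition \ref{p.AS2}) or produce a $Y$ that is not projective space. Throughout, the realizability claim at the end is settled by pointing to Example \ref{e.table}, Proposition \ref{p.ex1} and Proposition \ref{p.ex2}, where each listed $S$ is exhibited explicitly with $Y = \pit^{2(c-1)}$.
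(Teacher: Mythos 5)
Your opening moves coincide with the paper's: feeding $\chi(Y)=2c-1$ into Proposition \ref{p.Euler} gives the clean identity $\chi(S)=2\delta+2$, and Corollary \ref{c.Betti} then forces $\delta\geq(d-1)/2$; the regime $\delta\geq d/2$ is read off from Proposition \ref{p.d/2} (where the exclusion of $\mathbb{S}_5$ and $\pit^1\times\pit^3$ is cleanest via $\chi(S)=2\delta+2$, since ``$Y$ is not a projective space'' is a statement about one particular $\sigma$, not about $S$), non-prime-Fano cases are handled by Proposition \ref{p.CC}, and $\delta=0$ by Proposition \ref{p.OADP}. One bookkeeping slip: case (vii), the twisted cubic, has $(d,\delta)=(1,0)$, so it does not come from Proposition \ref{p.d/2} as you assert but from the $\delta=0$ analysis; your later sentence that only (vi) survives the low-defect regime contradicts your own attribution. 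The first genuine gap is this: after the Euler bound the residual regime is exactly $\delta=(d-1)/2$ with $\chi(S)=d+1$, and the divisibility in Proposition \ref{p.pic}(2) ($2^{[\frac{\delta-1}{2}]}$ divides $d-\delta=\delta+1$) leaves, for $\delta\geq 3$, precisely the triples $(\delta,d,n)=(3,7,13)$ and $(7,15,25)$. Your toolkit disposes of the first (there $K^{-1}_S=\sO(5)=\sO(d-2)$, so Proposition \ref{p.Mukai} applies and Proposition \ref{p.AS2} excludes the resulting $\mathbb{S}_5$-section), but it cannot touch $(7,15,25)$: there $K^{-1}_S=\sO(11)\neq\sO(d-2)=\sO(13)$, so Proposition \ref{p.Mukai} does not apply to $S$ at all. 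The paper kills this case by applying Mukai to the VMRT $\sC_x$ (a $9$-dimensional QEL-manifold with $K^{-1}_{\sC_x}=\sO(7)$, hence a hyperplane section of $\mathbb{S}_5$), using $\chi(S)=d+1$ to force all even Betti numbers of $S$ to be $1$ so that ${\rm Locus}(x)$ meets a general entry locus, and then comparing intersections of entry loci: Theorem \ref{t.fiberdim} forces two general entry loci of $S$ through a general point to meet along a $\pit^2$, hence two entry loci of $\sC_x$ to meet along a $\pit^1$, contradicting Corollary \ref{c.intersect2}. This entry-locus machinery of Section \ref{s.entry} is one of the paper's essential new ingredients, is entirely absent from your plan, and identifying $\sC_x$ alone yields no contradiction.

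The second gap is that your route to case (vi) is internally inconsistent. In the case $\delta=1$, $d=3$, $n=7$, the manifold $S$ is prime Fano (no member of the list in Proposition \ref{p.CC} has $(d,n)=(3,7)$) with $K^{-1}_S=\sO(\frac{d+\delta}{2})=\sO(2)$ by Proposition \ref{p.pic}(1). Since $\sO(2)\neq\sO(d-2)=\sO(1)$, Proposition \ref{p.Mukai} again does not apply; moreover its conclusion only ever produces linear sections of $\mathbb{S}_5$, so it can never output the codimension-$3$ linear section of $\Gr(2,5)$ that you claim ``survives.'' The paper instead invokes the classification of prime Fano threefolds with $K^{-1}=\sO(2)$ (12.1 in \cite{IP}), whose only admissible member is the quintic del Pezzo threefold $\Gr(2,5)\cap\pit^6$. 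Without this input (or an equivalent classification of index-two Fano threefolds), case (vi) is simply not derived by your argument.
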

\begin{proof}
As  $Y= \pit^{2(c-1)}$, we have $\chi(Y) = 2c-1$. Proposition \ref{p.Euler}  yields $\chi(S) = 2 \delta+2$.
By  Corollary \ref{c.Betti}, we obtain $2 \delta+2 \geq d+1$, i.e., $\delta \geq (d-1)/2$.
If $\delta \geq d/2$, then  Proposition \ref{p.d/2} gives (i)-(v).
It remains to handle the case $\delta=(d-1)/2$ and $\chi(S) = d+1$.
The latter condition combined with Corollary \ref{c.Betti} implies that all even Betti numbers of $S$  must be 1.
We will argue case-by-case, depending on the values of $\delta$.

\begin{itemize}\item[(1)]
If  $\delta \geq 3$, then $d-\delta = \delta+1$ is divisible by $2^{[\frac{\delta-1}{2}]}$ by Proposition  \ref{p.pic}. This implies that either  $(\delta, d, n) = (3, 7, 13)$ or   $(\delta, d, n) = (7, 15, 25)$. \begin{itemize}
\item[(1a)] When $(\delta, d, n) = (3, 7, 13)$,  Proposition \ref{p.pic} shows that $S$ is a prime Fano manifold with
$K^{-1}_S = \sO(\frac{d+\delta}{2}) = \sO(5)$. By Proposition \ref{p.Mukai}, the only possibility is (M2), a linear section of
$\mathbb{S}_5$. But such a case can not occur by Proposition \ref{p.AS2}.
\item[(1b)]  When $(\delta, d, n)=(7,15,25)$, we see that  $S$ is a prime Fano manifold
and  its VMRT $\mathcal{C}_x \subset \pit^{14}$  at a general point $x \in S$ is a QEL-manifold
of dimension $ (d+\delta)/2-2 = 9$ and secant defect $\delta -2 =5$  from Proposition \ref{p.pic}.  It follows that the subvariety ${\rm Locus}(x) \subset S$
   has dimension 10 and has nonempty intersection with a general entry locus $\Sigma \subset S$ of dimension 7, because all
   even Betti numbers of $S$ are 1. Since $S$ is defined by quadrics and satisfies Condition \ref{c} in Section \ref{s.entry} by Lemma
   \ref{l.AS}, we can apply Theorem \ref{t.fiberdim} to conclude that two general entry loci $\Sigma_1$ and $\Sigma_2$ of $S$ through a general point $x \in S$ intersect along $\pit^2$.
   By Proposition \ref{p.pic}, the lines on $\Sigma_1$ and $\Sigma_2$ through $x$ give two entry loci on $\mathcal{C}_x \subset \pit^{14}.$ The intersection of these entry loci of $\mathcal{C}_x$ must be $\pit^1$ corresponding to
   the lines through $x$ in $\Sigma_1 \cap \Sigma_2 \cong \pit^2$.    On the other hand, applying Proposition \ref{p.pic}  to $\sC_x$, we see that
$\sC_x$ is a prime Fano manifold with $K^{-1}_{\sC_x} = \sO(7)$.
  Proposition \ref{p.Mukai} shows that
 $\mathcal{C}_x \subset \pit^{14}$ is projectively equivalent to a codimension 1 linear section of $\mathbb{S}_{5} \subset \pit^{15}$.   Thus the intersection of any two distinct entry loci of $\mathcal{C}_x$ must be either empty or $\pit^k$ with $k\geq 2$, from Corollary \ref{c.intersect2}, a contradiction.
\end{itemize}

\item[(2)] If $\delta=2$, then $d=5$ and $n=10$.  As $d+\delta$ is odd,
$S$ is not a prime Fano manifold by Proposition \ref{p.pic}. Applying Proposition \ref{p.CC}, we see that $S \subset \pit^9$ is the Segre 5-fold
$\pit^1 \times \pit^4 \subset \pit^9$, a contradiction to $\chi(S)=d+1$.
\item[(3)] If $\delta=1$, then $d=3$ and $n=7$.  Since none of the varieties in the list in Proposition \ref{p.CC} have $(d,n)=(3,7)$, we see that
 $S$ is a prime Fano manifold.   Thus  $K^{-1}_S = \sO(2)$ by Proposition \ref{p.pic} (1).
From the classification of prime Fano threefolds with $K^{-1}_S = \sO(2)$ (e.g. 12.1 in \cite{IP}),  $S$ is a general codimension-3 linear section of ${\rm Gr}(2, 5) \subset \pit^9$, which gives case (vi).
\item[(4)] If $\delta=0$, then $d=1$ and $n=4$. Thus $S$ is a twisted cubic by Proposition \ref{p.OADP}, which is (vii).
\end{itemize}
\end{proof}

\subsection{Classification when $Y = \mathbb{Q}^{2(c-1)}$}

We can classify special quadratic manifolds in the case of (Y2)  of Theorem \ref{t.Sato}:

\begin{proposition} \label{p.Y=Q}
For a special quadratic manifold $S \subset \pit^{n-1}$ of codimension $c$,  assume that $Y = \qit^{2(c-1)}$. Then $S \subset \pit^{n-1}$
is projectively equivalent  to one of the following
\begin{itemize} \item[(i)] the Segre 4-fold $\pit^1 \times \pit^3 \subset \pit^7$;
\item[(ii)]  the 10-dimensional Spinor variety $\mathbb{S}_5 \subset \pit^{15}$. \end{itemize}
Both cases do occur with $Y = \qit^{2(c-1)}$ from
Example \ref{e.table}.
\end{proposition}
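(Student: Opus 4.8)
The plan is to run the dichotomy of Proposition \ref{p.Q_delta}. Since $Y = \qit^{2(c-1)}$ has $\chi(Y) = 2c$, that proposition leaves two possibilities: $\delta \geq \frac{d}{2}$ or $\delta = \frac{d}{2}-1$. In the first case I would feed $S$ into Proposition \ref{p.d/2} and isolate the entries whose $Y$ is a quadric. The clean way to do this is to note, via the blow-up relation of Proposition \ref{p.Euler}, that $\chi(Y) = \frac{\chi(S)(n-d-2)+n}{\delta+2}$ depends only on $S\subset\pit U$ and not on the choice of $\sigma$; since $\chi(\pit^{2(c-1)}) = 2c-1$ while $\chi(\qit^{2(c-1)}) = 2c$ (and a non-quadric Grassmannian section in Theorem \ref{t.Sato} has yet another value), whether or not $Y$ is a quadric is read off from $\chi(S)$. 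Scanning the eight cases of Proposition \ref{p.d/2}, the equality $\chi(Y) = 2c$ holds exactly for (iv) the Spinor variety $\mathbb{S}_5$, with $Y = \qit^8$, and (viii) the Segre fourfold $\pit^1\times\pit^3$, with $Y = \Gr(2,4)\cong\qit^4$; these are conclusions (ii) and (i), and both genuinely occur by the table in Example \ref{e.table}.

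All the work lies in excluding $\delta = \frac{d}{2}-1$. Writing $d = 2m$, this forces $n = 3m+3$ and $c = m+2$, and, exactly as in case (2) of the proof of Proposition \ref{p.Q_delta}, gives $\chi(S) = d+1$. By Corollary \ref{c.Betti} and Hard Lefschetz this pins the Betti numbers of $S$ to those of $\pit^d$: all odd ones vanish and all even ones equal $1$. This positivity is what will later guarantee that a general ${\rm Locus}(s)$ meets a general entry locus $\Sigma$, so that Theorem \ref{t.fiberdim} applies. I would then derive a contradiction by cases on $\delta = m-1$. The cases $\delta \leq 2$ are short: for $\delta = 0$ ($d=2$), Proposition \ref{p.OADP} lists the OADP surfaces in $\pit^5$ and none has $\chi = 3$; for $\delta = 1$ ($d=4$), $d+\delta$ is odd so $S$ is not prime Fano by Proposition \ref{p.pic}(1), yet no variety in Proposition \ref{p.CC} has $(d,n) = (4,9)$; for $\delta = 2$ ($d=6$), the non-prime-Fano alternative $\pit^1\times\pit^5$ of Proposition \ref{p.CC} has $\chi = 12 \neq 7$, so $S$ is prime Fano with $K_S^{-1} = \sO(d-2)$, and Proposition \ref{p.Mukai} makes $S$ a codimension-$4$ linear section of $\mathbb{S}_5$, which is not a special quadratic manifold by Proposition \ref{p.AS2}.

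The remaining range $\delta \geq 3$ is the heart of the matter. Proposition \ref{p.pic}(2) makes $S$ prime Fano, forces $d+\delta$ even (so $m$ odd), and requires $2^{[\frac{\delta-1}{2}]}$ to divide $d-\delta = m+1$; these constraints leave only $m = 5$ and $m = 7$. When $m = 7$ the VMRT $\sC_x$ is a prime Fano QEL-manifold of dimension $8$ and defect $4$, with full secant variety and $K_{\sC_x}^{-1} = \sO(\dim\sC_x - 2)$, so Proposition \ref{p.Mukai} identifies it with a codimension-$2$ linear section of $\mathbb{S}_5$; meanwhile Theorem \ref{t.fiberdim} gives $\Sigma_1\cap\Sigma_2 \cong \pit^1$ for two general entry loci through $x$, which by Proposition \ref{p.pic}(2) descends to a single point of intersection of the two corresponding entry loci of $\sC_x$, contradicting Corollary \ref{c.intersect2}.

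The delicate case, and the one I expect to cost the most effort, is $m = 5$: here $\dim\sC_x + \delta_{\sC_x} = 7$ is odd, so $\sC_x$ is not prime Fano and Proposition \ref{p.Mukai} is unavailable. My plan is to classify the $5$-dimensional, defect-$2$, linearly normal, conic-connected QEL-manifold $\sC_x$ through Theorem \ref{t.IR} — leaving only the Segre $\pit^1\times\pit^4$ and a symplectic-Grassmannian VMRT — and then to reconstruct $S$ from this VMRT, forcing $S\cong\Gr(2,7)$ (respectively a symplectic Grassmannian), whose linearly normal embedding is too large to sit in $\pit^{n-1}$, contradicting the linear normality of Proposition \ref{p.SQEL}. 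Carrying out this reconstruction from a non-prime-Fano VMRT is where the real difficulty sits.
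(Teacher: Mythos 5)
Your proposal follows the paper's proof almost step for step: the same reduction via Proposition \ref{p.Q_delta} to the two regimes $\delta \geq \frac{d}{2}$ and $\delta = \frac{d}{2}-1$, the same use of Proposition \ref{p.d/2} in the first regime, the same Euler-characteristic computation $\chi(S)=d+1$ pinning all even Betti numbers to $1$ in the second, the same divisibility argument from Proposition \ref{p.pic} leaving only $(\delta,d,n)=(4,10,18)$ and $(6,14,24)$ when $\delta\geq 3$, the identical entry-locus contradiction (Theorem \ref{t.fiberdim} giving $\Sigma_1\cap\Sigma_2\cong\pit^1$, Proposition \ref{p.Mukai} identifying $\sC_x$ with a codimension-$2$ section of $\mathbb{S}_5$, then Corollary \ref{c.intersect2}) for $(6,14,24)$, and the same low-defect eliminations for $\delta\leq 2$. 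Your Euler-characteristic filter for reading off which entries of Proposition \ref{p.d/2} have quadric $Y$ is a harmless explicit version of what the paper leaves implicit.

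The one place you stop short --- $(\delta,d,n)=(4,10,18)$, your $m=5$ --- is closed in the paper by a citation, not by new work: once $\sC_x\subset\pit^9$ is identified with the Segre $\pit^1\times\pit^4$, the Main Theorem of Section 2 of \cite{Mo} (Mok's recognition theorem for prime Fano manifolds whose general VMRT is that of an irreducible Hermitian symmetric space) yields $S\cong\Gr(2,7)$ directly. Your worry about ``reconstruction from a non-prime-Fano VMRT'' is misplaced: the recognition hypothesis concerns $S$, which is prime Fano by Proposition \ref{p.pic} since $\delta=4\geq 3$, not $\sC_x$, and $\pit^1\times\pit^4$ is precisely the VMRT of $\Gr(2,7)$. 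Moreover the symplectic-Grassmannian alternative you leave open is vacuous: $\sC_x$ is a QEL-manifold of dimension $5$ and defect $2$, so ${\rm Sec}(\sC_x)$ has dimension $2\cdot 5+1-2=9$ and fills $\pit^9$ (whence also linear normality, by Proposition 1.3 of \cite{R2}); but by Lemma 4.19 of \cite{FH}, as used in the proof of Proposition \ref{p.CC}, the only member of family (C3) in Theorem \ref{t.IR} with secant variety filling its ambient space is a general hyperplane section of $\pit^1\times\pit^{d}\subset\pit^{2d+1}$, which in dimension $5$ sits in $\pit^{10}$, not $\pit^9$. So only $\pit^1\times\pit^4\subset\pit^9$ survives, and the linear-normality contradiction you predicted (the linearly normal embedding of $\Gr(2,7)$ needs $\pit^{20}$, not $\pit^{17}$) finishes the case exactly as in the paper.
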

\begin{proof}
If $\delta \geq \frac{d}{2}$, then Proposition \ref{p.d/2} gives (i) and (ii).

 To show that $\delta < \frac{d}{2}$ cannot occur,   we may assume $d= 2 \delta + 2$ and $n= 2d+2- \delta= 3\delta + 6$ by Proposition \ref{p.Q_delta} and derive a contradiction.
Under this assumption, putting $\chi(\qit^{2(c-1)}) = 2c$ in Proposition \ref{p.Euler}, we have $\chi(S) = d+1$, which combined with Corollary \ref{c.Betti} implies that all even Betti numbers of $S$  must be 1.  We will argue case-by-case depending on the value of $\delta$.

\begin{itemize} \item[(1)]
Assume $\delta \geq 3.$
 Proposition \ref{p.pic} shows that the VMRT $\mathcal{C}_x \subset \pit^{d-1}$  at a general point $x \in S$  is  a QEL-manifold  of dimension $\frac{1}{2}(d+ \delta -4) = \frac{1}{2}(3 \delta -2)$ and secant defect $\delta -2$.
Since $d- \delta = \delta+2$ is divisible by  $2^{[\frac{\delta-1}{2}]}$ from
 Proposition \ref{p.pic}, we have two possibilities: $(\delta, d, n)  = (4, 10, 18)$ or  $(\delta, d, n) = (6, 14, 24)$.
\begin{itemize} \item[(1a)]
 When $(\delta, d, n) = (4, 10, 18)$, the VMRT $\mathcal{C}_x \subset \pit^9$ is a QEL manifold  of dimension 5 and secant defect 2.  By Proposition  \ref{p.pic}, this implies that  $\mathcal{C}_x \subset \pit^9$
is a not a prime Fano manifold. Thus Proposition \ref{p.CC} shows  that  $\mathcal{C}_x \subset \pit^9$ is projectively equivalent to the Segre 5-fold $\pit^1 \times \pit^4 \subset\pit^9$.   By Main Theorem of Section 2 in \cite{Mo}, $S$ is isomorphic to ${\rm Gr}(2, 7)$.  But $S  \subset \pit^{17}$ is linearly normal by Proposition \ref{p.SQEL}, which is not possible for $S = {\rm Gr}(2, 7)$.

\item[(1b)] When $(\delta, d, n) = (6, 14, 24)$,
we see that  $S$ is a prime Fano manifold
and  its VMRT $\mathcal{C}_x \subset \pit^{13}$  at a general point $x \in S$ is a QEL-manifold
of dimension 8 and secant defect 4  from Proposition \ref{p.pic}.  It follows that the subvariety ${\rm Locus}(x) \subset S$
   has dimension 9 and has nonempty intersection with a general entry locus $\Sigma \subset S$ of dimension 6, because all
   even Betti numbers of $S$ are 1. Since $S$ is defined by quadrics and satisfies Condition \ref{c} in Section \ref{s.entry} by Lemma
   \ref{l.AS}, we can apply Theorem \ref{t.fiberdim} to conclude that two general entry loci $\Sigma_1$ and $\Sigma_2$ of $S$ through a general point $x \in S$ intersect along $\pit^1$.
   By Proposition \ref{p.pic}, the lines on $\Sigma_1$ and $\Sigma_2$ through $s$ give two entry loci on $\mathcal{C}_x \subset \pit^{13}.$ The intersection of these entry loci of $\mathcal{C}_x$ must be one point corresponding to
   the unique line through $x$ in $\Sigma_1 \cap \Sigma_2$.    On the other hand, applying Proposition \ref{p.pic}  to $\sC_x$, we see that
$\sC_x$ is a prime Fano manifold with $K^{-1}_{\sC_x} = \sO(6)$.
  Proposition \ref{p.Mukai} shows that
 $\mathcal{C}_x \subset \pit^{13}$ is projectively equivalent to a codimension 2 linear section of $\mathbb{S}_{10} \subset \pit^{15}$.   Thus the intersection of any two distinct entry loci of $\mathcal{C}_x$ must be either empty or $\pit^k$ with $k\geq 1$ from Corollary \ref{c.intersect2}, a contradiction.
\end{itemize}

\item[(2)] If $\delta =2$, then  $d=6$ and $n=12$. \begin{itemize} \item[(2a)] If $S$ is not a prime Fano manifold, then
Proposition \ref{p.CC} shows that  $S$ is the Segre 6-fold $\pit^1 \times \pit^5 \subset \pit^{11}$.
This  contradicts $\chi(S) = d+1$.
\item[(2b)] If $S$ is a prime Fano manifold, then Proposition \ref{p.Mukai} is applicable. The only possibility is $s=4$ of (M2), i.e.,
 a codimension-4 linear section of the 10-dimensional
Spinor variety $\mathbb{S}_5 \subset \pit^{15}$. But this can not be a special quadratic manifold by Proposition \ref{p.AS2}.
\end{itemize}
\item[(3)] If  $\delta=1$, then $d= 4$ and $n=9$.  As $d+ \delta$ is odd, Proposition \ref{p.pic} shows that $S$ is not a prime Fano manifold.
This is not possible by Proposition \ref{p.CC}.
\item[(4)]
  If $\delta =0$, then  $d=2$ and  Proposition \ref{p.OADP} is applicable. But the surfaces in Proposition \ref{p.OADP} do not have
  Picard number 1, a contradiction.
\end{itemize} \end{proof}

\subsection{Classification when $Y = \Gr(2,c+1)$}

To handle the case  (Y3) of Theorem \ref{t.Sato}, we use the following result.

\begin{proposition}[Theorem 2.17 ($b=2$) \cite{AS2}] \label{p.Bir2Gr}
Let $V$ be a vector space with $\dim V = 2k+1 \geq 7$. Let $\mu: \SYM^2 V \to W$ be a system of quadrics such that the base locus subscheme $B(\mu) \subset \pit V$ is irreducible, nonsingular and nondegenerate. Let $\nu: \pit V \dasharrow \pit W$ be the associated rational map. Suppose that $\nu$ is generically injective and its proper image  ${\rm Im}(\nu)$ is biregular to   $\Gr(2, k+2)$.
Then $B(\mu) \subset \pit V$ is one of the rational normal scrolls. In particular, $B(\mu)$ has Picard number 2 and is covered by lines.
\end{proposition}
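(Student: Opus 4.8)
The plan is to recover the base locus $B = B(\mu)$ from the birational geometry of the quadro-quadric transformation $\nu$, the decisive point being that the target $Z := \Gr(2,k+2)\subset \pit W$ is a Grassmannian of $2$-planes. Since $B$ is smooth and $\nu$ is of type $(2,2)$ (both $\nu$ and $\nu^{-1}$ are given by quadrics), I would first invoke the structure theory behind Theorem 1.1 of \cite{ES} to show that the single blow-up $\pi\colon X = \Bl_B(\pit V) \to \pit V$ already resolves $\nu$: writing $E$ for the exceptional divisor and $\rho\colon X \to Z$ for the induced morphism, and setting $H = \pi^{*}\0(1)$, $G = \rho^{*}\0_{Z}(1)$, the defining linear systems give $G = 2H - E$ and, because $\nu^{-1}$ is also quadratic, $H = 2G - E'$ for the $\rho$-exceptional divisor $E'$. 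Eliminating $G$ yields $E' \sim 3H - 2E$, so $E'$ is the strict transform of a cubic hypersurface singular along $B$ with multiplicity $2$. As $\nu$ contracts exactly the secant lines of $B$, this cubic is $\Sec(B)$; in particular $\Sec(B)$ is a hypersurface and, as in Proposition 2.3(b) of \cite{ES}, $B$ is a QEL-manifold in the sense of Definition \ref{d.qel}, its entry loci through general points of $\Sec(B)$ being quadrics.

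The second, and crucial, step is to convert the specific target $Z=\Gr(2,k+2)$ into numerical control of $B$. The cubic $\Sec(B)$ is contracted by $\rho$ onto the base locus $B' = \rho(E')$ of $\nu^{-1}$, with fibres the entry-locus cones, of dimension $\delta_B + 1$; hence $\dim B' = (2k-1) - (\delta_B + 1)$. I would then analyse $B' \subset \Gr(2,k+2)$ using the geometry of the Grassmannian — its Plücker degree is the Catalan number and its VMRT at a general point is the Segre $\pit^{1}\times\pit^{k-1}$ — together with the identity $G^{2k} = \deg Z$ coming from $G = 2H - E$, to force $\delta_B = 0$ and $\dim B = k-1$. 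With $\delta_B = 0$, the secant cubic and the double-point formula give $\deg B = k+2 = \codim_{\pit V} B + 1$, that is, $B$ is a variety of minimal degree.

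Finally, a nondegenerate irreducible smooth variety of minimal degree is classified by Del Pezzo and Bertini as a hyperquadric, the Veronese surface $v_2(\pit^2)$, or a rational normal scroll. The hyperquadric is degenerate in $\pit V$ and excluded by nondegeneracy, while $v_2(\pit^2)$ has secant defect $1\neq 0$ and is excluded by $\delta_B = 0$; therefore $B$ is a rational normal scroll. Since $\dim V = 2k+1 \geq 7$ forces $\dim B = k-1 \geq 2$, the scroll is a genuine $\pit^{\,k-2}$-bundle over $\pit^1$, hence has Picard number $2$ and is covered by lines, as claimed. For the base of the induction (curves and surfaces) one may alternatively appeal directly to Proposition \ref{p.OADP}.

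The main obstacle is the middle step: extracting the exact invariants $\delta_B = 0$, $\dim B = k-1$ and $\deg B = k+2$ from the hypothesis that the image is \emph{precisely} $\Gr(2,k+2)$. The divisor relation alone gives only that $\Sec(B)$ is a cubic; it is the special geometry of $\Gr(2,k+2)$ — the codimension and VMRT of the fundamental locus $B'$, or equivalently the Plücker degree matching $(2H-E)^{2k}$ — that pins the numerics down to minimal degree and thereby forces the scroll. Controlling $B'$ inside the Grassmannian, and ruling out $\delta_B \geq 1$ (in which case $B$ would be conic-connected by Theorem 2.1 of \cite{R2} and hence one of the manifolds of Theorem \ref{t.IR}, to be excluded against the requirement that the image be $\Gr(2,k+2)$ with quadratic inverse, using the inductive VMRT machinery of Proposition \ref{p.pic}), is where the real work lies.
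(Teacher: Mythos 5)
First, note that the paper does not prove this proposition at all: it is imported verbatim from Theorem~2.17 of \cite{AS2} and used as a black box in the proof of Proposition~\ref{p.Gr}, so there is no internal argument to compare with and your attempt has to stand on its own. As it stands it has two genuine gaps. The first is your unjustified assumption that $\nu^{-1}$ is quadratic. The statement hypothesizes only that $\nu$ is given by quadrics, is generically injective, and has image biregular to $\Gr(2,k+2)$; the parenthetical ``$b=2$'' refers to the case distinction inside \cite{AS2}, not to a hypothesis you may freely invoke. Your relation $H=2G-E'$, hence $E'\sim 3H-2E$ and the cubicity of $\Sec(B)$, is exactly equivalent to $b=2$; without it you only get $E'\sim (2b-1)H-bE$ for an unknown $b$, and you give no derivation of $b=2$ from the stated hypotheses.

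The second and decisive gap is the middle step, which you yourself flag but which is in fact the entire content of the theorem. Even granting the cubic secant hypersurface, what follows is $\dim\Sec(B)=2k-1$, i.e.\ $\dim B = k-1+\delta/2$, so everything reduces to excluding $\delta\geq 2$ --- and your pointers (Catalan degree, VMRT of the Grassmannian, double-point formula) do not accomplish this. For instance $\Gr(2,6)\subset\pit^{14}$ is a smooth nondegenerate QEL-manifold with $\delta=4$ whose secant variety is a cubic hypersurface and whose associated map is quadro-quadric: it satisfies all of your divisorial numerology, and the only thing ruling it out is that its image is $\pit^{14}$ rather than $\Gr(2,9)$, so the exclusion must exploit the structure of the image in an essential way that you never make precise. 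Likewise $\deg B = k+2$ does not simply drop out of $G^{2k}=(2H-E)^{2k}=\deg\Gr(2,k+2)$: that intersection number involves the full Segre class of $N_{B/\pit V}$, not just $\deg B$, so minimal degree requires a genuine computation or a different argument. A smaller error: your fallback to Proposition~\ref{p.OADP} is inapplicable, since that result concerns $S^d\subset\pit^{2d+1}$ with $\Sec(S)=\pit^{2d+1}$, whereas your $B^{k-1}\subset\pit^{2k}$ has secant variety a hypersurface. Your endgame is sound --- once $\delta=0$ and minimal degree are in hand, Del Pezzo--Bertini applies, smoothness excludes cones, nondegeneracy together with $\dim B=k-1<2k-1$ excludes the quadric, $\delta=0$ excludes the Veronese surface, and $k\geq 3$ gives Picard number $2$ --- but the core that would make the proof a proof is missing.
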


We can classify special quadratic manifolds in the case of (Y3)  of Theorem \ref{t.Sato}:

\begin{proposition} \label{p.Gr}
  Let $S \subset \pit^{n-1}$ be a special quadratic manifold of codimension $c \geq 4$ with   $ Y \subset \pit W$ being an isomorphic projection of the Pl\"ucker variety ${\rm Gr}(2, c+1) \subset \pit (\wedge^2 \cit^{c+1})$.
   Then $S \subset \pit^{n-1}$ is projectively equivalent to
$\BP^1 \times \BP^{c} \subset \BP^{2c+1}$.
\end{proposition}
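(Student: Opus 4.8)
The plan is to reduce to the hypotheses of Proposition \ref{p.Bir2Gr} by passing to a general linear section, recognize the section as a rational normal scroll, lift this structure back to $S$ via a minimal-degree argument, and finish with Proposition \ref{p.CC}. Write $d = \dim S$, so that $\dim U = d + c + 1$ and $\delta = 2d + 2 - \dim U = d - c + 1$ by Proposition \ref{p.number}. First I would choose a general linear subspace $\BP U' \subset \BP U$ of codimension $\delta + 1$, so that $\dim U' = 2c - 1 \geq 7$ (using $c \geq 4$), and set $\sigma' = \sigma|_{\SYM^2 U'}$. By Bertini and general position the base locus $S' = S \cap \BP U'$ is a smooth, irreducible, nondegenerate subvariety of $\BP U'$ of dimension $c - 2$. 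The key point is that the restricted map $\nu := \psi^o|_{\BP U'} : \BP U' \dasharrow Y$ is birational onto $Y$: by Proposition \ref{p.linear} the general fibers of $\psi^o$ are the linear spaces $C_z(S) \cong \pit^{\delta+1}$, and a general subspace of codimension $\delta + 1$ meets each such $\pit^{\delta+1}$ in a single reduced point, so $\nu$ is both dominant and generically injective. Since $\dim \BP U' = \dim Y$ and $\mathrm{Im}(\nu) = Y \cong \Gr(2, c+1) = \Gr(2, (c-1)+2)$, Proposition \ref{p.Bir2Gr} (with $k = c-1$) applies and shows that $S'$ is a rational normal scroll.

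Next I would lift this to $S$. A rational normal scroll has minimal degree, so $\deg S' = \codim(S') + 1 = c + 1$; since degree is preserved under general hyperplane sections and $\codim(S) = c$, we get $\deg S = c + 1 = \codim(S) + 1$, i.e. $S$ is a smooth nondegenerate variety of minimal degree. By the classical del Pezzo--Bertini classification of such varieties, and since $\codim(S) = c \geq 4$ rules out linear spaces, hyperquadrics and the Veronese surface, $S$ must itself be a rational normal scroll; in particular $S$ has Picard number $2$ and is not a prime Fano manifold. To exclude the degenerate possibility $\delta = 0$, note that then $S$ would be a scroll of dimension $c - 1$, so $\chi(S) = 2(c-1)$, whereas Proposition \ref{p.Euler} together with $\chi(Y) = \chi(\Gr(2,c+1)) = \binom{c+1}{2}$ gives $\chi(S) = c$; as $c \geq 4$ these disagree, hence $\delta \geq 1$. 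Finally, with $\delta \geq 1$ and $S$ not prime Fano, Proposition \ref{p.CC} leaves only the cases (c1)--(c3); the codimension $c \geq 4$ excludes the conic (c1) and the surface (c2), so $S \cong \pit^1 \times \pit^{d-1}$, and matching codimensions ($c = d-1$) yields $S \cong \pit^1 \times \pit^c \subset \pit^{2c+1}$ (with $\delta = 2$).

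The main obstacle I anticipate is the first step: verifying in full that the restricted system $\sigma'$ satisfies every hypothesis of Proposition \ref{p.Bir2Gr}. The delicate part is the generic injectivity of $\nu$, where one must check that for general $\BP U'$ the intersection with a general fiber $\pit^{\delta+1}$ of $\psi^o$ is a single reduced point, that distinct fibers remain disjoint after restriction, and that $S' = S \cap \BP U'$ is exactly the reduced base scheme of $\sigma'$ and stays nondegenerate in $\BP U'$. (The minimal-degree lift is clean but invokes the external del Pezzo--Bertini classification; if one prefers to argue internally, for $\delta \geq 2$ the conclusion that $S$ is not prime Fano also follows from the morphism $S \to \pit^c$ sending $s \in S$ to the line $\ell \in \pit^c$ determining the linear $\pit^{c-1} \subset Y = \Gr(2, c+1)$ that is the $\psi$-image of the fiber of $F \to S$ over $s$ as in Proposition \ref{p.secant}(2); since $c \geq 4$ this image is forced to be one of the maximal $\alpha$-planes, and the morphism has positive-dimensional fibers whenever $d > c$, forcing $\rho(S) \geq 2$.)
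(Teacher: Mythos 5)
Your first half---cutting by a general linear subspace of codimension $\delta+1$ with $\dim U' = 2c-1 \geq 7$, using Proposition \ref{p.linear} to get dominance and generic injectivity of the restricted map onto $Y$, and invoking Proposition \ref{p.Bir2Gr} to conclude that $S' = S \cap \BP U'$ is a rational normal scroll---is exactly the paper's opening move. From there you genuinely diverge. The paper never leaves the scroll section: it computes $\chi(S)$ via Proposition \ref{p.Euler} and Corollary \ref{c.Betti}, transfers the Picard number of $S'$ up to $S$ by Lefschetz when $c \geq 5$ (so that $\dim S' = c-2 \geq 3$), and handles $c = 4$ by separate ad hoc arguments---the classification of Fano threefolds when $\delta = 0$, and the parity constraint ($d+\delta$ even) of Proposition \ref{p.pic} when $S$ is prime Fano---before finishing with Proposition \ref{p.CC}. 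You instead exploit that a scroll has minimal degree, so $\deg S = \deg S' = c+1 = \codim(S)+1$, and the classical del Pezzo--Bertini classification (with smoothness and $c \geq 4$ excluding linear spaces, quadrics and the Veronese surface) forces $S$ itself to be a smooth scroll, hence of Picard number $2$ and not prime Fano, uniformly in $c$; your Euler-characteristic exclusion of $\delta = 0$ (the scroll value $2(c-1)$ against $\chi(S) = c$ from Proposition \ref{p.Euler}) parallels the computation the paper makes in its own $\delta=0$ case, and the finish via Proposition \ref{p.CC} is shared. The trade-off: your route is shorter and avoids both the Lefschetz step (which is unavailable at $c=4$, where $\dim S' = 2$) and the Fano threefold lists, at the cost of importing the external del Pezzo--Bertini theorem, which the paper does not cite, plus the standard fact that degree and codimension are preserved under general linear sections. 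The general-position checks you flag as delicate (generic injectivity of $\nu$, that $B(\sigma')$ is exactly $S'$ and stays irreducible and nondegenerate) are glossed at essentially the same level in the paper, so this is honest bookkeeping rather than a gap. The one shaky spot is your parenthetical internal alternative: that positive-dimensional fibers of $S \to \pit^c$ force $\rho(S) \geq 2$ requires the nef-or-trivial dichotomy on a Picard-number-one variety together with $\psi(F) = Y$ from Proposition \ref{p.secant} to rule out a constant map---but your main line does not depend on it.
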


\begin{proof}
Let $\sigma: \SYM^2 U \to W, \dim U =n,$ be the special system of quadrics such that our $S$   is the base locus  $B(\sigma) \subset \pit U$. Recall that a general fiber of $\psi: \Bl_S(\BP U)  \to Y(\sigma)\cong \Gr(2, c+1)$ is sent to a linear space of dimension $\delta+1$ in $\BP U$ by Proposition \ref{p.linear}.
Take a general subspace $V \subset U$ with $$\dim U - \dim V = \delta+1, \  \dim V = 2c-1 \geq 7$$ and denote by  $S' \subset \pit V$ the linear section  $S\cap \pit V$. Then the restriction $$\psi|_{\Bl_{S'}(\BP V)} : \Bl_{S'}(\BP V) \to Y(\sigma)$$
is birational. The  birational map $\BP V \dasharrow Y(\sigma)$ is given by a system of quadrics $\mu: \SYM^2 V \to W$
induced by the restriction of  $\sigma$.
 Since the base locus $B(\mu)$ is $S'$, which is irreducible, nonsingular and nondegenerate in $\pit V$, we can apply
 Proposition \ref{p.Bir2Gr} to see that $S'$ is a rational normal scroll.
We will argue case-by-case as follows.
\begin{itemize} \item[(1)] When $\delta =0$.    Putting $n= 2d +2$ and  $\chi(\Gr(2, c+1)) = \frac{c(c+1)}{2}$ in Proposition \ref{p.Euler}, we have
$\chi(S) = d+1$. This implies that $S$ has Picard number 1.
\begin{itemize} \item[(1a)] If $c \geq 5$, then $ \dim S' = c-2   \geq 3$. Thus $S'$ and $S$ have the same Picard number by Lefschetz. Since  $S'$ has Picard number 2, this gives a contradiction. \item[(1b)] If $c=4$, then $d=3$. We conclude that $S \subset \pit^7 $ is a Fano threefold of Picard number 1 with $\chi(S) =4$. Furthermore, a general hyperplane section $S'$ of $S$ is a rational normal scroll. In particular, $S$ is covered by lines, which implies that $K^{-1}_S = \sO(k), k \geq 2$. From the classification of Fano threefolds (e.g. 12.1 of  \cite{IP}), there are three possibilities for such
 Fano threefolds: $\pit^3, \qit^3$ and a codimension-3 linear section of $\Gr(2,5) \subset \pit^9$. But none of these can have
 $\delta =0.$ \end{itemize}
\item[(2)] When  $\delta \geq 1.$
\begin{itemize} \item[(2a)] Assume that  $S$ is a prime Fano manifold and $ c \geq 5$. Then $\dim S'=c-2 \geq 3$, which implies that $S'$ and $S$ have the same Picard number  by Lefschetz. Since $S'$ has Picard number 2, this gives a contradiction. \item[(2b)] Assume that $S$ is a prime Fano manifold and $c=4.$ Then $n = d+5$
and $d - \delta =3$ is odd. This is a contradiction to Proposition \ref{p.pic}.
\item[(2c)] Assume that $S$ is not a prime Fano manifold.
Proposition \ref{p.CC}  shows that $S \subset \pit^{n-1}$ is projectively equivalent to the Segre embedding $\pit^1 \times \pit^c \subset \pit^{2c+1}$.
\end{itemize} \end{itemize} \end{proof}

\section{Quadratically symmetric varieties and prolongations}\label{s.prolongation}

In this section, we give an intrinsic characterization of $Z(\sigma)$ and relate it to the problem studied in \cite{FH}.
Throughout this section,  $X \subset \BP V$ denotes an
$n$-dimensional irreducible nondegenerate projective subvariety.

\begin{definition}\label{d.QSV}
Let $X \subset \BP V$ be an
$n$-dimensional irreducible nondegenerate projective subvariety.
We say that $X$ is a {\em quadratically
symmetric variety} if there exists a Zariski open subset $X^o \subset {\rm Sm}(X)$
in the nonsingular locus of $X$ such that for each $P \in X^o$, there exits a linear representation of the $\cit^*$-group denoted by $E_P: \cit^* \times V \to V$ on $V$ satisfying the following conditions.
 \begin{itemize} \item[(i)] Denoting by $\widehat{P} \subset V$
the tautological line over $P$ and by $T_P(\widehat{X}) \subset V$ the affine tangent space, the $\C^*$-action $E_P$ has weight $0$ on $\widehat{P}$, weight $1$ on $T_P(\widehat{X})/\widehat{P}$ and weight 2 on
$V/T_P(\widehat{X})$. In other words, there exist  subspaces $T'_P \subset T_P(\widehat{X})$ and $N'_P \subset V$ with
$$V = \widehat{P} \oplus T'_P \oplus N'_P  \  {\text and}\ T_P(\widehat{X}) = \widehat{P} \oplus T_P'$$ such that $\lambda \in \cit^*$ acts by $$E_P^{\lambda} (t \in \widehat{P}, u \in T'_P, w \in N'_P) = (t, \lambda u, \lambda^2 w).$$ \item[(2)] The induced $\cit^*$-action on $\BP V$ preserves $X$. \end{itemize}
\end{definition}

\begin{notation} For a projective manifold $M \subset \BP^N,$ the second fundamental form at a point $x \in M$ is the non-empty linear system of quadrics on the tangent space $T_xM$, denoted by
     $|{\rm II}_{x, M}| \subset \pit(\Sym^2 T_xM)^*,$ which comes from  the derivative of the Gauss map (see  Section 3.2 of \cite{IL} or p.602 of \cite{R2} for details).
\end{notation}

\begin{proposition}\label{p.surject}
Let $X \subset \BP V, \dim V = N+1,$ be a quadratically symmetric variety of dimension $n$.  For a fixed $P \in X^o$ and the representation $E_P$ of Definition \ref{d.QSV},  let
$(x_0, x_1, \ldots, x_N)$ be linear coordinates on $V$ such that the dual action of $\lambda \in \C^*$,  denoted by the same symbol $E_P^{\lambda}$ for simplicity, is given by $E_P^{\lambda}(x_0) = x_0$ and $$ E_P^{\lambda} (x_i) = \left\{
\begin{array}{ll} \lambda x_i  & \mbox{ if $ 1 \leq i \leq n$} \\
 \lambda^2 x_i & \mbox{ if $ n+1 \leq i \leq N.$} \end{array}
\right .  $$
Let $z_i= \frac{x_i}{x_0}, 1 \leq i \leq N,$ be the inhomogeneous coordinates on $\BP V$ centered at $P$.
View the germ of $X \subset \BP V$ at $P$ as the graph of the germ of a holomorphic map
$$z_k = F^k(z_1, \ldots, z_n), n+1 \leq k \leq N.$$ Then $F^k(z_1, \ldots, z_n)$ are homogeneous quadratic polynomials and
the system of quadrics  on $T_P X$ defined by $\{ F^k, n+1 \leq k \leq N \}$ under the identification
of $T^*_P X = \C z_1 + \cdots + \C z_n$ is isomorphic to the second fundamental form $| {\rm II}_{P,X}|$ of $X$ at $P$.
\end{proposition}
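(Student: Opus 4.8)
The plan is to transport the $\cit^*$-action into the affine chart centered at $P$, to read off the homogeneity of the graphing functions $F^k$ directly from the weight decomposition, and then to match the resulting quadrics with the second fundamental form.

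First I would record the action in the chart. Since $x_0$ has weight $0$, each $x_i$ with $1 \leq i \leq n$ has weight $1$, and each $x_k$ with $n+1 \leq k \leq N$ has weight $2$, the inhomogeneous coordinates $z_i = x_i/x_0$ transform by
$$E_P^{\lambda}(z_i) = \lambda z_i \ (1 \leq i \leq n), \qquad E_P^{\lambda}(z_k) = \lambda^2 z_k \ (n+1 \leq k \leq N),$$
so that on points of the chart the action is $(z_1,\dots,z_n,z_{n+1},\dots,z_N) \mapsto (\lambda z_1,\dots,\lambda z_n,\lambda^2 z_{n+1},\dots,\lambda^2 z_N)$. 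By Definition \ref{d.QSV} the weight-$1$ subspace $T'_P$ is exactly $T_P(\widehat{X})/\widehat{P}$, so $T_P X$ is the $(z_1,\dots,z_n)$-subspace and the normal directions are $z_{n+1},\dots,z_N$; hence near $P$ the variety $X$ is the graph $z_k = F^k(z_1,\dots,z_n)$, and since $P$ is the origin and this subspace is tangent, each $F^k$ vanishes to order $\geq 2$ at $0$.

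The heart of the argument is the homogeneity, which I would extract from invariance. Because $E_P^{\lambda}$ preserves $X$, for $z$ small and $\lambda$ near $1$ the image $(\lambda z, \lambda^2 F(z))$ of the graph point $(z, F(z))$ again lies on $X$, hence on the graph over $\lambda z$; comparing the normal coordinates gives the functional equation
$$F^k(\lambda z_1,\dots,\lambda z_n) = \lambda^2 F^k(z_1,\dots,z_n), \qquad n+1 \leq k \leq N.$$
Writing $F^k = \sum_{d \geq 2} F^k_d$ in homogeneous components and comparing coefficients of $\lambda^d$ then forces $F^k_d = 0$ for every $d \neq 2$, so each $F^k$ is a homogeneous quadratic polynomial. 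This establishes the first assertion.

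Finally I would identify $\{F^k\}$ with the second fundamental form. For a manifold presented near $P$ as a graph $z_k = F^k(z_1,\dots,z_n)$ over its tangent space, the standard description (Section 3.2 of \cite{IL}) realizes $|{\rm II}_{P,X}|$ as the linear system on $T_P X$ spanned by the quadratic parts of the $F^k$, under the identification $T_P^* X = \cit z_1 + \cdots + \cit z_n$. Since each $F^k$ is already purely quadratic it coincides with its own quadratic part, so the system of quadrics defined by $\{F^k\}$ is precisely $|{\rm II}_{P,X}|$. The computations are all elementary; the step that needs care is this last identification, where one must check that the coordinate system $\{F^k\}$ matches the intrinsic second fundamental form defined via the derivative of the Gauss map, so that the conventions (which space the form lives on, and the span of the normal components) line up correctly.
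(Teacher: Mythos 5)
Your proposal is correct and follows essentially the same route as the paper's proof: the paper likewise transports $E_P$ to the affine chart, observes that invariance of the germ forces the Taylor expansion of each $F^k$ to have only quadratic terms, and cites the standard graph description (p.~108 of \cite{IL}) to identify the quadratic parts with $|{\rm II}_{P,X}|$. Your write-up merely makes the functional equation $F^k(\lambda z)=\lambda^2 F^k(z)$ and the degree-by-degree comparison explicit, which the paper leaves implicit.
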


\begin{proof}
It is well-known (e.g. p. 108 of \cite{IL}) that the quadratic terms in the Taylor expansion of $F^k$'s give the second fundamental form $|{\rm II}_{P,X}|$.
So it suffices to show that $F^k$'s are quadratic.
The $\cit^*$-action $E_P$ induces a $\cit^*$-action on the affine space $\BP V \setminus (x_0 =0) \ = \ \{(z_1, \ldots, z_N) \}$ by
$$ E_P^{\lambda} (z_i) = \left\{
\begin{array}{ll} \lambda x_i  & \mbox{ if $ 1 \leq i \leq n$} \\
 \lambda^2 z_i & \mbox{ if $ n+1 \leq i \leq N.$} \end{array}
\right .  $$
Since this action preserves the germ of $X$ at $P$, the equations $$z_k = F^k(z_1, \ldots, z_n), n+1 \leq k \leq N$$ must remain unchanged
   under the $\cit^*$-action. Thus the Taylor expansion of $F^k$ can have only quadratic terms.
\end{proof}

\begin{proposition}\label{p.II}
Let $X_1$ and $X_2$ be two  quadratically symmetric varieties of the same dimension in
$\BP V$. Assume that there are points $P_1 \in X^o_1$ and
$P_2 \in X^o_2$ such that the second fundamental forms
${\rm II}_{P_1, X_1}$ and ${\rm II}_{P_2, X_2}$ are isomorphic as systems of
quadrics. Then $X_1$ and $X_2$ are
projectively equivalent.\end{proposition}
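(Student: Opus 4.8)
The plan is to exploit Proposition \ref{p.surject}, which tells us that near a point $P_i \in X_i^o$ the variety $X_i$ is, in the coordinates adapted to the $\C^*$-action $E_{P_i}$, nothing but the graph of its own second fundamental form. Concretely, writing $V = \widehat{P_i} \oplus T'_{P_i} \oplus N'_{P_i}$ for the weight decomposition of Definition \ref{d.QSV} (so the summands have dimensions $1$, $n$, $N-n$), the germ of $X_i$ at $P_i$ in the affine chart $\{x_0 = 1\}$ is the set of representatives $e_0 + v + {\rm II}_i(v,v)$ with $v \in T'_{P_i}$ small, where ${\rm II}_i \colon \Sym^2 T'_{P_i} \to N'_{P_i}$ is the second fundamental form, identified with the quadratic map $v \mapsto (F_i^{n+1}(v), \dots, F_i^N(v))$ of Proposition \ref{p.surject}. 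Thus the whole problem reduces to producing a single element of $\GL(V)$ carrying the germ of $X_1$ at $P_1$ onto the germ of $X_2$ at $P_2$; the rest will be irreducibility.

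I would build this linear map blockwise out of the isomorphism of the two systems of quadrics. Since $\dim X_1 = \dim X_2$ and the ambient $V$ is common, the three summands have matching dimensions, so an isomorphism $\tau \colon T'_{P_1} \to T'_{P_2}$ realizing $|{\rm II}_{P_1, X_1}| \cong |{\rm II}_{P_2, X_2}|$ extends to a block map $g \in \GL(V)$ with $e_0 \mapsto f_0$ on $\widehat{P_1} \to \widehat{P_2}$ and a map $\nu$ on $N'_{P_1} \to N'_{P_2}$ still to be pinned down. Because $X_i$ is nondegenerate, the quadrics $F_i^{n+1}, \dots, F_i^N$ are linearly independent, so ${\rm II}_i^* \colon (N'_{P_i})^* \to \Sym^2 (T'_{P_i})^*$ is injective with image the linear system $\mathcal{S}_i$. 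The hypothesis supplies $\tau$ with $(\Sym^2 \tau^*)(\mathcal{S}_2) = \mathcal{S}_1$, and composing this identification with ${\rm II}_1^*$ and ${\rm II}_2^*$ determines a unique $\nu \colon N'_{P_1} \to N'_{P_2}$ obeying the key compatibility
\begin{equation*}
\nu \circ {\rm II}_1 = {\rm II}_2 \circ \Sym^2 \tau .
\end{equation*}
Direct substitution then gives $g(e_0 + v + {\rm II}_1(v,v)) = f_0 + \tau(v) + \nu({\rm II}_1(v,v)) = f_0 + \tau(v) + {\rm II}_2(\tau v, \tau v)$, which is exactly a point of the germ of $X_2$ with tangent vector $\tau v$. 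Hence $g$ maps one germ onto the other.

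Finally I would globalize: the induced $\bar g \in \PGL(V)$ carries $X_1$ to the irreducible $n$-dimensional variety $\bar g(X_1)$, which shares a Euclidean-open analytic piece with the irreducible $n$-dimensional $X_2$; taking Zariski closures forces $\bar g(X_1) = X_2$, so $X_1$ and $X_2$ are projectively equivalent. I expect the only delicate point to be the bookkeeping in the middle step: checking that an isomorphism of the \emph{projective} linear systems of quadrics genuinely yields honest linear maps $\tau, \nu$ satisfying the exact identity $\nu \circ {\rm II}_1 = {\rm II}_2 \circ \Sym^2 \tau$ rather than one valid only up to scalars, and that the weight-$0$ normalization $e_0 \mapsto f_0$ keeps the quadratic homogeneity consistent so that no stray scalar survives. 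Once that identity is secured, both the construction of $g$ and the passage from germs to the full varieties are routine.
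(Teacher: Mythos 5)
Your proposal is correct and follows essentially the same route as the paper: both reduce via Proposition \ref{p.surject} to the fact that each germ is the graph of its own (quadratic) second fundamental form, then use the isomorphism of the systems of quadrics to match the two germs by a linear change of coordinates, and conclude by irreducibility. Your explicit blockwise construction of $g$ from $\tau$ and the induced $\nu$ satisfying $\nu \circ {\rm II}_1 = {\rm II}_2 \circ \Sym^2 \tau$ is simply the detailed form of the paper's one-line statement that ``we can assume $F^k_1 = F^k_2$ by linear coordinate changes,'' and your worry about scalar ambiguity is resolved exactly as you suggest, since $\nu$ is pinned down by $\nu^* = ({\rm II}_1^*)^{-1} \circ \Sym^2\tau^* \circ {\rm II}_2^*$.
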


\begin{proof} We can choose the inhomogeneous coordinates
in Proposition \ref{p.surject} such that the germ of $X_i \subset \BP V$ at $P_i$ for $i=1,2$ is defined by equations $z_k = F_i^k
 (z_1, \ldots, z_n)$ for some quadratic polynomials $F^k_i, n+1 \leq m, i=1,2$. Since ${\rm II}_{P_1, X_1}$ and ${\rm II}_{P_2, X_2}$ are isomorphic,
 we can assume that $F^k_1 = F^k_2$ by linear coordinate changes. It follows that $X_1$ and $X_2$ are projectively equivalent. \end{proof}

The following is a generalization of $Z(\sigma)$ in Definition \ref{d.quadrics}.

\begin{notation}\label{n.sigma}
Let $U, W$ be two vector spaces and let $\sigma: \SYM^2 U \to W$ be a surjective homomorphism.
Fix a 1-dimensional vector space $T$ with a fixed identification $T = \cit$.
 Define a rational map $\phi^o: \BP (T \oplus U) \dasharrow \BP (T \oplus U \oplus W)$
by $$ [t: u] \mapsto  [t^2: tu: \sigma(u,u)] \mbox{ for } t \in T, u \in U.$$ The proper image of $\BP (T \oplus U)$ under $\phi^o$ will be denoted by
$Z= Z(\sigma) \subset \BP (T \oplus U \oplus W)$.
Note that $\phi$ sends $U \cong \BP (T \oplus U) \setminus \BP U$ isomorphically to $Z \setminus \BP (U \oplus W).$ Thus $Z$ is a rational variety and $\phi^o: \BP (T \oplus U) \dasharrow Z$ is a birational map. \end{notation}

\begin{proposition}\label{p.Zqsv}
In Notation \ref{n.sigma}, the variety $Z(\sigma)$ is a quadratically symmetric variety.
\end{proposition}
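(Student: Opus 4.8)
The plan is to exhibit, for each point $P$ in a suitable open subset $X^o \subset Z(\sigma)$, an explicit $\cit^*$-action on $V = T \oplus U \oplus W$ satisfying the weight conditions of Definition \ref{d.QSV}. I would take $X^o = Z(\sigma) \setminus \BP(U \oplus W)$, which is the affine chart $\{[1:u:\sigma(u,u)] : u \in U\}$; by Notation \ref{n.sigma} this is the isomorphic image of $U$ under $\phi^o$, so it is a Zariski open subset contained in the smooth locus of $Z(\sigma)$, even though no smoothness hypothesis is imposed on $Z(\sigma)$ here.

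The anchor of the construction is the diagonal action $E^0$ at the base point $P_0 = [1:0:0]$, defined by $E^0_\lambda(t, u, w) = (t, \lambda u, \lambda^2 w)$. First I would check directly that $E^0$ preserves $Z(\sigma)$: applying it to $[1 : u : \sigma(u,u)]$ gives $[1 : \lambda u : \lambda^2 \sigma(u,u)] = [1 : \lambda u : \sigma(\lambda u, \lambda u)]$, again a point of $Z(\sigma)$. Next I would verify the weight structure at $P_0$. In the chart $x_0 = 1$, $Z(\sigma)$ is the graph $w = \sigma(u,u)$, whose differential vanishes at $u = 0$; hence the affine tangent space is $T_{P_0}(\widehat{Z}) = T \oplus U$ and $V / T_{P_0}(\widehat{Z}) = W$. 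Thus $E^0$ has weight $0$ on $\widehat{P_0} = T$, weight $1$ on $T_{P_0}(\widehat{Z})/\widehat{P_0} = U$, and weight $2$ on $V/T_{P_0}(\widehat{Z}) = W$, so $E^0$ serves as the action $E_{P_0}$.

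To move the action to a general point I would use the linear automorphisms $g_v$ of $\BP V$ introduced in the proof of Proposition \ref{p.hyperplane}, namely $g_v([t:u:w]) = [t : u + tv : w + 2\sigma(u,v) + t\sigma(v,v)]$. These are elements of $\GL(V)$ that preserve $Z(\sigma)$ (as verified there), satisfy $g_v \circ g_{v'} = g_{v+v'}$, and send $P_0$ to $[1 : v : \sigma(v,v)]$; hence $\{g_v : v \in U\}$ acts simply transitively on $X^o$. For $P = g_v(P_0)$ I would set $E_P := g_v \circ E^0 \circ g_v^{-1}$. This is again a linear $\cit^*$-action preserving $Z(\sigma)$, and because $g_v$ is a linear isomorphism carrying $P_0$ to $P$, $\widehat{P_0}$ to $\widehat{P}$, and (being an automorphism of $Z(\sigma)$) $T_{P_0}(\widehat{Z})$ to $T_P(\widehat{Z})$, conjugation transports the weight decomposition verbatim, giving $E_P$ the required weights $0,1,2$ on $\widehat{P}$, $T_P(\widehat{Z})/\widehat{P}$, and $V/T_P(\widehat{Z})$.

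The construction is essentially forced once the automorphisms $g_v$ are available, so there is no serious obstacle; the only points requiring care are the direct verification that $E^0$ fixes $P_0$ with the correct weights (which rests on the quadratic nature of $\sigma$, making the affine tangent space at $P_0$ exactly $T \oplus U$) and the observation that $X^o$ lies in the smooth locus despite the absence of a smoothness hypothesis on $Z(\sigma)$. If one prefers an explicit formula, composing the three maps yields $E_P^\lambda([t:u:w]) = [t : \lambda u + (1-\lambda)tv : \lambda^2 w + 2\lambda(1-\lambda)\sigma(u,v) + (1-\lambda)^2 t\sigma(v,v)]$, which one checks directly fixes $P$ and preserves $Z(\sigma)$.
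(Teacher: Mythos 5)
Your proof is correct and takes essentially the same approach as the paper: the diagonal action $\lambda\cdot[t:u:w]=[t:\lambda u:\lambda^2 w]$ supplies $E_P$ at $P=\BP T$, and the translations $g_v$ from the proof of Proposition \ref{p.hyperplane}, which act transitively on $Z(\sigma)\setminus \BP(U\oplus W)$, transport it to every point of that chart. Your explicit verifications of the weight structure at the base point and of the conjugated action $E_P = g_v\circ E^0\circ g_v^{-1}$ simply spell out details the paper leaves implicit.
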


\begin{proof}
Consider the $\cit^*$-action on $\pit (T \oplus U \oplus W)$ given by $\lambda \cdot [t:u:w] = [t: \lambda u:\lambda^2 w]$ for all $\lambda \in \cit^*.$
Then it preserves $Z(\sigma)$. This gives $E_P$ in Definition \ref{d.QSV} when $P = \BP T \in Z(\sigma)$.

As in the proof of Proposition \ref{p.hyperplane}, we can
associate to each vector $v \in U$ the linear automorphism $g_v$ of $\BP (T \oplus U \oplus W)$ defined by
 $$g_v: [t: u: w] \mapsto [t: u + tv: w + 2 \sigma(u,v) +  t\sigma(v, v)].$$
Then $g_v$ preserves $Z(\sigma)$ as we saw in the proof of Proposition \ref{p.hyperplane}.
In particular, the linear automorphism group of $Z(\sigma) \subset \BP (T \oplus U \oplus W)$ acts transitively on $Z(\sigma) \setminus \BP (U \oplus W).$
Thus $E_P$ exists for any $P \in Z(\sigma) \setminus \BP (U \oplus W).$
 \end{proof}

\begin{proposition} \label{p.Landsberg}
Any quadratically symmetric variety $X \subset \BP V$ is projectively equivalent to  $Z(\sigma)$, where $\sigma: \SYM^2 T_P X \to (T_P \BP V)/T_P X$ is the dual of the second fundamental form of $X$ at a point $P \in X^o$.
\end{proposition}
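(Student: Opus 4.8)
The plan is to exhibit $Z(\sigma)$ as a second quadratically symmetric variety sharing the dimension and second fundamental form of $X$, and then to conclude with Proposition \ref{p.II}. Write $U := T_P X$ and $W := (T_P\BP V)/T_P X$, so that $\sigma : \SYM^2 U \to W$ is the map in the statement and $\dim X = \dim U = n$. A first point to settle is that $\sigma$ is surjective, so that $Z(\sigma)$ is defined as in Notation \ref{n.sigma}: by Proposition \ref{p.surject} the germ of $X$ at $P$ is the graph $z_k = F^k(z_1,\dots,z_n)$ of homogeneous quadrics for $n+1\le k\le N$, and any linear relation $\sum_k c_k F^k = 0$ would confine $X$ to the hyperplane $\{\sum_k c_k z_k = 0\}$, contradicting nondegeneracy. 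Hence the $F^k$ are linearly independent, so the second fundamental form $\SYM^2 U \to W$ is surjective, i.e. $\sigma$ is onto. With $\sigma$ surjective, the weight decomposition $V = \widehat P\oplus T'_P\oplus N'_P$ of Definition \ref{d.QSV} furnishes a linear isomorphism $V\cong T\oplus U\oplus W$ (via $\widehat P\cong T$, $T'_P\cong U$, $N'_P\cong W$), the summands matching in dimension since $\dim W = N-n = \dim N'_P$; thus $Z(\sigma)\subset\BP(T\oplus U\oplus W)$ is a subvariety of $\BP V$.

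Next I would assemble the two hypotheses of Proposition \ref{p.II}. First, $Z(\sigma)$ is quadratically symmetric by Proposition \ref{p.Zqsv}, and $\dim Z(\sigma) = \dim\BP(T\oplus U) = n = \dim X$ because $\phi^o$ is birational onto its image. Second, I would compute the second fundamental form of $Z(\sigma)$ at the base point $\BP T$, which lies in $Z(\sigma)\setminus\BP(U\oplus W)$, hence in the locus $Z(\sigma)^o$ where the $\cit^*$-action of Proposition \ref{p.Zqsv} is defined; near $\BP T$ the variety is the graph $w = \sigma(u,u)$ in the chart $t=1$, so it is in particular smooth there. By Proposition \ref{p.surject} the form ${\rm II}_{\BP T, Z(\sigma)}$ is exactly the system of quadrics attached to $\sigma$, namely $W^*\subset(\SYM^2 U)^*$. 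By the very definition of $\sigma$ as the dual of the second fundamental form of $X$, this system coincides, under $T_{\BP T}Z(\sigma)\cong U\cong T_P X$, with $|{\rm II}_{P,X}|$.

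Finally I would apply Proposition \ref{p.II} to $X_1 = X$ and $X_2 = Z(\sigma)$ at the points $P_1 = P$ and $P_2 = \BP T$: these are two quadratically symmetric varieties of the same dimension $n$ inside $\BP V$ whose second fundamental forms are isomorphic as systems of quadrics, whence $X$ and $Z(\sigma)$ are projectively equivalent, as claimed. The substantive content lies entirely in the surjectivity-and-dimension bookkeeping of the first paragraph, which is where nondegeneracy of $X\subset\BP V$ enters; the rest is a direct combination of Propositions \ref{p.surject}, \ref{p.Zqsv} and \ref{p.II}, so I anticipate no serious obstacle beyond keeping the identification $V\cong T\oplus U\oplus W$ consistent throughout.
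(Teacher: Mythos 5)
Your proof is correct and follows essentially the same route as the paper's: combine Proposition \ref{p.Zqsv} with Proposition \ref{p.II}, after identifying the second fundamental form of $Z(\sigma)$ at the point $\BP T$ with $\sigma$. The only difference is that where the paper simply cites Section 3 of \cite{L} for that identification, you derive it directly from the graph description $w=\sigma(u,u)$ of the germ of $Z(\sigma)$ at $\BP T$ via Proposition \ref{p.surject}, and you additionally verify from nondegeneracy the surjectivity of $\sigma$ required by Notation \ref{n.sigma} --- both welcome, self-contained refinements of the same argument.
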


\begin{proof}
 By Section 3 of \cite{L}, the second fundamental form of $Z(\sigma)$ at a point in $Z(\sigma) \setminus \BP (U \oplus W)$
is isomorphic to $\sigma$. Hence $X$ is projectively equivalent to $Z(\sigma)$ by Proposition \ref{p.II} and Proposition \ref{p.Zqsv}.
\end{proof}

Now we can given an intrinsic characterize the projective manifold $Z(\sigma)$ associated to a special system   of quadrics $\sigma$
as follows.

\begin{theorem}\label{t.qsv}
Let $X \subset \BP V$ be  a prime Fano manifold. Then $X \subset \BP V$ is a quadratically symmetric variety if and only if $X = Z(\sigma)$ for a special system of quadrics $\sigma$. In particular, Theorem \ref{t.bir} gives a classification of quadratically symmetric prime Fano manifolds.
\end{theorem}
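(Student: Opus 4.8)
The plan is to prove the two implications of the stated equivalence separately, with the reverse implication being immediate and the forward one carrying all the content; the ``in particular'' assertion is then a formal consequence. For the reverse implication, suppose $X = Z(\sigma)$ for a special system of quadrics $\sigma$. Then $X$ is quadratically symmetric directly by Proposition \ref{p.Zqsv}, which establishes this for $Z(\sigma)$ attached to any surjective $\sigma$; the prime Fano hypothesis plays no role here.

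For the forward implication, assume $X \subset \BP V$ is a prime Fano manifold that is quadratically symmetric. Proposition \ref{p.Landsberg} gives a projective equivalence $X \cong Z(\sigma)$, where $\sigma : \SYM^2 U \to W$ is the surjective dual of the second fundamental form of $X$ at a point $P \in X^o$ and $U = T_P X$. It remains to check that $\sigma$ is special in the sense of Definition \ref{d.quadrics}, i.e. that $Z(\sigma)$ is nonsingular and that $B(\sigma) = \{[u] \in \BP U : \sigma(u,u) = 0\}$ is irreducible and nonsingular. The first is automatic since $Z(\sigma) \cong X$ is a manifold, so the whole problem reduces to the geometry of $B(\sigma)$.

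The point I would exploit is that $B(\sigma)$ is exactly the VMRT $\sC_P$ of $Z(\sigma)$ at $P$: parametrizing the line of $\BP(T \oplus U)$ through $\BP T$ in a direction $u_0 \in U$ and applying $\phi^o$, its image is a line of $Z(\sigma)$ through $P$ with tangent direction $[u_0]$ precisely when $\sigma(u_0,u_0) = 0$ (and a conic otherwise), while every line of $Z(\sigma)$ through $P$ arises this way. Since the automorphisms $g_v$ from the proof of Proposition \ref{p.Zqsv} act transitively on the open orbit $Z(\sigma) \setminus \BP(U \oplus W)$, the VMRT at $P$ is isomorphic to the VMRT at a general point, which is nonsingular by Definition \ref{d.vmrt}; hence $B(\sigma)$ is nonsingular.

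The main obstacle is irreducibility of $B(\sigma)$, which, given smoothness, amounts to connectedness, and I would settle it by a Picard-number count. Set $n = \dim X = \dim U$; blowing up the now-smooth center gives a smooth $\widehat W := \Bl_{B(\sigma)}(\BP(T \oplus U))$ with $\rho(\widehat W) = 1 + c$, where $c$ is the number of connected components of $B(\sigma)$, each of codimension $\geq 2$ in $\BP^n$ as it lies in the hyperplane $\BP U$. The morphism $\phi : \widehat W \to Z(\sigma) = X$ is birational onto the smooth variety $X$ of Picard number $1$, so it contracts exactly $\rho(\widehat W) - \rho(X) = c$ prime divisors. Now $\phi$ contracts the proper transform $\widetilde{\BP U}$ of the hyperplane $\BP U$ onto $Y(\sigma)$, and the estimate $\dim Y(\sigma) = 2(\dim U - \dim B(\sigma) - 2) < n - 1$ from Proposition \ref{p.number} (using $\Sec(B(\sigma)) = \BP U$ from Proposition \ref{p.secant}) shows that $\widetilde{\BP U}$ is genuinely $\phi$-exceptional. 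The crux is then to verify that $\widetilde{\BP U}$ is the only contracted divisor, i.e. that the blow-up exceptional divisors lying over $B(\sigma)$ are mapped generically finitely (into the hyperplane section $X \cap \BP(U \oplus W)$) rather than contracted; granting this forces $c = 1$, so $B(\sigma)$ is connected and hence irreducible, and $\sigma$ is special. With the equivalence in hand, the classification of quadratically symmetric prime Fano manifolds reduces to that of special systems of quadrics, equivalently of their special quadratic manifolds $S = B(\sigma)$, which is precisely the content of Theorem \ref{t.bir}. An alternative to the contraction check would be to cite directly the irreducibility of the VMRT of a prime Fano manifold of Picard number one at a general point, should a statement in exactly that form be available.
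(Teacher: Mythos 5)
Your skeleton is the paper's: the reverse implication via Proposition \ref{p.Zqsv} (where, as you say, primality is irrelevant), and the forward one via Proposition \ref{p.Landsberg}, reducing everything to smoothness and irreducibility of $B(\sigma)$. Your smoothness step is sound in substance and close to the paper's, which cites Proposition 2.15 of \cite{AS2} to identify the locus $\sigma(u,u)=0$ with the VMRT at a general point and then invokes homogeneity; one caveat is that your line-parametrization argument only identifies the \emph{sets}, whereas Definition \ref{d.quadrics} requires the base locus \emph{subscheme} to be nonsingular, so reducedness of $B(\sigma)$ still needs an argument (this is what the citation to \cite{AS2} supplies, and it also underlies your use of $\Bl_{B(\sigma)}$ to resolve $\phi^o$).

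The genuine gap is in the irreducibility step, and you flagged it yourself: your Picard count forces $c=1$ only once you know that no exceptional divisor $E_i$ over a component of $B(\sigma)$ is contracted by $\phi$, and you ``grant'' this rather than prove it --- yet it is the entire content of the step. The paper avoids the issue completely by citing Proposition 6.6 of \cite{FH}, which states exactly what you asked for in your closing sentence: the VMRT at a general point of a prime Fano manifold is smooth \emph{and irreducible}; combined with the \cite{AS2} identification this finishes the forward direction in one line. Your route is salvageable, though: for $x=[u_x]\in B(\sigma)$ and a normal direction $\nu=(\nu_t,\nu_u)$, the curve $[\epsilon\nu_t : u_x+\epsilon\nu_u]$ shows that $(x,[\nu])$ maps under $\phi$ to $[0:\nu_t u_x: 2\sigma(u_x,\nu_u)]$, so from a general image point (with $\nu_t\neq 0$) one recovers $x$ from the $U$-component and then $[\nu]$ modulo $T_xB(\sigma)$ (using that the differentials of $W^*$ span the conormal when $B(\sigma)$ is scheme-smooth); hence $\phi|_{E_i}$ is generically injective and no $E_i$ is contracted. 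Note also that your appeal to Proposition \ref{p.number} is circular --- it is proved only for \emph{special} quadratic manifolds, i.e.\ under the irreducibility you are trying to establish --- but it is dispensable: since $\phi$ is an isomorphism off $\BP U$, the exceptional locus lies in $\widetilde{\BP U}\cup\bigcup_i E_i$, so once no $E_i$ is contracted, the count $\rho(\widehat{W})-\rho(X)=c$ already forces $c=1$, and $\widetilde{\BP U}$ is then automatically the unique contracted divisor.
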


\begin{proof}
From Theorem \ref{t.bir} and Proposition \ref{p.Zqsv}, the manifold $Z(\sigma)$ associated to a special system of quadrics $\sigma$  is
a quadratically symmetric prime Fano manifold.

Conversely, a quadratically symmetric prime Fano manifold is of
the form $Z(\sigma)$ by Proposition \ref{p.Landsberg} where
$\sigma$ is its second fundamental form. By Corollary 2.15 (iv) of
\cite{AS2}, the locus $\sigma(u, u)=0$ is exactly the VMRT at a
general point of $X$. Since $Z(\sigma)$ is a quadratically
symmetric prime Fano manifold, there are Euler vector fields
through general points of $Z(\sigma)$, hence its VMRT at a
 general point is smooth and irreducible by Proposition 6.6 of \cite{FH}.
This implies that $X \subset \pit V$
is  $Z(\sigma)$ associated to a special system of quadratics $\sigma$. \end{proof}

Theorem \ref{t.qsv} enables us to relate special quadratic manifolds to the problem studied in \cite{FH}.
Let us recall the basic definitions.

\begin{definition} \label{d.prolongation}
Let $V$ be  a complex vector space and $\fg \subset {\rm End}(V)$
a Lie subalgebra. The {\em first prolongation} (denoted by
$\fg^{(1)}$) of $\fg$ is the space of bilinear
homomorphisms $A: \Sym^{2}V \to V$ such that for any fixed $w \in V$, the endomorphism $A_{w}: V \to
V$ defined by $$v\in V \mapsto A_{w,v} := A(w,v) \in V$$ is in $\fg$. In other words, $\fg^{(1)}
= \Hom(\Sym^{2}V, V) \cap \Hom(V, \fg)$.
\end{definition}

\begin{definition}\label{d.basic}
Let $X \subset \BP V$ be an irreducible subvariety.
Denote by $\widehat{X} \subset V$ the affine cone of $X$
and by $T_{\alpha}(\widehat{X}) \subset V$ the tangent space at a
smooth point $\alpha \in \widehat{X}$. The Lie algebra of
infinitesimal linear automorphisms of $\widehat{X}$ is
$$\aut(\widehat{X}):=\{g \in \End(V)| g(\alpha) \in T_{\alpha}(\widehat{X})
\mbox{ for any smooth point } \alpha \in \hat{X} \}.$$ We will call
  $\aut(\widehat{X})^{(1)}$  the  {\em
prolongation} of $X \subset \BP V$. We say that $X$ {\em has nonzero prolongation}
if $\aut(\widehat{X})^{(1)} \neq 0$.
\end{definition}

\begin{proposition}\label{p.prolongation}
Let $X \subset \pit V$ be a quadratically symmetric variety. Then $\aut(\widehat{X})^{(1)} \neq 0$.
\end{proposition}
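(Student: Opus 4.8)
The plan is to reduce to the model $Z(\sigma)$ and then write down one explicit nonzero element of the prolongation, assembled from infinitesimal symmetries that are already available in this section. By Proposition \ref{p.Landsberg}, $X \subset \BP V$ is projectively equivalent to $Z(\sigma)$, where $\sigma : \SYM^2 U \to W$ is the dual of the second fundamental form of $X$ and $V = T \oplus U \oplus W$. Since $\aut(\widehat{X})^{(1)}$ is a linear invariant (a linear isomorphism carrying $\widehat{X_1}$ to $\widehat{X_2}$ conjugates $\aut(\widehat{X_1})$ to $\aut(\widehat{X_2})$ and induces an isomorphism of prolongations), I may assume $X = Z(\sigma)$. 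First I would collect three operators lying in $\aut(\widehat{X}) \subset \End(V)$ that are implicit in the proofs of Proposition \ref{p.Zqsv} and Proposition \ref{p.hyperplane}: the Euler operator $\Id_V$, which lies in $\aut(\widehat{X})$ because $\widehat{X}$ is a cone; the grading element $E$ with $E(t,u,w) = (0, u, 2w)$, the generator of the $\cit^*$-action $\lambda \cdot [t:u:w] = [t:\lambda u:\lambda^2 w]$ preserving $Z(\sigma)$; and, for each $v \in U$, the operator $n_v(t,u,w) = (0, tv, 2\sigma(u,v))$, the generator of the group $s \mapsto g_{sv}$, since a direct check gives $n_v^3 = 0$ and $g_v = \Id_V + n_v + \tfrac12 n_v^2 = \exp(n_v)$. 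All three preserve $\widehat{X}$, hence lie in $\aut(\widehat{X})$.

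Next I would define the symmetric bilinear map $A : \SYM^2 V \to V$ by
\[ A\big((t_1,u_1,w_1),(t_2,u_2,w_2)\big) = \big(2 t_1 t_2,\ t_1 u_2 + t_2 u_1,\ 2\sigma(u_1,u_2)\big) \]
and check that $A \in \aut(\widehat{X})^{(1)}$. The key point is the identity
\[ A_{(t,u,w)} = 2t\,\Id_V - t\,E + n_u , \]
which is verified by evaluating both sides on an arbitrary $(t_2,u_2,w_2)$. Because the right-hand side is a linear combination of the operators $\Id_V$, $E$, $n_u$ already shown to lie in $\aut(\widehat{X})$, every slice $A_{(t,u,w)}$ lies in $\aut(\widehat{X})$; as $A$ is manifestly symmetric, this places $A$ in $\Hom(\SYM^2 V, V) \cap \Hom(V, \aut(\widehat{X})) = \aut(\widehat{X})^{(1)}$. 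Finally $A \neq 0$, since $A\big((1,0,0),(1,0,0)\big) = (2,0,0)$, so $\aut(\widehat{X})^{(1)} \neq 0$, as desired.

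The only genuinely creative step is guessing $A$; once its slice is rewritten as $A_{(t,u,w)} = 2t\,\Id_V - t E + n_u$, membership in the prolongation follows immediately from the structural facts above, with no tangent-space computation required. Conceptually, $A_{(1,0,0)} = 2\,\Id_V - E$ is the grading element of the reversed $\cit^*$-action (weights $2,1,0$), i.e. the grading attached to the symmetric structure at the opposite point, and it is precisely here that the quadratic symmetry of $X$ — supplying both $E$ and the translations $n_v$ — enters. I expect the only (minor) obstacle to be the bookkeeping: confirming $g_v = \exp(n_v)$ and that $E$ is the stated generator, so that the linear-combination identity for $A_{(t,u,w)}$ holds exactly.
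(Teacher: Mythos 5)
Your proposal is correct and takes essentially the same route as the paper: reduce to $Z(\sigma)$ via Proposition \ref{p.Landsberg} and exhibit an explicit nonzero element of the prolongation, your $A$ being exactly twice the paper's $A\left((t_1,u_1,w_1),(t_2,u_2,w_2)\right) = \left(t_1t_2, \tfrac{t_1u_2+t_2u_1}{2}, \sigma(u_1,u_2)\right)$. The only divergence is in the membership check: the paper computes $T_\alpha\widehat{X}$ at a general point and verifies the defining equation $tw'+t'w-2\sigma(u,u')=0$ directly, whereas your decomposition $A_{(t,u,w)} = 2t\,\Id_V - tE + n_u$ into the Euler operator, the grading element of the $\cit^*$-action, and the nilpotents $n_u$ (with $g_v=\exp(n_v)$, which checks out since $n_v^3=0$) is a correct and cleaner alternative to that tangent-space computation.
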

\begin{proof}
By Proposition \ref{p.Landsberg}, we may assume that $X = Z(\sigma) \subset \pit V$,
with $V = T \oplus U \oplus W$ associated to a surjective homomorphism $\sigma: \SYM^2 U \to W$.
Define $A: {\rm Sym}^2 V \to V$ by
$$
A\left( (t_1, u_1, w_1), (t_2, u_2, w_2) \right) = \left(t_1 t_2, \frac{t_1 u_2 +t_2u_1}{2}, \sigma(u_1, u_2) \right).
$$ We claim that  $A \in \aut(\widehat{X})^{(1)}$ which proves the proposition.

To check the claim, we have to show that  $A(v, \alpha) \in T_\alpha \widehat{X}$ for any $v \in V$ and a general smooth point $\alpha$ of $\widehat{X}$. It suffices to consider $\alpha$ in the open subset $\{(t, u, w)  \mid t \neq 0,  t w = \sigma(u, u)\}$ of $\widehat{X}$.
Fix $$\alpha = (t, u, w=\frac{1}{t}\sigma(u,u)) \in V, t \neq 0.$$ Then $T_{\alpha} \widehat{X} $ is the subspace of $V$ consisting of $(t',u',w')$ satisfying
$$(t+ \epsilon t') (w + \epsilon w') = \sigma (u + \epsilon u', u + \epsilon u') \ \mbox{ modulo } \epsilon^2.$$
Thus \begin{eqnarray*} T_{\alpha} \widehat{X} &=& \{(t', u', w') \in V \mid t w' + t' w - 2 \sigma( u, u') =0\} \\
&=& \{(t',u',w') \mid t w' + \sigma(u, \frac{t'}{t}u - 2 u') =0 \}.\end{eqnarray*}
For any $v = (t_0, u_0, w_0)$, we have
$$A(v, \alpha) = A((t_0, u_0, w_0), (t, u, w)) = (t_0  t, \frac{t_0 u + t u_0}{2}, \sigma(u_0, u)).$$
Writing the right hand side as $(t', u', w')$, we have
$$t w' + \sigma(u, \frac{t'}{t}u - 2 u')  = t \sigma(u_0, u) +  \sigma( u, \frac{t_0 t}{t}  u - 2 \frac{t_0  u + t u_0}{2} ) = 0.$$
This proves the claim. \end{proof}

The following is a partial converse of Proposition \ref{p.prolongation}.

\begin{proposition}\label{p.converse}
 Let $X \subset \BP V$ be a nonsingular nondegenerate linearly normal projective variety.
If $X$ has nonzero prolongation, then   $X$ is a
quadratically symmetric variety.\end{proposition}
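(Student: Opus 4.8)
The plan is to extract from a nonzero prolongation element a $\cit^*$-action on $V$ realizing, at every point $P$ of a suitable Zariski-open $X^o \subset X$, the weight structure $(0,1,2)$ demanded by Definition \ref{d.QSV}. Fix $0 \neq A \in \aut(\widehat X)^{(1)}$, viewed as a symmetric map $A \colon \Sym^2 V \to V$ all of whose partial maps $A_w = A(w,\cdot)$ lie in $\aut(\widehat X)$. Since $\aut(\widehat X)$ is an algebraic Lie algebra containing the Euler operator $\Id_V$, both each $A_w$ and its semisimple part again lie in $\aut(\widehat X)$, and each generates a one-parameter group of linear automorphisms preserving $\widehat X$, hence $X$. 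The goal is to manufacture, for general $P$, a single \emph{semisimple} $h_P \in \aut(\widehat X)$ whose eigenvalue is $0$ on $\widehat P$, $1$ on $T_P(\widehat X)/\widehat P$ and $2$ on $V/T_P(\widehat X)$; the group it generates is then the required $E_P$.

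First I would normalize the point. For general $[P] \in X$ I would pick a representative $\beta \in \widehat P$ that is \emph{idempotent} for $A$, i.e.\ $A(\beta,\beta) \in \cit\beta$; after rescaling one may assume $A(\beta,\beta)=\beta$, so $\beta$ is a $1$-eigenvector of $A_\beta$. I would then set $h_P := 2\,\Id_V - 2\,(A_\beta)_s$, where $(A_\beta)_s$ is the semisimple part of the Jordan decomposition of $A_\beta$ inside $\aut(\widehat X)$. By construction $h_P$ is semisimple, lies in $\aut(\widehat X)$, and its eigenvalues arise from those of $A_\beta$ by $\mu \mapsto 2-2\mu$. For the explicit $A$ exhibited in the proof of Proposition \ref{p.prolongation} one checks directly that, at a general $\beta$, the endomorphism $A_\beta$ is already semisimple with eigenvalues $1,\tfrac12,0$ along the flag $\cit\beta \subset T_\beta(\widehat X) \subset V$, so that $h_P$ reproduces exactly the action $E_P$ of Proposition \ref{p.Zqsv}. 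The content is to establish the analogous statement for an arbitrary $X$.

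The decisive step is therefore the eigenvalue/eigenspace analysis: I must show that for general $\beta$ the eigenvalues of $A_\beta$ lie in $\{1,\tfrac12,0\}$, with the $1$-eigenspace equal to the line $\cit\beta$, the (generalized) $\tfrac12$-eigenspace projecting isomorphically onto $T_\beta(\widehat X)/\cit\beta$, and the $0$-eigenspace complementary to $T_\beta(\widehat X)$. Translating back, $h_P$ then has $0$-eigenspace $\widehat P$, $1$-eigenspace a complement $T'_P$ in $T_P(\widehat X)$, and $2$-eigenspace a complement $N'_P$ in $V$, which is precisely the decomposition of Definition \ref{d.QSV}, and $E_P$ preserves $X$ because $h_P \in \aut(\widehat X)$. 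Here the hypotheses are indispensable: linear normality and nondegeneracy are what identify the relevant weight spaces with the osculating flag and, crucially, exclude eigenvalues outside $\{0,\tfrac12,1\}$. I would carry this out by invoking the structural results of \cite{FH} on varieties with nonzero prolongation, which supply exactly such a weight-$(0,1,2)$ grading of $V$; the remaining work is the bookkeeping matching the grading to the flag $\widehat P \subset T_P(\widehat X) \subset V$ and verifying the conditions of Definition \ref{d.QSV}.

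The main obstacle is exactly this eigenvalue bound together with the existence of the idempotent normalization $A(\beta,\beta)\in\cit\beta$ over a Zariski-dense subset of $X$. These two points genuinely use the geometry of $X$ and \emph{fail} for an arbitrary symmetric $A$ on $\Sym^2 V$: already for $X = \BP V$ a careless choice of $A$ yields an endomorphism $A_\beta$ with no such structure and a vector field that vanishes only at finitely many points, so the right prolongation element (or normalization) must be \emph{selected}, not assumed. Everything else — the passage through the Jordan decomposition, the semisimplicity and membership $h_P \in \aut(\widehat X)$, and the exponentiation to $E_P$ — is formal. I therefore expect the heart of the argument to be the extraction, via \cite{FH}, of a grading element $h_P \in \aut(\widehat X)$ with weights $0,1,2$ and eigenspaces $\widehat P$, a complement in $T_P(\widehat X)$, and a complement in $V$, after which $X$ is exhibited as quadratically symmetric.
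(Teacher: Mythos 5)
Your proposal is correct and is essentially the paper's own argument: the paper likewise produces the $\cit^*$-action from the semisimple part of $A_\alpha$ at a general point $\alpha \in \widehat{X}$, with weights $0,1,2$ along the flag $\widehat{P} \subset T_P(\widehat{X}) \subset V$, except that it cites p.~606 of \cite{HM05} (the proof of Theorem 1.1.3 there, which is also the source of the structural facts you attribute to \cite{FH}) for the weight analysis, and it pins the top weight at $2$ by combining the degree $\leq 2$ bound on the $\cit^*$-orbits coming from $A_\alpha^3=0$ with the nondegeneracy of $X$. Your idempotent normalization $A(\beta,\beta)=\beta$ and the explicit grading element $h_P = 2\,\Id_V - 2(A_\beta)_s$ are a harmless repackaging of the same input, so nothing essential differs.
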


\begin{proof}
This is contained in the proof of
Theorem 1.1.3 \cite{HM05}.  In fact, it is shown  in p. 606 of \cite{HM05} that for a nonzero element $A: \Sym^2 V \to V$ of the prolongation of $X$ and a general point $\alpha \in \widehat{X}$, the
endomorphism $A_{\alpha}$ of $V$ satisfies $A_{\alpha}^3 =0$ and the semisimple part of the endomorphism $A_{\alpha}$  generates a $\cit^*$-action with weight 0 on $\alpha$ and weight 1 on $T_{\alpha}X$.
The orbits of this $\cit^*$-action on $X$ have degree $\leq 2$ from   $A_{\alpha}^3 =0$. Since $X$ is nondegenerate in $\BP V$, this implies that the
weight on the complement of $T_{\alpha} \widehat{X}$ is 2. Thus $X$ must be quadratically symmetric. \end{proof}

Using the above results, we can derive from Theorem \ref{t.bir} the following classification result.

\begin{theorem}\label{t.Main}
Let $X \subsetneq \BP V$ be an irreducible nonsingular nondegenerate variety such that $\aut(\widehat{X})^{(1)} \neq 0$. Then
  $X \subset \BP V$ is projectively equivalent to one in the following list.
\begin{itemize} \item[(1)] The VMRT of an irreducible Hermitian symmetric space of compact type  of rank $\geq 2$ from Example \ref{e.IHSS}.
\item[(2)] The VMRT of a symplectic Grassmannian from Example \ref{e.SymGr}.
\item[(3)] A nonsingular linear section of $\Gr(2, 5) \subset \BP^9$ of codimension $\leq 2$.
\item[(4)] A $\BP^4$-general linear section of $\mathbb{S}_5 \subset \BP^{10}$ of codimension $\leq 3$.
\item[(5)] Biregular projections of (1) and (2) with nonzero prolongation, which are completely described in Section 4 of \cite{FH}.
\end{itemize}  \end{theorem}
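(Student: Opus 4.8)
The plan is to convert the prolongation hypothesis into the quadratically symmetric structure isolated in this section, and then feed the resulting variety into Theorem \ref{t.bir}. I would organize the argument around two dichotomies: whether $X$ is linearly normal, and whether $X$ is a prime Fano manifold.

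The heart of the matter is the linearly normal prime Fano case. If $X$ is linearly normal, Proposition \ref{p.converse} upgrades the hypothesis $\aut(\widehat{X})^{(1)} \neq 0$ to the statement that $X$ is quadratically symmetric, and Proposition \ref{p.Landsberg} then identifies $X$ with $Z(\sigma)$, where $\sigma: \SYM^2 T_P X \to (T_P \BP V)/T_P X$ is dual to the second fundamental form at a general $P$. If moreover $X$ is prime Fano, Theorem \ref{t.qsv} forces $\sigma$ to be a special system of quadrics, so that $S = B(\sigma)$ is a special quadratic manifold and Theorem \ref{t.bir} applies directly, sorting $S$ into the families (a)--(f). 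For each family I would then recover $X = Z(\sigma)$ from the explicit dictionary already assembled: Example \ref{e.table} sends the homogeneous base loci $\Q^d$, $\BP^1 \times \BP^{d-1}$, $\Gr(2,5)$, $\mathbb{S}_5$ to $\Q^{d+2}$, $\Gr(2,d+2)$, $\mathbb{S}_5$, $\mathbb{OP}^2$, each of which is the VMRT of an irreducible Hermitian symmetric space of rank $\geq 2$ in Example \ref{e.IHSS} (case (1)); while Propositions \ref{p.ex1} and \ref{p.ex2} send the linear-section families (e) and (f) to the nonsingular linear sections of $\Gr(2,5)$ of codimension $\leq 2$ and of $\mathbb{S}_5$ of codimension $\leq 3$ (cases (3) and (4)).

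Two strata remain. If $X$ is linearly normal but not prime Fano, then $X = Z(\sigma)$ still holds, but $\sigma$ cannot be special: a special $\sigma$ forces Picard number one on $Z(\sigma)$ through the blow-up identification of Proposition \ref{p.blow}, so Theorem \ref{t.bir} is blind to these varieties. Here the VMRT $B(\sigma)$ of $X$ must degenerate, becoming reducible or singular, and I would instead recognize $X$ through the large automorphism group of $Z(\sigma)$, namely the $\cit^*$-action of Proposition \ref{p.Zqsv} together with the translations $g_v$, which act transitively on $Z(\sigma) \setminus \BP(U \oplus W)$; analyzing its induced extremal contractions should match $X$ with a VMRT of a symplectic Grassmannian as in Example \ref{e.SymGr} (case (2)). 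Finally, if $X$ is not linearly normal it is a nondegenerate biregular projection of its linear normalization; the projections carrying nonzero prolongation are precisely those catalogued in Section 4 of \cite{FH}, yielding case (5).

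The step I expect to be the main obstacle is exactly the world of Picard number $\geq 2$, namely the stratum where $X$ is not prime Fano (case (2)) together with the projection stratum (case (5)). The special birational transformation machinery underpinning Theorem \ref{t.bir} is engineered to force Picard number one on the target, and so gives no direct grip on these; controlling them calls for the finer $\cit^*$-equivariant geometry of $Z(\sigma)$ and the projection analysis of \cite{FH} rather than Theorem \ref{t.bir} alone. Once the complete list (1)--(5) is assembled, comparing it with the classification of \cite{FH} both recovers that result and exhibits the discrepancy repaired in Remark \ref{r.FH}.
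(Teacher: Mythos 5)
Your handling of two of the three strata matches the paper's proof: for linearly normal prime Fano $X$, Proposition \ref{p.converse} and Proposition \ref{p.Landsberg} give $X = Z(\sigma)$, Theorem \ref{t.qsv} makes $\sigma$ a special system of quadrics, and Theorem \ref{t.bir} together with the dictionary of Example \ref{e.table}, Proposition \ref{p.ex1} and Proposition \ref{p.ex2} produces cases (3), (4) and the prime Fano members of (1); for non-linearly-normal $X$, Corollary 4.8 of \cite{FH} reduces to biregular projections of the linearly normal classification, giving (5) (you should still note, as the paper does, that (3) and (4) admit no biregular projections, so only (1) and (2) contribute there).

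The genuine gap is the linearly normal, non-prime-Fano stratum. You correctly diagnose that Theorem \ref{t.bir} gives no grip here, but your substitute --- analyzing the extremal contractions induced by the $\cit^*$-action and the translations $g_v$ on $Z(\sigma)$ and hoping to ``match $X$ with a VMRT of a symplectic Grassmannian'' --- is a program, not a proof. For non-special $\sigma$ the base locus $B(\sigma)$ can be singular, reducible, or even empty (e.g.\ $v_2(\BP^d)$, which has no lines at all, so your claim that the VMRT becomes ``reducible or singular'' is already inaccurate), and transitivity on the open orbit $Z(\sigma)\setminus \BP(U\oplus W)$ gives no control over the boundary or over which $Z(\sigma)$ actually arise. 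The paper's actual tool, which you never invoke, is Theorem 1.1.3 of \cite{HM05}: nonzero prolongation already forces $X$ to be \emph{conic-connected}. This makes the Ionescu--Russo classification (Theorem \ref{t.IR}) directly applicable to $X$ itself, and sorting its families (C1)--(C4) by the prolongation condition is precisely Proposition 6.4 of \cite{FH}, which yields exactly (1) and (2) apart from the prime Fano members of (1) (those being re-obtained as the $Z(\sigma)$ of Example \ref{e.table} in the prime Fano branch). Without conic-connectedness or an equivalent structural input, your case (2) remains unestablished.
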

\begin{proof}
The assumption $\aut(\widehat{X})^{(1)} \neq 0$ implies that $X$ is conic-connected by Theorem 1.1.3 \cite{HM05}.

If $X$ is not a prime Fano manifold and $X$ is linearly normal, then  $X \subset \BP V$ can be classified by Theorem \ref{t.IR}. This is done
in Proposition 6.4 of \cite{FH}: they are exactly (1) and (2), excepting the prime Fano manifolds belonging to (1).
Note that prime Fano manifolds belonging to (1) are exactly those appearing as $Z(\sigma)$ in Example \ref{e.table}.

 If $X$ is a prime Fano manifold and $X \subset \BP V$ is linearly normal, then
 by  Proposition \ref{p.Landsberg} and Proposition \ref{p.converse}, we see that $X \subset \pit V$ is projectively equivalent to $Z(\sigma)$, where $\sigma$ is the second fundamental form of $X$ at a general point.
Thus we have (3), (4) and prime Fano manifolds in (1),  from Theorem \ref{t.bir} and the data on (1) and (2) in Section 3 of \cite{FH}.

Finally, if $X$ is not linearly normal, it must be a biregular projection of the linearly normal one by Corollary 4.8 \cite{FH}.
(3) and (4) do not have biregular projections. So we have (5).
\end{proof}

\begin{remark}\label{r.FH}
Main Theorem in \cite{FH} asserted the classification result in Theorem \ref{t.Main}, but
there was a flaw: the list there missed the case of (3) with codimension 2 and the cases of (4) with codimension 2 and 3.
This omission is caused by  Proposition 2.9 of \cite{FH}, where it was claimed that $\dim \aut(\hat{X})^{(1)} \neq 1$ for any nonsingular nondegenerate linearly normal variety $X \subset \BP V$.
This proposition is incorrect: the codimension-2 case of (3)  and the codimension-3 case of (4)
 satisfy  $\dim \aut(\hat{X})^{(1)} = 1$.    The error in the proof
  of Proposition 2.9 occurred at the very last line: the statement that the set of
$\alpha$ and $\alpha'$ (in the notation therein) spans the vector space $V$ is wrong.
\end{remark}

\bigskip
Baohua Fu

Institute of Mathematics, AMSS, Chinese Academy of Sciences,

55 ZhongGuanCun East Road, Beijing, 100190, China

 bhfu@math.ac.cn

\bigskip
Jun-Muk Hwang

 Korea Institute for Advanced Study, Hoegiro 85,

Seoul, 130-722, Korea

jmhwang@kias.re.kr

\end{document}